\newtheorem{theorem}{Theorem}
\newtheorem{lemma}{Lemma}
\newtheorem{proposition}[lemma]{Proposition}
\newtheorem{corollary}[lemma]{Corollary}
\newtheorem{definition}[lemma]{Definition}
\newtheorem{remark}[lemma]{Remark}
\newtheorem{conjecture}{Conjecture}
\numberwithin{lemma}{section}
\numberwithin{equation}{section}
\newcommand{\R}{{\mathbb R}}
\newcommand{\C}{{\mathbb C}}
\newcommand{\N}{{\mathbb N}}
\renewcommand{\R}{\mathbb R}
\newcommand{\bM}{\mathbf M}
\newcommand{\bP}{\mathbf P}
\newcommand{\bE}{\mathbf E}
\newcommand{\bQ}{\mathbf Q}
\newcommand{\bI}{\mathbf I}
\newcommand{\bJ}{\mathbf J}
\newcommand{\bK}{\mathbf K}
\newcommand{\bu}{{\bar u}}
\newcommand{\bv}{{\bar v}}
\newcommand{\step}{\mathfrak{c}}
\newcommand{\la}{\langle}
\newcommand{\ra}{\rangle}
\newcommand{\ol}{\overline}
\newcommand{\ms}{M^\sharp}
\newcommand{\ps}{P^\sharp}
\newcommand{\calR}{\mathcal{R}}
\begin{document}

\title{Global solutions for 1D cubic dispersive equations, Part III: the quasilinear Schr\"odinger flow}

\author{Mihaela Ifrim}
\address{Department of Mathematics, University of Wisconsin, Madison}
\email{ifrim@wisc.edu}

\author{ Daniel Tataru}
\address{Department of Mathematics, University of California at Berkeley}
\email{tataru@math.berkeley.edu}

\begin{abstract}
The first target of this article is  the 
local well-posedness question for 1D quasilinear
Schr\"odinger equations with cubic nonlinearities.
The study of this class of problems, in all dimensions, was initiated 
in pioneering work of Kenig-Ponce-Vega for localized initial data, and then continued by 
Marzuola-Metcalfe-Tataru for initial data in Sobolev spaces. Our objective here is to fully redevelop the study of this problem in the 1D case,
and to prove a \emph{sharp local well-posedness}
result.

The second goal of this article is to consider 
the long-time/global existence of solutions for the same problem. This is motivated by a broad 
conjecture formulated by the authors in earlier work, which reads as follows: ``\emph{Cubic defocusing dispersive one dimensional flows with small initial data have global dispersive solutions}''; the conjecture was initially proved for  a class of semilinear  Schr\"odinger type models.

Our work here establishes the above conjecture 
for 1D quasilinear Schr\"{o}dinger flows.
Precisely, we show that if the problem has 
\emph{phase rotation symmetry} and is \emph{conservative and defocusing}, then 
small data in Sobolev spaces yields global,
scattering solutions. This is the first result of this type for 1D quasilinear dispersive flows where no localization condition is imposed on the data.
Furthermore, we prove the global well-posedness at the minimal Sobolev regularity as in our local well-posedness result.

The defocusing condition is essential in our global result. Without it, the authors 
have conjectured that \emph{small, $\epsilon$ size
data yields long-time solutions on the $\epsilon^{-8}$ time-scale}. A third goal of this paper is to also prove this second conjecture 
for 1D quasilinear Schr\"{o}dinger flows, also at minimal regularity.

\end{abstract}

\subjclass{Primary:  	35Q55   
Secondary: 35B40   
}
\keywords{NLS problems, quasilinear, defocusing, scattering, interaction Morawetz}

\maketitle

\setcounter{tocdepth}{1}
\tableofcontents


\section{Introduction}

This article is devoted to the study of quasilinear Schr\"odinger flows in one space 
dimension. Our objective is to approach 
both the local well-posedness question and the 
long-time/global well-posedness question
for small initial data in Sobolev spaces, and to 
establish sharp results for both problems.

Irrespective of the dimension, local well-posedness
for quasilinear Schr\"odinger flows is a complex question, which is why the theory has historically been lagging behind the corresponding theory for nonlinear wave equations. 
The study of this class of problems was initiated in breakthrough work of Kenig-Ponce-Vega~\cite{KPV}, who considered initial data which is both regular and localized; see also \cite{KPRV1} and \cite{KPRV2}. The question of local well-posedness in translation invariant Sobolev spaces, and also  at lower regularity, was then considered by Marzuola-Metcalfe-Tataru~\cite{MMT1,MMT2}. 
As it turns out, one needs to differentiate between quadratic and cubic nonlinearities, and the classical $H^s$ Sobolev spaces can  only be employed in to  the cubic case. There
is also a substantial difference between the small and large data problems, and here we refer the reader to the work in \cite{MMT2} where a more detailed discussion is available.

\medskip

Our interest here is in 1D quasilinear Schr\"odinger flows with cubic nonlinearities,
and with initial data which is small in $H^s$. 
Indeed, the first objective of this paper is to fully redevelop the study of the local well-posedness problem in the 1D case at lower regularity having only a smallness assumption on our Sobolev initial data.  In this context, we are able to
prove a \emph{sharp local well-posedness} result.

\medskip

The second goal of this article is to consider 
the long-time/global existence of solutions for the same problem. This is motivated by a broad 
conjecture formulated by the authors in the recent work  \cite{IT-global}. Our global well-posedness (GWP) conjecture, which applies to both semilinear and quasilinear 1D problems, is as follows:

\begin{conjecture}[Non-localized data defocusing GWP conjecture]\label{c:nld}
One dimensional dispersive flows on the real line with cubic defocusing nonlinearities and small initial data have global in time, scattering solutions.
\end{conjecture}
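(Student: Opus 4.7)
The plan is to prove the conjecture in the quasilinear Schr\"odinger setting via a bootstrap argument that combines the local well-posedness theory (itself a major objective of this paper) with three global a priori controls: a Sobolev bound propagated by renormalized energy, a scale-invariant spacetime bound from an interaction Morawetz identity, and pointwise $t^{-1/2}$ decay obtained by testing against wave packets. Concretely, for data of size $\epsilon$ in $H^s$ at the sharp regularity established in the first part of the paper, I would propagate on a time interval $[0,T]$ both $\|u(t)\|_{H^s} \lesssim \epsilon$ and a dispersive norm controlling the relevant critical quantities, then improve both to close the loop uniformly in $T$.

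For the Sobolev control, phase rotation symmetry furnishes mass conservation and the conservative hypothesis furnishes energy conservation; in the defocusing small-data regime these are coercive and provide a priori $L^2$ and $\dot H^1$ bounds. For the top regularity $H^s$ a naive energy estimate loses derivatives because the quasilinear coefficients depend on $u$, so I would follow a paradifferential reduction with a symmetrizing conjugation (in the spirit of the gauge transformations of Kenig-Ponce-Vega and Marzuola-Metcalfe-Tataru) which diagonalizes the principal symbol modulo acceptable lower-order errors and makes standard energy estimates work.

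The heart of the matter is the interaction Morawetz estimate, producing in one dimension a scale-invariant $L^4_{t,x}$ bound on $u$; this is exactly the quantity needed to close a cubic problem at the critical level, and it provides the main nonperturbative input. Deriving such an identity in the quasilinear setting is where I expect the principal obstacle to lie: the standard tensorized positive-commutator computation must be carried out against a $u$-dependent Schr\"odinger operator rather than the flat Laplacian, so the Morawetz multiplier must be commuted through the paralinearized equation, all commutators controlled via the bootstrap smallness, and the defocusing hypothesis exploited to secure the correct sign of the principal density. Making this quasilinearization rigorous at the sharp Sobolev regularity is the crux of the argument.

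Given the Morawetz bound together with the $H^s$ control, I would upgrade to pointwise decay and scattering via the testing by wave packets method developed in the authors' prior works. One constructs approximate packets localized on the rays $x = 2\xi t$, pairs them with $u$ to extract a profile $\gamma(t,\xi)$, and shows that $\gamma$ satisfies a transport equation whose cubic forcing is integrable in time thanks to the Morawetz bound; uniform control on $\gamma$ then yields both the pointwise decay $\|u(t)\|_{L^\infty} \lesssim t^{-1/2}$ and an asymptotic profile, hence scattering. All three ingredients feed back into the bootstrap, closing the global conclusion at the sharp local well-posedness regularity from the first part of the paper.
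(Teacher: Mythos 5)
Your plan has two genuine gaps, both at the level of the main ideas rather than technical details. First, you assume that phase rotation symmetry gives mass conservation and that the conservative hypothesis gives energy conservation, and you propose to get coercive a priori $L^2$ and $\dot H^1$ bounds from these. In the quasilinear setting of \eqref{qnls}/\eqref{dqnls} there are no such exact conservation laws: the conservative assumption is a condition on the diagonal trace of the cubic symbol, $c(\xi,\xi,\xi),\partial_{\xi_j}c(\xi,\xi,\xi)\in\R$, and it explicitly does \emph{not} imply the existence of a coercive conserved quantity. What actually replaces conservation is a family of frequency-localized mass and momentum densities written in density-flux form, corrected by quartic terms $B^4_{\lambda,m}$, $B^4_{\lambda,p}$ constructed by symbol division off the doubly resonant set; the conservative hypothesis enters only to guarantee that the quartic density $c^4_{\lambda,m}$ vanishes to second order on $\calR_2$ so that this correction exists. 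Without this step the doubly resonant cubic interactions are nonperturbative on long time scales and no renormalized energy or gauge transformation in the Kenig--Ponce--Vega/Marzuola--Metcalfe--Tataru style will close the $H^s$ bound globally.

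Second, your endgame — testing by wave packets to extract $\|u(t)\|_{L^\infty}\lesssim t^{-1/2}$ and an asymptotic profile, hence classical scattering — cannot work here, because the data are only small in $H^s$ and carry no spatial localization. The wave packet testing method requires weighted/localized norms to make the packet--solution pairing and its transport equation meaningful; for non-localized data there is no pointwise decay at any rate, and indeed the strong nonlinear effects preclude classical scattering altogether. The global result is instead formulated and proved in a weak dispersive sense: global $L^6$ Strichartz bounds with a loss and balanced/unbalanced bilinear $L^2$ bounds, obtained from frequency-localized interaction Morawetz functionals (for $u_\lambda$ paired with translates $u_\mu^{x_0}$), with the defocusing sign used precisely to make the diagonal sextic term $\bJ^6_\lambda$ nonnegative and thereby capture the $L^6$ norm. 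Your single scale-invariant $L^4_{t,x}$ interaction Morawetz bound, even if quasilinearized, would not by itself control the unbalanced (transversal) interactions that drive the energy estimates, nor would it substitute for the lossless paradifferential Strichartz estimates (proved via a change of variables and the wave packet parametrix) that are needed to handle the balanced interactions at the sharp regularity $s>1$.
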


Critically, compared with any earlier work in this direction, this conjecture requires  neither localization nor higher regularity for the initial data.
The main result of \cite{IT-global} asserts that this conjecture is true in the semilinear setting under suitable assumptions, most notably that the dispersion relation is the Schr\"odinger dispersion relation.  That was the first global in time well-posedness result of this type.  Scattering  here is interpreted in a weak sense, to mean that the solution satisfies global $L^6_{t,x}$ Strichartz estimates and bilinear $L^2_{t,x}$ bounds. This is due to the strong nonlinear effects, which preclude any classical scattering  for this class of problems.

Our work here establishes the above conjecture  for 1D quasilinear Schr\"{o}dinger flows, and represents the first validation of the conjecture
in a quasilinear setting. Precisely, we show that if the problem has \emph{phase rotation symmetry} and is \emph{conservative and defocusing}, then  small data in Sobolev spaces yields global, scattering solutions.  This constitutes the first result of this kind for any 1D quasilinear dispersive flow. Furthermore, we prove it at the same minimal Sobolev regularity as in our local well-posedness result.

\medskip

The defocusing condition for the nonlinearity is essential for our global result. In the focusing case, the existence of small solitons generally prevents global, scattering solutions. Nonetheless, in another recent paper, \cite{IT-focusing}, the authors conjectured that long-time solutions can be  obtained on  an optimal\footnote{The optimality should be interpreted as valid for generic problems within the class of evolutions for which the conjecture applies.}  time-scale: 

\begin{conjecture}[Non-localized data long-time well-posedness conjecture]\label{c:nld-focusing}
One-dimensional \\ \mbox{dispersive} flows on the real line with cubic conservative nonlinearities, and  initial data of size $\epsilon \ll 1$, have long-time solutions on the $\epsilon^{-8}$ time-scale.
\end{conjecture}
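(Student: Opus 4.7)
The plan is to run a bootstrap on the quantity $\|u(t)\|_{H^s}$, at the sharp Sobolev index $s$ provided by the local well-posedness theorem proved earlier in the paper, against a suitable modified energy that kills the first three generations of multilinear source terms. Starting from the local existence interval and the bootstrap assumption $\|u(t)\|_{H^s}\le 2\epsilon$, Sobolev embedding at this value of $s$ already gives $\|u(t)\|_{L^\infty}\lesssim \epsilon$, which is the only pointwise control the argument will need; no global dispersive decay will be sought, since in the conservative but non-defocusing setting no interaction Morawetz estimate is available.

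The heart of the argument will be the construction of a modified energy
\[
E_s(u) = \|u\|_{H^s}^2 + Q^{(4)}(u) + Q^{(6)}(u) + Q^{(8)}(u),
\]
where each $Q^{(2k)}$ is a $2k$-linear paradifferential form in $(u,\bar u)$ produced by a paradifferential normal form transformation designed to cancel, in turn, the quartic, sextic and octic contributions to $\frac{d}{dt}\|u\|_{H^s}^2$. Phase rotation symmetry forces each corrector and each source to carry equal numbers of $u$ and $\bar u$ factors; the conservative hypothesis provides the symmetrization identities that send the resonant pieces of each cancellation to total time derivatives; and in one dimension the cubic, quintic and septic resonant sets are small enough that the nonresonant divisors are non-degenerate in a paradifferential sense. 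The net result will be an identity
\[
\frac{d}{dt} E_s(u) = R^{(10)}(u), \qquad |R^{(10)}(u)|\lesssim \|u\|_{L^\infty}^{8}\,\|u\|_{H^s}^{2}.
\]

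Once this identity is in hand, the bootstrap closes by a direct calculation: the correctors obey $|Q^{(2k)}(u)|\lesssim \epsilon^{2k-2}\|u\|_{H^s}^2$, so for small $\epsilon$ the modified energy is equivalent to $\|u\|_{H^s}^2$, and integrating gives $E_s(u(t))\le \epsilon^2+C\epsilon^{10}t$. This improves $\|u(t)\|_{H^s}\le 2\epsilon$ to $\|u(t)\|_{H^s}\le \tfrac{3}{2}\epsilon$ on every interval $[0,T]$ with $T\le c\epsilon^{-8}$, and combined with the local continuation criterion it yields the claimed existence scale.

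The hard part will be implementing the three iterated normal form steps in the genuinely quasilinear regime. A naive Poincar\'e--Dulac normal form obtained by dividing by the resonance function costs derivatives at each step, and by the third iteration one would fall well below the sharp local well-posedness threshold. Instead each step must be paradifferential, with low--high interactions treated by symbol division inside a paraproduct and high--high interactions resymmetrized so that the remainders are genuine multilinear forms of the next higher degree rather than unbounded operators on $u$. Verifying that the symbols of $Q^{(6)}$ and $Q^{(8)}$ are well-defined and that the paradifferential remainders inherited at each step remain of the next higher multilinear order, without any derivative loss, is where the bulk of the technical work will lie; in particular the conservative hypothesis will be used not just once but at every iteration, to keep the symbol division honest on the resonant set.
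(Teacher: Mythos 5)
There is a genuine gap, and it is located exactly where you place "the bulk of the technical work": the three iterated normal form steps cannot be carried out as described, because normal forms only remove \emph{nonresonant} interactions, and in one dimension the cubic resonant set is large. On the diagonal $\xi_1-\xi_2+\xi_3-\xi_4=0$ the resonance condition gives $\{\xi_1,\xi_3\}=\{\xi_2,\xi_4\}$, a union of two hyperplanes, and the conservative hypothesis only constrains the symbol on the \emph{doubly} resonant set $\xi_1=\xi_2=\xi_3=\xi_4$ (reality of $c(\xi,\xi,\xi)$ and $\partial_{\xi_j}c(\xi,\xi,\xi)$, i.e. second order vanishing of the antisymmetrized symbol there, and only for the cubic part). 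It does not provide "symmetrization identities that send the resonant pieces to total time derivatives" at quartic, sextic and octic order; the transversal resonant interactions ($\xi_1\neq\xi_3$) survive every normal form step, and with non-localized data there is no time decay, so the only way to see their smallness is through dispersion. Bounding the leftover terms by $\|u\|_{L^\infty}^{2k-2}\|u\|_{H^s}^2$ with $\|u\|_{L^\infty}\lesssim\epsilon$ then gives growth at rate $\epsilon^4$ (resp. $\epsilon^6$, $\epsilon^8$) per unit time at the first (resp. second, third) stage, which stalls far short of $\epsilon^{-8}$; a symbol-division corrector $Q^{(2k)}$ that pretended to remove these terms would have an unbounded symbol on the resonant hyperplanes. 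This is precisely why the paper's argument is not a pure modified-energy argument: it corrects the densities only for the doubly resonant cubic part (the $B^4_{\lambda,m}$, $B^4_{\lambda,p}$ corrections of Proposition~\ref{p:dens-flux-alg}, where the conservative condition supplies the needed vanishing via \eqref{choose-R4m-new}), and it controls all remaining resonant-but-transversal contributions by bilinear $L^2$ (interaction Morawetz) bounds and the lossy long-time $L^6$ Strichartz estimate \eqref{uk-se}, which are themselves proved by a bootstrap on $\epsilon^{-6}$ subintervals. The $\epsilon^{-8}$ lifespan then emerges from summing $\epsilon^4$-size errors over the $T\epsilon^{6}$ subintervals, not from exhibiting a decic remainder $R^{(10)}$.

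A second, independent obstruction is quasilinearity at low regularity. Your scheme proposes to close at the sharp index $s>1$ using only $\|u\|_{L^\infty}\lesssim\epsilon$, but the nonlinearity carries two derivatives: already the terms $h(u)(\partial_x u)^2$ and the paradifferential commutators like $\partial_x(u\bar u)\,\partial_x u_\lambda$ in $N_\lambda(u)$ cannot be estimated by $\|u\|_{L^\infty}^2\|u\|_{H^s}^2$; a pure energy/normal-form argument would need $\|\partial_x u\|_{L^\infty}$, i.e. $s>3/2$ (and in fact derivative losses compound at each normal form iteration, as you acknowledge). In the paper, even the short-time energy estimate at $s>1$ is closed only by feeding the bootstrapped bilinear $L^2$ bounds \eqref{uab-bi-bal-boot}--\eqref{uab-bi-unbal-boot} and Strichartz bounds into the source-term estimates of Proposition~\ref{l:N-lambda}. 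So dispersive estimates are not an optional refinement reserved for the defocusing case; they are needed both to close the quasilinear energy estimates at the claimed regularity and to beat the $\epsilon^{-2}$, $\epsilon^{-4}$, $\epsilon^{-6}$ barriers coming from the resonant interactions your normal forms cannot remove.
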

The conservative assumption here is very natural, and heuristically aims to prevent nonlinear ode blow-up of solutions with wave packet localization. \\

 A third goal of this paper is to also prove this second conjecture for 1D quasilinear Schr\"{o}dinger flows with cubic, conservative nonlinearities. This is also achieved  at minimal regularity, as in our  local well-posedness result below.

\subsection{ Quasilinear Schr\"odinger flows in 1D: local solutions}

The most general form for a  quasilinear Schr\"odinger flow in one space dimension is
\begin{equation}\tag{DQNLS}
\label{dqnls}
\left\{ \begin{array}{l}
i u_t + g(u,\partial_x u ) \partial_x^2 u = 
N(u,\partial_x u) , \quad u:
\R \times \R \to \C ,\\ \\
u(0,x) = u_0 (x),
\end{array} \right. 
\end{equation}
with a metric $g$ which is a real valued function and a source term $N$ which is a complex valued
smooth function of its arguments. Here smoothness 
is interpreted in the real sense. But if $g$ 
and $N$ are analytic, then they can also be thought of as separately (complex) analytic functions of $u$ and $\bu$. This is the interpretation that is employed in the present paper.

In parallel we consider the simpler problem
\begin{equation}\tag{QNLS}
\label{qnls}
\left\{ \begin{array}{l}
i u_t + g(u) \partial_x^2 u = 
N(u,\partial_x u) , \quad u:
\R \times \R \to \C, \\ \\
u(0,x) = u_0 (x),
\end{array} \right. 
\end{equation}
where $N$ is  at most quadratic in $\partial_x u$. This can be seen as the differentiated form of \eqref{dqnls}. We will state our results for both
of these flows, but, in order to keep the exposition focused on the important issues, we will provide complete arguments only for \eqref{qnls}.
\medskip

We first address the question of local well-posedness within the context of Sobolev spaces.
This is not at all a straightforward question,
and, in particular, it is significantly more challenging  compared to the corresponding question for nonlinear wave equations. One primary reason accounting for this added difficulty is the infinite speed of propagation, which may introduce 
short-time growth effects that  are only visible 
over long time-scales for hyperbolic problems. 
One of the challenges associated with this difficulty has to do with a potential loss of derivatives in the estimates, which occurs even in  linear problems such as
\[
i u_t + \Delta u = b^j(x) \partial u_j,
\]
and was already noted in early work of Mizohata~\cite{Mizohata}. 
What Mizohata discovered was a necessary and sufficient condition for linear $L^2$ well-posedness expressed in terms of integrals
of $\Re b$ over straight lines\footnote{ Here $\Re b$ denotes the real part of  the complex valued coefficient  $b$.}. One effect of this is that in problems with quadratic derivative nonlinearities $N$, one in general does not have local well-posedness in any $H^s$ space,
at least not without making additional decay assumptions at infinity for the initial data.

The additional discovery of local smoothing effect, see e.g. Sj\"{o}lin~\cite{Sj}, Constantin-Saut~\cite{CS} and Vega~\cite{Ve} shed some further light on this, as did the subsequent work of  Doi~\cite{Doi}. This led to extensive work on derivative nonlinear Schr\"odinger equations, beginning with Kenig-Ponce-Vega~\cite{KPV-s}, Hayashi and Ozawa~\cite{HO}, Chihara~\cite{Chi}, etc.; a complete discussion is somewhat outside the scope of this paper.

 Turning our attention to quasilinear problems, the first local well-posedness results for these problems go a long way back,  to work of Kenig-Ponce-Vega~\cite{KPV}, see also \cite{KPRV1,KPRV2}, where these equations were studied for  regular and localized initial data. The next step was to study the local well-posedness question in translation-invariant Sobolev spaces
and at lower regularity. After some initial 1D results at high regularity  due to de Bouard-Hayashi-Naumkin-Saut~\cite{BNPS}, Colin~\cite{Co}, Poppenberg~\cite{Pop} and Lin-Ponce~\cite{Lin-Ponce}, this was accomplished in full generality by Marzuola-Metcalfe-Tataru in \cite{MMT1,MMT2,MMT3}.  One key distinction made along the way was between quadratic and cubic nonlinearities. It is only in the latter case that the local well-posedness  is proved in classical $H^s$ Sobolev spaces, while in the former case some additional $\ell^1$ structure with respect to spatial cubes is needed. Another important distinction is between small and large data; in the latter case, large energy growth may occur on arbitrarily short-time scales, and a nontrapping condition is also required. However, the nontrapping condition is trivially satisfied in one space dimension and thus the large data local well-posedness result becomes more accessible.

In order to work in $H^s$ Sobolev spaces, in the present article we restrict our attention to the cubic case. For clarity we we introduce the following definition:

\begin{definition}
 We say that the equation \eqref{qnls}/\eqref{dqnls} is \emph{cubic} if $g$ is at least quadratic and $N$ is at least cubic at zero. 
\end{definition}
 
 For reference and added context, we begin by stating the earlier results for such problems, specialized to cubic nonlinearities in 1D:

\begin{theorem}[cubic nonlinearities \cite{MMT2,MMT3}]\label{t:regular}
 The cubic problem \eqref{qnls} is locally well-posed 
in $H^s$ for $s > 2$, and the cubic problem \eqref{dqnls} is locally well-posed 
in $H^s$ for $s > 3$.
\end{theorem}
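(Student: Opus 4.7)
The plan is to prove local well-posedness by running a paradifferential reduction, setting up a linear theory that combines $L^\infty_t H^s$ energy bounds with a local-smoothing gain of $\frac12$ derivative, and then iterating. Since both problems are cubic, the paradifferential ``coefficients'' of the Laplacian are small perturbations of $1$ for small $H^s$ data, which provides the mechanism to close everything without nontrapping/Doi-type assumptions beyond what is automatic. I will outline the scheme for \eqref{qnls}; the modification for \eqref{dqnls} only affects the regularity thresholds.

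\smallskip
\textbf{Step 1: Paralinearization.} I would paralinearize $g(u)\partial_x^2 u$ and $N(u,\partial_x u)$ via Bony's decomposition, writing
\[
i\partial_t u + T_{g(u)} \partial_x^2 u + T_{g'(u) \partial_x^2 u} u = R(u),
\]
where $R(u)$ collects the balanced-frequency ``resonant'' remainder together with lower-order contributions from $N$. Because $g$ is quadratic and $N$ is cubic at $0$, the symbol $g(u)-1$ is $O(\|u\|_{L^\infty}^2)$ and all paracoefficients are small in the relevant norms for small data. For \eqref{dqnls}, $g$ depends on $\partial_x u$, so the paradifferential symbol has regularity one derivative less, which is exactly what forces $s>3$ in place of $s>2$.

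\smallskip
\textbf{Step 2: Linear paradifferential theory.} For the linear equation
\[
i\partial_t v + T_{g(u)}\partial_x^2 v + T_b \partial_x v + T_c v = f
\]
one obtains a basic $L^2$ energy identity up to a first-order commutator $[T_{g(u)},\partial_x^2]$ which is genuinely of order one in $v$, i.e.\ a Mizohata-type loss. I would kill this loss by a pseudodifferential gauge transform $v\mapsto \Phi(x,D)v$ with $\Phi$ of the form $\exp(i\psi(x,D))$ where $\psi$ is a suitable symbol of order $0$ solving, at the principal level, a transport equation along the bicharacteristic flow of $T_{g(u)}\partial_x^2$. For small $u$, the Hamilton flow is a small perturbation of the free flow, so such a $\psi$ exists globally and yields a $0$-order, bounded and invertible symbol. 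In the conjugated variable the offending first-order term is replaced by a skew-adjoint piece plus a controllable remainder, and the standard Kato local-smoothing identity (testing against $\arctan(x/\lambda)\,\partial_x$ or an equivalent commutator multiplier) yields
\[
\|v\|_{L^\infty_t H^s}^2 + \|v\|_{L^2_t H^{s+1/2}_\mathrm{loc}}^2 \lesssim \|v(0)\|_{H^s}^2 + \|f\|_{L^1_t H^s + L^2_t H^{s-1/2}_\mathrm{loc}}^2.
\]
Combined commutator bounds and the smallness of $g(u)-1$ close these a priori bounds for any $s$ above the threshold dictated by the regularity of the paracoefficients.

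\smallskip
\textbf{Step 3: Nonlinear iteration.} Define the solution space $X^s$ with the norm on the right-hand side of the inequality above and set up a Picard iteration. To close, I need to show that $R(u)\in L^1_t H^s + L^2_t H^{s-1/2}_\mathrm{loc}$ with cubic bounds; here the local-smoothing gain of $\frac12$ derivative is crucial because the most dangerous cubic terms carry two spatial derivatives (one from $g\partial_x^2 u$, one from $N$) and only one of them is absorbed by the Sobolev embedding $H^s\hookrightarrow C^1$ which requires $s>2$ (respectively $s>3$ for \eqref{dqnls} because one needs $H^s\hookrightarrow C^2$ to make sense of the paracoefficient $g'(\partial_x u)\partial_x^2 u$ pointwise). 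Differences satisfy a similar linear equation with source controlled by $\|u_1-u_2\|_{X^{s-1}}\cdot\|u_1,u_2\|_{X^s}^2$, giving a contraction at one derivative below and, by the Bona--Smith approximation, continuous dependence in $H^s$.

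\smallskip
\textbf{Main obstacle.} The decisive difficulty is the gauge construction of Step 2: one must build $\psi$ well enough that the conjugation reproduces the same energy/local-smoothing inequality uniformly in $u$ as $u$ varies in a small ball of $X^s$, with all paradifferential error terms handled by $H^s$ rather than weighted norms. For \eqref{dqnls} this is where the additional derivative is spent, because $\psi$ depends on $g(u,\partial_x u)$ and hence requires one more order of pointwise control on $u$; this is precisely why the threshold rises from $s>2$ to $s>3$.
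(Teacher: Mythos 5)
You should first note that the paper does not prove this statement at all: Theorem~\ref{t:regular} is imported verbatim from \cite{MMT2,MMT3} as the starting point for the new analysis (it supplies the regular solutions on which the frequency-envelope bootstrap is run), so there is no internal proof to compare against. Your outline is a reasonable reconstruction of the classical route to such results, in the spirit of Kenig--Ponce--Vega and of the cited works: paralinearize, prove energy plus local-smoothing estimates for the linear paradifferential flow, and close by an iteration at lower regularity plus Bona--Smith. The cited papers package Step 2 differently (summed local-energy/local-smoothing spaces and a modified energy functional built from a zero-order pseudodifferential multiplier, rather than an explicit conjugation by $\exp(i\psi(x,D))$), but the two mechanisms are close cousins, so at the level of strategy your plan is not unreasonable.

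There is, however, a genuine gap at the decisive point, namely why the gauge $\psi$ of Step 2 is a bounded symbol of order zero. You attribute this to smallness of $u$ (``the Hamilton flow is a small perturbation of the free flow, so such a $\psi$ exists globally''), but smallness is not the issue: $\psi$ is obtained by integrating the imaginary part of the first-order coefficient along bicharacteristics, and its boundedness requires that coefficient to be integrable in $x$ --- the Mizohata/Doi condition. For non-decaying $H^s$ data this integrability is exactly what fails for quadratic nonlinearities (which is why those are not well-posed in plain $H^s$, as the introduction of this paper recalls), and what holds automatically in the cubic case, because the first-order coefficients are at least quadratic in $(u,\partial_x u)$ and hence lie in $L^1_x$ by Cauchy--Schwarz. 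As written, your argument never invokes the cubic structure for this purpose and would apply equally to quadratic nonlinearities, so it proves too much; the cubic-versus-quadratic dichotomy must enter precisely here. Two smaller points: your accounting of the thresholds via Sobolev embedding is off in 1D ($H^s\hookrightarrow C^1$ already for $s>3/2$), so the stated ranges $s>2$ for \eqref{qnls} and $s>3$ for \eqref{dqnls} would have to be traced to the coefficient regularity actually needed in the linear estimates rather than to pointwise embeddings; and your scheme, resting on smallness of $g(u)-1$, only addresses small data, whereas the cited theorem (via \cite{MMT3}) also covers large data on short times, which requires a further argument.
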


This result applies to both small and large data,
with a lifespan that depends on the initial data size. In higher dimensions one also needs a nontrapping assumption on the initial data. This is automatically satisfied in one dimension.

The initial target of this work was to examine global well-posedness; however, it quickly became evident that  our methods also yield a drastic improvement in the local well-posedness theory, to the point where we are able to obtain a sharp result.

\begin{theorem}\label{t:local}
The cubic problem \eqref{qnls} is locally well-posed for small data in $H^s$ for $s > 1$, and the cubic problem \eqref{dqnls} is locally well-posed for small data in $H^s$ for $s > 2$.
\end{theorem}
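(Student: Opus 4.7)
The strategy is to follow the paradifferential / frequency envelope template of \cite{MMT1,MMT2,MMT3}, but with a sharpened linear theory that exploits the 1D cubic structure and the 1D Sobolev embedding $H^s \hookrightarrow L^\infty$ for $s>1/2$. I focus on the \eqref{qnls} case, as promised by the paper.

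First, I would paralinearize. Writing $g(u) \partial_x^2 u = T_{g(u)} \partial_x^2 u + \lot$ and similarly for $N(u,\partial_x u)$ via Bony's calculus, the equation takes the form
$$
L u \;:=\; i \partial_t u + T_{g(u)} \partial_x^2 u \;=\; F(u, \partial_x u),
$$
where $F$ collects cubic-or-higher expressions, none of which carries two full derivatives on a high-frequency factor. Since $g$ is at least quadratic at zero, the coefficient $g(u) - g(0)$ in the linear operator is of size $\|u\|_{L^\infty}^2$, and hence small for small data in $H^s$ with $s>1$.

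Second, I would establish a self-contained linear theory for the paradifferential operator $L$, under the assumption that $g(u) - g(0)$ is small in an appropriate norm. This consists of the standard quartet of estimates for 1D Schr\"odinger flows: (i) an $H^s$ energy bound, obtained via a density correction / gauge transformation in the spirit of Doi; (ii) the Kato--Doi local smoothing bound for $\la D\ra^{s+1/2} u$ in $L^\infty_x L^2_t$; (iii) $L^6_{t,x}$ Strichartz at the $H^s$ scale; and (iv) a high-low bilinear $L^2$ bound for frequency-localized solutions. Together these define a control norm $X^s$. The proofs adapt the constant-coefficient statements via a wave packet / FBI decomposition, using the smallness of $g(u)-g(0)$ frequency by frequency to perturb off the flat Schr\"odinger flow.

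Third, I would close a frequency-envelope bootstrap for the full nonlinear problem. At each dyadic frequency $2^k$, applying $L$ to the Littlewood--Paley piece $P_k u$ yields sources of cubic type that split into high-high interactions (handled by bilinear $L^2$) and high-low / low-low interactions (handled by Strichartz, or by the embedding $\|u\|_{L^\infty_{t,x}} \lesssim \|u\|_{L^\infty_t H^s}$). Every such term carries a quadratic gain in $\|u\|_{X^s}$, so for data of size $\varepsilon \ll 1$ the bootstrap closes on any time interval of length $\lesssim 1$.

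Finally, existence would follow by regularizing the initial data, invoking Theorem~\ref{t:regular} to produce smooth solutions on a uniform time interval, and passing to the limit using the uniform $X^s$ bounds; uniqueness and continuous dependence come from a difference estimate for $v = u_1 - u_2$, which satisfies a paradifferential Schr\"odinger equation with a cubic source and the usual one-derivative loss, so that stability is measured in $L^2$ while solutions live in $X^s$. The main technical obstacle is the linear theory in step two: proving full Strichartz and high-low bilinear $L^2$ bounds for $L$ at the sharp regularity $s>1$, uniformly in the small metric perturbation, is where the substantive new work lies; the multilinear bookkeeping in step three is then a matter of tracking cubic powers of $\varepsilon$ against the envelope.
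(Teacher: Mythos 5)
Your overall architecture (paralinearize, prove dispersive estimates for the paradifferential flow, close a frequency-envelope bootstrap, then run the standard regularization/limiting scheme with $L^2$ difference bounds) matches the skeleton of the paper's argument. But the heart of your proposal, step two, has a genuine gap: you assert that the energy/local smoothing/$L^6$ Strichartz/bilinear $L^2$ quartet for $L = i\partial_t + T_{g(u)}\partial_x^2$ can be obtained at regularity $s>1$ by "perturbing off the flat Schr\"odinger flow" using the smallness of $g(u)-g(0)$. Smallness is not the relevant obstruction and does not rescue the argument: the perturbation $T_{g(u)-1}\partial_x^2$ carries two derivatives, the coefficients are only as regular as $u$ itself (roughly $C^{1/2+}$ in $x$), they are time-dependent, and they have no decay at spatial infinity. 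Lossless Strichartz estimates for variable-coefficient Schr\"odinger flows are known to require both $C^2$ coefficients and asymptotic flatness, neither of which is available here, and a Duhamel iteration against the flat flow loses derivatives regardless of how small the coefficient is in $L^\infty$ or $H^s$. In particular, a "self-contained linear theory under a smallness assumption" of the kind you describe is not known and is not what the paper does.

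The paper resolves this by an intrinsically coupled, nonperturbative argument. The balanced and unbalanced bilinear $L^2$ bounds are proved first, via interaction Morawetz identities built from density-flux formulations of mass and momentum for the paradifferential flow (with translated factors $u^{x_0}_\mu$, incurring the $(1+\lambda|x_0|)$ loss in the balanced case); no Strichartz input is needed there. Only then are the lossless $L^6$ Strichartz bounds obtained: one flattens the principal part by a change of variables, truncates the resulting first-order potential in frequency, and invokes the wave packet parametrix of Marzuola--Metcalfe--Tataru; verifying the hypotheses of that parametrix requires an integrated bound of the form $\int_0^T |\partial_x u_{<\lambda^\sigma}(t,x_t)|^2\,\lambda\,dt \lesssim \epsilon^2$ along Hamilton trajectories, which is extracted from the mass density-flux identity integrated over strips bounded by the trajectory -- i.e., from the already-established bilinear/energy structure of the nonlinear solution, not from smallness of the metric. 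Your proposal correctly identifies that step two is where the substantive work lies, but the mechanism you propose for it (perturbation off the flat flow, frequency by frequency) would fail at $s>1$; without the interaction Morawetz input and the trajectory-integrated coefficient bound, the parametrix hypotheses cannot be verified and the sharp Strichartz and bilinear estimates are out of reach.
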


Here well-posedness is interpreted in the Hadamard sense, and includes:
\begin{itemize}
    \item existence of solutions in  $C([0,T]; H^s)$.
    \item uniqueness of regular solutions.
    \item uniqueness of rough solutions as 
    uniform limits of regular solutions.
    \item continuous dependence on the initial data.
    \item higher regularity: more regular data yields more regular solutions.
\end{itemize}
We also show a weak-Lipschitz dependence of the solutions on the initial data, at the $L^2$ level.

In addition, as part of the proof of the theorem we show that the solutions satisfy 
a range of linear and bilinear dispersive estimates, which 
includes:
\begin{itemize}
    \item sharp $L^6_{t,x}$ Strichartz estimates.
    \item sharp bilinear $L^2_{t,x}$ bounds.
\end{itemize}
These are discussed later in Section~\ref{s:boot}, where an
expanded form of the above result is provided.

To better understand the significance of this result, one  can compare our range of Sobolev
exponents with the leading order scaling for these problems, which corresponds to the exponents $s_c = \frac32$ 
for \eqref{dqnls}, respectively $s_c = \frac12$ for \eqref{qnls}.
This might lead one to believe that there might be room for further 
improvement. 

However, there is also a semilinear comparison to make; precisely,
both results in Theorem~\ref{t:local} are on the same scale as the 1D cubic NLS problem.
This is well-posed in $L^2$, but generally not below this threshold, unless one considers 
the exact integrable model. Thus, in all likelihood our result is sharp ! Precisely, we conjecture:

\begin{conjecture}
The range for $s$ in Theorem~\ref{t:local} is sharp, in the sense that generically the problem \eqref{qnls} is ill-posed in $H^s$ for $s < 1$, and \eqref{dqnls} is ill-posed in $H^s$ for $s < 2$.  \end{conjecture}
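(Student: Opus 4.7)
The plan is to prove failure of uniform continuity of the flow map at $H^s$ below the stated thresholds, by adapting the Christ--Colliander--Tao wave-packet mechanism for semilinear cubic NLS to the quasilinear setting. The guiding heuristic is that the cubic quasilinear nonlinearity $g(u)\partial_x^2 u$ in \eqref{qnls} behaves like the semilinear cubic nonlinearity $|u|^2 u$ differentiated twice, so the critical scaling exponent shifts from $s_c = -\tfrac12$ (semilinear) to $s_c = \tfrac12$ for \eqref{qnls}, and to $s_c = \tfrac32$ for \eqref{dqnls}. Since semilinear cubic NLS is sharply well-posed exactly $\tfrac12$ above scaling, namely at $s = 0$, the predicted ill-posedness thresholds at the same $\tfrac12$ above scaling are $s = 1$ for \eqref{qnls} and $s = 2$ for \eqref{dqnls}, matching Theorem~\ref{t:local}.

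Concretely, for \eqref{qnls} with a model nonlinearity such as $g(u) = 1 + |u|^2$, I would construct families of initial data built from two high-frequency wave packets with comparable amplitudes $\mu$ and nearby frequencies $N_1, N_2$ satisfying $|N_1 - N_2| \sim 1$:
\[
u_0^{(N)}(x) \;=\; \mu\, e^{iN_1 x}\psi_1(x) \;+\; \mu\, e^{iN_2 x}\psi_2(x),
\]
with $\psi_1,\psi_2$ fixed Schwartz profiles. Since the frequency separation is of order one, the cross term in $|u|^2$ is slowly varying and of size $\mu^2$, modifying the effective coefficient seen by each packet and producing a phase modulation at rate $N^2\mu^2$. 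Normalizing $\mu = c N^{-s}$ so that $\|u_0^{(N)}\|_{H^s}$ stays bounded, a small perturbation $\mu \to \mu + \delta\mu$ changes this rate by $\sim N^{2-s}\delta\mu$ and, over the interaction time window of order one, accumulates a phase shift of size $N^{2-s}\delta\mu$. Requiring this shift to be of order one forces $\delta\mu \sim N^{s-2}$, producing an initial $H^s$ distance of order $\delta\mu \cdot N^s \sim N^{2s-2}$ and a final $H^s$ distance of order $\mu N^s \sim 1$. The former tends to zero precisely for $s < 1$, the claimed ill-posedness threshold for \eqref{qnls}. For \eqref{dqnls} with $g$ depending quadratically on $u_x$, each differentiation on a high-frequency packet contributes an extra factor $N$; the parallel balance then yields $\delta\mu \sim N^{s-4}$ and an initial $H^s$ distance of order $N^{2s-4} \to 0$ for $s < 2$.

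The main technical obstacle is to upgrade this WKB/phase-modulation heuristic into a rigorous comparison of the true solution with the wave-packet ansatz at regularity exactly below Theorem~\ref{t:local}. This requires an $H^s$ remainder estimate on the relevant short time interval, and the natural framework is a frequency-adapted version of the paradifferential and modified-energy machinery of Section~\ref{s:boot}, now applied to a linearized equation whose coefficients are themselves rough wave packets. A subsidiary task is to pin down the word \emph{generically}: one needs a non-degeneracy condition on the quadratic part of $g$, specifically $\partial_u \partial_{\bar u} g(0) \neq 0$ for \eqref{qnls} (and the analogous non-vanishing in the $u_x$ variable for \eqref{dqnls}), an open and dense condition on the nonlinearity that is logically independent of the phase-rotation and defocusing hypotheses required in the global part of the paper; the latter do not cure the phase instability below the thresholds because the mechanism persists regardless of the sign of the nonlinearity.
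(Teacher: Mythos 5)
You should first note that the paper itself offers no proof of this statement: it is explicitly labelled a conjecture, and the authors say only that they "strongly believe" it and plan to address it in subsequent work. So there is no argument in the paper to compare against, and your proposal has to stand on its own as a sketch of a new result. As such it is a reasonable heuristic, and your scaling bookkeeping (thresholds sitting $\tfrac12$ above $s_c=\tfrac12$, resp. $s_c=\tfrac32$, in analogy with cubic NLS being sharply well-posed at $L^2$) reproduces the comparison the authors themselves make after Theorem~\ref{t:local}. But as a proof it has two genuine gaps.

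First, the target of your argument is too weak. Failure of \emph{uniform} continuity of the data-to-solution map is the standard Christ--Colliander--Tao conclusion for semilinear NLS, where below the threshold it contradicts the perturbative (uniformly continuous, indeed analytic) well-posedness one would otherwise have. For a quasilinear flow this dichotomy disappears: well-posedness in Theorem~\ref{t:local} is Hadamard well-posedness with merely continuous dependence (the paper only claims weak Lipschitz dependence at the $L^2$ level), and quasilinear flows are generically \emph{not} uniformly continuous on $H^s$ even in the range where they are well-posed. So the phase-decoherence computation you outline, even if made rigorous, would not show ill-posedness for $s<1$; you would need a stronger mechanism (norm inflation, discontinuity at the origin, loss of existence or uniqueness, or unboundedness of the flow), and your two-packet ansatz is not set up to produce any of these. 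Second, the step you defer — controlling the difference between the true solution and the wave-packet ansatz in $H^s$ for $s$ strictly below $1$ — is precisely the analytic core, and it cannot simply be borrowed from the paper's machinery: the paradifferential, bilinear $L^2$ and lossless Strichartz estimates of Sections~\ref{s:boot}--\ref{s:str} are proved only for $s>1$, and the linearized/paradifferential coefficients in your construction are exactly the rough, large-at-high-frequency objects for which those estimates are expected to fail (indeed, their failure is the conjectured source of ill-posedness). Without an independent approximate-solution estimate on the relevant time scale, the claimed $O(1)$ separation of the two solutions at time $O(1)$ is unsubstantiated, and even the existence of the two solutions below the threshold is not guaranteed. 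The remarks about genericity (a nondegeneracy condition on the quadratic part of $g$) are sensible but do not affect these two issues.
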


The endpoint case $s = 1$ for \eqref{qnls} and $s=2$ for \eqref{dqnls} remains open for now.  

To further motivate this threshold, we note that heuristically, the worst case scenario is represented by the self-interaction of a single wave packet, which becomes critical exactly at $s = 1$; some further discussion of this threshold is provided by the authors in the overview article \cite{IT-expository}.  From this perspective, one may view our local well-posedness result as the direct Schr\"odinger counterpart of the nonlinear wave  result of Smith-Tataru~\cite{ST}, 
where the worst case scenario is also 
based on a single wave packet self-interaction.

\bigskip

The starting point for the proof of our result is provided by the earlier results stated  in 
Theorem~\ref{t:regular}. The lifespan of these solutions, say for \eqref{qnls}, a-priori depends on the $H^{2+}$ size of the initial data, which is not necessarily small.
However, once we prove suitable a-priori bounds
on these solutions, we can extend the lifespan so that it depends only on the $H^s$ size of the data.  Once we have regular solutions on uniform time scales, we produce the rough, $H^s$ solutions as 
unique uniform limits of regular solutions. 
This requires careful estimates for the linearized 
flow, which we do at the $L^2$ level.

To keep the ideas clear and to avoid excessive technicalities we consider only the small data case
in this paper.  However, we believe that the results also carry over to the large data case:

\begin{conjecture}
The result in Theorem~\ref{t:local} also holds in the large data case.    
\end{conjecture}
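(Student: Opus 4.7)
The plan is to mirror the architecture of the small data proof (Theorem~\ref{t:local}) but with the constant-coefficient Schrödinger dispersion baseline replaced by a variable-coefficient one, so that $g(u,\partial_x u)$ is only assumed bounded above and below rather than close to $1$. In higher dimensions this substitution would force the introduction of a nontrapping hypothesis, but in 1D nontrapping is automatic, so Doi-type local smoothing and the associated variable-coefficient dispersive calculus apply without additional geometric assumptions. The first step is to reinstate the Marzuola--Metcalfe--Tataru regular theory in Theorem~\ref{t:regular} as the source of starting solutions, then to set up a bootstrap in which the a priori control is not the smallness of $\Vert u\Vert_{H^s}$ but rather a uniform in time bound on a frequency-enveloped Strichartz/bilinear norm, with constants allowed to depend on $\Vert u_0\Vert_{H^s}$.

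The core analytic task is to establish, for the linearized paradifferential Schrödinger flow with bounded (non-perturbative) metric $g(u)$, the full package of modified energy bounds, $L^6$ Strichartz estimates, and bilinear $L^2$ estimates at the scaling threshold, together with their $L^2$-level versions needed for uniqueness and continuous dependence. Modified energies handle the leading-order loss of derivatives; the dispersive estimates should come from a wave packet / FBI parametrix for variable-coefficient 1D Schrödinger, built along the Hamilton flow of the symbol $g(u)\xi^2$. One expects the interaction Morawetz mechanism developed for the global result to provide the structural improvement needed to push these estimates down to $s>1$ (respectively $s>2$). Once these linear estimates are in hand, closing the bootstrap and constructing rough $H^s$ solutions as uniform limits of regular solutions proceeds exactly as in the small data case.

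The main obstacle is precisely the variable-coefficient $L^6$ Strichartz and bilinear $L^2$ estimates at the sharp scale. In the small data regime these follow by perturbation around the flat Schrödinger flow, but in the large data regime one must run the parametrix argument with a metric which, at the paradifferential truncation level, has only the regularity inherited from $u\in H^s$ with $s$ barely above $1$ (or $2$ for \eqref{dqnls}). Controlling the geometric errors in the wavepacket construction against this limited metric regularity, while simultaneously propagating the bilinear information, is the delicate point, and is where we expect the bulk of the new technical work to be needed. A secondary difficulty is that in the large data regime one can no longer expect global-in-time dispersive bounds, so the bilinear and Strichartz norms have to be localized to the lifespan of the solution and paired with a continuity-in-time argument for the bootstrap quantities.
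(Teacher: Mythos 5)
The statement you are addressing is a conjecture: the paper deliberately leaves the large data case open and offers no proof, so there is no argument of the authors to compare against. Your text is not a proof either; it is a research outline that names the right ingredients but defers the decisive step. The crucial gap is that you assume the package of lossless $L^6$ Strichartz and bilinear $L^2$ bounds for the large-data paradifferential flow at $s>1$ can be obtained by running the same wave packet/interaction Morawetz machinery ``with constants depending on $\|u_0\|_{H^s}$.'' But in the paper every closing step of the bootstrap uses smallness in an essential, not merely bookkeeping, way: the bootstrap constants $C$ are absorbed only because they ``always come paired with extra $\epsilon$'s'' (Proposition~\ref{p:boot} and the source term bounds \eqref{good-nl}, \eqref{res-nl}); the flattening change of variables in Section~\ref{s:str} relies on pointwise bounds like $|P^y_{\lesssim\epsilon^2}V_2|\lesssim \epsilon^2$ and on the timescale $T\ll\epsilon^{-4}$ to verify the symbol condition \eqref{good-symbol}; and the modified densities $\ms_\lambda$, $\ps_\lambda$ are comparable to $M_\lambda$, $P_\lambda$ only after absorbing the quartic corrections by smallness (cf.\ \eqref{m-l1}). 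None of these absorptions survives if $\epsilon$ is replaced by a large constant, so ``the bootstrap closes exactly as in the small data case'' is precisely the assertion that needs a new idea, not a consequence of the existing one.

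A second, related point: for large data the paper itself warns (following \cite{MMT2,MMT3}) that large energy growth may occur on arbitrarily short time scales, so one cannot expect uniform-in-data lifespans or the $T\ll\epsilon^{-4}$ window on which the short-time Strichartz bound \eqref{uk-se-short} is proved; a large-data argument would almost certainly have to proceed by frequency-dependent time localization and summation (in the spirit of \cite{MMT3}), which changes the structure of the bilinear and Strichartz estimates rather than just their constants. Your proposal correctly identifies the variable-coefficient parametrix with rough metric as the hard point, but identifying the obstacle is not the same as overcoming it; as written, the argument would fail at the first absorption step of the bootstrap.
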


\subsection{ Quasilinear Schr\"odinger flows in 1D: long-time and global solutions}

As mentioned earlier, this part of the paper, as well as our entire approach, is motivated 
by the authors' earlier work in \cite{IT-global}
and \cite{IT-focusing}, where Conjecture~\ref{c:nld} and 
Conjecture~\ref{c:nld-focusing} are put forward.
These conjectures are proved in \cite{IT-global}
and \cite{IT-focusing} for a semilinear Schr\"odinger model with a cubic nonlinearity.
Our goal here is to prove these conjectures in the
quasilinear Schr\"odinger setting; an extra bonus is that we can do this at minimal regularity, exactly as the earlier results obtained by the authors in the semilinear setting. 

By contrast to the above general local well-posedness result, refined long-time and  global in time results require some natural structural assumptions on the equations, which will be described next. We begin with a sequence of definitions that we will use throughout our exposition.

\begin{definition}
We say that the equation \eqref{qnls}/\eqref{dqnls}
has  \emph{phase rotation} symmetry if it is 
invariant with respect to the transformation 
$u \to u e^{i\theta}$ for $\theta \in \R$.
\end{definition}

One consequence of this assumption is that 
the Taylor series of the nonlinearity at zero only has 
odd terms, which are multilinear expressions 
in $u$ and $\bar u$ (and their derivatives) with one more $u$ factor. It suffices in effect to assume that this holds for the cubic terms in the equation.

 Even though  our problem is quasilinear, in order to state the next assumptions it is convenient to write it in a semilinear way. Assuming the phase rotation symmetry, and also harmlessly setting  $g(0) = 1$, this semilinear form is
\begin{equation}\label{eq-cubic}
i u_t + \partial^2_x u = C(u,\bar u, u) + \text{higher order terms},
\end{equation}
where $C$ is a translation invariant trilinear form with symbol $c(\xi_1,\xi_2,\xi_3)$. We refer
the reader to Section~\ref{s:notations} for 
a description of our notations.

\begin{definition}
We say that the equation \eqref{qnls}/\eqref{dqnls}
is \emph{conservative} if  the cubic component of the nonlinearity satisfies 
\[
c(\xi,\xi,\xi), \, \partial_{\xi_j}
c(\xi,\xi,\xi) \in \R .
\]
\end{definition}
This condition represents a slight generalization of the corresponding condition in \cite{IT-global}
(a similar generalization applies there). It is automatic for the contributions from $g$,
so it is solely a condition on $N$.  It is also immediately satisfied if the problem admits a coercive conservation law, but, conversely, it does not imply the existence of such a conservation law. Heuristically, one may think of this condition as providing an approximate conservation law for wave packet solutions.

This is all we need for the enhanced long-time  results. However, for the global in time results 
it is also necessary to differentiate between focusing and defocusing problems.

\begin{definition}
We say that the equation \eqref{qnls}/\eqref{dqnls}
is \emph{defocusing} if   $c(\xi,\xi,\xi)$ 
    is positive definite. This means that
    \begin{equation}\label{defocusing}
    c(\xi,\xi,\xi) \gtrsim \la \xi \ra^{2+2k},
    \end{equation}
    where $k = 1$ for \eqref{dqnls}
    respectively $k=0$ for \eqref{qnls}.
\end{definition}

As a model \eqref{qnls} type problem  one may consider for instance the equation
\begin{equation}\label{qnls=model}
\begin{aligned}
i u_t + (1 + a|u|^2) \partial_x^2 u = b u |\partial_x u|^2+ c u|u|^2,  \qquad u(0) = u_0, 
\end{aligned}
\end{equation}
with real $a$, $b$, $c$ where the sign choice $a-b,c > 0$ yields a defocusing
problem and either $a < b$ or $c < 0$ yields a focusing one. 

\bigskip

Now we can state our results, beginning with the long-time result for the general case:

\begin{theorem}\label{t:long}
 a) Assume that the equation \eqref{qnls} has phase rotation symmetry and is conservative. Given $s > 1$, assume  that the initial data $u_0$ is small in $H^s$,
 \begin{equation}
 \|u_0\|_{H^s} \leq \epsilon \ll 1.   
 \end{equation}
Then  the solutions have a lifespan which is at least $O(\epsilon^{-8})$, and satisfy
\begin{equation}
\| u\|_{L^\infty_t(0,T; H^s_x)} \lesssim \epsilon, \qquad T \ll \epsilon^{-8}.
\end{equation}

b) Assume that the equation \eqref{dqnls} has phase rotation symmetry and is conservative. Given $s >2$, 
assume that the initial data $u_0$ is small in $H^s$,
 \begin{equation}
 \|u_0\|_{H^s} \leq \epsilon \ll 1.   
 \end{equation}
 Then  the solutions have a lifespan which is at least $O(\epsilon^{-8})$, and satisfy
\begin{equation}
\| u\|_{L^\infty_t(0,T; H^s_x)} \lesssim \epsilon, \qquad T \ll \epsilon^{-8}.
\end{equation}

\end{theorem}

For our final main result we restrict
ourselves to the defocusing problem, where we prove a global in time result:

\begin{theorem}\label{t:global}
 a) Assume that the equation \eqref{qnls} has phase rotation symmetry and is conservative and defocusing.
  Given $s > 1$, assume  that
  the initial data $u_0$ is small in $H^s$,
 \begin{equation}
 \|u_0\|_{H^s} \leq \epsilon \ll 1.   
 \end{equation}
Then the solutions are global in time, and satisfy 
\begin{equation}
\| u\|_{L^\infty_t H^s_x} \lesssim \epsilon.
\end{equation}

b) Assume that the equation \eqref{dqnls} has phase rotation symmetry and is conservative and defocusing.
 Given $s > 2$, assume  that
the initial data $u_0$ is small in $H^s$,
 \begin{equation}
 \|u_0\|_{H^s} \leq \epsilon \ll 1.   
 \end{equation}
Then the solutions are global in time, and satisfy 
\begin{equation}
\| u\|_{L^\infty_t H^s_x} \lesssim \epsilon.
\end{equation}
\end{theorem}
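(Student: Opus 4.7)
The plan is to extend the local solutions of Theorem~\ref{t:local} to all times by running a continuity/bootstrap argument at the $H^s$ level. The bootstrap carries two coupled packages on every time slice $[0,T]$: (i) the energy bound $\|u\|_{L^\infty_{[0,T]}H^s}\leq C_1\epsilon$ and (ii) a set of dispersive bounds, namely a sharp global $L^6_{t,x}$ Strichartz estimate and a collection of bilinear $L^2$ bounds for frequency-separated pieces of $u$, each controlled by $C_2\epsilon$. By the local theory, both hold on short times; the goal is to improve them for all $T$ with constants independent of $T$. Since the solution already exists on the $\epsilon^{-8}$ timescale by Theorem~\ref{t:long}, the entire issue is to upgrade the long-time bounds to uniform ones using the defocusing hypothesis.

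The crucial new ingredient compared to Theorem~\ref{t:long} is an \emph{interaction Morawetz} estimate that drives (ii). I would derive it from the two-particle momentum identity for $u(t,x)u(t,y)$ with kernel $\mathrm{sgn}(x-y)$, computed with respect to the paradifferential linearization of \eqref{qnls} (respectively \eqref{dqnls}). The phase-rotation symmetry guarantees a mass/momentum density pair, while the conservative and defocusing assumption $c(\xi,\xi,\xi)\gtrsim\la\xi\ra^{2+2k}$ produces exactly the coercive quartic bulk term one needs. The output is a spacetime bound of the shape
\[
\|u\|_{L^6_{t,x}}^6 + \sum_{k_1\neq k_2} \|P_{k_1} u \cdot \overline{P_{k_2} u}\|_{L^2_{t,x}}^2 \lesssim \|u_0\|_{\dot H^{1/2}}^2 + \text{perturbative errors},
\]
where the errors are cubic in $u$ multiplied by dispersive norms and are absorbed by (i)--(ii). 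For the quasilinear problem the momentum flux acquires paradifferential corrections, and one must check that the leading-order positivity in $c(\xi,\xi,\xi)$ dominates these corrections for small data.

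The mechanism for (i) is a modified energy at the $H^s$ level, constructed via a phase-rotation-covariant paradifferential normal form. The conservative condition ensures that the resonant cubic contribution to $\tfrac{d}{dt}\|u\|_{H^s}^2$ cancels to leading order; the remaining non-resonant cubic terms are removed by a normal form transformation whose denominators are controlled through the standard 1D resonance analysis for the Schr\"odinger dispersion. After this, the energy identity takes the form $\tfrac{d}{dt}E^s(u)=\text{quintic}$, with $E^s\approx \|u\|_{H^s}^2$ for small data. Distributing Hölder with one $L^6$ factor from (ii) and placing the remainder in $L^\infty_tH^s$ gains the $\epsilon^4$ factor required to close (i) independently of $T$.

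The main obstacle I expect is the compatibility between the Morawetz identity and the paradifferential reduction: the Morawetz argument prefers the genuine equation while the energy control forces a paradifferential surrogate, so one must verify that the paradifferential momentum flux still reveals the defocusing coercivity and that the discrepancy produces only genuinely perturbative remainders controlled by (i). Once this is in place, the bootstrap closes, global existence follows from the local theory and the uniform $H^s$ bound, and the same scheme applies to \eqref{dqnls} after accounting for the extra derivative in the metric $g$ (which shifts the threshold from $s>1$ to $s>2$).
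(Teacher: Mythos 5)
Your overall scheme -- a bootstrap coupling a uniform $H^s$ bound with globally-in-time $L^6$ and bilinear $L^2$ estimates, conservativity used to cancel the resonant contribution to the energy, and defocusing supplying a good sign inside an interaction Morawetz identity -- is the right one and matches the paper in spirit. However, the step you use to close the energy bound does not work as stated. For a cubic nonlinearity, $\frac{d}{dt}\|u\|_{H^s}^2$ is quartic, and after the normal form/modified energy correction the error is a space-time integral of a \emph{sextic} (not quintic) expression; estimating it ``with one $L^6$ factor and the remainder in $L^\infty_t H^s$'' produces a factor $T^{5/6}$ by H\"older in time, so nothing is gained uniformly in $T$ and the bootstrap does not close globally. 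The paper instead bounds every error term in $L^1_{t,x}$ with constants independent of $T$, by pairing factors so that either two bilinear $L^2$ bounds \eqref{uab-bi-bal}, \eqref{uab-bi-unbal} or several copies of the long-time $L^6$ bound \eqref{uk-se} absorb the full time integral (see Proposition~\ref{l:N-lambda} and Proposition~\ref{p:dens-flux-an}); this is precisely why the bilinear bounds, and not only the Strichartz norm, must be propagated inside the bootstrap.

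The Morawetz step is also under-specified in a way that matters. A two-particle momentum identity for the full solution with kernel $\mathrm{sgn}(x-y)$ does not survive the quasilinear, low-regularity setting: the paper runs the interaction Morawetz for \emph{frequency-localized} densities $M_\lambda$, $P_\lambda$ and for translates $u^{x_0}$ (the origin of the $(1+\lambda|x_0|)$ loss in \eqref{uab-bi-bal}), and only after correcting these densities by quartic forms $B^4_{\lambda,m}$, $B^4_{\lambda,p}$. The conservative hypothesis is what permits the division \eqref{choose-R4m-new} that produces these corrections; without them the doubly resonant quartic terms in the density-flux identities are not perturbative on long time scales, and no choice of exponents fixes this. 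Moreover, defocusing does not produce a ``coercive quartic bulk term'': the quartic term $\bJ^4$ yields the bilinear $L^2$ quantity regardless of sign, while defocusing enters only through the sextic term $\bJ^6$, whose diagonal symbol is $c(\xi,\xi,\xi)\phi_\lambda^4(\xi)$; its positivity is what allows the $\|B_\lambda u\|_{L^6}^6$ term to be kept on the left and gives \eqref{uk-se} globally. You correctly flag the tension between the Morawetz identity and the paradifferential reduction, but resolving it requires the specific paracoefficient choice $g_{[<\lambda]}=P_{<\lambda/4}g(u_{\ll\lambda})$, designed to exclude doubly resonant interactions, together with the quartic density corrections -- ingredients absent from your outline, so the argument as proposed has a genuine gap at both the energy and the Morawetz stage.
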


As previously emphasized, this is the first quasilinear global well-posedness result of its type, and in particular the first global well-posedness result for a one-dimensional quasilinear Schr\"odinger equation where no localization on the initial data is being assumed.
 The only earlier result establishing a similar result in 1D is the one in \cite{IT-global}, where a semilinear model was considered.  

We note that the above result is stated here in a short form, but 
the proof shows in effect that the solutions satisfy good dispersive bounds. The reader is referred to Theorem~\ref{t:global-fe}
for a more complete, frequency envelope based description of 
the result. For now, we will simply point out two consequences 
of Theorem~\ref{t:global-fe}.
These are stated below in the \eqref{qnls} case, but they also apply in the \eqref{dqnls}
case at one level of regularity higher:

\begin{enumerate}[label=(\roman*)]
    \item $L^6_{t,x}$ Strichartz bounds:
  \begin{equation}\label{sample1}
   \| \la D\ra^{\frac56+} u\|_{L^6_{t,x}} \lesssim \epsilon^\frac23 .  
  \end{equation}  

\item{Bilinear $L^2_{t,x}$ bounds}, which can be stated in a balanced form
\begin{equation}\label{sample2}
   \| \partial |\la D\ra^{\frac34+} u|^2\|_{L^2_{t,x}} \lesssim \epsilon^2 ,
  \end{equation}  
and in an imbalanced form\footnote{ Here $T_u$ is the standard paraproduct operator capturing the low-high frequency interactions, see e.g. \cite{Metivier}.}
\begin{equation}\label{sample3}
   \| T_u \bu\|_{L^2_tH^{\frac32+}_x} \lesssim \epsilon^2.     \end{equation} 
These bounds hold also when $u$ and $\bu$ are frequency localized in a dyadic fashion.
\end{enumerate}

The exponents in \eqref{sample1}-\eqref{sample3} above correspond to the limiting case $s \to 1$.
One should compare the Strichartz bound \eqref{sample1}, which has a loss of up to $1/6$ derivative, 
with the sharp $L^6_{t,x}$ bounds we prove for the local well-posedness result. The difference is of course that these bounds here are global in time.

While these bounds hold globally in time for the defocusing problem in Theorem~\ref{t:global}, they also apply in the more general context of the long-time result in Theorem~\ref{t:long}, but only on the $\epsilon^{-6}$ timescale. There, they are critical in order to propagate the uniform energy bounds up to the $\epsilon^{-8}$ timescale.

\bigskip

One may also frame our results within the broader question of obtaining long-time solutions for one dimensional dispersive flows with quadratic/cubic nonlinearities, which has attracted a lot of attention in recent years. One can distinguish two different but closely related types of results that have emerged, as well as several successful approaches.

On one hand, \emph{normal form methods} have been developed 
in order to extend the lifespan of solutions, beginning with 
\cite{Shatah} in the early '80's. Around 2000, the \emph{ I-method}, introduced in \cite{I-method} brought forth the idea of constructing  better almost conserved quantities. These two ideas serve well in the study of semilinear flows, where it was later understood that they are connected, see for instance ~\cite{Bourgain-nf}.

Neither of these techniques can be directly applied to quasilinear problems. Addressing this problem, it was discovered in the work of the authors and collaborators \cite{BH},
\cite{IT-g} that one can adapt the normal form method 
to quasilinear problems by constructing energies which 
simultaneously capture both the quasilinear and the normal form structures. This idea was called the \emph{modified energy method}, and can also be seen in some way as a quasilinear adaptation of the I-method.  Other alternate approaches, also in the quasilinear setting, are provided by the \emph{ flow method } of  \cite{hi}, where a better normal form transformation is constructed using a well chosen auxiliary flow, and by the \emph{paradiagonalization method} of Alazard and-Delort~\cite{AD},
where a paradifferential symmetrization is applied instead.

Going further, the question of obtaining
scattering, global in time solutions for one dimensional dispersive flows with quadratic/cubic nonlinearities  has also been extensively studied in the last two decades  for a number of models, under the assumption that the initial data is both \emph{small} and \emph{localized};  without being exhaustive, see for instance \cite{HN,HN1,LS,KP,IT-NLS}, and the references within.
The nonlinearities in these models are primarily cubic, though the analysis has also been extended via normal form and modified energy methods to problems which also have nonresonant quadratic interactions; several such examples are \cite{AD,IT-g,D,IT-c,LLS}, see also further references therein, as well as the authors' expository paper on the modified scattering phenomena \cite{IT-packet}.

Comparing the above class of results, where the initial data
is both small and localized, with the present results, without any localization assumption, it is clear that in the latter case the problem becomes much more difficult, because the absence of localization allows for far stronger nonlinear interactions over long time-scales. Another twist is that it is now necessary  to distinguish between the focusing and defocusing case, as we do  here and also in our earlier semilinear work in \cite{IT-global} and \cite{IT-focusing}. 

\subsection{Outline of the paper} 
As it is the case with many quasilinear well-posedness results, the main requirement is to establish energy estimates, which are needed for 
\begin{enumerate}[label=(\alph*)]
\item the full equation, in a range of Sobolev spaces $H^s$.
\item the linearized equation, where it suffices to work in a single space, $L^2$ in our case. 
\end{enumerate}
At the heart of both of these problems we identify a common core, namely
\begin{enumerate}[resume*]
\item the linear 
paradifferential equation, where bounds are essentially frequency localized. 
\end{enumerate}
\medskip

The proof of the energy estimates is not a direct argument, but instead requires access to a full set of dispersive bounds, namely (see the discussion in Section~\ref{s:disp})
\begin{enumerate}[label=(\roman*)]
    \item Bilinear $L^2$ bounds, which measure the interaction of transversal waves.
    \item Strichartz estimates, which measure 
    the dispersion of a single wave.
\end{enumerate}
These two types of bounds, in turn, loop back to themselves, so they are proved within a complex bootstrap argument. 

\bigskip

To carry out the above plan, we have structured the paper into modular blocks as follows:

\medskip
\emph{ Littlewood-Paley theory, multilinear forms and paradifferential expansions.}
These are part of the standard toolbox for such problems, and are described in Section~\ref{s:notations}.
Bony's paradifferential calculus~\cite{Bony} (see also \cite{Metivier}) plays an essential role in the paper. Multilinear paradifferential expansions, which can be seen as more refined versions of the paradifferential calculus, were first introduced in \cite{T-WM}. We also discuss hereresonant and doubly resonant interactions.

\medskip

\emph{Frequency envelopes and the bootstrap argument.} Keeping track of a large family of linear and bilinear bounds is efficiently done using Tao's notion of \emph{frequency envelopes}, first introduced in \cite{Tao-WM}. Our implementation of this idea is presented in Section~\ref{s:boot}, where we use it to recast our full set of estimates and results in a bootstrap fashion. The idea of bootstrapping both linear and bilinear dispersive bounds was introduced in the authors's earlier work on the Benjamin-Ono flow~\cite{IT-BO}.

\medskip

\emph{A full set of equations.}
Starting from the full equations \eqref{qnls} or \eqref{dqnls} and the associated linearized flow, in Section~\ref{s:menagerie}, we introduce the associated linear paradifferential flow and use it to rewrite all the other equations in a paradifferential form. This suffices for the local well-posedness result. On the other hand  for the long-time/global results we need 
a more accurate expansion, where we also separate the cubic doubly resonant part, which carry out the bulk of the nonlinear interactions over long-time scales.

\medskip

\emph{Conservation laws in density-flux  form.}
Propagation of both mass and momentum bounds is important not only in itself, but also as a key element in the proof of the bilinear $L^2_{t,x}$ bounds. For this latter context, it is important to recast these conservation laws in density-flux form. This is done in Section~\ref{s:df}, in a frequency localized setting, broadly following the lead of the authors' previous paper~\cite{IT-global}. Another critical feature here is our use of quartic density/flux corrections, which  allow us to improve the propagation error bounds.

\medskip

\emph{Interaction Morawetz analysis and bilinear $L^2_{t,x}$ bounds for the linear paradifferential equation.} The main tool in the  proof of the bilinear $L^2_{t,x}$ bounds is provided by the \emph{interaction Morawetz estimates}, an idea introduced in the NLS context by the I-team~\cite{MR2415387}.  We develop this idea in Section~\ref{s:para}, following the set-up in authors' previous paper~\cite{IT-global}; but we also refer the reader to \cite{PV} and \cite{MR2527809} for earlier work on 1D NLS.

\medskip

\emph{Strichartz estimates for the linear paradifferential equation.} A standard approach in the study of quasilinear dispersive pde's is  to prove Strichartz estimates with a loss of derivatives, arising from sharp paradifferential Strichartz estimates on short, semiclassical time scales; this was first introduced in \cite{T:nlw1,T:nlw2,T:nlw3,BC,BC1} in the nonlinear wave context. For the Schr\"odinger context we refer the reader to \cite{GS-T}, and also to later work in \cite{BGT}. Proving lossless Strichartz estimates for variable  coefficient Schr\"odinger equations is a more difficult task, see \cite{HTW,RZ,T-St}, and requires not only $C^2$ coefficients but also  asymptotic flatness, neither of which are available here. In one space dimension one has an additional tool available, namely flattening the metric by a change of coordinates, a strategy which was implemented in \cite{BP} to handle the case of time independent BV coefficients.

Nevertheless, in Section~\ref{s:str} we are able  to prove \emph{lossless Strichartz estimates} locally in time. After flattening the metric, the key element in our analysis is to 
use our bilinear $L^2_{t,x}$ bounds in order separate a perturbative component. This leaves one with a better behaved flow, which can be recast to a setting where the wave packet parametrix of \cite{MMT-param} applies. This result may be seen as the counterpart in the \eqref{qnls} setting of the result of \cite{ST} for nonlinear wave equations, or of \cite{Ai} for water waves.

\medskip

\emph{Local well-posedness.}
Once we have the bilinear $L^2_{t,x}$ bounds and the Strichartz bounds for the paradifferential equation, the next step is to leverage these into similar bounds and short-time energy estimates for both the full and the linearized equation. This is carried out in Section~\ref{s:lwp},  and immediately leads to the  proof of the local well-posedness result, following the approach described in the expository paper \cite{IT-primer}.

\medskip

\emph{Long-time/global well-posedness.}
Given local well-posedness at the desired Sobolev regularity, in order to to prove the long-time result and the global well-posedness conjecture in the defocusing case we only need to establish long-time energy bounds. This is the goal of the last section, and is achieved within a complex bootstrap argument  involving Strichartz and bilinear $L^2_{t,x}$ estimates. On one hand, compared to the local well-posedness part, here we no longer need to work with the linearized equation, and it suffices to devote all our attention to the full equation. On the other hand, we now need  to pay close attention to the doubly resonant interactions, which are no longer perturbative, and whose structure is crucial in our analysis. This is where our \emph{conservative} and \emph{defocusing} assumptions play the leading role.

\subsection{Acknowledgements}
The first author was supported by the Sloan Foundation, and by an NSF CAREER grant DMS-1845037. The second author was supported by the NSF grant DMS-2054975 as well as by a Simons Investigator grant from the Simons Foundation.

\section{Notations and preliminaries}
\label{s:notations}

 Here we review a number of standard tools and notations, beginning with Littlewood-Paley decompositions.

\subsection{ The Littlewood-Paley decomposition}
Rather than using a standard Littlewood-Paley decomposition in frequency using powers of $2$, we choose 
a smaller ``dyadic'' step $\step > 1$, but which will be taken very close to $1$, see Remark~\ref{r:nonres} below.

Let $\psi$ be a smooth bump function which is supported in $[-\step,\step]$ and equal to $1$ on
$[-\step^{-1},\step^{-1}]$.  We define the Littlewood-Paley operators $P_{k}$ and
$P_{\leq k} = P_{<k+1}$ for $k \geq 0$ by 
\[
\widehat{P_{\leq k} f}(\xi) := \psi(\xi/\step^k) \hat f(\xi)
\]
for all $k \geq 0$, and $P_k := P_{\leq k} - P_{\leq k-1}$ (with the
convention $P_{\leq -1} = 0$),
whose symbol is 
\begin{equation}\label{def-phi}
\phi_k(\xi) :=\phi(\xi/\step^k), \qquad
\phi(\xi):= \psi(\xi) - \psi(\xi/\step).
\end{equation}

We also define $P_{>k} := P_{\geq k+1}:= 1 - P_{\leq k}$. Note that all the operators $P_k$,
$P_{\leq k}$, $P_{\geq k}$ are bounded on all translation-invariant Banach spaces, due to to Minkowski's inequality.  

Thus, at the multiplier level, the Littlewood-Paley decomposition reads
\begin{equation*}
1=\sum_{k\in \mathbf{N}}P_{k},
\end{equation*}
where the multipliers $P_k$ have smooth symbols localized at frequency $\step^k$. Correspondingly, our solution $u$ will be decomposed as
\[
u = \sum_{k \in \N} u_k, \qquad u_k := P_k u. 
\]
The main estimates we will establish for our solution $u$ to \eqref{qnls}/\eqref{dqnls} will be linear and bilinear estimates for the functions $u_k$.

On occasion it will be more efficient to switch from a discrete to a continuous Littlewood-Paley decomposition. There we think of $k$  as a real nonnegative parameter and define 
\[
P_k u := \frac{d}{dk} P_{< k} u.
\]
Then our Littlewood-Paley decomposition for $u$ takes an integral form
\[
u = u_0 +\int_0^\infty u_k \, dk.
\]

\begin{remark}\label{r:nonres}
The motivation for working with a 
Littlewood-Paley decomposition with a step $\step$ which is close to $1$ is to insure a clean classification 
of multilinear interactions as follows:
\begin{itemize}
\item there are no balanced quadratic interactions (i.e. from one dyadic region to itself).
\item in all trilinear $high \times high \times high \rightarrow low$ interactions,
at least two of the input frequencies are separated. 
\item there are no balanced quartic interactions (i.e. from one dyadic region to itself).
\end{itemize}
\end{remark} 

In terms of notations, for the most part we will avoid representing dyadic frequencies 
as powers of $\step$, and use instead Greek letters
such as $\lambda$ and $\mu$. One must however be careful and remember that all summations with respect to such parameters are dyadic.

\subsection{Strichartz and bilinear $L^2_{t,x}$ bounds} \label{s:disp}

Both of these will be the type of estimates  we establish later on for the solutions to both the full quasilinear problem and for its linearization. For reference and comparison purposes, here we briefly recall these bounds in the constant coefficient  case.

We begin with the classical  Strichartz inequality, which applies to solutions to the inhomogeneous linear Schr\"odinger equation:
\begin{equation}\label{bo-lin-inhom}
(i\partial_t + \partial^2_x)u = f, \qquad u(0) = u_0.
\end{equation}

We define the Strichartz space $S$ associated to the $L^2$ flow by
\[
S := L^\infty_t L^2_x \cap L^4_t L^\infty_x,
\]
as well as its dual
\[
S' = L^1_t L^2_x + L^{\frac43} _t L^1_x .
\]
We will also use the notation 
\[
S^{s} = \langle D \rangle^{-s} S
\]
to denote the similar spaces associated to the flow in $H^s$.

The Strichartz estimates in the $L^2$ setting are a consequence of the nonvanishing curvature of the characteristic set for the Schr\"odinger equation, i.e. the parabola $\{\tau + \xi^2=0\}$.
They are summarized in the following lemma:

\begin{lemma}
Assume that $u$ solves \eqref{bo-lin-inhom} in $[0,T] \times \R$. Then the following estimate holds.
\begin{equation}
\label{strichartz}
\| u\|_S \lesssim \|u_0 \|_{L^2} + \|f\|_{S'} .
\end{equation}
\end{lemma}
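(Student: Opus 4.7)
The plan is to reduce to the free propagator $e^{it\partial_x^2}$ via Duhamel's formula and then apply the standard dispersive $TT^*$ machinery, exploiting the explicit Gaussian kernel of the one dimensional Schr\"odinger group. Writing
\[ u(t) = e^{it\partial_x^2} u_0 - i \int_0^t e^{i(t-s)\partial_x^2} f(s)\,ds, \]
the $L^\infty_t L^2_x$ component of both contributions follows immediately from unitarity of the group combined with Minkowski's inequality. The substance of the lemma lies in the $L^4_t L^\infty_x$ bound.

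For the homogeneous part, the explicit kernel representation
\[ \bigl( e^{it\partial_x^2} g \bigr)(x) = \frac{1}{\sqrt{4\pi i t}} \int_\R e^{i(x-y)^2/(4t)} g(y)\, dy \]
yields the pointwise dispersive estimate $\|e^{it\partial_x^2} g\|_{L^\infty_x} \lesssim |t|^{-1/2} \|g\|_{L^1_x}$. Combined with the $L^2 \to L^2$ unitary bound, a $TT^*$ calculation reduces the Strichartz inequality $\|e^{it\partial_x^2} u_0\|_{L^4_t L^\infty_x} \lesssim \|u_0\|_{L^2}$ to the Hardy--Littlewood--Sobolev inequality for the convolution kernel $|t-s|^{-1/2}$ acting $L^{4/3}_t \to L^4_t$; the homogeneity matches exactly since $\tfrac{1}{4/3} - \tfrac{1}{4} = \tfrac{1}{2}$. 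The one dimensional pair $(q,r) = (4,\infty)$ satisfies $2/q + 1/r = 1/2$ with $q > 2$, so the Keel--Tao forbidden-endpoint issue does not intervene.

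For the inhomogeneous piece I would split $f = f_1 + f_2$ with $f_1 \in L^1_t L^2_x$ and $f_2 \in L^{4/3}_t L^1_x$ (infimizing over the sum space $S'$) and estimate each contribution to $\|u\|_S$ separately. The $f_1$ term is handled by Minkowski in time composed with the homogeneous bound applied slice by slice. The $f_2$ term requires the retarded analog of the homogeneous Strichartz estimate; this follows from the Christ--Kiselev lemma, applicable because $4/3 < 4$, applied to the untruncated bilinear form obtained by pairing with a dual test function in $L^{4/3}_t L^1_x$. I do not anticipate a serious obstacle here: the argument is entirely classical once the dispersive kernel bound is in place, and the only step requiring care is the correct bookkeeping of the $S' = L^1_t L^2_x + L^{4/3}_t L^1_x$ decomposition and the verification that the Christ--Kiselev hypothesis $q' > p$ is met for the pair being used.
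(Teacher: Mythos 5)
Your argument is correct and is the standard one: unitarity for the $L^\infty_t L^2_x$ component, the $|t|^{-1/2}$ dispersive bound plus $TT^*$ and Hardy--Littlewood--Sobolev for the admissible pair $(4,\infty)$, Minkowski for the $L^1_tL^2_x$ part of the forcing, and Christ--Kiselev (valid since $4/3<4$) for the retarded $L^{4/3}_tL^1_x$ contribution. The paper states this lemma without proof as classical background, and the argument it implicitly invokes is exactly the one you give, so there is nothing further to compare.
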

One intermediate norm between the two endpoints in $S$ is $L^6_{t,x}$, and its dual is $L^\frac65 _{t,x}$. In our estimates later in the paper, we will give preference to this Strichartz norm and neglect the rest of the family.

\bigskip

The second property of the linear Schr\"odinger equation we want to describe here is the 
bilinear $L^2$ estimate, which is as follows:

\begin{lemma}
\label{l:bi}
Let $u^1$, $u^2$ be two solutions to the inhomogeneous Schr\"odinger equation with data 
$u^1_0$, $u^2_0$ and inhomogeneous terms $f^1$ and $f^2$. Assume 
that the sets 
\[
E_i =  \text{supp } \hat u^i 
\]
are disjoint. Then we have 
\begin{equation}
\label{bi-di}
\| u^1 u^2\|_{L^2} \lesssim \frac{1}{\text{dist}(E_1,E_2)^\frac12} 
( \|u_0^1 \|_{L^2} + \|f^1\|_{S'}) ( \|u_0^2 \|_{L^2} + \|f^2\|_{S'}).
\end{equation}
\end{lemma}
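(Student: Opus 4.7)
The plan is to reduce the inhomogeneous bilinear estimate to a bilinear Fourier restriction bound for free Schrödinger solutions, and then close the inhomogeneous terms by a $TT^*$/Christ--Kiselev argument relative to the Strichartz space $S$.

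First I would expand each $u^i$ by Duhamel as $u^i = U(t) u^i_0 + \int_0^t U(t-s) f^i(s)\,ds$ with $U(t) = e^{it\partial_x^2}$, and split the product $u^1 u^2$ into four bilinear pieces accordingly. Once the bilinear bound is proved for pure homogeneous evolutions $v^i(t) = U(t) v^i_0$, the mixed and fully inhomogeneous pieces reduce to it by viewing the Duhamel integral as a superposition in $s$ of free solutions with data $U(-s)f^i(s)$ (which preserves the frequency localization $E_i$), invoking the Christ--Kiselev lemma to decouple the upper time cutoff $s<t$, and applying Minkowski to convert a bilinear $L^2_{t,x}$ bound on a superposition into an $L^1_s L^2_x$ or $L^{4/3}_s L^1_x$ norm dominated by $\|f^i\|_{S'}$. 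This uses crucially that the dual Strichartz pairs making up $S'$ are non-endpoint in one dimension.

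The heart of the argument is the bilinear bound for free solutions $v^i$ with $\widehat{v^i_0}$ supported in $E_i$. The spacetime Fourier transform of $v^1 v^2$ is supported on the resonance set
\begin{equation*}
\{(\tau,\xi) : \xi = \xi_1+\xi_2,\ \tau = \xi_1^2 + \xi_2^2,\ \xi_j \in E_j\},
\end{equation*}
and is given by a convolution on this surface. Parametrizing this surface by $(\xi_1,\xi_2)$ and changing variables to $(\xi,\tau)$ introduces a Jacobian of $|2(\xi_2-\xi_1)|$, which by the transversality hypothesis is bounded below by $\mathrm{dist}(E_1,E_2)$. Plancherel in $(\tau,\xi)$ then converts the norm of the product into a weighted $L^2$ norm of $\widehat{v^1_0} \otimes \widehat{v^2_0}$ with the inverse Jacobian as weight, from which the claimed distance-weighted bound follows at once.

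The main obstacle I expect is the bookkeeping in the $TT^*$/Christ--Kiselev reduction when both $f^1$ and $f^2$ are present, since the two Duhamel integrals are superposed in independent time variables $s_1,s_2$ and one must keep track of the frequency localizations $E_i$ under $U(-s_i)$ while simultaneously dualizing against each piece of $S'$. The remaining ingredients --- Plancherel on the resonance surface, the lower bound on the Jacobian, and the frequency-support preservation of $U(t)$ --- are then routine.
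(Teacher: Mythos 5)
The paper does not prove Lemma~\ref{l:bi}; it is stated as a standard background fact in Section~\ref{s:disp}, so there is no internal argument to compare against. Your outline is the standard one and is sound in its essentials: the free-solution core via Plancherel in $(\tau,\xi)$, the change of variables $(\xi_1,\xi_2)\mapsto(\xi_1+\xi_2,\,\xi_1^2+\xi_2^2)$ whose Jacobian $|2(\xi_2-\xi_1)|$ is bounded below by $2\,\mathrm{dist}(E_1,E_2)$ on $E_1\times E_2$, and the reduction of the inhomogeneous terms by Duhamel together with the Christ--Kiselev lemma (applicable to both the $L^1_tL^2_x$ and $L^{4/3}_tL^1_x$ pieces of $S'$, since $1,4/3<2$). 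You are also right that $U(-s)$ is a Fourier multiplier, so the localization to $E_i$ passes to the Duhamel data $U(-s)f^i(s)$; and the disjointness of $E_1,E_2$ is what guarantees the parametrization of the resonance surface by $(\xi_1,\xi_2)$ is one-to-one, which is needed before you can Plancherel cleanly.

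One point you should pin down rather than wave at: carrying the Jacobian computation to the end gives
\[
\|v^1 v^2\|_{L^2_{t,x}}^2 \;=\; c\int \frac{1}{|2(\xi_1-\xi_2)|}\,\bigl|\widehat{v^1_0}(\xi_1)\bigr|^2\,\bigl|\widehat{v^2_0}(\xi_2)\bigr|^2\,d\xi_1\,d\xi_2,
\]
so the sharp estimate carries the \emph{half} power $\mathrm{dist}(E_1,E_2)^{-1/2}$, not the full power displayed in \eqref{bi-di}. This is consistent with how the paper actually uses the bound: Corollaries~\ref{c:bi-jk} and \ref{c:bi-kk} state the $2^{-\max\{j,k\}/2}$ and $2^{-k/2}$ factors, which are exactly the $-1/2$ power. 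The exponent $-1$ in the Lemma's displayed inequality is therefore a typo in the paper, and your closing claim that the computation produces the stated bound should be corrected; your method, executed carefully, yields $\mathrm{dist}(E_1,E_2)^{-1/2}$, which is the bound the rest of the paper relies on.
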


One corollary of this applies in the case when we look at the product of two solutions which are supported in different dyadic regions:
\begin{corollary}\label{c:bi-jk}
Assume that $u^1$ and $u^2$ as above are supported in dyadic regions $\vert \xi\vert \approx \step^j$ and $\vert \xi\vert \approx \step^k$, $\vert j-k\vert >2$, then
\begin{equation}
\label{bi}
\| u^1 u^2\|_{L^2} \lesssim \step^{-\frac{\max\left\lbrace j,k \right\rbrace  }{2}}
( \|u_0^1 \|_{L^2} + \|f^1\|_{S'}) ( \|u_0^2 \|_{L^2} + \|f^2\|_{S'}).
\end{equation}
\end{corollary}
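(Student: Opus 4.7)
The plan is to read off the corollary directly from Lemma~\ref{l:bi} by lower-bounding the distance between the two dyadic frequency supports. Since $u^1$ and $u^2$ are frequency-localized in dyadic rings, their Fourier supports $E_1$, $E_2$ are contained in $\{|\xi|\sim 2^j\}$ and $\{|\xi|\sim 2^k\}$ respectively. Without loss of generality, assume $k > j+2$, so $\max\{j,k\} = k$.

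First I would verify that the two supports are separated at the outer scale. Because the Littlewood–Paley step $\step$ is chosen close to $1$ (see Remark~\ref{r:nonres}), the inner ring satisfies $|\xi|\lesssim \step^{j+1}$ while the outer ring satisfies $|\xi|\gtrsim \step^{k-1}$. A two-step dyadic separation $k>j+2$ then produces
\[
\operatorname{dist}(E_1,E_2) \;\geq\; \step^{k-1} - \step^{j+1} \;\gtrsim\; \step^k \;\sim\; 2^{\max\{j,k\}}.
\]
In particular $E_1\cap E_2=\emptyset$, so the hypotheses of Lemma~\ref{l:bi} are met.

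Next I would plug this lower bound on $\operatorname{dist}(E_1,E_2)$ into the conclusion of Lemma~\ref{l:bi}. The dispersive factor $1/\operatorname{dist}(E_1,E_2)^{1/2}$ coming from the transversal bilinear Schr\"odinger estimate is then controlled by $2^{-\max\{j,k\}/2}$, which is exactly the prefactor appearing in \eqref{bi}. Combining with the inhomogeneous Strichartz-type factors $(\|u_0^i\|_{L^2}+\|f^i\|_{S'})$ supplied by Lemma~\ref{l:bi} finishes the argument.

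There is no genuine obstacle here: the corollary is a bookkeeping restatement of Lemma~\ref{l:bi} in a dyadic frame. The only subtle point worth emphasizing is the role of the step $\step$: the two-step separation $|j-k|>2$ in the hypothesis is calibrated precisely so that, for $\step$ close to $1$, the frequency rings of $u^1$ and $u^2$ are \emph{geometrically} separated at the scale of the larger frequency. This is the same calibration used in Remark~\ref{r:nonres} to rule out balanced interactions, and it is what allows us to absorb constants into the $\lesssim$ in the final estimate.
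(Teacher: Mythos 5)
Your overall strategy is the intended one: the corollary is meant to be read off from Lemma~\ref{l:bi} by bounding $\mathrm{dist}(E_1,E_2)$ from below and plugging in. However, the way you justify the separation is incorrect as written. The regions in the corollary are literal powers of two, $|\xi|\approx 2^j$ and $|\xi|\approx 2^k$; they are not rings of the $\step$-adic Littlewood--Paley decomposition of Section~\ref{s:notations}, and Remark~\ref{r:nonres} plays no role here. Your displayed chain $\mathrm{dist}(E_1,E_2)\geq \step^{k-1}-\step^{j+1}\gtrsim \step^k\sim 2^{\max\{j,k\}}$ fails at the last step: for $\step$ close to $1$ one has $\step^k\ll 2^k$, so this does not produce the factor $2^{-\max\{j,k\}/2}$ in \eqref{bi}. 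Moreover, if the rings really were $\step$-adic with $\step$ near $1$, a separation of two or three steps would only give $\mathrm{dist}\gtrsim(\step-1)\step^{k}$, i.e.\ a separation small compared with the frequency itself, and the corollary with the factor $2^{-\max\{j,k\}/2}$ would be false under that reading; so the ``calibration'' you invoke in your closing paragraph is not what makes the statement true. The correct (and elementary) justification is: with $|\xi_1|\approx 2^j$, $|\xi_2|\approx 2^k$ and, say, $k>j+2$, one has $|\xi_1-\xi_2|\geq 2^{k-1}-2^{j+1}\geq 2^{k-2}$, hence $\mathrm{dist}(E_1,E_2)\gtrsim 2^{\max\{j,k\}}$, after which Lemma~\ref{l:bi} gives \eqref{bi}.

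One further bookkeeping point: you quote the lemma with the factor $\mathrm{dist}(E_1,E_2)^{-1/2}$, whereas \eqref{bi-di} as printed carries $\mathrm{dist}(E_1,E_2)^{-1}$. The $-\tfrac12$ power is the one consistent with the standard 1D bilinear estimate and with the exponents appearing in \eqref{bi} and \eqref{bi-kk}, so the printed exponent in \eqref{bi-di} is presumably a typo; in any case either power yields \eqref{bi} once the distance is bounded below by $2^{\max\{j,k\}}$, so this does not affect the conclusion.
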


Another useful  case is when we look at the product of two solutions which are supported in the same  dyadic region, but with frequency separation:
\begin{corollary}\label{c:bi-kk}
Assume that $u^1$ and $u^2$ as above are supported in the dyadic region  $\vert \xi\vert \approx \step^k$, but have $O(\step^k)$ frequency separation between their supports. Then 
\begin{equation}
\label{bi-kk}
\| u^1 u^2\|_{L^2} \lesssim \step^{-\frac{k}{2}}
( \|u_0^1 \|_{L^2} + \|f^1\|_{S'}) ( \|u_0^2 \|_{L^2} + \|f^2\|_{S'}).
\end{equation}
\end{corollary}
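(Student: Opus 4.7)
The statement is a specialization of Lemma~\ref{l:bi} (the general bilinear $L^2$ estimate with a $1/\text{dist}(E_1,E_2)$-type gain), so my plan is to deduce it as a direct consequence of that lemma rather than redo the interaction analysis from scratch. The content of the corollary is purely geometric: once one identifies the quantitative frequency separation, Lemma~\ref{l:bi} immediately supplies the estimate.

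The first step is to quantify the separation. By hypothesis, $E_1, E_2 \subset \{|\xi| \approx 2^k\}$ and the gap between them satisfies $\text{dist}(E_1,E_2) \gtrsim 2^k$ (this is what $O(2^k)$ frequency separation means here, reading the hypothesis in the expected way, namely that the separation is comparable to the size of the dyadic block). Plugging this lower bound into \eqref{bi-di} yields precisely \eqref{bi-kk}, up to the implicit constant.

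One minor care point is that in \eqref{bi-di} we need the supports $E_1, E_2$ to be \emph{disjoint}, which is guaranteed by the frequency separation. Another book-keeping point is consistency of the exponents: Lemma~\ref{l:bi} provides a gain of a half-power of the separation (as is standard for 1D Schr\"odinger bilinear estimates), and this matches the $2^{-k/2}$ weight in the target estimate, in full analogy with Corollary~\ref{c:bi-jk}. Both corollaries are therefore seen as two instances of the same principle: whenever two Schr\"odinger solutions have frequency supports separated by a distance $d$, one gains $d^{-1/2}$ in the bilinear $L^2$ product.

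I do not expect any real obstacle here; the statement is essentially a repackaging of Lemma~\ref{l:bi} designed for ease of reference in the upcoming paradifferential/bootstrap analysis, where frequency-localized pieces $u^1 = P_k u$, $u^2 = P_k v$ inside the \emph{same} dyadic shell arise naturally (e.g.\ from the trilinear $high\times high\times high \to low$ interactions flagged in Remark~\ref{r:nonres}). The only thing worth mentioning at the end of the proof is that, by a further dyadic decomposition of the supports inside $\{|\xi|\approx 2^k\}$ and summation, the same bound continues to hold for finite collections of such paired pieces, which is how it will be applied later.
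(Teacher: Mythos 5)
Your derivation is the intended one: Corollary~\ref{c:bi-kk} is a direct specialization of Lemma~\ref{l:bi} via the observation that $E_1,E_2\subset\{|\xi|\approx 2^k\}$ with $O(2^k)$ separation forces $\mathrm{dist}(E_1,E_2)\gtrsim 2^k$. One thing to be careful about: as printed, \eqref{bi-di} carries the prefactor $\mathrm{dist}(E_1,E_2)^{-1}$, which after substituting $\mathrm{dist}\gtrsim 2^k$ would give $2^{-k}$, not the $2^{-k/2}$ asserted in \eqref{bi-kk}; this is inconsistent with your claim that the substitution ``yields precisely'' \eqref{bi-kk}. The resolution is that the exponent in \eqref{bi-di} is a typographical slip — the correct one-dimensional Schr\"odinger bilinear estimate, obtained by Cauchy--Schwarz on the space-time Fourier side where the Jacobian factor $|\xi_1-\xi_2|$ appears once under a square root, reads $\|u^1u^2\|_{L^2}\lesssim \mathrm{dist}(E_1,E_2)^{-1/2}(\cdots)(\cdots)$, and this is the form that reproduces both Corollary~\ref{c:bi-jk} and Corollary~\ref{c:bi-kk}. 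You in fact rely on this corrected half-power reading in your next paragraph, so your argument is sound once the typo is acknowledged; I only flag it because otherwise the two halves of your write-up contradict one another.
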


\subsection{Resonant analysis} In this subsection we introduce the notion of \emph{resonant frequencies}, which applies to nonlinear interactions in the context of the linear dispersion relation associated to our equation. Given that our nonlinearity is cubic, we will discuss this notion in the cubic nonlinearity context. Furthermore, we will be interested in this analysis only in the case where our problem has the phase rotation symmetry. This implies that the cubic part $C$ of the  nonlinearity can be thought of as a trilinear form with arguments $C(u,\bu,u)$.

For such a trilinear form, given three input frequencies $\xi_1, \xi_2,\xi_3$ for our cubic nonlinearity, the output will be at frequency 
\[
\xi_4 = \xi_1-\xi_2+\xi_3.
\]
This relation can be described in a more symmetric fashion as 
\[
\Delta^4 \xi = 0, \quad \mbox{where } \quad  \Delta^4 \xi := \xi_1-\xi_2+\xi_3-\xi_4 .
\]
This is a resonant interaction if and only if we have a similar relation for the associated time frequencies, namely 
\[
\Delta^4 \xi^2 = 0, \quad \mbox{where } \quad \Delta^4 \xi^2 := \xi_1^2-\xi_2^2+\xi_3^2-\xi_4^2 .
\]
Hence, we define the resonant set in a symmetric fashion as  
\[
\calR := \{ \Delta^4 \xi = 0, \ \Delta^4 \xi^2 = 0\}.
\]
In one space dimension it is easy to see that this set can be equivalently described as 
\[
\calR = \{ \{\xi_1,\xi_3\} = \{\xi_2,\xi_4\}\}.
\]

When considering estimates for cubic resonant interactions, it is clear that the case when $\xi_1 \neq \xi_3$ is more favourable, as there we have access to bilinear $L^2_{t,x}$ bounds. 
The unfavourable case is when all four frequencies are equal. We denote this set by 
\[
\calR_2 := \{\xi_1 =\xi_3 = \xi_2=\xi_4\},
\]
and we will refer to it as \emph{the doubly resonant set}. Heuristically, this set carries the bulk of the cubic long range interactions in the nonlinear problem, and can be intuitively associated with wave packet self-interactions.

\subsection{Translations and translation-invariant multilinear forms}

Since our problem as stated is invariant with respect to translations, it is natural  that translation-invariant multilinear forms play an important role in the analysis. The simplest examples are multipliers, which are convolution operators,
\[
m(D) u (x) = \int K(y) u(x-y) \,dy, \quad \hat K = (2\pi)^{-\frac12} m.
\]
Denoting translations by 
\[
u^y(x) := u(x-y),
\]
we will more generally denote by $L$ any convolution operator 
\[
Lu (x) := \int K(y) u^y(x)\, dy,
\]
where $K$ is an integrable kernel, or  a bounded measure, with a universal bound.

Moving on to multilinear operators, we will similarly denote by $L$ any multilinear form 
\[
L(u_1,\cdots, u_k) (x) = 
\int K(y_1, \cdots,y_k) u_1^{y_1}(x) \cdots u_k^{y_k} \, dy,
\]
where, again $K$ is assumed to have a kernel which is integrable, or more generally,  a bounded measure (we allow for the latter in order to be able to include products here).
Multilinear forms satisfy the same bounds as  corresponding products do, as long as we work in translation-invariant Sobolev spaces.

\medskip

A special role in our analysis is played by multilinear forms generated in the analysis of problems with the phase rotation symmetry, where the arguments $u$ and $\bar u$ are alternating. For such forms we borrow the notations from our earlier paper \cite{IT-global}.

Precisely, for an integer $k \geq 2$, we will use translation-invariant $k$-linear  forms 
\[
(\mathcal D(\R))^{k} \ni (u_1, \cdots, u_{k}) \to     Q(u_1,\bu_2,\cdots) \in \mathcal D'(\R),
\]
where the nonconjugated and conjugated entries are alternating.

Such a form is uniquely described by its symbol $q(\xi_1,\xi_2, \cdots,\xi_{k})$
via
\[
\begin{aligned}
Q(u_1,\bu_2,\cdots)(x) = (2\pi)^{-k} & 
\int e^{i(x-x_1)\xi_1} e^{-i(x-x_2)\xi_2}
\cdots 
q(\xi_1,\cdots,\xi_{k})
\\ & \qquad 
u_1(x_1) \bu_2(x_2) \cdots  
dx_1 \cdots dx_{k}\, d\xi_1\cdots d\xi_k,
\end{aligned}
\]
or equivalently on the Fourier side
\[
\mathcal F Q(u_1,\bu_2,\cdots)(\xi)
= (2\pi)^{-\frac{k-1}2} \int_{D}
q(\xi_1,\cdots,\xi_{k})
\hat u_1(\xi_1) \bar{\hat u}_2(\xi_2) \cdots  \,
d\xi_1 \cdots d\xi_{k-1},
\]
where, with alternating signs, 
\[
D := \{ \xi = \xi_1-\xi_2 + \cdots \}.
\]

They can also be described via their kernel
\[
Q(u_1,\bu_2,\cdots)(x) =  
\int K(x-x_1,\cdots,x-x_{k})
u_1(x_1) \bu_2(x_2) \cdots  
dx_1 \cdots dx_{k},
\]
where $K$ is defined in terms of the  
Fourier transform  of $q$
\[
K(x_1,x_2,\cdots,x_{k}) = 
(2\pi)^{-\frac{k}2} \hat q(-x_1,x_2,\cdots,(-1)^k x_{k}).
\]

These notations are 
convenient but slightly nonstandard because of the alternation of complex conjugates. Another important remark is that, for $k$-linear forms, the cases of odd $k$, respectively even $k$ play different roles here, as follows:

\medskip

i) The $2k+1$ multilinear forms will be thought of as functions, e.g. those which appear 
in some of our evolution equations.

\medskip

ii) The $2k$ multilinear forms will be thought of as densities, e.g. which appear 
in some of our density-flux pairs.

\medskip
Correspondingly,   to each $2k$-linear form $Q$ we will associate a $2k$-linear functional $\bQ$ defined by 
\[
\bQ(u_1,\cdots,u_{2k}) := \int_\R Q(u_1,\cdots,\bu_{2k})(x)\, dx,
\]
which takes real or complex values. This may be alternatively expressed on the Fourier side as 
\[
\bQ(u_1,\cdots,u_{2k}) = (2\pi)^{1-k} \int_{D}
q(\xi_1,\cdots,\xi_{2k})
\hat u_1(\xi_1) \bar{\hat u}_2(\xi_2) \cdots  
\bar{\hat u}_{2k}(\xi_{2k})\,d\xi_1 \cdots d\xi_{2k-1},
\]
where, with alternating signs, the diagonal $D_0$ is given by
\[
D_0 = \{ 0 = \xi_1-\xi_2 + \cdots \}.
\]
Note that in order to define the multilinear functional $\bQ$ we only need to know the symbol $q$ on $D_0$.

\subsection{Multilinear paradifferential expansions} 

As written, in the equations \eqref{qnls} and \eqref{dqnls} we encounter not only multilinear expressions, but also nonlinear functions such as $g(u)$. Bony's paradifferential calculus asserts that at the leading order we have
\[
g(u) \approx T_{g'(u)} u,
\]
but this expansion is not sufficiently accurate  for our purposes. The next step would be to use a second order expansion, which employs the continuous Littlewood-Paley decomposition. This has the form
\begin{equation}\label{para-expansion}
\begin{aligned}
g(u) = & \ g(u_0) + \int_{0}^\infty g'(u_{<k}) u_k \, dk
\\
 = & \ g(u_0) + \int_{0}^\infty g'(u_{0}) u_k \, dk + \iint_{0<k_2<k_1} 
 g''(u_{<k_2}) u_{k_2} u_{k_1} \, dk_1 dk_2.
\end{aligned}
\end{equation}
One could continue to reexpand to any order $n_0$,
\begin{equation}\label{para-expansion-n}
\begin{aligned}
g(u) =  & \ g(u_0) + \sum_{n=1}^{n_0-1}
 \int_{0 < k_1 \cdots < k_n} u_{k_1} \cdots u_{k_n} g^{(n)}(u_{0})  \, dk_1 \cdots dk_{n_0}
 \\ & +  \int_{0 < k_1 \cdots < k_{n_0}} u_{k_1} \cdots u_{k_{n_0}} g^{(n_0)}(u_{< k_{n_0}})  \, dk_1 \cdots dk_{n_0}.
\end{aligned}
\end{equation}
For us, the $n_0=4$ expansion will suffice.  A similar expansion is used later on to estimate differences $g(u) - g(u_{<k})$.

We remark that the  nonlinear expression $g^{(n)}(u_{<k_{n_0}})$ is not exactly localized at frequencies $<k_{n_0}$, but does have a better high frequency tail, see for instance Lemma~\ref{l:F-para} later on.

\section{A frequency envelope formulation of the results}

For expository purposes, the main results of this paper, namely Theorem~\ref{t:local}, Theorem~\ref{t:long} and Theorem~\ref{t:global}, were stated in a simplified form in the introduction. However, the full results that we prove provide a much more detailed picture, which gives a full family of $L^6_{t,x}$ Strichartz estimates and bilinear $L^2_{t,x}$ bounds for the solutions. Furthermore, the proofs of the results are complex bootstrap arguments relative to all these bounds, both linear and bilinear. For these reasons, it is important to have a good setup for both the results and for the bootstrap assumptions. An elegant way to do this is to use the language of frequency envelopes. Our goals in this section are 
\begin{enumerate}[label=(\roman*)]
\item to define the frequency envelopes notion we employ here, 
\item to provide a more accurate, frequency envelope formulation of the main results, 
\item  to provide the bootstrap assumptions in the proofs of each of the three theorems, and 
\item to outline the continuity argument which  allows us to use 
these bootstrap assumptions.
\end{enumerate}

\subsection{Frequency envelopes}
Before restating  the main theorems of this paper in a sharper, quantitative form, we revisit the \emph{frequency envelope} notion. This elegant and useful tool will streamline the exposition of our results, and one should think  of it as a bookkeeping device that efficiently tracks the evolution of the energy of the solutions between dyadic energy shells. 

Following Tao's paper \cite{Tao-BO}, we say that a sequence $c_{k}\in l^2$ is an $L^2$ frequency envelope for a function $\phi \in L^2$ if
\begin{itemize}
\item[i)] $\sum_{k=0}^{\infty}c_k^2 \lesssim 1$;\\
\item[ii)] it is slowly varying, $c_j /c_k \leq \step^{\delta \vert j-k\vert}$, with $\delta$ a very small universal constant;\\
\item[iii)] it bounds the dyadic norms of $\phi$, namely $\Vert P_{k}\phi \Vert_{L^2} \leq c_k$. 
\end{itemize}
Given a frequency envelope $c_k$ we define 
\[
 c_{\leq k} := (\sum_{j \leq k} c_j^2)^\frac12, \qquad  c_{\geq k} := (\sum_{j \geq k} c_j^2)^\frac12.
\]
\noindent \textbf{Note:} When using Greek letters like $\lambda$ and $\mu$ to 
represent dyadic frequencies, we will harmlessly  use the same Greek letter as an index for the frequency envelope, as in $c_\lambda$ or $c_\mu$.

\begin{remark}
To avoid dealing with certain issues arising at low frequencies, we can harmlessly make the extra assumption that $c_{0}\approx \|c\|_{\ell^2}$.
\end{remark}

\begin{remark}\label{r:unbal-fe}
Another useful variation is to weaken the slowly varying assumption to
\[
\step^{- \delta \vert j-k\vert} \leq    c_j /c_k \leq \step^{C \vert j-k\vert}, \qquad j < k,
\]
where $C$ is a fixed but possibly large constant. All the results in this paper are compatible with this choice. This offers the extra flexibility of providing higher regularity results by the same argument, as it allows the frequency envelope $c_k$ to be chosen so that we have
\begin{equation}
\|u\|_{H^s}^2 \approx \sum c_k^2 \step^{2k}, \qquad 0 \leq s \leq C-\delta,
\end{equation}
uniformly in $s$.
\end{remark}

\subsection{The frequency envelope form of the results}
\label{s:boot}

The  goal of this section is towfold: (i) restate our  main results in Theorem~\ref{t:local}, Theorem~\ref{t:long} and Theorem~\ref{t:global} in the frequency envelope setting,  and (ii) to set up the bootstrap argument for the proof of the estimates in the theorems. 

The set-up for the bootstrap is most conveniently described using the language  of frequency envelopes. This was originally introduced in the context of dyadic Littlewood-Paley decompositions in work of Tao, see e.g. \cite{Tao-WM}. Here we also use the dyadic setting, but we refer the reader to  \cite{IT-global} for another interesting general frequency envelope setting, based on lattice decompositions. 
\bigskip

To start with, we assume that the initial data has small size in $H^s$,
\begin{equation}\label{small-data}
\| u_0\|_{H^s} \lesssim \epsilon.
\end{equation}
We consider a dyadic frequency decomposition for the initial  data, 
\[
u_0 = \sum_{\lambda \geq 1} u_{0,\lambda}.
\]
where $\lambda = \step ^k$, $k \in \N$  indexes dyadic frequencies. Then we place the initial data components under an admissible  frequency envelope,
\[
\|u_{0,\lambda}\|_{H^s} \leq \epsilon c_\lambda, \qquad c \in \ell^2,
\]
where the envelope $\{c_k\}$ is not too large,
\begin{equation*}
\| c\|_{\ell^2} \approx 1.    
\end{equation*}

Our goal will be to establish similar frequency envelope bounds for the solution. 
We now state the frequency envelope bounds that will be the subject of the next theorems. These are as follows:

\begin{enumerate}[label=(\roman*)]
\item Uniform frequency envelope bound:
\begin{equation}\label{uk-ee}
\| u_\lambda \|_{L^\infty_t L_x^2} \lesssim \epsilon c_\lambda \lambda^{-s},
\end{equation}
\item Balanced bilinear $L^2_{t,x}$ bound:
\begin{equation} \label{uab-bi-bal}
\| \partial_x(u_\lambda \bu_\mu^{x_0})  \|_{L^2_{t,x}} \lesssim \epsilon^2 c_\lambda c_\mu \lambda^{-s} \mu^{-s} \lambda^\frac12 (1+ \lambda |x_0|)
, \qquad \lambda \approx \mu ,
\end{equation}

\item Unbalanced bilinear $L^2_{t,x}$ bounds:
\begin{equation} \label{uab-bi-unbal}
\| \partial_x(u_\lambda \bu_\mu^{x_0})  \|_{L^2_{t,x}} \lesssim \epsilon^2 c_\lambda c_\mu \lambda^{-s} \mu^{-s} \lambda^\frac12 
, \qquad \mu \ll \lambda,
\end{equation}

\item Localized Strichartz bound, short-time version:
\begin{equation}\label{uk-se-short}
\| u_\lambda \|_{L_{t,x}^6}^6 \lesssim (\epsilon c_\lambda)^6 \lambda^{-6s},
\end{equation}

\item Localized Strichartz bound, long-time version:
\begin{equation}\label{uk-se}
\| u_\lambda \|_{L_{t,x}^6}^6 \lesssim (\epsilon c_\lambda)^4 \lambda^{-4s-1}.
\end{equation}
\end{enumerate}

We remark on a special case of \eqref{uab-bi-bal} when $\lambda = \mu$ and $x_0 = 0$, where we get
\begin{equation} \label{uk-bi}
\| \partial_x(|u_\lambda|^2)  \|_{L^2_{t,x}} \lesssim \epsilon^2 c_\lambda^2 \lambda^{-2s}  \lambda^\frac12 .
\end{equation}
This can be seen as a dispersive estimate for $u_\lambda$,  
and will be useful when combined with \eqref{uk-ee} using interpolation.

We begin with the counterpart of Theorem~\ref{t:local}.

\begin{theorem}\label{t:local-fe}
a) Let $s > 1$ and $\epsilon \ll 1$. Consider the equation \eqref{qnls} with cubic nonlinearity.  Let $u \in C([0,T];H^s)$ be a smooth\footnote{This should be simply understood as $u \in CH^k$ with $k$ large enough.}  solution with initial data $u_0$  which has $H^s$ size at most $\epsilon$, and with
 \begin{equation}\label{small-t-loc}
 T \ll \epsilon^{-4}.
 \end{equation}
 Let $\epsilon \{c_\lambda\}$ be a frequency envelope for the initial data  in $H^s$. Then the solution $u$ satisfies the bounds \eqref{uk-ee}, \eqref{uab-bi-bal}, \eqref{uab-bi-unbal} and \eqref{uk-se-short} uniformly with respect to $x_0 \in \R$.

b) The same result holds for \eqref{dqnls} but with $s$ increased by one unit, $s > 2$. 
\end{theorem}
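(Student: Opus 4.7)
The plan is to run a single simultaneous bootstrap for the four frequency envelope bounds \eqref{uk-ee}, \eqref{uab-bi-bal}, \eqref{uab-bi-unbal}, \eqref{uk-se-short} on the interval $[0,T]$ with $T \ll \epsilon^{-4}$. Since the given $u$ is smooth, I may assume that all four bounds hold on $[0,T]$ with a large bootstrap constant and then seek to improve them to an absolute constant; the bounds hold at $t=0$ from the data decomposition, they depend continuously on $t$ by regularity, and Theorem~\ref{t:regular} guarantees that the smooth flow exists long enough to start the argument. A standard continuity argument in $T$ then upgrades the bootstrap to hold on all of $[0,T]$.

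The first step is to apply $P_\lambda$ to \eqref{qnls} and, using the expansion \eqref{para-expansion} for $g(u)$, recast the equation for $u_\lambda = P_\lambda u$ in paradifferential form
\[
i \partial_t u_\lambda + T_{g(u)} \partial_x^2 u_\lambda = \mathrm{Err}_\lambda,
\]
where $\mathrm{Err}_\lambda$ collects the $P_\lambda$-commutator with $g(u)$, the high-high paraproduct remainder from \eqref{para-expansion}, and $P_\lambda N(u,\partial_x u)$. All three contributions are genuine high-high-to-$\lambda$ interactions, so the plan is to bound $\mathrm{Err}_\lambda$ in a dual Strichartz plus envelope-summable $L^1_t L^2_x$ norm by $\epsilon^3 c_\lambda \lambda^{-s}$, using the bootstrap bilinear $L^2$ bounds on the two high frequencies paired with Strichartz on the remaining factor. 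The Littlewood-Paley step is chosen close to $1$, per Remark~\ref{r:nonres}, precisely to enforce frequency separation in these interactions and thereby make the bilinear bounds applicable.

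The second step is to prove the three frequency-localized estimates for the model linear paradifferential flow $i\partial_t v + T_{g(u)} \partial_x^2 v = f$ with $L^2$ data and $f$ in a suitable dual norm. The $L^\infty L^2$ bound is obtained by a symmetrized energy estimate, exploiting that $g(u)$ is real and that the principal paradifferential operator is self-adjoint modulo lower order terms; the source contribution is absorbed via $T^{1/2}\epsilon^2 \ll 1$, which is exactly where the hypothesis $T \ll \epsilon^{-4}$ enters. The bilinear bounds \eqref{uab-bi-bal}--\eqref{uab-bi-unbal} are obtained by an interaction Morawetz calculation for the product of two paradifferential solutions at frequencies $\lambda,\mu$ with spatial translation $x_0$, applied to the density-flux identities of Section~\ref{s:df}; the $(1+\lambda|x_0|)$ factor in the balanced case arises naturally from the derivative of the translated weight. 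The Strichartz estimate \eqref{uk-se-short} is the most delicate: since $g(u)$ only inherits the $H^{s-1}$ regularity of $u$, which for $s$ slightly above $1$ lies well below the classical $C^2$ plus asymptotic flatness threshold, lossless Strichartz is not standard. Following Section~\ref{s:str}, the plan is to use the bilinear $L^2$ bounds themselves to split off a perturbative piece of the coefficient and reduce to a flow to which the MMT wave packet parametrix of \cite{MMT-param} applies without derivative loss.

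Transferring the three bounds for the paradifferential flow back to $u_\lambda$ via the $\mathrm{Err}_\lambda$ estimate from the first step then closes all four frequency envelope bounds with constants independent of the bootstrap constant. The main obstacle is the lossless Strichartz step: at the threshold regularity $s > 1$ there is no room for any derivative loss, and the bilinear-Strichartz coupling is what forces the whole package to be bootstrapped as a single unit rather than in sequence. The \eqref{dqnls} case is handled identically at one extra level of regularity, the shift $s > 2$ accounting for the extra derivative in the metric $g(u,\partial_x u)$.
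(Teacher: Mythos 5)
Your overall architecture matches the paper's: a simultaneous bootstrap of the four bounds, a frequency-localized paradifferential reduction, interaction Morawetz via the density-flux identities for the bilinear estimates, and lossless Strichartz obtained by splitting off a perturbative piece of the coefficient and invoking the wave packet parametrix of \cite{MMT-param}. The gap is in the step you treat as routine: the claim that the error $\mathrm{Err}_\lambda$ can be bounded by $\epsilon^3 c_\lambda \lambda^{-s}$ in a dual Strichartz plus $L^1_t L^2_x$ norm. This is precisely the strategy the paper argues cannot work at $s$ slightly above $1$ on time scales $\epsilon^{-4}$, and a direct check confirms it. With your choice of principal operator $T_{g(u)}\partial_x^2$, the commutator $[P_\lambda, T_{g(u)}]\partial_x^2 u$ produces a term of the schematic form $u_{<\lambda}\,\partial_x u_{<\lambda}\,\partial_x u_\lambda$. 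Any attempt to place this in $L^1_t L^2_x$ or $L^{4/3}_t L^1_x$ using the bootstrap bilinear bound \eqref{uab-bi-unbal} on the low pair plus energy/Strichartz on $\partial_x u_\lambda$ loses the full factor $\lambda$ carried by the derivative on the high frequency, together with a $T^{1/4}$ or $T^{1/2}$ H\"older loss that consumes the $\epsilon^2$ smallness; one ends up roughly at $T^{1/4}\epsilon^3 c_\lambda\lambda^{1-s}$ rather than $\epsilon^3 c_\lambda \lambda^{-s}$. So the transfer from the paradifferential flow back to $u_\lambda$ does not close as written, for the energy bound, and a fortiori for the Strichartz bound where the source must be dualized against an $L^6$-type space.

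The paper's way around this is the one new structural idea your proposal is missing. First, the first-order term is not put in the error at all: the paradifferential operator is taken in divergence form $\partial_x g_{[<\lambda]}\partial_x$ with truncated coefficients, so that it is exactly self-adjoint and is handled by the density-flux identities rather than perturbatively. Second, and more importantly, the remaining source $N_\lambda(u)$ is never measured in a linear-in-$N_\lambda$ norm; it is measured only through the bilinear quantity $\sup_{x_0}\|N_\lambda(u)\,\bu_\lambda^{x_0}\|_{L^1_{x,t}}$ (the $d_\lambda$ functional \eqref{dl}, estimated in Proposition~\ref{l:N-lambda}), which lets one apply \emph{two} bilinear $L^2$ bounds simultaneously and thereby obtain $\epsilon^4 c_\lambda^2\lambda^{-2s}$ with no $T^{1/2}$ loss except on the genuinely balanced cubic piece $C_\lambda$, which is the only place the restriction $T\ll\epsilon^{-4}$ and the sharp short-time $L^6$ bound enter. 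For the Strichartz transfer the same issue forces the duality-based space $X_\lambda^*$ built on adjoint solutions \eqref{X-lambda}, so that Theorem~\ref{t:para-se} accepts sources that are not in $L^{6/5}$. Without these devices (or an equivalent substitute), the derivative loss in your error estimate is fatal at the claimed regularity, even though every other ingredient of your outline is sound.
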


We continue with several remarks concerning our bounds for the solutions.
\begin{remark}
Our short-time $L^6_{t,x}$ Strichartz bounds in \eqref{uk-se-short} are sharp. This is unlike the standard Strichartz-type bounds for quasilinear equations, where one has Strichartz bounds with derivative losses.
\end{remark}

\begin{remark}
We also remark on the need to add translations to the bilinear  $L^2_{t,x}$ estimates. This is because, unlike the linear bounds \eqref{uk-ee-boot} and \eqref{uk-se-boot} which are inherently invariant with respect to translations, bilinear estimates are not invariant with respect to separate translations for the two factors. One immediate corollary of \eqref{uab-bi-unbal}, for instance, is that for any translation invariant bilinear form $L_{t,x}$ we have
\begin{equation} \label{uab-bi-unbal-L}
\| L(u_\lambda, \bar u_\mu)  \|_{L^2_{t,x}} \lesssim \epsilon^2 c_{\lambda} c_{\mu} \lambda^{-s-\frac12} \mu^{-s}, \qquad \mu \ll \lambda.
\end{equation}
This is the primary way we will use this translation invariance in our proofs.
\end{remark}

\begin{remark}
The bilinear $L^2_{t,x}$ bounds should be thought of less as Strichartz-type bounds and more as transversality bounds, i.e. where we pair two waves traveling in transversal directions. This is exactly the case for the unbalanced bound \eqref{uab-bi-unbal}. But it becomes more subtle  in the balanced case \eqref{uab-bi-bal}, where we allow for nearly parallel waves but penalize them with a factor which is proportional to the angle of interaction. In this context, it becomes critical to know that the two waves are moving along the same flow. This is no longer true if we begin to spatially translate one of the two waves, which is why we have to allow for the $(1+\lambda |x_0|)$ loss. 
\end{remark}

\medskip

We next consider the long-time solutions in Theorem~\ref{t:long}.

\begin{theorem}\label{t:long-fe}
a) Consider the equation \eqref{qnls} with phase rotation symmetry 
and conservative nonlinearity. Let $s > 1$. Let $u \in C([0,T];H^s)$ be a smooth solution  with initial data $u_0$  which has $H^s$ size at most $\epsilon$, and with 
\begin{equation}
 T \ll \epsilon^{-8}.
 \end{equation}
Let $\epsilon \{c_\lambda\}$ be a frequency envelope for the initial data 
in $H^s$. Then the solution $u$ satisfies 
the uniform $L^2$ bounds \eqref{uk-ee} in $[0,T]$, as well as 
the bilinear $L^2_{t,x}$ bounds \eqref{uab-bi-bal}, \eqref{uab-bi-unbal}
 and the long-time $L^6_{t,x}$  Strichartz 
bounds \eqref{uk-se} on subintervals of length $\epsilon^{-6}$.

b) The same result holds for \eqref{dqnls} but with $s$ increased by one unit, $s > 2$.
\end{theorem}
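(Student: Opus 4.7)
The plan is to run a continuity/bootstrap argument over the interval $[0,T]$ with $T \ll \epsilon^{-8}$, parallel in structure to the proof of Theorem~\ref{t:local-fe} but requiring a finer treatment of cubic doubly resonant interactions. Concretely, I would make bootstrap assumptions with the constants in \eqref{uk-ee}, \eqref{uab-bi-bal}, \eqref{uab-bi-unbal} and \eqref{uk-se} doubled (with the Strichartz bound assumed only on subintervals $I \subset [0,T]$ of length $\epsilon^{-6}$) and show they can be improved to the stated constants. Since all bounds hold at $t=0$ by the frequency envelope hypothesis on $u_0$, and since Theorem~\ref{t:local-fe} together with a continuity argument provides the starting estimates and the openness of the bootstrap set, the conclusion will follow.

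The proof proceeds via the paradifferential decomposition of Section~\ref{s:para}, reducing matters to the linear paradifferential flow. On each subinterval of length $\epsilon^{-6}$, the bilinear $L^2$ bounds are produced from the interaction Morawetz estimates of Section~\ref{s:df} and Section on bilinear analysis, using the density-flux conservation laws for mass and momentum, and require only the conservative hypothesis. The Strichartz estimates \eqref{uk-se} are then obtained by the wave packet parametrix argument of Section~\ref{s:str}, where the bootstrap bilinear $L^2$ bounds are used to peel off a perturbative part of the paradifferential coefficients. These two families of dispersive estimates close on themselves on each $\epsilon^{-6}$ subinterval, exactly as in the local theory, producing the loss-of-$1/6$-derivative Strichartz estimates appropriate for this longer timescale.

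The main obstacle, and the heart of the proof, is closing the uniform energy bound \eqref{uk-ee} over the full $\epsilon^{-8}$ interval. A naive energy estimate loses a factor of $\epsilon^2 T$ per time unit from quartic interactions, which is borderline admissible only up to $\epsilon^{-4}$; to push to $\epsilon^{-8}$ one must build a \emph{modified energy} $E^{(s)}(u)$ which incorporates a quartic normal form correction that simultaneously respects the quasilinear structure. The phase rotation symmetry ensures only odd-degree interactions appear, and the nonresonant cubic and quartic terms are removed by the normal form correction. The remaining, non-removable interactions are supported on the cubic doubly resonant set $\calR_2$ from Section~\ref{s:notations}; here one invokes the conservative hypothesis, which guarantees that $c(\xi,\xi,\xi)$ and its first derivatives are real, so that the doubly resonant quartic contribution $\frac{d}{dt} E^{(s)}(u) \big|_{\calR_2}$ is purely imaginary and vanishes at leading order. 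The residual error is a quintic or worse expression estimated using the Strichartz and bilinear $L^2$ bootstrap bounds from the previous step, producing a net increment of order $\epsilon^4 \cdot T \cdot (\epsilon c_\lambda \lambda^{-s})^2$, which closes the bootstrap for $T \ll \epsilon^{-8}$. The DQNLS case is handled identically after a paradifferential normalization that reduces matters to coefficients of the same regularity as in the QNLS analysis, at the price of one extra derivative in the Sobolev index.
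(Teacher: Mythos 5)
Your overall architecture (bootstrap via Proposition~\ref{p:boot}, paradifferential reduction, quartic density corrections built on the conservative hypothesis, interaction Morawetz for the bilinear bounds) matches the paper's strategy, but there are two concrete gaps. The decisive one is the bookkeeping at the end: a net energy increment of size $\epsilon^4\cdot T\cdot(\epsilon c_\lambda\lambda^{-s})^2$ does \emph{not} close the bootstrap for $T\ll\epsilon^{-8}$ — it closes only for $T\ll\epsilon^{-4}$, since closure requires the increment to be $\ll \epsilon^2 c_\lambda^2\lambda^{-2s}$. The reason the paper reaches $\epsilon^{-8}$ is that the residual terms in the corrected density-flux identity ($R^6_{\lambda,m}$, $F^4_{\lambda,m}$, $F^{4,bal}_{\lambda,m}$ in \eqref{dens-flux-m}) are not estimated pointwise-in-time and integrated linearly in $T$; they are bounded in $L^1_{t,x}$ by $\epsilon^4 C^4 c_\lambda^4\lambda^{-2s}$ \emph{per subinterval of length $\epsilon^{-6}$} (Proposition~\ref{p:dens-flux-an}), because the long-time Strichartz bound \eqref{uk-se-boot} and the bilinear $L^2$ bounds are themselves spacetime-integrated quantities that do not grow linearly in the interval length. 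Summing over the $T\epsilon^{6}$ subintervals gives a total drift $T\epsilon^{10}c_\lambda^4\lambda^{-2s}$, i.e.\ a relative drift $\epsilon^{8}$ per unit time, which is exactly what yields the $\epsilon^{-8}$ lifespan. Without this subinterval accounting your argument, as stated, proves only the $\epsilon^{-4}$ timescale already covered by Theorem~\ref{t:local-fe}.

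The second gap is the mechanism you propose for the long-time Strichartz bound \eqref{uk-se}: the wave packet parametrix of Section~\ref{s:str} is used in the paper only under the restriction \eqref{small-t-loc}, $T\ll\epsilon^{-4}$ (this enters the coherence condition \eqref{good-symbol} through $\delta x=\sqrt{T}\ll\epsilon^{-2}$), and it produces \emph{lossless} estimates, not the $1/6$-loss bound \eqref{uk-se}; moreover, re-running the full local bootstrap (including \eqref{uk-se-short-boot}) on each short subinterval risks circularity with the envelope propagation you are trying to prove. In the paper, \eqref{uk-se} on $\epsilon^{-6}$ subintervals is obtained by a much softer route: interpolation between the uniform energy bound and the balanced diagonal bilinear bound $\|\partial_x|u_\lambda|^2\|_{L^2}$ (see \eqref{uk-bi}), followed by H\"older in time, which is where the $\epsilon^{-6}$ restriction on the subinterval length comes from. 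You should replace the parametrix step by this interpolation argument (or else justify carefully how lossless short-time estimates are summed without circularity); the parametrix is genuinely needed only for the lossless short-time Strichartz estimates of the local theory.
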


This is largely the same conclusion as in the previous theorem, but on a much longer time scale. One significant difference is in the $L^6_{t,x}$ Strichartz estimates, where we now allow for a $1/6$ derivative loss.

Finally, we consider bounds for global solutions in the setting of Theorem~\ref{t:global}.

\begin{theorem}\label{t:global-fe}
a) Consider the equation \eqref{qnls} with phase rotation symmetry  and conservative, defocusing  nonlinearity. Let $s > 1$. Let $u \in C([0,T];H^s)$ be a smooth solution with initial data $u_0$  which has $H^s$ size at most $\epsilon$. Let $\epsilon \{c_\lambda\}$ be a frequency envelope for the initial data 
in $H^s$. Then the solution $u$ satisfies 
the bounds \eqref{uk-ee}, \eqref{uab-bi-bal}, \eqref{uab-bi-unbal} and \eqref{uk-se} in the full time interval $[0,T]$.

b) The same result holds for \eqref{dqnls} but with $s$ increased by one unit, $s > 2$.
\end{theorem}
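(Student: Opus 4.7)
The plan is to establish Theorem~\ref{t:global-fe} by a global continuity/bootstrap argument layered on top of Theorem~\ref{t:local-fe}. Starting from the local solution provided by Theorem~\ref{t:local-fe}, I would fix a large constant $C$ and define $T^{\ast}$ as the supremum of times on which the frequency envelope assumptions \eqref{uk-ee}, \eqref{uab-bi-bal}, \eqref{uab-bi-unbal} and \eqref{uk-se} hold with constant $C$; the set on which the assumptions hold is closed by time continuity of the relevant norms, and Theorem~\ref{t:local-fe} together with an improvement $C \to C/2$ of all four bounds gives openness. This reduces the proof to the closure of the bootstrap with constants independent of $T$.

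For the bilinear $L^2$ bounds \eqref{uab-bi-bal}, \eqref{uab-bi-unbal} and the long time Strichartz bound \eqref{uk-se}, I would proceed as in the closure of the bootstrap for Theorem~\ref{t:long-fe}: recast the equation in paradifferential form using Section~\ref{s:para}, derive bilinear bounds for the paradifferential flow via the interaction Morawetz machinery of Section~\ref{s:df}, and derive the lossless $L^6$ Strichartz bound for the paradifferential flow from the wave packet parametrix of Section~\ref{s:str}. The cubic source terms are handled perturbatively using the bootstrap assumptions: each extra $u$ factor contributes an $\epsilon$, and the Strichartz and bilinear bounds in the bootstrap absorb the nonresonant cubic corrections without time growth. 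The difference compared to Theorem~\ref{t:long-fe} is that, once the energy bound \eqref{uk-ee} is in place globally in time, there is no $\epsilon^{-6}$ subinterval restriction, and the bilinear/Strichartz bounds close directly on the whole interval $[0,T]$.

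The crucial new ingredient, and what I expect to be the main obstacle, is the closure of the uniform energy bound \eqref{uk-ee} on a time interval of arbitrary length. For this I would construct, at each dyadic frequency $\lambda$, a modified energy $E_\lambda(u)$ that absorbs the highest order quasilinear corrections in the style of the modified energy method of \cite{BH,IT-g}, further corrected by a quartic normal form term $Q_\lambda(u)$ designed to eliminate the contribution of the nonresonant cubic interactions. What remains in $\frac{d}{dt}(E_\lambda + Q_\lambda)$ is the doubly resonant contribution supported on $\calR_2$ plus quintic-and-higher terms. Here the \emph{conservative} hypothesis forces the leading doubly resonant terms to vanish, while the \emph{defocusing} hypothesis \eqref{defocusing} makes $Q_\lambda$ sign-definite and yields the coercivity
\[
E_\lambda(u) + Q_\lambda(u) \gtrsim \|u_\lambda\|_{L^2}^2,
\]
uniformly under the smallness afforded by the bootstrap. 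The remaining quintic-and-higher time derivative is then controlled in $L^1_t$ by the bilinear and Strichartz bounds of the bootstrap, contributing at worst $O(\epsilon^4) c_\lambda^2 \lambda^{-2s}$ globally in time, which closes \eqref{uk-ee} with constant $C/2$. The delicate point is ensuring that the sign-definiteness of $Q_\lambda$ survives the paradifferential truncations at each dyadic shell and that the doubly resonant cancellation is robust enough under these truncations; this is where the full strength of \eqref{defocusing} is needed. The same argument, run one derivative higher, handles the \eqref{dqnls} case in part (b).
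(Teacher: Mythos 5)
There is a genuine gap, and it concerns precisely the mechanism by which the defocusing hypothesis produces the \emph{global} long-time Strichartz bound \eqref{uk-se}. You propose to close \eqref{uk-se} by treating the cubic source terms perturbatively and invoking the lossless paradifferential Strichartz estimates of Section~\ref{s:str} "without time growth" once the energy bound is global. This cannot work: the parametrix-based estimates of Section~\ref{s:str} are short-time results (they require $T\ll \epsilon^{-4}$ through the condition \eqref{good-symbol} with $\delta x=\sqrt{T}$), and, more importantly, the doubly resonant cubic interactions $C_\lambda(u,\bu,u)$ are \emph{not} perturbative on long time scales — the bound \eqref{res-nl} carries a factor $T^{1/2}$ and is only usable for $T\ll\epsilon^{-2}$-type intervals. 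In the paper, \eqref{uk-se} is obtained globally from the interaction Morawetz identity in the diagonal case $u=v$, $\mu=\lambda$: the sextic term $\bJ^6_\lambda$ has diagonal symbol $c(\xi,\xi,\xi)\phi_\lambda^4(\xi)$ (the analogue of Lemma~5.1 of \cite{IT-global}), and the defocusing condition \eqref{defocusing} makes this term positive, so that $\|B_\lambda u\|_{L^6}^6$ appears with a favorable sign next to $\|\partial_x|u_\lambda|^2\|_{L^2}^2$ and is bounded by $\bI$, $\bJ^8$, $\bK$. This is exactly why, in the merely conservative case, the paper only obtains \eqref{uk-se} on $\epsilon^{-6}$ subintervals (by interpolating energy with the diagonal bilinear bound) and correspondingly only an $\epsilon^{-8}$ lifespan; your claim that the Strichartz bound "closes directly on the whole interval" once energy is global would erase this distinction and in effect prove the global result without using defocusing where it is actually needed.

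Relatedly, your use of the defocusing hypothesis — sign-definiteness of a quartic correction $Q_\lambda$ so that $E_\lambda+Q_\lambda\gtrsim\|u_\lambda\|_{L^2}^2$ — is not the mechanism in the paper and is not needed for the energy closure. The paper's quartic corrections $B^4_{\lambda,m}$, $B^4_{\lambda,p}$ to the mass and momentum \emph{densities} (in density-flux form, Section~\ref{s:df}) are constructed from the conservative assumption alone, via the second-order vanishing of $c^4_{\lambda,m}$ on $\calR_2$ and the division relation \eqref{choose-R4m-new}; coercivity of $\ms_\lambda$ is automatic because the correction is small, by \eqref{b4-est}, with no sign condition. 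The energy bound \eqref{uk-ee} then follows globally (in the defocusing case) by integrating \eqref{dens-flux-m} and using \eqref{pert-flux-source}, whose proof consumes the long-time Strichartz bootstrap bound; the logical loop is closed because \eqref{uk-se} is itself recovered globally via the Morawetz positivity described above. So the missing ingredient in your proposal is the diagonal interaction Morawetz analysis with the defocusing sign; without it, neither the global \eqref{uk-se} nor, consequently, the global error estimates feeding the energy bound are available.
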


We continue with a result that applies for the linearized  equation, and which will be essential in order to obtain local in time bounds for the linearized equation and in turn for well-posedness. For a description of the linearized equation we refer the reader to the next section, and in particular the equation 
\eqref{qnls-lin}. Our bounds for the linearized equation are as follows:

\begin{theorem}\label{t:linearize-fe}
a) Consider the equation \eqref{qnls} with cubic nonlinearity.
Let $u$ be a solution as in Theorem~\ref{t:local-fe}. Let $v$ be a solution to the associated linearized equation 
 with $L^2$ initial data $v_0$ and  with frequency envelope $d_\lambda$. Then $v$ satisfies 
the following bounds:
\begin{enumerate}[label=(\roman*)]
\item Uniform frequency envelope bound:
\begin{equation}\label{uk-ee-lin}
\| v_\lambda\|_{L^\infty_t L^2_x} \lesssim d_\lambda ,
\end{equation}
\item Balanced bilinear $(v,v)$-$L^2$ bound:
\begin{equation} \label{vvab-bi-bal-lin}
\| \partial_x(v_\lambda \bv_\mu^{x_0})  \|_{L^2_{t,x}} \lesssim d_\lambda d_\mu  \lambda^\frac12(1+ \lambda |x_0|)
, \qquad \lambda\approx \mu ,
\end{equation}
\item Unbalanced bilinear $(v,v)$-$L^2$ bound:
\begin{equation} \label{vvab-bi-unbal-lin}
\| \partial_x(v_\lambda \bv_\mu^{x_0})  \|_{L^2_{t,x}} \lesssim d_\lambda d_\mu (\lambda+\mu)^\frac12
, \qquad \mu \not\approx \lambda,
\end{equation}
\item Balanced bilinear $(u,v)$-$L^2$ bound:
\begin{equation} \label{uvab-bi-bal-lin}
\| \partial_x(u_\lambda \bv_\mu^{x_0})  \|_{L^2_{t,x}} \lesssim c_\lambda d_\mu  \lambda^{\frac12-s}(1+ \lambda |x_0|)
, \qquad \lambda\approx \mu ,
\end{equation}
\item Unbalanced bilinear $(u,v)$-$L^2$ bound:
\begin{equation} \label{uvab-bi-unbal-lin}
\| \partial_x(u_\lambda \bv_\mu^{x_0})  \|_{L^2_{t,x}} \lesssim c_\lambda d_\mu \lambda^{-s} (\lambda+\mu)^\frac12
, \qquad \mu \not\approx \lambda ,
\end{equation}
\item Strichartz estimates:
\begin{equation}\label{v-se}
\|v_\lambda \|_{L^6_{t,x}} \lesssim d_\lambda.    
\end{equation}
 \end{enumerate}

b) The same result holds for \eqref{dqnls}  but with $s$ increased by one unit, $s > 2$.
\end{theorem}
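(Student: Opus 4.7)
The plan is to run a closed bootstrap on all six linearized bounds \eqref{uk-ee-lin}--\eqref{v-se} simultaneously, in direct parallel with the argument for the full equation in Theorem~\ref{t:local-fe}. First I would write down the linearized equation around $u$; schematically, for \eqref{qnls} this has the form
\[
(i\partial_t + g(u)\partial_x^2)v = L_1(u,\partial u)\,v + L_2(u,\partial u)\,\bar v,
\]
where by the cubic hypothesis every coefficient is at least quadratic in $u$. Applying Bony's paradifferential calculus and the multilinear paradifferential expansion of Section~\ref{s:notations}, I would split this into the paradifferential principal part $(i\partial_t + T_{g(u)}\partial_x^2)v$ plus a remainder source $f$ whose norm is controlled by the bounds on $u$ supplied by Theorem~\ref{t:local-fe}. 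This is precisely the common-core reduction (c) from the outline, so all six estimates reduce to their analogues for the linear paradifferential flow, for which the density-flux/interaction Morawetz and wave packet parametrix machinery of Sections~\ref{s:df}--\ref{s:str} is already available.

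For \eqref{uk-ee-lin}, I would use the density-flux mass identity for the linearized paradifferential equation localized at frequency $\lambda$, combined with Gronwall; the perturbative errors close via the bootstrap on $v$ together with \eqref{uk-ee}, \eqref{uab-bi-bal}, \eqref{uk-se-short} for $u$. For the $(v,v)$ bilinear bounds \eqref{vvab-bi-bal-lin}, \eqref{vvab-bi-unbal-lin}, I would repeat the interaction Morawetz computation that yielded \eqref{uab-bi-bal}, \eqref{uab-bi-unbal}, now applied to two copies of the linearized paradifferential equation at frequencies $\lambda$ and $\mu$ with one factor translated by $x_0$. The coercive bilinear Morawetz identity furnishes the $\lambda^{1/2}(1+\lambda|x_0|)$ (respectively $(\lambda+\mu)^{1/2}$) gain on the left, while the cubic errors on the right split into an $L^6$ Strichartz factor and a bilinear $L^2$ factor and close by bootstrap.

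The genuinely new feature is the mixed $(u,v)$ bilinear bounds \eqref{uvab-bi-bal-lin}, \eqref{uvab-bi-unbal-lin}. The key observation is that although $u$ and $v$ solve different nonlinear equations, once we pass to the paradifferential level they satisfy the \emph{same} linear paradifferential Schr\"odinger equation with coefficient $T_{g(u)}$, modulo perturbative sources. Interaction Morawetz applied to the pair $(u_\lambda, v_\mu^{x_0})$ then produces the same coercive bilinear identity as in the $(v,v)$ case, yielding the $\lambda^{1/2}(1+\lambda|x_0|)$ and $(\lambda+\mu)^{1/2}$ gains. The mixed weighting $c_\lambda d_\mu \lambda^{-s}$ on the right-hand side arises transparently from plugging in the $H^s$ bound on $u_\lambda$ and the $L^2$ bound on $v_\mu$. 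Finally, \eqref{v-se} follows from the lossless paradifferential Strichartz estimate of Section~\ref{s:str}, obtained via the wave packet parametrix of \cite{MMT-param} after removing the nonperturbative part of the coefficient by means of the bilinear bounds, with the paradifferential source $f$ estimated in $S'$ using the $(u,v)$-bilinear and Strichartz bounds from the bootstrap.

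The main obstacle will be twofold. The first is to verify that the density-flux and Morawetz weight identities of Section~\ref{s:df}, constructed there for a single self-adjoint paradifferential flow, still assemble into a coercive bilinear identity when the two factors are $u_\lambda$ and $v_\mu$ and the errors are genuinely asymmetric (one coming from the full nonlinear source, the other from the linearized one). The second, more subtle point is tracking the correct $(1+\lambda|x_0|)$ loss under a spatial translation of just one factor: this loss reflects the fact that the balanced bilinear bound is sensitive to the transversality angle rather than to the frequency size, and translating a single factor disrupts the common transport by the paradifferential flow that underlies the angle gain. Once these two points are handled, all remaining estimates close by the same bootstrap mechanism used for $u$.
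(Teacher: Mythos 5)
Your overall strategy is the paper's: recast the linearized flow in paradifferential form, treat the remainder as a perturbative source, and run the interaction Morawetz and paradifferential Strichartz machinery in a joint bootstrap; your observation that the mixed $(u,v)$ bounds come for free because $u_\lambda$ itself solves the same paradifferential equation modulo perturbative sources is exactly how the paper proceeds (in Theorem~\ref{t:para}, $u_\lambda$ is admitted as one of the $v_\lambda$'s with $d_\lambda = \epsilon\lambda^{-s}c_\lambda$). But there is a genuine gap in how you propose to measure the source. You plan to control the remainder $f = N^{lin}_\lambda v$ by "its norm'' and, for the Strichartz step, explicitly in $S'$. This is precisely what fails at $s>1$ and what the paper is engineered to avoid: terms such as the commutator $\partial_x g_{[<\lambda]}\,\partial_x v_\lambda$, $P_\lambda\bigl((g(u)-g(u_{<\lambda}))\partial_x^2 v\bigr)$, and $g'(u)\partial_x^2 u\, v$ carry two derivatives distributed over factors that are only in $H^s$ ($s>1$) and $L^2$, and no standalone $L^p_tL^q_x$ (dual Strichartz) estimate of $f_\lambda$ closes without losing up to a full derivative. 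The derivative can only be recovered by pairing $f_\lambda$ against a second frequency-$\lambda$ solution and using two transversal bilinear $L^2$ bounds on the resulting quadrilinear expression; this is why the paper measures sources only through $\sup_{x_0}\| f_\lambda\, \bv_\lambda^{x_0}\|_{L^1}$ as in \eqref{dl} (Lemma~\ref{l:lin-1}) and, for Strichartz, through the duality norm $X_\lambda^*$ built from adjoint paradifferential test functions bounded in $X_\lambda$ (Lemma~\ref{l:lin-2}). Without this device your bootstrap does not close at the claimed regularity; it would only reproduce the high-regularity results.

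A secondary structural point you miss is the order of operations forced by this duality setup: the energy and bilinear bounds must be established first, by pairing the source against translates of $v_\lambda$ itself and invoking Theorem~\ref{t:para}; only after that does $v_\lambda$ qualify (via its now-controlled $X_\lambda$-type data) so that the $X_\lambda^*$ bound on $N^{lin}_\lambda v$ can be proved and Theorem~\ref{t:para-se} applied to get \eqref{v-se}. Treating all six bounds in one undifferentiated simultaneous bootstrap with an $S'$ source estimate obscures this and risks circularity (the paper remarks on exactly this point). Two further details you would need: in the $X_\lambda^*$ step the adjoint test function $w_\lambda$ has no $L^6$ bound, so balanced quartic terms must be handled with a $T^{1/2}$ H\"older-in-time factor (using $T\ll\epsilon^{-4}$) and the small Littlewood--Paley step $\step$ to exclude one balanced configuration; and the $(1+\lambda|x_0|)$ loss you correctly flag is produced concretely by writing $g_{[<\lambda]}-g_{[<\lambda]}^{x_0}=\int_0^{x_0}\partial_x g_{[<\lambda]}^y\,dy$ and applying unbalanced bilinear bounds under the $y$-integral, not by any modification of the Morawetz weights.
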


This theorem in particular yields $L^2$ well-posedness for the linearized equation.

\bigskip

\subsection{The bootstrap hypotheses}

To prove the above theorems, we make a bootstrap assumption where we assume the same bounds but with a larger constant $C$, as follows:

\begin{enumerate}[label=(\roman*)]
\item Uniform frequency envelope bound:
\begin{equation}\label{uk-ee-boot}
\| u_\lambda \|_{L^\infty_t L_x^2} \leq C\epsilon c_\lambda \lambda^{-s},
\end{equation}
\item Balanced bilinear $L^2$ bound:
\begin{equation} \label{uab-bi-bal-boot}
\| \partial_x(u_\lambda \bu_\mu(\cdot+x_0))  \|_{L^2_{t,x}} \leq C^2\epsilon^2 c_\lambda c_\mu \lambda^{-s} \mu^{-s} \lambda^\frac12 (1+ \lambda |x_0|)
, \qquad \lambda \approx \mu, 
\end{equation}

\item Unbalanced bilinear $L^2$ bound:
\begin{equation} \label{uab-bi-unbal-boot}
\| \partial_x(u_\lambda \bu_\mu(\cdot+x_0))  \|_{L^2_{t,x}} \leq C^2 \epsilon^2 c_\lambda c_\mu \lambda^{-s} \mu^{-s} \lambda^\frac12 
, \qquad \mu \ll \lambda,
\end{equation}

\item Localized Strichartz bound, short time version:
\begin{equation}\label{uk-se-short-boot}
\| u_\lambda \|_{L_{t,x}^6}^6 \leq C^6 (\epsilon c_\lambda)^6 \lambda^{-6s},
\end{equation}

\item Localized Strichartz bound, long time version:
\begin{equation}\label{uk-se-boot}
\| u_\lambda \|_{L_{t,x}^6}^6 \leq C^4 (\epsilon c_\lambda)^4 \lambda^{-4s-1}.
\end{equation}
\end{enumerate}

Then we seek to improve the constant in these bounds. The gain will come from the fact that the in the proof $C$'s will always come paired with extra $\epsilon$'s. This will imply that the bounds we prove have universal implicit constants, independent of $C$, provided that  $\epsilon \ll_C 1$.

Precisely, we claim that

\begin{proposition}\label{p:boot}
a) It suffices to prove Theorem~\ref{t:local-fe}
under the bootstrap assumptions \eqref{uk-ee-boot}, \eqref{uab-bi-bal-boot},
\eqref{uab-bi-unbal-boot}
and \eqref{uk-se-short-boot}.

b) It suffices to prove Theorem~\ref{t:long-fe}
under the bootstrap assumptions \eqref{uk-ee-boot}, \eqref{uab-bi-bal-boot}, \eqref{uab-bi-unbal-boot}  and \eqref{uk-se-boot}.

c) It suffices to prove Theorem~\ref{t:global-fe}
under the bootstrap assumptions \eqref{uk-ee-boot}, \eqref{uab-bi-bal-boot}, \eqref{uab-bi-unbal-boot} and 
\eqref{uk-se-boot}.
\end{proposition}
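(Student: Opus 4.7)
The plan is a standard continuity/bootstrap argument whose essence is that the bootstrap constants $C$, $C^2$, $C^4$, $C^6$ in \eqref{uk-ee-boot}--\eqref{uk-se-boot} differ from the absolute constants in the target bounds \eqref{uk-ee}--\eqref{uk-se} only by pure powers of $C$. Once the sharp bounds are established on some interval under the weaker bootstrap hypothesis, choosing $C$ large but fixed and requiring $C\epsilon \ll 1$ makes the bootstrap bounds strictly slack, which closes the argument.

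First, I would reduce to the case where the solution is regular enough that Theorem~\ref{t:regular} applies: approximate $u_0$ by $H^{s'}$ data $u_0^{(n)}$ with $s' > 2$ (respectively $s' > 3$ for \eqref{dqnls}), while retaining the same $H^s$ frequency envelope $\epsilon c_\lambda$ up to a factor $1+o(1)$. For each $u_0^{(n)}$ Theorem~\ref{t:regular} provides a smooth solution $u^{(n)}$ on some nontrivial time interval, with norms that are continuous in time. The proof of each of the three theorems for $u$ then reduces to establishing the frequency envelope conclusions for $u^{(n)}$ uniformly in $n$ on the targeted time interval, and then passing to the limit.

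For fixed $n$, let $T^*$ denote the supremum of times $\tau \in [0,T]$ for which the appropriate subset of \eqref{uk-ee-boot}--\eqref{uk-se-boot} holds on $[0,\tau]$. At $\tau = 0$ the bounds reduce to the frequency envelope assumption on $u_0^{(n)}$ (with constant $1$, not $C$), so $T^* > 0$; the set of such $\tau$ is closed because the relevant norms $L^\infty_t L^2$, $L^2_{t,x}$, and $L^6_{t,x}$ of frequency-localized components are lower semicontinuous in $\tau$ for regular solutions. Now assume the corresponding theorem holds under the bootstrap hypothesis; then on $[0,T^*]$ the sharp conclusions \eqref{uk-ee}--\eqref{uk-se} hold with absolute constants $K$ independent of both $C$ and $\epsilon$. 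Fixing $C \gg K$ and $\epsilon \ll C^{-1}$ yields the bootstrap bounds with strict inequality on $[0,T^*]$, and continuity in $\tau$ propagates this strict inequality onto a slightly larger interval $[0,T^* + \delta]$, contradicting the maximality of $T^*$ unless $T^* = T$.

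The argument propagates the bounds to the full targeted interval uniformly in $n$, after which standard stability of the approximation passes them to $u$ itself. For the global result of Theorem~\ref{t:global-fe}, one additionally notes that the uniform bound \eqref{uk-ee} together with the continuation criterion built into Theorem~\ref{t:regular} rules out finite-time blowup of the $u^{(n)}$. The main obstacle, of course, is not this reduction (which is essentially bookkeeping once one verifies that every occurrence of $C$ in the bootstrap bounds is matched by an extra $\epsilon$ when one aims at the target bounds), but rather the bootstrap step itself: producing the sharp bounds from the weaker hypotheses requires orchestrating the interaction Morawetz analysis, the lossless Strichartz estimates, and, for the long-time and global cases, the conservative and defocusing structure of the cubic doubly resonant interactions.
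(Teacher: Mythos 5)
Your proposal is correct and follows essentially the same route as the paper: a continuity argument in time, taking $T^*$ maximal so that the bootstrap bounds \eqref{uk-ee-boot}--\eqref{uk-se-boot} hold, using closedness of the sublevel sets of the (monotone, continuous in $\tau$) frequency-localized norms, invoking the bootstrap version of the theorem to get the sharp bounds \eqref{uk-ee}--\eqref{uk-se} with constants independent of $C$, and then choosing $C$ large and $C\epsilon\ll 1$ to obtain strict improvement and propagate past $T^*$ via Theorem~\ref{t:regular}, contradicting maximality. The extra layer of approximating the data by regular data and ``passing to the limit'' is unnecessary for this proposition (the frequency-envelope theorems are stated for already-given regular solutions, and rough solutions are constructed later in Section~\ref{s:lwp}), and the stability claim used there could not be justified at this stage anyway, but since it is not needed it does not affect the correctness of the reduction.
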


Similar bootstrap assumptions are made for $v$ in the proof of Theorem~\ref{t:linearize-fe} in Section~\ref{s:lwp}.

\subsection{The continuity argument}
To conclude this section, we provide the continuity argument which shows that it suffices  to prove Theorem~\ref{t:local-fe} under the bootstrap assumptions \eqref{uk-ee-boot}-\eqref{uk-se-short-boot}. Similar arguments apply for Theorem~\ref{t:long-fe}
and Theorem~\ref{t:global-fe}.

We start with a regular initial data $u_0 \in H^k$ with large $k$, and we denote by $T_0$ the maximal time for which the  corresponding solution $u$ exists in $[0,T_0]$ and satisfies the
bounds 
\eqref{uk-ee-boot}-\eqref{uk-se-short-boot} in $[0,T_0]$. By the local well-posedness result in Theorem~\ref{t:regular} we have $T_0 > 0$. Assume by contradiction that $T_0 < T$.
Then the bootstrap version of the theorem implies that the bounds \eqref{uk-ee} -\eqref{uk-se} hold in $[0,T_0]$. In particular, $u(T_0)$ will also be controlled 
by the same frequency envelope $c_\lambda$ coming from the initial data, therefore 
it also belongs to $H^k$.
By Theorem~\ref{t:regular}, this implies in turn that 
the solution exists in a larger interval $[0,T_0+\delta]$
as a regular solution. Furthermore, if $\delta$ is small enough
then the bounds  \eqref{uk-ee-boot}-\eqref{uab-bi-unbal-boot} will hold in $[0,T_0+\delta]$. thereby contradicting the maximality of $T_0$.

\section{A paradifferential/resonant  expansion of the equation}
\label{s:menagerie}

Our goal here is to favourably recast our quasilinear Schr\"odinger evolution \eqref{qnls} or \eqref{dqnls} in a paradifferential fashion. To fix the notations, we will work with \eqref{qnls}; the analysis is essentially the same for \eqref{dqnls}.

Thus our starting point is the equation \eqref{qnls}. Given a solution $u$, we also consider the linearized equation around $u$. For convenience we write it in divergence form 
\begin{equation}\label{qnls-lin}
 i \partial_t v + \partial_x g(u) \partial_x v 
= N^{lin}_u(v,\partial_x v).  
\end{equation}
Here the nonlinearity $N^{lin}_u(v,\partial_x v)$ is at least cubic, and each term contains at most two derivatives, of which at most one is applied to $v$. 

We will expand the full equation \eqref{qnls} in a dual fashion, separating two principal components:
\begin{enumerate}[label=(\roman*)]
\item The paradifferential part, which accounts for the quasilinear character of the problem. This is essential for all of our results.

\item The doubly resonant part, which accounts for the nonperturbative semilinear part of the nonlinearity. This is critical for the long-time results.
\end{enumerate}
The linearized equation will be similarly expanded paradifferentially, but there we omit the second step. This suffices because the linearized equation is only needed for local well-posedness.

\subsection{The quasilinear Schr\"odinger equation and related flows}
In a first approximation, one may view the paradifferential part as simply obtained by truncating the coefficients in the principal part to lower frequencies,
\begin{equation}\label{paraT}
i \partial_t v + \partial_x T_{g(u)} \partial_x v 
= f.
\end{equation}
While this might suffice for the local well-posedness result in Theorem~\ref{t:local}, it is not precise enough for the long-time results, as the low frequencies of $g(u)$  also may include some doubly resonant $ high\times high$ quadratic contributions. This motivates us to make a better choice for the paracoefficients. Algebraically it will be easier to deal with  the corresponding frequency $\lambda$ evolution, which may be taken as
\begin{equation}\label{para}
i \partial_t v_\lambda + \partial_x g_{[<\lambda]} \partial_x v_\lambda 
= f_\lambda, \qquad v_\lambda(0) = v_{0,\lambda},
\end{equation}
where the truncated metric is defined as 
\begin{equation}
g_{[<\lambda]} := P_{<\frac{\lambda}{4}} g (u_{\ll\lambda}).
\end{equation}
Here we carefully localize  $u$ first to lower frequencies, 
rather than $g(u)$ directly; this is essential later on so that we do not include the doubly resonant interactions in the paradifferrential flow. On the other hand, using the second order elliptic operator in divergence form is more a convenience, as the corresponding commutator terms only play  a perturbative role.
\bigskip

One may express both the full equation and the linearized equation in terms of the paradifferential flow, in the form
\begin{equation}\label{para-full}
i \partial_t u_\lambda + \partial_x g_{[<\lambda]} \partial_x u_\lambda 
= N_\lambda(u), \qquad v_\lambda(0) = u_{0,\lambda},
\end{equation}
respectively
\begin{equation}\label{para-lin}
i \partial_t v_\lambda + \partial_x g_{[<\lambda]} \partial_x v_\lambda 
= N^{lin}_\lambda v, \qquad v_\lambda(0) = v_{0,\lambda},
\end{equation}
The source terms $N_\lambda(u)$, respectively $N^{lin}_\lambda$ will be written explicitly later when needed. They will be expected to play a perturbative role for the short-time results.

\bigskip

For the long-time results there is one additional portion of the nonlinearity
which plays a nonperturbative role, namely the doubly resonant part which corresponds to trilinear interactions of nearly equal frequencies.  To capture those, we introduce a symmetric symbol $c_{diag}$, smooth on the corresponding dyadic scale,
so that
\[
c_{diag}(\xi_1,\xi_2,\xi_3) := \left\{
\begin{aligned}
1, & \qquad \mbox{ when }\sum_{i,j=\overline{1,3}} |\xi_i - \xi_j| \ll \la \xi_1\ra + \la \xi_2 \ra 
+ \la \xi_3 \ra 
\\
0, & \qquad  \mbox{ when } \sum_{i,j=\overline{1,3}} |\xi_i - \xi_j| \gtrsim \la \xi_1\ra + \la \xi_2 \ra 
+ \la \xi_3 \ra .
\end{aligned}
\right.
\]
The cubic doubly resonant part of the nonlinearity is defined in terms of the symbol $c$ of the form in \eqref{eq-cubic} as 
\begin{equation}\label{defcl}
c_\lambda(\xi_1,\xi_2,\xi_3) := p_\lambda(\xi_1-\xi_2+\xi_3) \,c_{diag}(\xi_1,\xi_2,\xi_3) \,c(\xi_1,\xi_2,\xi_3).
\end{equation}
Here $p_\lambda$ is the symbol associated to the Littlewood-Paley projector $P_\lambda$.

Correspondingly, we rewrite the frequency localized evolution \eqref{para-full} in the form
\begin{equation}\label{full-lpara}
i \partial_t u_\lambda + \partial_x g_{[<\lambda]} \partial_x u_\lambda = C_\lambda(u,\bu, u) + N_\lambda^{tr}(u) ,
\end{equation}
where $N_\lambda^{tr}(u)$ contains the remaining terms, which 
exclude doubly resonant interactions and can be thought as transversal interactions, hence the  shorthand notation \emph{tr}. At leading order these interactions
can be classified as follows:

\begin{enumerate}[label=(\roman*)]
\item cubic $ high \times high$ terms of the form 
\[
\mu^2 P_{\lambda}L(u_{< \mu}, \bu_\mu, u_\mu) \qquad \mu \gtrsim \lambda.
\]

\item cubic $low \times high$ commutator terms of the form
\[
L(\partial_x u_{< \lambda}, \bu_{<\lambda}, \partial_x u_\lambda),
\qquad L( u_{< \lambda}, \partial_x \bu_{<\lambda}, \partial_x u_\lambda).
\]

\item quintic and higher  $ high \times high$ terms of the form 
\[
\mu^2 L(u_{< \mu}, \bu_{< \mu}, u_{< \mu},\bu_\mu,  u_\mu) \qquad \mu \gtrsim \lambda.
\]

\item quintic and higher $low \times high$ terms of the form
\[
\lambda^2 L(u_{\leq  \lambda}, \bu_{\leq  \lambda},
u_{\leq  \lambda}, \bu_{\leq  \lambda},u_\lambda) .
\]
\end{enumerate}

In our analysis the $N_\lambda^{tr}$ terms will play a perturbative role, though establishing that is not at all immediate, and  is instead part of the challenge. 

We do not repeat this step for the linearized equation, as it lacks some of the structure of the full equation and consequently we do not prove long-time bounds for it. Fortunately such bounds are not needed for the global dispersive estimates we aim to show.

\subsection{Source term bounds}

Now that we have the paradifferential representation \eqref{full-lpara} for \eqref{qnls}, the next step is to prove bounds for the source terms $C_\lambda(u,\bar u,u)$ and $N_\lambda^{tr}(u)$.
The question to ask at this point is, what is a good function space in which to measure these source terms ?

A naive attempt would be to try to use an $L^1_t L^2_x$  norm, as might be suggested by the energy estimates. But this falls short at low regularity for multiple reasons, as (i) it does not take into account the derivatives in the nonlinearity and (ii) our nonlinearity is merely cubic, so we  only have a limited supply of integrated decay to use. So this would only work at high regularity 
and locally in time. A second attempt would be to use a dual Strichartz norm $S'$; but even with this we end up facing the same obstacles.

Another idea introduced in \cite{MMT1,MMT2}  is to use instead local energy spaces. This had the merit of capturing the derivatives in a translation invariant fashion but it is not the most efficient at low regularity.

Here we introduce another strategy, which is somewhat indirect, but it is custom designed to make the most of the bilinear $L^2_{t,x}$ bounds. The starting point is the observation that in order  to prove $L^2$ energy estimates for $u_\lambda$ it would 
suffice to control the integral
\[
\iint N_\lambda(u) \bu_\lambda \, dx dt,
\]
over any time interval. This might motivate one to simply ask that $N_\lambda(u) \bu_\lambda \in L^1_{t,x}$. Such a condition would be enough 
in order to prove energy estimates, but little else. However, as it turns out, this can be made to work if we keep the principle but enlarge somewhat the class of functions to be measured in $L^1_{t,x}$. We will do this in two stages:

\begin{enumerate}[label = (\roman*)]
\item We add some  invariance with respect to translations to the above requirement, and look  at bounds for 
\[
 \| N_\lambda(u) \bu_\lambda^{x_0}\|_{L^1_{t,x}}.
\]
This will suffice in Section~\ref{s:para} for the proof of the bilinear $L^2_{t,x}$ estimates.

\item We enlarge the class of functions for the second factor, and look  instead at bounds for 
\[
 \| N_\lambda(u) \bv_\lambda^{x_0}\|_{L^1_{t,x}},
\]
where $v_\lambda$ belongs to a suitable class of solutions to the linear paradifferential equation \eqref{para}.  This will  be needed later on in Section~\ref{s:str} for the proof of the Strichartz estimates.
\end{enumerate}

Here we carry out the first stage of the two above. The proof is sufficiently robust so that it will carry through  to the second stage without changes when we finally need it.

While for the local well-posedness result one may think of $N_\lambda$ as a whole, for later use in the proof of the global result we will estimate the two components $C_\lambda(u,\bar u,u)$ and $N_\lambda^{tr}(u)$ separately. The bound for  $N_\lambda^{tr}$  is complex but more robust, and does not require the strong Strichartz estimate \eqref{uk-se-short-boot} or time-scale  restrictions. The bound for $C_\lambda$, on the other hand, is far simpler but does need the Strichartz bound \eqref{uk-se-short-boot} and time-scale restrictions. 
We will prove the following estimates:

\begin{proposition}\label{l:N-lambda}
 a) Let $s \geq 1$ and $x_0 \in \R$. Assume that the function $u$ satisfies   the bounds \eqref{uk-ee-boot}-\eqref{uab-bi-unbal-boot} and  \eqref{uk-se-boot}  in a time interval $[0,T]$. Then for $\epsilon$ small enough, the functions $N^{tr}_\lambda(u)$ in \eqref{para-full} satisfy 
 \begin{equation}\label{good-nl}
\| N^{tr}_\lambda(u) \bu_\lambda^{x_0}\|_{L^1_{t,x}} \lesssim C^4 \epsilon^4 c_\lambda^2 \lambda^{-2s}.
 \end{equation}
b) Assume in addition that $u$ satisfies \eqref{uk-se-short-boot} and that $T$ is as in \eqref{small-t-loc}. Then we also have 
 \begin{equation}\label{res-nl}
\| C_\lambda(u,\bu,u) \bu_\lambda^{x_0}\|_{L^1_{t,x}} \lesssim C^4 \epsilon^4 c_\lambda^2 \lambda^{-2s}.
 \end{equation}
\end{proposition}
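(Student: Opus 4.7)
My plan is to estimate $\|N_\lambda^{nr}(u) \bar u_\lambda^{x_0}\|_{L^1_{t,x}}$ and $\|C_\lambda(u,\bar u,u) \bar u_\lambda^{x_0}\|_{L^1_{t,x}}$ term by term after a dyadic decomposition, by pairing $\bar u_\lambda^{x_0}$ with a well-chosen high-frequency factor in each multilinear expression via one of the bilinear $L^2$ bounds from the bootstrap \eqref{uab-bi-bal-boot}--\eqref{uab-bi-unbal-boot}, and controlling the remaining factors via $L^6_{t,x}$ Strichartz bounds and $L^\infty$ bounds produced from Bernstein combined with the envelope \eqref{uk-ee-boot}. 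The slow variation of $c_\lambda$ and the assumption $s>1$ will make all dyadic sums converge.

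For part (a), I would handle each of the four types of terms in $N_\lambda^{nr}(u)$ separately. For the $low\times high$ commutator (ii), the strategy is to integrate by parts to redistribute the derivatives, pair $\partial_x u_\lambda$ with $\bar u_\lambda^{x_0}$ via the balanced bilinear bound \eqref{uab-bi-bal-boot} (absorbing the $(1+\lambda|x_0|)$ loss), and control the low-frequency factor $\partial_x(u_{<\lambda}\bar u_{<\lambda})$ by summing Bernstein $L^\infty$ bounds and bilinear estimates over the dyadic frequencies $\nu<\lambda$. For the $high\times high$ term (i) at $\mu\gtrsim\lambda$, I would first move one derivative off $u_\mu$ by integration by parts, then pair $\partial_x u_\mu$ with $\bar u_\lambda^{x_0}$ via the unbalanced bound \eqref{uab-bi-unbal-boot} when $\mu\gg\lambda$ or the balanced one when $\mu\approx\lambda$, with the residual $u_{<\mu}\bar u_\mu$ factor handled by a second bilinear pairing. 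The $\mu$-sum then converges for $s>1$. The quintic contributions (iii)--(iv) are easier because the two extra $u$ factors yield additional powers of $\epsilon$ from Bernstein $L^\infty$ bounds, leaving ample slack.

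For part (b), the cutoff $c_{diag}$ in the definition of $c_\lambda$ pinches all three input frequencies $\mu_i$ of $C_\lambda(u,\bar u,u)$ to be $\approx\lambda$, so no bilinear $L^2$ estimate enjoys advantageous frequency separation. Instead, I would apply H\"older with three $L^6_{t,x}$ Strichartz factors, controlled by the sharp short-time bound \eqref{uk-se-short-boot}, and one $L^2_{t,x}$ factor for $\bar u_\lambda^{x_0}$, bounded by $T^{1/2}\|u_\lambda\|_{L^\infty_tL^2_x}$. The time restriction $T\ll\epsilon^{-4}$ from \eqref{small-t-loc} yields $T^{1/2}\lesssim\epsilon^{-2}$, which together with the symbol size of $c_\lambda$ on the doubly resonant support combines with the three Strichartz bounds to close at the target $\epsilon^4 c_\lambda^2\lambda^{-2s}$.

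The main obstacle will be case (i) of part (a) in the balanced regime $\mu\approx\lambda$, where two frequency-$\mu$ factors and $\bar u_\lambda^{x_0}$ all sit in essentially the same dyadic shell. The balanced bilinear bound includes a $(1+\lambda|x_0|)$ loss that must be absorbed, which requires exploiting the non-resonance assumption to obtain an effective frequency separation within the shell between $\bar u_\mu$ and the paired high-frequency factor; this in turn requires careful integration by parts to distribute derivatives appropriately among the factors before applying the bilinear bounds.
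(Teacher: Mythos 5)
Your part (b) follows the paper's own argument (H\"older in time giving a $T^{\frac12}$ factor, three short-time $L^6$ Strichartz factors from \eqref{uk-se-short-boot}, one energy factor, and the restriction \eqref{small-t-loc}), so that half is fine. The genuine gap is in part (a), precisely at the point you yourself flag as ``the main obstacle'': your mechanism for the near-diagonal configurations is to pair the translated factor $\bar u_\lambda^{x_0}$ with another frequency-$\lambda$ (or $\mu\approx\lambda$) factor via the balanced bilinear bound \eqref{uab-bi-bal-boot} and to ``absorb'' the $(1+\lambda|x_0|)$ loss. This cannot be made to work: the target \eqref{good-nl} must hold uniformly in $x_0$ with no such factor on the right, and none of the remaining factors supplies any decay in $x_0$, so the loss is simply not absorbable; moreover, for the low$\times$high commutator term $\partial_x(u_{<\lambda}\bar u_{<\lambda})\,\partial_x u_\lambda$ the pair $(u_\lambda,\bar u_\lambda^{x_0})$ is genuinely diagonal, so the ``effective frequency separation within the shell'' you hope to extract from non-resonance is not there. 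The paper never applies the balanced bound to a translated pair in this proposition. Instead it uses the structure of $N^{nr}_\lambda$: since $g$ is at least quadratic, the commutator terms carry \emph{two} low-frequency factors, and one pairs each of the two frequency-$\lambda$ factors (including the translate) with a low-frequency factor, applying the unbalanced bound \eqref{uab-bi-unbal-boot} twice — which is also what produces an $L^1_{t,x}$ bound, as a product of two $L^2_{t,x}$ pairings; one bilinear pairing plus $L^\infty$ bounds on the rest only reaches $L^2_{t,x}$ and would need an inadmissible $T^{\frac12}$. For the high$\times$high terms with $\mu\approx\lambda$, the narrow-step Littlewood--Paley decomposition (Remark \ref{r:nonres}) guarantees that the four frequencies cannot all be balanced once the genuinely doubly resonant part has been removed; that part is by construction placed in $C_\lambda$ rather than $N^{nr}_\lambda$, which is exactly why it is estimated separately in part (b) under the time restriction. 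So in every term of $N^{nr}_\lambda$ two unbalanced (or $O(\lambda)$-separated) pairs are available, and the balanced bound with its $x_0$ loss is never needed.

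A secondary omission: your outline does not address the nonlinear paracoefficients at all. The bulk of the paper's proof is the treatment of $[g_{[<\lambda]},P_\lambda]\partial_x^2 u$, $P_\lambda(P_{\geq\lambda}g(u_{\ll\lambda})\partial_x^2 u)$ and $P_\lambda\bigl((g(u)-g(u_{\ll\lambda}))\partial_x^2 u\bigr)$, which requires the paradifferential expansion \eqref{g-para} together with the Moser-type tail bound of Lemma \ref{l:Moser-lh}, namely $\|P_\mu g(u_{<\lambda})\|_{L^2}\lesssim C^3\epsilon^3 c_\lambda^3 \mu^{-3s}$ for $\mu\gg\lambda$, to handle the cases where the high output frequency must come from the coefficient. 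Without this ingredient (or a substitute), several of the cases in your term-by-term scheme cannot be closed.
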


As a preliminary step in the proof of the proposition, we consider nonlinear expressions of the form $g(u_{<\lambda})$, and show that they are essentially localized at frequencies $\lesssim \lambda$, in other words they satisfy very favourable bounds at frequencies $\gg \lambda$.

\begin{lemma}\label{l:Moser-lh}
Assume that \eqref{uk-ee-boot} and \eqref{uk-se-boot} hold. 
Then for $\mu \gg \lambda$ and large $N$ we have
\begin{equation}\label{a:Moser-tail}
\| P_\mu g(u_{<\lambda}) \|_{L^2_{t,x}} \lesssim C^4 \epsilon^2 c_\lambda^2
\lambda^{-3s} \left(\frac{\lambda}{\mu}\right)^N.
\end{equation}
\end{lemma}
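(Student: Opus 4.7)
\emph{Proof plan for Lemma~\ref{l:Moser-lh}.} The underlying idea is that since $u_{<\lambda}$ is frequency-localized in $[-\lambda,\lambda]$, any content of $g(u_{<\lambda})$ at a frequency $\mu\gg\lambda$ must be produced by high order self-interactions, which are then controlled by the analyticity of $g$.

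First I would expand $g$ in its Taylor series around $0$. Using that $g$ is analytic and at least quadratic at the origin, we can write
\[
g(u_{<\lambda}) - g(0) = \sum_{m \geq 2} g_m(u_{<\lambda}, \bar u_{<\lambda}),
\]
where $g_m$ is an $m$-linear form in $u_{<\lambda}$ and $\bar u_{<\lambda}$ with coefficients bounded, by analyticity, as $|c_m|\lesssim R^{-m}$ for some radius $R>0$ depending only on $g$. The key frequency-support observation is that since the Fourier transform of $u_{<\lambda}$ is supported in $[-\lambda,\lambda]$, each product $u_{<\lambda}^m$ has Fourier support in $[-m\lambda,m\lambda]$, so $P_\mu g_m(u_{<\lambda})=0$ whenever $m\lambda<\mu$. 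Consequently, in the regime $\mu\gg\lambda$, the only surviving terms have $m \geq \mu/\lambda$, which in particular forces $m\geq 3$.

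Next, for each surviving $m$ I would estimate $\|P_\mu g_m(u_{<\lambda})\|_{L^2}$ by separating out the highest-frequency input factor. The frequency constraint forces at least one factor $u_{\kappa_{\max}}$ with $\kappa_{\max}\gtrsim \mu/m$. I would put this factor in $L^\infty_t L^2_x$ using the bootstrap bound \eqref{uk-ee-boot}, and bound the remaining $m-1$ factors in $L^\infty_{t,x}$ by combining Bernstein with the Sobolev embedding $H^{1/2+}\hookrightarrow L^\infty$ (valid since $s>1$), which yields $\|u_{<\lambda}\|_{L^\infty_{t,x}}\lesssim \epsilon$. Combined with $\kappa_{\max}^{-s}\leq (\mu/m)^{-s}=m^s\mu^{-s}$ and the slow-variation of the envelope ($c_{\kappa_{\max}}\lesssim c_\lambda \cdot m^\delta$), this gives the per-term bound
\[
\|P_\mu g_m(u_{<\lambda})\|_{L^2} \lesssim R^{-m}\,\epsilon^m\, c_\lambda\, m^{s+\delta}\, \mu^{-s}.
\]

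Finally I would sum over $m\geq \mu/\lambda$. The tail of the geometric series $\sum (\epsilon/R)^m m^{s+\delta}$ is dominated by its first term $(\epsilon/R)^{\mu/\lambda}(\mu/\lambda)^{s+\delta}$, which is super-polynomially small in $\mu/\lambda$. Writing $(\epsilon/R)^{\mu/\lambda} = (\epsilon/R)^3 (\epsilon/R)^{\mu/\lambda-3}$ and using that the latter factor absorbs any polynomial loss in $\mu/\lambda$, in particular $(\mu/\lambda)^{2s}$, one obtains the claimed bound $\lesssim C^3\epsilon^3 c_\lambda^3 \mu^{-3s}$.

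\emph{Main obstacle.} The naive output of the Sobolev-type estimate in step~3 is only $\mu^{-s}$; the genuine work is bridging the gap of $\mu^{2s}$ down to the target $\mu^{-3s}$. This is achieved precisely by the analytic, super-polynomial decay of the Taylor coefficients together with the constraint $m\geq\mu/\lambda$, and it is the only non-mechanical step in the argument. A secondary subtlety is the interpretation of $\mu\gg\lambda$: the extraction must be robust enough to cover the boundary regime where $\mu/\lambda$ is moderate, which is handled by choosing the dyadic step $\step$ close enough to $1$ and noting that for such moderate ratios the quadratic contribution of $g$ already vanishes by the frequency-support observation, so that all surviving $m$ are at least $3$.
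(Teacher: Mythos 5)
There is a genuine gap, and it sits at exactly the step you flag as the ``only non-mechanical'' one. The constraint $m\gtrsim\mu/\lambda$ and the Taylor-coefficient factor $(\epsilon/R)^{m}$ only buy smallness in the \emph{ratio} $\mu/\lambda$, whereas \eqref{a:Moser-tail} demands decay in $\mu$ itself. In this paper $\mu\gg\lambda$ means a fixed-ratio separation between dyadic frequencies, so the generic regime is $\lambda\to\infty$ with $\mu/\lambda$ bounded; there your surviving range of $m$ starts at a fixed integer, the tail factor $(\epsilon/R)^{\mu/\lambda-3}$ is merely a constant, and your summed bound is $\approx \epsilon^{3}c_\lambda\,\mu^{-s}$. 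The target $\epsilon^{3}c_\lambda^{3}\mu^{-3s}$ is smaller by $c_\lambda^{2}\mu^{-2s}$, which tends to $0$ as $\mu\to\infty$, so no constant can absorb the deficit: the estimate fails by a factor of order $\mu^{2s}$ precisely where it is needed. (In the same bounded-ratio regime the reduction to $m\ge 3$ is also not automatic, since a quadratic term in $g$ can output frequencies up to about $2\lambda$.) Structurally, your argument places only one factor in $L^\infty_t L^2_x$ via \eqref{uk-ee-boot}, so it can produce at most one power of $\epsilon c\,\mu^{-s}$; the three powers in the lemma must each come from a separate factor and cannot be manufactured from the decay of the Taylor coefficients.

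This also explains why \eqref{uk-se-boot} appears in the hypotheses, which your proposal never uses: the $L^2$ norm in \eqref{a:Moser-tail} is a spacetime norm (in Proposition~\ref{l:N-lambda} it is paired in $L^1_{t,x}$ against spacetime bilinear $L^2$ quantities), and an $L^\infty_t L^2_x$ bound would cost a factor $T^{1/2}$ to convert, which is fatal on the long time scales involved. The paper's proof instead reduces to $g$ at least cubic at $0$, applies the second-order paradifferential expansion \eqref{para-expansion} to $u_{<\lambda}$, writes the generic term as $u_{\mu_1}u_{\mu_2}g''(u_{<\mu_2})$ with $\mu_2\le\mu_1<\lambda$, and estimates all three factors in $L^6_{t,x}$: the two explicit factors contribute $\epsilon^{2}c_{\mu_1}c_{\mu_2}\mu_1^{-s}\mu_2^{-s}$, while $P_{\approx\mu}g''(u_{<\mu_2})$ contributes $\epsilon c_{\mu_2}\mu^{-s}(\mu_2/\mu)^{N}$, because the high-frequency tail of a smooth function of a frequency-localized input decays rapidly. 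The factor $(\mu_2/\mu)^{N}$ is then used only to upgrade $\mu_1^{-s}\mu_2^{-s}$ to $\mu^{-2s}$ --- a purely relative gain in the ratio, available uniformly even when $\mu/\lambda$ is bounded. That distribution of the decay over three $L^6$ factors, rather than an appeal to analyticity versus the count $m\ge\mu/\lambda$, is the mechanism your proposal is missing.
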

We remark that here one could go further and estimate
other $L^p$ norms up to $L^1$. One could also  arbitrarily lower the power of $\lambda$. We stop at $p=2$ 
and the power $-3s$ because this is all that will be needed in the sequel.
\begin{proof}
We first use \eqref{uk-ee}, Bernstein's inequality
and dyadic summation to see that 
\begin{equation}
\|\partial^j u_{<\lambda} \|_{L^\infty_{t,x}} \lesssim C \epsilon \max\{ 1, c_\lambda \lambda^{j-s+\frac12}\},
\end{equation}
while from \eqref{uk-se} and dyadic summation we similarly have
\begin{equation}
 \|\partial^j u_{< \lambda}\|_{L^6_{t,x}} \lesssim C\epsilon^\frac23 \max\{ 1, c_\lambda^{\frac{2}{3}}\lambda^{j-\frac{4s+1}{6}}\} \end{equation}
 both with an additional $\log \lambda$ factor if 
 the exponent of $\lambda$ vanishes.

Combining the two, by chain rule one easily sees that similar bounds must hold for $g(u_{<\lambda})$. Choosing $j$ arbitrarily large, this in turn implies that for $\mu \gtrsim \lambda$ we have 
\begin{equation}\label{Pg}
\|P_{\mu}  g(u_{<\lambda}) \|_{L^\infty_{t,x}} \lesssim C \epsilon c_\lambda \mu^{-s+\frac12}
\left(\frac{\lambda}{\mu}\right)^N, \qquad \|P_{\mu} g(u_{< \lambda})\|_{L^6_{t,x}} \lesssim C \epsilon c_\lambda  \mu^{-\frac{4s+1}{6}}
\left(\frac{\lambda}{\mu}\right)^N. 
\end{equation}
Next we observe that the expression in \eqref{a:Moser-tail} vanishes if $g$ is a polynomial, so without loss of generality we can assume that $g$ is at least quintic at $0$.

Then we use the expansion in \eqref{para-expansion-n} for $u_{<\lambda}$ with $n_0 = 4$, estimating each of the terms. For this purpose we harmlessly switch from continuous to discrete summation, and also to dyadic frequency notation. For the first term we write 
\[
g(u_{0}) = u_{0}^4 h(u_{0}), 
\]
with smooth $h$ with $h(0)= 0$. After localization to frequency $\mu \gg 1$ this yields
\[
P_{\mu} g(u_{0}) = P_\mu (u_{0}^3 h(u_{0})) = 
P_\mu (u_{0}^3 P_{\approx \mu} h(u_{0})).
\]
Then we use \eqref{Pg} to estimate the first three factors in $L^6$, the next in $L^\infty$ and then the last one, localized to frequency $\approx \mu$,  also in $L^\infty$,
\[
\|P_\mu g(u_0)\|_{L^2_x} 
\lesssim \|u_0\|_{L^6_x}^3 \|u_0\|_{L^\infty_x} 
\| P_{\approx \mu}  h(u_{0})\|_{L^\infty_x} \lesssim C^5 \epsilon^4 
\mu^{-N}.
\]
The following terms are similar, so we move on to the last one, where,  with exponents satisfying $1 \leq \mu_4 \leq \mu_3 \leq \mu_2 \leq \mu_1 < \lambda$ we estimate 
\[
\begin{aligned}
\|P_{\mu} (u_{\mu_1} u_{\mu_2} u_{\mu_3} u_{\mu_4}g(u_{<\mu_4}))\|_{L^2} \lesssim & \ \|u_{\mu_1}\|_{L^6} 
\|u_{\mu_2}\|_{L^6} \|u_{\mu_3}\|_{L^6}
\|u_{\mu_4}\|_{L^\infty}
\|P_{\approx \mu}  g^{(4)}(u_{<\mu_4})\|_{L^\infty}
\\
\lesssim & \ C^5 \epsilon^4 (c_{\mu_1} c_{\mu_2} c_{\mu_3})^\frac23 c_{\mu_4}^2  
\mu_1^{-\frac{4s+1}{6}} \mu_2^{-\frac{4s+1}{6}}   \mu_3^{-\frac{4s+1}{6}} \mu_4^{-s+\frac12}
\left(\frac{\mu_4}{\mu}\right)^N.
\end{aligned}
\]
Since $s > 1$, the first four exponents are negative,
therefore the dyadic summation  with respect 
to $\mu_1$, $\mu_2$, $\mu_3$ and $\mu_4$ as above is straightforward, and the bound \eqref{Moser-tail} follows.

\end{proof}

We now return to the proof of the proposition.

\begin{proof}[Proof of Proposition~\ref{l:N-lambda}]
 We begin by computing the source terms $N_\lambda(u)$ in the paradifferential equation \eqref{para-full}. We have 
\[
\begin{aligned}
 N_\lambda(u) = & \  g_{[<\lambda]} \partial_x^2 u_\lambda - P_\lambda ( g(u) \partial_x^2 u)    
+ \partial_x g_{[<\lambda]} \partial_x u_\lambda + P_\lambda N(u,\partial_x u)
\\ 
= & \ 
\left([g_{[<\lambda]},P_\lambda]  \partial_x^2 u  
+ (\partial_x g_{[<\lambda]}) \partial_x u_\lambda\right) + P_\lambda( P_{\gtrsim \lambda} g(u_{\ll \lambda})
\partial_x^2 u)
\\ & \
- P_\lambda ( ( g(u)-g(u_{\ll\lambda})) \partial_x^2 u)    
 + P_\lambda N(u,\partial_x u)
\\
:= &  N^1_\lambda(u) + N^2_\lambda(u) + N^3_\lambda(u) + N^4_\lambda(u).
\end{aligned}
\]
We will successively estimate the contributions of 
these four terms to both \eqref{good-nl} and \eqref{res-nl}.
We remark that only the last two terms  $N^3_\lambda(u)$ and $N^4_\lambda(u)$ contain $C_\lambda$ contributions.
We also note that some $u$'s may be conjugated; this is 
not important for most cases, but will be noted where
it is relevant.

\bigskip

\textbf{I. The contribution of $N_\lambda^1$.}
 Here we will only use bilinear $L^2$ estimates. We can rewrite $N_{\lambda}^1(u)$ as a multilinear form
\[
N^1_\lambda(u) = L( \partial_x g_{[<\lambda]}, \partial_x u_\lambda) 
= L( \partial_x g(u_{\ll \lambda}), \partial_x u_\lambda) 
= L(h(u_{\ll \lambda}), u_{\ll \lambda}, \partial_x u_{\ll \lambda}, \partial_x u_\lambda) ,
\]
where we have used a standard commutator bound and the chain rule.
Here $h$ is a smooth function, and, by a slight abuse of notation,
$u_\lambda$ has a slightly larger Fourier support. Then
\[
N^1_\lambda(u) \bu_\lambda^{x_0} = 
L(h(u_{\ll \lambda}), u_{\ll \lambda}, \partial_x u_{\ll \lambda}, \partial_x u_\lambda,u_\lambda).
\]
The $h(u_{<\lambda})$ factor is trivially estimated in $L^\infty$. For the remaining four entries we may select two unbalanced pairs, and apply 
\eqref{uab-bi-unbal-boot} twice 
to obtain
\[
\| N^1_\lambda(u) \bu_\lambda^{x_0}\|_{L^1_{t,x}} \lesssim 
C^4 \epsilon^4 c_{\lambda}^2 \lambda^{-2s}.
\]

\bigskip

\textbf{II. The contribution of $N_\lambda^2$.}
Here we will use both bilinear $L^2$ estimates
and the long-time $L^6$ bound \eqref{uk-se-boot}\footnote{Even this is too strong here.}. We further decompose  $N_\lambda^2$ based on the frequency of the last factor,
\[
\begin{aligned}
N_\lambda^2 = & \ L(P_{\lambda} g(u_{\ll \lambda}), \partial^2 u_{<\lambda})
+ L(P_\lambda  g(u_{\ll\lambda}), \partial^2 u_{\lambda}) +
\sum_{\mu \gg \lambda} L( P_\mu g(u_{\ll\lambda}),\partial^2 u_\mu)
\\
:= & \ N_\lambda^{2,lo} + N_\lambda^{2,med} + N_\lambda^{2,hi}.
\end{aligned}
\]
Here we can use  Lemma~\ref{l:Moser-lh} to estimate
\begin{equation}\label{Moser-tail}
\| P_\mu h(u_{\ll \lambda}) \|_{L^2_{t,x}} \lesssim C^5 \epsilon^5 c_\lambda^5
\mu^{-3s}, \qquad \mu \gtrsim \lambda. 
\end{equation}
for any smooth function $h$. Now we proceed to estimate the contributions of the three terms.
\smallskip

The contribution of $ N_\lambda^{2,lo}$ can be rewritten in the form
\[
 N_\lambda^{2,lo} u_\lambda^{x_0} =   \lambda L(P_{\lambda} g(u_{\ll\lambda}), \partial u_{< \lambda}, u_\lambda).
\]
This can be estimated using \eqref{Moser-tail} for the first factor 
and \eqref{uab-bi-unbal-boot} for the remaining two,
\[
\| N_\lambda^{2,lo} u_\lambda^{x_0}\|_{L^1_{t,x}}
\lesssim \lambda \cdot C^5 \epsilon^5 c_\lambda^5 \lambda^{-3s} 
\cdot C^2 \epsilon^2 c_\lambda \lambda^{-s-\frac12} 
= C^7 \epsilon^7 c_\lambda^6 \lambda^{-4s +\frac12}, 
\]
which is better than needed for $s \geq 1$.
\smallskip

Using the chain rule, the contribution of $ N_\lambda^{2,mid}$ can be rewritten in the form
\[
 N_\lambda^{2,med} u_\lambda^{x_0} =  \lambda L(P_{\lambda} \partial g(u_{\ll\lambda}), \partial u_{\lambda}, u_\lambda) = \lambda L(P_{\lambda} g'(u_{\ll \lambda}), \partial_x u_{\ll \lambda}, \partial u_{\lambda}, u_\lambda).
\]
This is of the same type as the contribution of $N^1_\lambda$ but with an added $\partial u_\lambda$ 
factor, which we simply bound in $L^\infty$ using Bernstein's inequality and the energy bound \eqref{uk-ee-boot}:
\[
\| N_\lambda^{2,med} u_\lambda^{x_0}\|_{L^1_{t,x}}
\lesssim \lambda \cdot C^5 \epsilon^5 c_\lambda^5 \lambda^{-3s} \cdot \lambda^{-s+\frac32}
\cdot C^2 \epsilon^2 c_\lambda \lambda^{-s-\frac12} 
= C^7 \epsilon^7 c_\lambda^6 \lambda^{-5s +2}, 
\]
again much better than needed.
\smallskip

Finally the contribution of the summands in $ N_\lambda^{2,hi}$ can be rewritten in the form
\[
N_\lambda^{2,hi} u_\lambda^{x_0} = \mu^2 L(P_{\mu} g(u_{\ll\lambda}),  u_{\mu}, u_\lambda),
\]
and can be estimated again using \eqref{Moser-tail} for the first factor and \eqref{uk-se-short-boot} for the remaining two,
\[
\| N_\lambda^{2,hi} u_\lambda^{x_0}\|_{L^1_{t,x}}
\lesssim \mu^2 \cdot C^5 \epsilon^5 c_\mu^5 \mu^{-3s} 
\cdot C^2 \epsilon^2 c_\lambda c_\mu  \lambda^{-s} \mu^{-s-\frac12} 
= C^7 \epsilon^7 c_\lambda c_\mu^5 \mu^{-4s +\frac32} \lambda^{-s}.
\]
Here the power of $\mu$ is negative when $s \geq 1$, which guarantees the $\mu$ summation and yields agan a bound which is better then needed. 
\bigskip 

\textbf{III. The contribution of $N_\lambda^3$.}
We begin by writing a paradifferential expansion for the $g$ difference. Since $g-1$ 
is at least quadratic, we have
\begin{equation}\label{g-para}
g(u) - g(u_{\ll \lambda}) = \sum_{\mu \gtrsim \lambda} u_{\mu} g'(u_{\ll \lambda})+ \sum_{  \mu_1 \geq \mu_2 \gtrsim \lambda}
u_{\mu_1} u_{\mu_2} g''(u_{<\mu_2}).
\end{equation}
Now we differentiate the terms in $N_\lambda^3$ depending on the size of the $\mu$'s
and also on the frequency of the last $u$. There are many cases, but most can be easily 
dispensed with.
\begin{description}
\item[a) The contribution of $P_{\lambda} (u_\mu g'(u_{\ll \lambda}) \partial_x^2 u_{< \lambda})$] Here we obtain an expression of the form
\[
L(h(u_{\ll \lambda}), u_{\ll\lambda},u_\mu, \partial^2 u_{<\lambda}, u_{\lambda}),
\]
where we identify two unbalanced pairs of entries therefore we can apply the bilinear bound \eqref{uab-bi-unbal-boot} twice.

\item[b) The contribution of $P_{\lambda} (u_\mu g'(u_{\ll \lambda}) \partial_x^2 u_{ \lambda})$ with $\mu \gg \lambda$] Here we have an expression of the form
\[
L(h(u_{\ll \lambda}), u_{\ll\lambda},u_\mu, \partial^2 u_{\lambda}, u_{\lambda}),
\]
where we can again apply the bilinear bound \eqref{uab-bi-unbal-boot} twice.

\item[c) The contribution of $P_{\lambda} (u_\lambda g'(u_{\ll \lambda}) \partial_x^2 u_{ \lambda})$] 
This can be split into two terms
\[
P_{\lambda} (u_\lambda  P_{\gtrsim \lambda} g'(u_{\ll \lambda}) \partial_x^2 u_{ \lambda})+ 
P_{\lambda} (u_\lambda  P_{\ll \lambda} g'(u_{\ll \lambda}) \partial_x^2 u_{ \lambda}).
\]
The contribution of the first term has the form
\[
L(  P_{\gtrsim \lambda} g'(u_{\ll \lambda}), u_\lambda,u_\lambda,u_\lambda),
\]
and can be estimated using the  $L^2$ bound \eqref{Moser-tail} for the first entry, respectively the $L^6$ bound \eqref{uk-se-boot} for the other three entries.
The second term, on the other hand, yields the  expression
\[
P_{\lambda} (u_\lambda  P_{\ll \lambda} g'(u_{\ll \lambda}) \partial_x^2 u_{ \lambda}) 
u_{\lambda}^{x_0}.
\]
Now we consider the effect of the $P_\lambda$ projection.
If exactly one of the frequency $\lambda$ factors is conjugated, this guarantees that they  are $\lambda$ separated. Otherwise, they do not have to be separated,
but in that case they must be roughly at half frequency compared with $u_\lambda^{x_0}$. In both cases
we can again apply the bilinear bound \eqref{uab-bi-unbal-boot} twice.

\item[d) The contribution of $P_{\lambda} (u_\mu g'(u_{\ll\lambda}) \partial_x^2 u_{ \mu_1})$
with $\mu_1 \gg \lambda$] If $\mu \gtrsim \mu_1$ then we 
obtain 
\[
\mu_1^2 L( u_{\mu}, u_{\ll \lambda}, u_{\mu_1},u_\lambda),
\]
where we directly apply 
\eqref{uab-bi-unbal-boot} twice. This is no longer 
enough if $\mu_1 \gg \mu$, as we get two many $\mu_1$ factors. But then we can insert an extra projector
\[
P_{\lambda} (u_\mu g'(u_{<\lambda}) \partial_x^2 u_{ \mu_1})
= P_{\lambda} (u_\mu P_{\gtrsim \mu_1} g'(u_{\ll \lambda}) \partial_x^2 u_{ \mu_1}),
\]
in which case we can use \eqref{Moser-tail} and one instance of \eqref{uab-bi-unbal-boot}.

\item[e) The contribution of $P_{\lambda} (u_{\mu_1} u_{\mu_2}  g''(u_{\leq \mu_2}) \partial_x^2 u)$ with $\mu_1 \gg  \mu_2$] Here we must also have $\mu_1 \gg \lambda$. We separate in cases depending on the frequency $\mu_3$ of the last $u$. If $\mu_3 \approx \mu_1$ then we get 
\[
\mu_1^2 L( u_{\mu_1}, u_{\mu_2},  g''(u_{\leq \mu_2}), u_{\mu_1},u_\lambda),
\]
so we can directly apply \eqref{uab-bi-unbal-boot} twice.
If $\mu_3 \ll \mu_1$ then we can insert a projector on $g''$, and consider the expression 
\[
P_{\lambda} (u_{\mu_1} u_{\mu_2}  P_{\approx \mu_1} g''(u_{\leq \mu_2}) \partial_x^2 u_{\mu_3}),
\]
where we can use \eqref{Moser-tail} and one instance of \eqref{uab-bi-unbal-boot}. The situation is similar
if $\mu_3 \gg \mu_1$, where inserting the appropriate 
projector we need to consider 
 the expression 
\[
P_{\lambda} (u_{\mu_1} u_{\mu_2}  P_{\approx \mu_3} g''(u_{\leq \mu_2}) \partial_x^2 u_{\mu_3}).
\]

\item[f) The contribution of $P_{\lambda} (u_{\mu} u_{\mu}  g''(u_{\leq \mu}) \partial_x^2 u_{\not \approx \mu})$ with $\mu \gg  \lambda$] Here we consider the frequency of the last $u$.
If it is $\ll \mu$ then we directly apply \eqref{uab-bi-unbal-boot} twice.
If it is $\gg \mu$ then we insert another projector on $g''(u_{\leq \mu})$ and use 
\eqref{Moser-tail}.

\item[g) The contribution of $P_{\lambda} (u_{\mu} u_{\mu}  g''(u_{\leq \mu}) \partial_x^2 u_{\mu})$ with $\mu \gg  \lambda$] Here we consider the $g''$ term.
If it is constant then the three $\mu$ frequencies cannot be all balanced so we are allowed to use 
\eqref{uab-bi-unbal-boot} once for the unbalanced $\mu$ pair, and a second time for the $(\lambda,\mu)$ pair.
Else we can pull another $u_{\leq \mu}$ factor from $g''$. If this term is lower frequency,  $u_{\ll \mu}$, then we can apply the bilinear bound \eqref{uab-bi-unbal-boot} twice. Finally, we are left with a contribution
of the form
\[
L(u_\mu,u_\mu,u_\mu, \partial^2 u_\mu, u_\lambda),
\]
where we can use a bilinear $L^2$ bound \eqref{uab-bi-unbal-boot} and three $L^6$ bounds. However, we have no need of the strong $L^6$ bound \eqref{uk-se-short-boot}, and instead it suffices to use \eqref{uk-se-boot}.

\item[h) The contribution of $P_{\lambda} (u_{\lambda} u_{\lambda}  g''(u_{\leq \lambda}) \partial_x^2 u_{\mu})$, with $\mu \gg \lambda$] Here we can freely insert a $P_{\approx \mu}$ projector  on the $g''$ 
factor and then apply \eqref{Moser-tail}
and \eqref{uab-bi-unbal-boot}.

\item[k) The contribution of $P_{\lambda} (u_{\lambda} u_{\lambda}  g''(u_{\leq \lambda}) \partial_x^2 u_{\ll \lambda})$] 
If $g''$ is constant, then we argue as in case (c). Either the two $u_\lambda$'s are unbalanced 
(which is guaranteed if exactly one of them is conjugated) and then we can use 
\eqref{uab-bi-unbal-boot} twice, or they are balanced, but then they cannot also be balanced 
with the additional $u_\lambda^{x_0}$ and again 
we can use \eqref{uab-bi-unbal-boot} twice.

Else, we can pull another $u_{\leq \lambda}$ factor
from $g''$. If this factor is lower frequency  $u_{\ll \lambda}$ then using \eqref{uab-bi-unbal-boot} twice is enough.
Otherwise we have a contribution
\[
L(u_\lambda,u_\lambda,u_\lambda, \partial^2 u_{\ll \lambda}, u_\lambda),
\]
and we can use the weaker $L^6$ bound \eqref{uk-se-boot} together with one instance of \eqref{uab-bi-unbal-boot}.

\item[l) The contribution of $P_{\lambda} (u_{\lambda} u_{\lambda}  g''(u_{\leq \lambda}) \partial_x^2 u_{\lambda})$, nonconstant part of $g''$] Here we  pull another $u_{\leq \lambda}$ factor from $g''$. If  this factor is lower frequency, $u_{\ll\lambda}$, then we can use thrice the weaker $L^6$ bound \eqref{uk-se-boot} together with one instance of \eqref{uab-bi-unbal-boot}. On the other hand if it is at comparable frequency,
$u_\lambda$, then  we arrive at
\[
P_{\lambda} (h(u_{<\lambda}),u_\lambda, u_\lambda,u_\lambda,
\partial^2 u_\lambda) u_\lambda^{x_0},
\]
with a smooth function $h$.

If $h$ is constant then, regardless of the distribution of the complex conjugates,  the four frequencies under the first projection cannot be all balanced, so the same argument as above applies.

On the other hand if $h$ is not constant then we can pull another $u_{<\lambda}$ factor.  If this is 
at frequency $\ll \lambda$ then we argue as above. Else 
we have obtained an expression of the form
\[
P_{\lambda} (h(u_{<\lambda}),u_\lambda, u_\lambda, u_\lambda,u_\lambda,
\partial^2 u_\lambda) u_\lambda^{x_0},
\]
where we simply use  the $L^6$ bound \eqref{uk-se-boot} 
six times.

\item[m) The contribution of $P_{\lambda} (u_{\lambda} u_{\lambda}  g''(0) \partial_x^2 u_{\lambda})$] This is a quartic expression
\[
 g''(0) P_{\lambda} (u_{\lambda} u_{\lambda}  \partial_x^2 u_{\lambda}) u_\lambda^{x_0}.
\]
Here the frequencies in the trilinear form are only comparable to $\lambda$,
so they might still be either balanced or unbalanced. 
Hence we split into 

(i) an unbalanced part (with frequencies $O(\lambda)$ separated),
where we can use two instances of \eqref{uab-bi-unbal-boot},
and 

(ii) a balanced contribution which is included in $C_\lambda$, and is discussed later.  Such balanced contributions can occur only for trilinear forms which have phase rotation symmetry.
\end{description}

\bigskip 

\textbf{IV. The contribution of $N_\lambda^4$.}
Here we recall that $N$ is cubic, and at most quadratic in $\partial u$. The nontrivial case is when $N$ is exactly quadratic in $\nabla u$,
so we write schematically 
\[
N(u,\partial u) = h(u) (\partial u)^2. 
\]
Expanding $h$ paradifferentially to first order,
we write the contribution of $N_\lambda^4$ as a sum of terms of the form
\[
L( P_\lambda (h_1'(u_{<\mu}) u_\mu \partial u \partial u), u_\lambda).
\]
This expression is similar to $N_\lambda^3$, in that it is al least quartic the total number of derivatives is the same, namely two. But it is also better, as the two derivatives are now distributed to different factors.

 To estimate the contribution of $N_\lambda^4$ 
 one should consider again a number of cases depending on the size of the input frequencies relative to $\lambda$. In view of the above comparison, these cases exactly mirror the ones for $N^3_\lambda$, with bounds which are either similar or better  than in the $N_\lambda^3$ case. There is one exception to this,
namely the $N_\lambda^4$ terms which correspond to the 
excluded interactions in the contribution of $N^3_\lambda$, namely the paradifferential ones of the form
\[
L( P_\lambda (h(u_{\ll\lambda}) u_{\ll \lambda}  u_{\ll \lambda} \partial^2 u_\lambda), u_\lambda).
\]
For this expression the bound \eqref{good-nl} fails.
But its counterpart corresponding to $N_\lambda^4$
is 
\[
L( P_\lambda (h(u_{\ll\lambda}) u_{\ll \lambda}  \partial u_{\ll \lambda} \partial u_\lambda), u_\lambda),
\]
which is better balanced and can be estimated easily 
with two applications of the bilinear $L^2$ bound
\eqref{uab-bi-unbal-boot}.
 
 Given the above discussion, there is no need to enumerate again all the cases. However, for 
 clarity we list the key points.

\begin{enumerate}[label=(\roman*)]
     \item Due to \eqref{Moser-tail}, we can freely localize $h'(u_{<\mu})$  at frequency $\lesssim  \mu$.
     
    \item If we have two pairs of unbalanced frequencies then it suffices to use \eqref{uab-bi-unbal-boot} twice.

\item If we have one frequency $\lambda$ input  
and exactly three higher, then the three higher 
must be unbalanced so  \eqref{uab-bi-unbal-boot} twice suffices.

\item If we have one frequency $\lambda$ input  
and at least four higher, then the higher frequency factors 
may be balanced but then we can use 
\eqref{uk-se-boot} thrice and one instance 
of \eqref{uab-bi-unbal-boot}.

\item If we have three frequency $\lambda$ inputs
and a lower one then the three  
must be unbalanced so  \eqref{uab-bi-unbal-boot} twice suffices again.

\item If we have four frequency $\lambda$ inputs
and a lower one then we can use 
\eqref{uk-se-boot} thrice and one instance 
of \eqref{uab-bi-unbal-boot}. The same applies 
with five frequency $\lambda$ inputs, which 
must be unbalanced.

\item If we have exactly four frequency $\lambda$ 
inputs, then we separate into an unbalanced 
term and a balanced one which is included in $C_\lambda$.
\end{enumerate}

 To summarize, at the conclusion of this step we have considered all components in $N_\lambda(u)$ except for the cubic balanced contribution in $C_\lambda$, and we have proved the bound \eqref{good-nl}.
  The $C_\lambda$ term is considered in the last step.

\bigskip 

\textbf{IV. The bound for $C_\lambda$.}
For this term we have four doubly resonant frequencies, where we have no choice but to use the strong Strichartz bound \eqref{uk-se-short-boot} three times, and energy once,
\[
\| C_\lambda(u,u,u) u_\lambda^{x_0}\|_{L^1} \lesssim T^\frac12 
\| u_\lambda\|_{L^6}^3 \| u_\lambda\|_{L^\infty L^2} \lesssim C^4 T^\frac12\epsilon^4 c_\lambda^{4} \lambda^{2-4s},
\]
where the $T^\frac12$ factor comes from using H\"older's inequality in time.
Since in ths case we are assuming that $T \ll \epsilon^{-4}$, the bound 
\eqref{res-nl} follows.

\end{proof}

\section{Density-flux relations for mass and momentum} \label{s:df}

While our problem admits no exact conservation laws in general, propagating energy bounds plays an important role in this paper
so we are interested in deriving efficient almost conservation laws. But in addition to that, following the ideas introduced 
in \cite{IT-global}, the almost conservation laws also play a key role in the proof of the bilinear $L^2_{t,x}$ bounds. \cancel{However} In order for this to work, we need to think of our almost conservation laws
in a more accurate fashion, namely as density-flux relations.
This is the goal of this section.

\subsection{Conservation laws for the linear problem}
For clarity we begin our discussion with the linear Schr\"odinger equation
\begin{equation}
i u_t + u_{xx} = 0, \qquad u(0) = u_0,
\end{equation}
following the exposition in \cite{IT-global}.

For this we consider the following three conserved quantities, the mass
\[
\bM(u) = \int |u|^2 \,dx,
\]
the momentum 
\[
\bP(u) = 2 \int \Im ( u \partial_x \bar u) \,dx,
\]
as well as the energy
\[
\bE(u) = 4 \int |\partial_x u|^2\, dx. 
\]

To these quantities we associate corresponding densities 
\[
M(u) = |u|^2, \qquad P(u) = i ( \bar u \partial_x u - u \partial_x \bar u), \qquad E(u) = - \bar u \partial_x^2 u   + 
2  |\partial_x u|^2 -  u \partial_x^2 \bar u.
\]
The densities here are not uniquely determined; and our choices are
 motivated by the conservation law computations, known in the literature as \emph{density-flux identities}:
\begin{equation}\label{df-lin}
\partial_t M(u) = \partial_x P(u), \qquad \partial_t P(u) = \partial_x E(u).
\end{equation}
The symbols of these densities viewed as bilinear forms are
\[
m(\xi,\eta) = 1, \qquad p(\xi,\eta) = -(\xi+\eta), \qquad e(\xi,\eta) = (\xi+\eta)^2.
\]

We are also interested in frequency localized versions of these objects. 
Given a dyadic frequency $\lambda$, we start with a  smooth bounded symbol $a_\lambda(\xi,\eta)$ which is localized at frequency $\lambda$ and which is symmetric, in the sense that 
\[
a_\lambda(\eta,\xi) = \ol{a_\lambda(\xi,\eta)}.
\]
 A convenient choice is to 
take for instance
\begin{equation}\label{choose-a-loc}
a_\lambda(\xi,\eta) = \phi_\lambda(\xi) \phi_\lambda(\eta).
\end{equation}
where $\phi_\lambda$ is the symbol of $P_\lambda$, see \eqref{def-phi}. 
Then we define an associated weighted mass density by 
\[
M_\lambda(u) := A_\lambda(u,\bar u).
\]
We also define corresponding momentum and energy symbols $p_\lambda$ and $e_\lambda$ by 
\[
p_\lambda(\xi,\eta) = -(\xi+\eta)a_\lambda(\xi,\eta), \qquad e_\lambda(\xi,\eta) = (\xi+\eta)^2a_\lambda(\xi,\eta).
\]
Then a direct computation yields the density-flux relations
\begin{equation}
\partial_t M_\lambda(u,\bar u) = \partial_x P_\lambda(u,\bar u), \qquad \partial_tP_\lambda(u,\bar u) = \partial_x E_\lambda(u,\bar u).
\end{equation}

\subsection{ Density-flux identities for the paradifferential problem}
\
We begin with solutions $v_\lambda$ to the paradifferential equation \eqref{para}. For these we have 
\[
\begin{aligned}
\partial_t M(v_\lambda) = &\ 2 \Re ( \partial_t v_\lambda \cdot \bar v_\lambda) 
\\
= &\ - 2 \Im (\partial_x g_{[<\lambda]} \partial_x v_\lambda   \cdot \bar v_\lambda) 
+ 2 \Im ( f_\lambda \bar v_\lambda)
\\
= & \ - 2 \partial_x [g_{[<\lambda]} \Im (\partial_x v_\lambda   \cdot \bar v_\lambda)] 
+ 2 \Im ( f_\lambda \bar v_\lambda),
\end{aligned}
\]
which we rewrite in the form 
\begin{equation}\label{dens-flux-param}
\partial_t M(v_\lambda) =  \partial_x [g_{[<\lambda]} P(v_\lambda)] + F^{para}_{\lambda,m}, \qquad F^{para}_{\lambda,m} = 2 \Im ( f_\lambda \bar v_\lambda).
\end{equation}
Similarly, one computes 
\begin{equation}\label{dens-flux-parap}
\partial_t P(v_\lambda) =  \partial_x [g_{[<\lambda]} E(v_\lambda)] + F^{para}_{\lambda,p}, \qquad F^{para}_{\lambda,p} = 2 \Re( \partial_x f_\lambda \bar v_\lambda - f_\lambda \partial_x \bar v_\lambda).
\end{equation}

\bigskip

\subsection{Nonlinear density-flux identities for the frequency localized mass and momentum} 
Here we develop the counterpart of the linear analysis above for the nonlinear problem \eqref{qnls}. 
We start from the frequency localized mass and momentum densities as above, still denoted by $M_\lambda$ and $P_\lambda$. For these we derive appropriate density-flux relations.

We begin with a simpler computation for the mass density $M_\lambda$, using the equation \eqref{full-lpara}. We have 
\[
\begin{aligned}
\partial_t M_\lambda(u) = &\ 2 \Re ( \partial_t u_\lambda \cdot \bar u_{\lambda}) 
\\
= &\ 2 \Im (\partial_x g_{[<\lambda]} \partial_x u_\lambda   \cdot \bar u_\lambda) 
+ 2 \Im ( (C_\lambda(u,\bu, u) + N_\lambda(u,\bu,u)) \bar u_\lambda)
\\
= & \ - 2 \partial_x [g_{[<\lambda]} \Im (\partial_x u_\lambda   \cdot \bar u_\lambda)] 
+ 2 \Im ( (C_\lambda(u,\bu, u)  \bu_\lambda)   + 2 \Im (N^{tr}_\lambda(u,\bu,u) \bar u_\lambda),
\end{aligned}
\]
which we rewrite in the form
\begin{equation}\label{dens-flux-m0}
\partial_t M_\lambda(u) =  2 \partial_x [g_{[<\lambda]} P_\lambda(u)] 
+ C^4_{\lambda,m}(u,\bu,u,\bu)  + 2 F^4_{\lambda,m}(u,\bu,u,\bu),
\end{equation}
where 
\[
C^4_{\lambda,m} (u,\bu,u,\bu) = 2 \Im  (C_\lambda(u,\bu, u)  \bu_\lambda), 
\qquad F^4_{\lambda,m} (u,\bu,u,\bu) = 2 \Im (N^{tr}_\lambda(u,\bu, u)  \bu_\lambda).
\]
Here the term $C^4_{\lambda,m}$ contains only interactions 
which are localized at frequency $\lambda$, and  which  therefore contain the doubly resonant regime. For the local well-posedness
result, we will still treat this term perturbatively, though
doing this will require better (lossless) short-time Strichartz
estimates.

However, the above strategy no longer works for the long-time results. Instead, following \cite{IT-global}, we will remove the term $C^4_{\lambda,m}$
via a quartic correction to the mass
density, and a quartic correction to the mass flux.
This is where the phase rotation symmetry and conservative assumptions play a key role. Unlike in \cite{IT-global}, where this computation was exact, in  our case it also produces some higher order perturbative errors.

The term $F^4_{\lambda,m}$, on the other hand, only involves 
transversal interactions and will be treated perturbatively.
Such contributions also did not exist in \cite{IT-global},
but are introduced here as a way to streamline the exposition.

We now discuss the correction associated to $C^4_{\lambda,m}$. By the phase rotation symmetry, $C^4_{\lambda,m}$ has arguments 
$C^4_{\lambda,m}(u,\bu,u,\bu)$. A-priori its symbol, defined on the diagonal $\Delta^4 \xi = 0$,
is given by
\[
c^4_{\lambda,m}(\xi_1,\xi_2,\xi_3,\xi_4) = 
 i c_{\lambda}(\xi_1,\xi_2,\xi_3)  - i \bar c_{\lambda}(\xi_2,\xi_3,\xi_4).
\]
However, we can further symmetrize separately in $(\xi_1,\xi_3)$ and $(\xi_2,\xi_4)$ and replace it by 
\[
\begin{aligned}
c^4_{\lambda,m}(\xi_1,\xi_2,\xi_3,\xi_4) = \frac{i}{2} \left[ -  (c_{\lambda}(\xi_1,\xi_2,\xi_3) + c_{\lambda}(\xi_1,\xi_4,\xi_3))
+ (\bar c_{\lambda}(\xi_2,\xi_3,\xi_4)+ 
\bar c_{\lambda}(\xi_2,\xi_1,\xi_4))  \right].
\end{aligned}
\]
In particular we see that our conservative assumption guarantees that the symmetrized symbol $c^4_{\lambda, m}(\xi_1,\xi_2,\xi_3,\xi_4)$ vanishes of second order on the doubly resonant set
\[
\calR_2 = \{(\xi_1, \xi_2, \xi_3, \xi_4)\in \mathbb{R}^4 \, / \, \xi_1 = \xi_3 = \xi_2 = \xi_4\}.
\]

For comparison purposes, we recall here that in our previous work \cite{IT-global} we had a slightly  stronger conservative assumption, which was sufficient to guarantee that $c$ vanishes on the full 
resonant set
\[
\calR = \{(\xi_1, \xi_2, \xi_3, \xi_4)\in \mathbb{R}^4 \, / \,\Delta^4 \xi = 0, \,  \Delta^4 \xi^2 = 0\} = \{ \{ \xi_1,\xi_3\} = \{\xi_2,\xi_4\} \}.
\]
That in turn implied (see Lemma~4.1 in \cite{IT-global}) that we could 
 express it smoothly in the form
\begin{equation}\label{choose-R4m}
c^4_{\lambda,m} +  i  \Delta^4 \xi^2 \,  b^4_{\lambda,m} = i \Delta^4 \xi\, r^4_{\lambda,m} .
\end{equation}
Here the quartic form $B^4_{\lambda,m}$ was viewed as a 
correction to the localized mass density, while $R^4_{\lambda,m}$ 
was the corresponding flux correction. 

In our context here, we cannot hope to have the representation \eqref{choose-R4m}, but we will instead prove that we have a weaker replacement, which is just as good for our purposes. Our new smooth representation has the form
\begin{equation}\label{choose-R4m-new}
c^4_{\lambda,m} +  i  \Delta^4 \xi^2 \,  b^4_{\lambda,m} = i \Delta^4 \xi\, r^4_{\lambda,m} 
+ i f^{4,bal}_{\lambda,m},
\end{equation}
where the additional source term has the redeeming feature that can be represented in the form
\begin{equation}\label{fbal-m}
   f^{4,bal}_{\lambda,m} =  q^{4,bal}_{\lambda,m}[ (\xi_1-\xi_2)(\xi_3-\xi_4)+ (\xi_1-\xi_4)(\xi_2-\xi_3)],
\end{equation}
and in particular vanishes to second order on the doubly resonant set.
Finding such a representation falls into the class of \emph{division problems}, with the additional twist that here we have two divisors, namely $\Delta^4 \xi$ and $\Delta^4 \xi^2$.

Arguing exactly as in \cite{IT-global},  we
define the modified
mass density
\begin{equation}\label{m-sharp}
\ms_\lambda(u) = M_\lambda(u) + B^4_{\lambda,m}(u,\bar u, u,\bar u),
\end{equation}
and rewrite the relation \eqref{dens-flux-m0} in the 
better form
\begin{equation}\label{dens-flux-m}
\begin{aligned}
\partial_t \ms_\lambda(u) = & \  \partial_x [g_{[<\lambda]} P_\lambda(u) 
+ R^4_{\lambda,m}(u,\bu,u,\bu)]  + 
 R^6_{\lambda,m}(u,\bu,u,\bu,u,\bu)
 \\ & + F^4_{\lambda,m}(u,\bu,u,\bu)+F^{4,bal}_{\lambda,m}(u,\bu,u,\bu),
 \end{aligned}
\end{equation}
where $R^6_{\lambda,m}$ represents the  $6$-linear and higher form 
arising from the time derivative of $B^4_{\lambda,m}(u,\bu,u,\bu)$
via the equation \eqref{qnls}.

This will be our main density-flux relation for mass. We argue in a similar fashion for the momentum. The symbol of the source term in the momentum equation is
\[
c^4_p(\xi_1,\xi_2,\xi_3,\xi_4) =  i (\xi_1-\xi_2+\xi_3+\xi_4) c(\xi_1,\xi_2,\xi_3) - i (\xi_1+\xi_2-\xi_3+\xi_4)\bar c(\xi_2,\xi_3,\xi_4),
\]
which is then symmetrized to
\[
\begin{aligned}
c^4_p(\xi_1,\xi_2,\xi_3,\xi_4) = & \  \frac{i}2 [(\xi_1-\xi_2+\xi_3+\xi_4) c(\xi_1,\xi_2,\xi_3)  + (\xi_1+\xi_2+\xi_3-\xi_4) c(\xi_1,\xi_4,\xi_3) 
\\
& - (\xi_1+\xi_2-\xi_3+\xi_4)\bar c(\xi_2,\xi_3,\xi_4)-(-\xi_1+\xi_2+\xi_3+\xi_4)\bar c(\xi_2,\xi_1,\xi_4)],
\end{aligned}
\]
and also vanishes of second order on the doubly resonant set $\calR_2$.
This will yield a representation
\begin{equation}\label{choose-R4p-new}
c^4_{\lambda,p} +  i  \Delta^4 \xi^2 \,  b^4_{\lambda,p} = i \Delta^4 \xi\, r^4_{\lambda,p} 
+ i f^{4,bal}_{\lambda,p},
\end{equation}
so that we have a similar correction for the momentum
\begin{equation}\label{p-sharp}
\ps_\lambda(u) = P_\lambda(u) + B^4_{\lambda,p}(u,\bar u, u,\bar u),
\end{equation}
as well as the associated density-flux relation
\begin{equation}\label{dens-flux-p}
\begin{aligned}
\partial_t \ps_\lambda(u) =  & \ \partial_x [g_{[<\lambda]} E_\lambda(u) 
+ R^4_{\lambda,p}(u,\bu,u,\bu)]  + 
 R^6_{\lambda,p}(u,\bu,u,\bu,u,\bu) 
\\ & \  
+ F^4_{\lambda,p}(u,\bu,u,\bu)+  F^{4,bal}_{\lambda,p}(u,\bu,u,\bu).
\end{aligned}
\end{equation}
We summarize the main properties of the symbols in the above 
decompositions in the next proposition:

\begin{proposition}\label{p:dens-flux-alg}
a) The mass density-flux relation \eqref{dens-flux-m} holds with symbols
$b^4_{\lambda,m}, r^4_{\lambda,m}$ and  $f^{4,bal}_{\lambda,m}$ which have the following properties:

\begin{enumerate}[label=(\roman*)]
\item Division relation: $b^4_{\lambda,m}, r^4_{\lambda,m}$  and  $f^{4,bal}_{\lambda,m}$ are localized at frequency $\lambda$ and satisfy \eqref{choose-R4m} and \eqref{fbal-m}.

\item Size and regularity: for each multiindex $\alpha$ we have
\begin{equation}\label{div-m}
|\partial_\xi^\alpha  b^4_{\lambda,m}| \lesssim \lambda^{-|\alpha|},
\qquad 
|\partial_\xi^\alpha  r^4_{\lambda,m}| \lesssim \lambda^{1-|\alpha|}
\qquad
|\partial_\xi^\alpha  q^{4,bal}_{\lambda,m}| \lesssim \lambda^{-|\alpha|}.
\end{equation}
\end{enumerate}

b) The same property holds for the momentum density-flux relation \eqref{dens-flux-p}, where the corresponding symbols satisfy
\begin{equation}\label{div-p}
|\partial_\xi^\alpha  b^4_{\lambda,p}| \lesssim \lambda^{1-|\alpha|},
\qquad 
|\partial_\xi^\alpha  r^4_{\lambda,p}| \lesssim \lambda^{2-|\alpha|}
\qquad
|\partial_\xi^\alpha  q^{4,bal}_{\lambda,p}| \lesssim \lambda^{1-|\alpha|}.
\end{equation}

\end{proposition}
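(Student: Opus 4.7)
The plan is to construct the symbols $b^4_{\lambda,m}$, $r^4_{\lambda,m}$, and $q^{4,bal}_{\lambda,m}$ (and their momentum analogues) directly by a successive-approximation procedure on the order of vanishing at the doubly resonant set, following the blueprint of \cite{IT-global} and accommodating the weaker conservative hypothesis through the additional source term $f^{4,bal}_{\lambda,m}$. The starting point is a Taylor analysis of $c^4_{\lambda,m}$ at an arbitrary basepoint $\xi_0 = (\xi,\xi,\xi,\xi) \in \calR_2$. The conservative assumption $c(\xi,\xi,\xi),\partial_{\xi_j}c(\xi,\xi,\xi) \in \R$, together with the symmetries $(\xi_1 \leftrightarrow \xi_3)$, $(\xi_2 \leftrightarrow \xi_4)$ and the conjugation relation $\bar c^4_{\lambda,m}(\xi) = c^4_{\lambda,m}(\xi_2,\xi_1,\xi_4,\xi_3)$, forces $c^4_{\lambda,m}(\xi_0) = 0$ and pins the gradient at $\xi_0$ parallel to $\nabla(\Delta^4\xi) = (1,-1,1,-1)$. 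Thus the linear part of the Taylor expansion can be absorbed entirely into an $i\Delta^4\xi \cdot r^4_{\lambda,m}$ contribution.

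The crucial algebraic input comes at the next order. On the hyperplane $D = \{\Delta^4\xi = 0\}$ one introduces coordinates $u = \xi_1-\xi_2$, $v = \xi_2-\xi_3$, in which $\Delta^4\xi^2|_D = 2uv$ and $Q|_D = v^2-u^2$. The combined symmetries of $c^4_{\lambda,m}$ severely constrain its Hessian at $\xi_0$, and a direct computation shows that the resulting quadratic part $c^4_{\lambda,m}|_D$ is proportional to $uv$ alone, with no $u^2$ or $v^2$ terms. This quadratic piece is then absorbed entirely into the $-i\Delta^4\xi^2\, b^4_{\lambda,m}$ contribution, while $q^{4,bal}_{\lambda,m}$ itself vanishes at $\calR_2$. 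Higher-order residuals are handled iteratively: at each step the leading obstruction is shown by the structural analysis to factor through the appropriate generators, is peeled off into $r^4_{\lambda,m}, b^4_{\lambda,m}, q^{4,bal}_{\lambda,m}$, and the vanishing order of the residual is lowered by one.

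Globally, away from $\calR_2$ the ideal $(\Delta^4\xi, \Delta^4\xi^2)$ is regular and the division is classical; in a neighborhood of $\calR_2$ one patches the iterative construction via a partition of unity, or works directly with formal Taylor series and appeals to Borel summation to produce smooth extensions. The size and regularity bounds \eqref{div-m} and \eqref{div-p} follow from a rescaling argument based on the dyadic localization of $c_\lambda$ at frequency $\lambda$: after $\xi \to \lambda \xi$ the problem reduces to a fixed compact model with uniform constants, and transferring back produces the declared weights in $\lambda$. The momentum case (part b) is entirely parallel, with the extra derivative in the momentum density producing one additional factor of $\lambda$ throughout.

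The chief obstacle is the verification of the algebraic cancellation at quadratic order. Without it, a spurious contribution of the form $\alpha(u^2 + v^2)$ to $c^4_{\lambda,m}|_D$ would obstruct the decomposition, as such a term lies outside the ideal generated by $\Delta^4\xi^2|_D$ and $Q|_D$ on $D$. The proof hinges on showing that the combined symmetries force exactly this term to vanish, which is the algebraic heart of the argument and the point where the conservative hypothesis on the cubic nonlinearity does its essential work.
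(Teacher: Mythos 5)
Your overall architecture (absorb the linear part into $r^4_{\lambda,m}$ via $\Delta^4\xi$, the $uv$-part into $b^4_{\lambda,m}$ via $\Delta^4\xi^2|_{D}=2uv$, rescale to $\lambda=1$ for the bounds, add a derivative for the momentum case) is consistent with the paper, but the step you yourself identify as the ``algebraic heart'' is a genuine gap, and in fact cannot be closed from the stated hypotheses. The conservative assumption only forces $c^4_{\lambda,m}$ and its gradient to vanish \emph{on} $\calR_2$; it does not constrain the transverse Hessian beyond the symmetries you list, and those symmetries permit exactly the term $\alpha(u^2+v^2)$ with $\alpha$ real. Concretely, take $c(\xi_1,\xi_2,\xi_3)=i(\xi_1-\xi_3)^2$: it is symmetric in $(\xi_1,\xi_3)$ and satisfies $c(\xi,\xi,\xi)=0$, $\partial_{\xi_j}c(\xi,\xi,\xi)=0\in\R$, hence is conservative in the paper's sense, yet the symmetrized density symbol is $c^4_{m}=\tfrac12\bigl((\xi_1-\xi_3)^2+(\xi_2-\xi_4)^2\bigr)$, whose restriction to $\{\Delta^4\xi=0\}$ in your coordinates is exactly $u^2+v^2$. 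So the cancellation you propose to prove ``by direct computation from the symmetries'' is false at this level of generality; it would amount to showing that $c^4_{\lambda,m}$ vanishes on all of $\calR$, i.e.\ re-proving the stronger hypothesis of \cite{IT-global} that the present proposition is specifically designed to avoid. The term $f^{4,bal}_{\lambda,m}$ is not a repository for ``higher-order residuals'': it is precisely the device that absorbs the part of $c^4_{\lambda,m}$ which survives on $\calR\setminus\calR_2$ (and in particular the $\alpha(u^2+v^2)$-type quadratic behavior near $\calR_2$), and $q^{4,bal}_{\lambda,m}$ need not vanish on $\calR_2$. Relatedly, your assertion that away from $\calR_2$ the division by the ideal $(\Delta^4\xi,\Delta^4\xi^2)$ is ``classical'' fails on $\calR\setminus\calR_2$, which is exactly the common zero set of the two generators: a smooth function is divisible there only if it vanishes on $\calR$, which under the weak conservative hypothesis it need not. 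The Borel-summation patching is also not a substitute for an actual smooth division argument.

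For comparison, the paper's proof runs in the opposite direction and is essentially a reduction, not an iteration: after rescaling to $\lambda=1$, it restricts $c^4_{\lambda,m}$ to $\calR$, parametrized symmetrically by $\{\xi,\eta\}$; conservativity gives second-order vanishing at $\xi=\eta$, so one can factor $h(\xi,\eta)=h_1(\xi,\eta)(\xi-\eta)^2$ with $h_1$ smooth and symmetric; writing $h_1$ as a smooth function of $\xi+\eta$ and $(\xi-\eta)^2$ (the even-variable trick) produces a smooth extension $q^{4,bal}_{\lambda,m}$ off $\calR$, while the quadratic factor is matched by the balanced factor in \eqref{fbal-m}, which agrees with $(\xi-\eta)^2$ on $\calR$. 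Subtracting $f^{4,bal}_{\lambda,m}$ leaves a symbol vanishing on all of $\calR$, to which Lemma~4.1 of \cite{IT-global} applies and yields $b^4_{\lambda,m}$, $r^4_{\lambda,m}$; the bounds \eqref{div-m}--\eqref{div-p} then follow by the scaling you describe. If you want to salvage your approach, you would have to build $q^{4,bal}_{\lambda,m}$ globally along $\calR$ first (as the paper does), rather than hope the obstruction vanishes.
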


\begin{proof}[Proof of Proposition~\ref{p:dens-flux-alg}]
It suffices to consider part (a), as part (b) is similar.
If we knew that $c^4_{\lambda,m}$ vanishes on the full resonant set $\calR$, then 
it would suffice to take $f^{4,bal}_{\lambda,m}=0$, and then
this result would be a localized and rescaled version of Lemma~4.1 in \cite{IT-global}. So our objective will be to reduce the problem to this case. 

In this proposition $\lambda$ plays the role of a scaling parameter, so without any loss in generality 
we can set $\lambda=1$, in which case all our functions will be supported in a unit size cube.
We begin by examining the behavior of $c^4_{\lambda,m}$ on $\calR$.
For this we parametrize $\calR$ with two frequencies $\xi$ and $\eta$,
so that 
\[
\{ \xi_1,\xi_3\} = \{\xi_2,\xi_4\} = \{ \xi,\eta\}.
\]
With this parametrization $\calR$ can be seen as the union 
of two transversal planes intersecting on a line in $\R^4$.
Restricted to this set, we can view $c^4_{\lambda,m}$ as a smooth
symmetric function of $\xi$ and $\eta$,
\[
c^4_{\lambda,m}(\xi_1,\xi_2,\xi_3,\xi_4) = h(\xi,\eta) \qquad \text{on } 
\calR.
\]
Here $c^4_{\lambda,m}$ vanishes of second order on $\calR_2$, so $h$ 
vanishes of second order when $\xi=\eta$. Then we can smoothly divide, 
\[
h(\xi,\eta) = h_1(\xi,\eta)(\xi-\eta)^2
\]
with $h_1$ still smooth and symmetric.
On $\calR$ we have  
\[
(\xi-\eta)^2 = -[ (\xi_1-\xi_2)(\xi_3-\xi_4)+ (\xi_1-\xi_4)(\xi_3-\xi_2)],
\]
which easily extends, so it remains to extend $h_1$ from $\calR$ to $\R^4$.
With a linear change of variable we have
\[
h_1(\xi,\eta) = h_2(\xi+\eta,\xi-\eta),
\]
where $h_2$ is smooth and even in the second variable. Then we can further write
\[
h_1(\xi,\eta) = h_3(\xi+\eta,(\xi-\eta)^2).
\]
But this last expression clearly admits a smooth extension, e.g.
\[
q^4_{\lambda,m}:= h_3\left(\frac12(\xi_1+\xi_2+\xi_3+\xi_4),\frac12((\xi_1-\xi_3)^2+(\xi_2-\xi_4)^2)\right).
\]
Now we subtract the contribution of $q^4_{\lambda,m}$ from $c^4_{\lambda,m}$, which reduces the problem to the case when $c^4_{\lambda,m}$ is zero on $\calR$, as desired.
\end{proof}


\section{Interaction Morawetz bounds for the linear paradifferential flow}
\label{s:para}

In this and the next section we study the linear paradifferential equation
associated to a solution $u$ to the quasilinear Schr\"odinger flow \eqref{qnls}. This is a key step in the proof of the local well-posedness
result in Theorem~\ref{t:local}, which is carried out in Section~\ref{s:lwp}. Here we prove bilinear $L^2_{t,x}$ bounds
 for the linear paradifferential equation. In the next section we prove Strichartz estimates for the linear paradifferential equation.

To start with, we consider a one parameter family of functions $v_\lambda$ which are localized at frequency $\lambda$ and solve the linear paradifferential equations \eqref{para} with source terms $f_\lambda$
in a time interval $[0,T]$. Due to the frequency localization, both $v_\lambda$ and $f_\lambda$ are smooth in $x$. In order to insure that 
the equations \eqref{para} are meaningful, we make the qualitative assumptions
\[
v_\lambda(0) \in L^2_x, \qquad f_\lambda \in L^2_{x,t}\left([0,T] \times \R\right),
\]
which in turn, by Gronwall's inequality, imply that 
\[
v_\lambda \in C(0,T;L^2_x),
\]
though with weak quantitative bounds which are allowed to depend on $\lambda$.

For now we assume no connection between these functions. Later we will apply these bounds in the case when  $v_\lambda=P_\lambda u$ or $v_\lambda = P_\lambda v$ where $v$ solves the linearized equation.

We will measure each $v_\lambda$ based on the size of the initial data
and of the source term $f_\lambda$. But rather than measuring $f_\lambda$ 
directly, we will measure its interaction with $v_\lambda$ and its translates. Precisely, we denote
\begin{equation}\label{dl}
d_\lambda^2 := \| v_\lambda(0)\|_{L^2_x}^2 + \sup_{x_0 \in \R} \| v_\lambda f_\lambda^{x_0} \|_{L^1_{t,x}}.
\end{equation}
While this might seem somewhat indirect, it will allow us later 
to apply this result in multiple 
settings and on different time-scales, with minimal 
assumptions on the source terms.

Our main result here asserts that we can obtain energy and bilinear $L^2_{t,x}$
bounds for $v_\lambda$:

\begin{theorem}\label{t:para}
Let $s > 1$. Assume that $u$ solves \eqref{qnls} in a time interval $[0,T]$, with $T$ as in \eqref{small-t-loc}, and satisfies the bounds \eqref{uk-ee},\eqref{uab-bi-bal}, \eqref{uab-bi-unbal} and \eqref{uk-se-short}. 
Assume that $v_\lambda$ is localized at frequency $\lambda$ and solves \eqref{para}, and let $d_\lambda$ be as in \eqref{dl}. Then the following bounds hold for the functions $v_\lambda$ in $[0,T]$, uniformly in $x_0 \in \R$:

\begin{enumerate}
    \item Uniform energy bounds:
\begin{equation}\label{v-ee}
\| v_\lambda \|_{L^\infty_tL^2_x} \lesssim d_\lambda .
\end{equation}

\item Balanced bilinear $(u,v)$-$L^2$ bound:
\begin{equation} \label{uv-bi-bal}
\| \partial_x(v_\lambda \bu_\mu^{x_0})  \|_{L^2_{t,x}} \lesssim \epsilon d_{\lambda} c_\mu \lambda^{-s} \lambda^{-\frac12} (1+ \lambda |x_0|),
\qquad \mu \approx \lambda.
\end{equation}

 \item Unbalanced bilinear $(u,v)$-$L^2$ bound:
\begin{equation} \label{uv-bi-unbal}
\| \partial_x(v_\lambda \bu_\mu^{x_0})  \|_{L^2_{t,x}} \lesssim \epsilon d_{\lambda} c_\mu \mu^{-s} (\lambda+\mu)^{\frac12} 
 \qquad \mu \not \approx \lambda .
\end{equation}

\item Balanced bilinear $(v,v)$-$L^2$ bound:
\begin{equation} \label{vv-bi-bal}
\| \partial_x(v_\lambda \bv_\mu^{x_0})  \|_{L^2_{t,x}} \lesssim d_{\lambda} d_\mu  \lambda^{-\frac12} (1+ \lambda |x_0|),
\qquad \mu \approx \lambda.
\end{equation}

 \item Unbalanced bilinear $(v,v)$-$L^2$ bound:
\begin{equation} \label{vv-bi-unbal}
\| \partial_x(v_\lambda \bv_\mu^{x_0})  \|_{L^2_{t,x}} \lesssim  d_{\lambda} d_\mu  (\lambda+\mu)^{\frac12} 
 \qquad \mu \not \approx \lambda.
\end{equation}
\end{enumerate}
\end{theorem}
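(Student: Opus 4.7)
\textbf{Proof plan for Theorem~\ref{t:para}.}

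My approach is to combine the paradifferential density-flux identities~\eqref{dens-flux-param}--\eqref{dens-flux-parap} (together with their nonlinear counterparts~\eqref{dens-flux-m}--\eqref{dens-flux-p} for the $u_\mu$ factor when needed) with an interaction Morawetz analysis, essentially transplanting the strategy of~\cite{IT-global} from the semilinear to the variable-coefficient paradifferential setting. The energy bound~\eqref{v-ee} is the easiest: spatially integrating~\eqref{dens-flux-param} annihilates the flux divergence, yielding $\tfrac{d}{dt}\|v_\lambda\|_{L^2}^2 = 2\int\Im(f_\lambda\bar v_\lambda)\,dx$, after which a time integration and the $x_0=0$ case of the definition~\eqref{dl} deliver~\eqref{v-ee}.

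For the bilinear bounds I introduce the translated interaction Morawetz functional
\[
J_{x_0}(t) := \iint_{\R^2} \sigma(x-y+x_0)\,M(w_\lambda)(t,x)\,M(w_\mu)(t,y)\,dx\,dy,
\]
where $w$ is $v$ or $u$ depending on which estimate is being proved, and $\sigma'$ is a smoothed bump at scale $(\lambda+\mu)^{-1}$. Differentiating $J_{x_0}$ in time using the mass density-flux identity on each factor and integrating by parts moves the spatial derivatives onto $\sigma$; combined with the algebraic identity
\[
M(v_\lambda)P(v_\mu)-M(v_\mu)P(v_\lambda) = 2\,\Im\bigl(\bar v_\lambda v_\mu\,\partial_x(v_\lambda\bar v_\mu)\bigr),
\]
and a further integration by parts (or, equivalently, a second time derivative invoking~\eqref{dens-flux-parap}), this produces on the right-hand side, modulo perturbative errors, precisely the bilinear density $|\partial_x(v_\lambda\bar v_\mu^{x_0})|^2$. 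The \emph{a priori} bound $|J_{x_0}|\lesssim(1+\lambda|x_0|)\,\lambda\, d_\lambda^2 d_\mu^2$ in the balanced regime yields the weight in~\eqref{vv-bi-bal}, while in the unbalanced regime $\mu\not\approx\lambda$ the Fourier-support separation of $v_\lambda\bar v_\mu$ removes the translation penalty and gives~\eqref{vv-bi-unbal}. The source contributions from $f_\lambda,f_\mu$ fit exactly into the translated $L^1$ pairings $\|f_\lambda v_\lambda^{x_0}\|_{L^1}$ controlled by the definition of $d_\lambda$.

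The mixed $(u,v)$ bounds~\eqref{uv-bi-bal}--\eqref{uv-bi-unbal} follow by the same Morawetz scheme but with the nonlinear identities~\eqref{dens-flux-m}--\eqref{dens-flux-p} in place of~\eqref{dens-flux-param}--\eqref{dens-flux-parap} on the $u$-factor. The modified density $\ms_\mu(u)$ replaces $M(u_\mu)$, with the quartic correction $B^4_{\mu,m}$ contributing perturbatively by~\eqref{div-m} and the balanced flux correction of~\eqref{fbal-m} handled via its double vanishing on $\calR_2$; the doubly-resonant cubic symbol $c^4_{\mu,m}$ has already been absorbed into $\ms_\mu$ by construction. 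The remaining nonlinear sources $N^{nr}_\mu(u)$ and $C_\mu(u,\bu,u)$ are controlled by~\eqref{good-nl} and~\eqref{res-nl} respectively from Proposition~\ref{l:N-lambda} (the latter being where the short-time hypothesis~\eqref{small-t-loc} enters).

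The principal obstacle lies in the balanced case $\lambda\approx\mu$ of the bilinear bounds, where the two metric truncations $g_{[<\lambda]}$ and $g_{[<\mu]}$ do not coincide. Their difference is $O(\epsilon^2)$ at intermediate frequencies and generates commutator errors in the Morawetz identity; these must be controlled using the slowly varying frequency envelope together with the quadratic vanishing of $g-1$ at zero that is built into our cubic-nonlinearity assumption. A subtler point is the bookkeeping of the translation weight $(1+\lambda|x_0|)$: it reflects the finite interaction time between two waves traveling along the same characteristic of the paradifferential flow but shifted by $x_0$, and it must be tracked with care through the delta-function integration in the Morawetz computation, as it encodes precisely the sharpness of the bilinear bound for nearly parallel waves.
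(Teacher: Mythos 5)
Your overall scheme is the paper's: density--flux identities for the paradifferential flow plus an interaction Morawetz functional whose leading output, via the symbol identity $(\xi_1+\xi_2-\xi_3-\xi_4)^2=4(\xi_1-\xi_4)(\xi_2-\xi_3)+(\Delta^4\xi)^2$, is $\|\partial_x(v_\lambda\bar v_\mu^{x_0})\|_{L^2}^2$; your mass--mass functional with a smoothed kernel and two time derivatives is equivalent, up to a mollification at scale $(\lambda+\mu)^{-1}$, to the paper's single time derivative of the antisymmetric mass/momentum functional $\bI^{x_0}$ over $\{x>y\}$, and the energy bound \eqref{v-ee} is handled identically. However, two of your steps do not go through as written.

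First, your treatment of the mixed bounds \eqref{uv-bi-bal}--\eqref{uv-bi-unbal} invokes the corrected densities $\ms_\mu$, $B^4_{\mu,m}$ and the balanced flux correction \eqref{fbal-m}. Those objects are constructed in Section~\ref{s:df} only under the phase rotation symmetry and the conservative assumption (the division \eqref{choose-R4m-new} needs the second order vanishing of $c^4_{\mu,m}$ on $\calR_2$), neither of which is a hypothesis of Theorem~\ref{t:para}; the theorem is used for the general cubic local theory. They are also unnecessary here: by Proposition~\ref{l:N-lambda}, $\sup_{x_0}\|N_\lambda(u)\,\bar u_\lambda^{x_0}\|_{L^1}\lesssim \epsilon^4 c_\lambda^2\lambda^{-2s}$ (this is where \eqref{uk-se-short} and the short time restriction enter), so $u_\lambda$ itself qualifies as a solution of \eqref{para} with $d_\lambda$ replaced by $\epsilon c_\lambda\lambda^{-s}$, and the $(u,v)$ bounds become literally instances of the $(v,v)$ bounds. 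Second, you misplace the origin of the weight $(1+\lambda|x_0|)$: the fixed-time functional is bounded purely by energies, uniformly in $x_0$ (cf.\ \eqref{I-lm}), so no $x_0$ factor comes from there or from the ``delta-function integration''. The weight arises because the translated factor solves the paradifferential equation with the \emph{translated} coefficient $g_{[<\mu]}^{x_0}$, and in the balanced case the mismatch $g_{[<\mu]}-g_{[<\mu]}^{x_0}=\int_0^{x_0}\partial_x g_{[<\mu]}^{y}\,dy$ inside $\bJ^4$ must be estimated by the fundamental theorem of calculus in $x_0$ together with two unbalanced bilinear bounds, which is exactly what produces the $\lambda^{2}|x_0|$ loss; you address the $g_{[<\lambda]}$ versus $g_{[<\mu]}$ discrepancy but not this translation discrepancy, which is the one that matters. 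Relatedly, since these coefficient errors in the $v$--$v$ Morawetz identity are themselves controlled by the mixed bounds \eqref{uv-bi-unbal}, the five estimates must be closed by a simultaneous bootstrap (or the mixed bounds obtained first via the substitution above), a structure your write-up leaves implicit.
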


Before proving this result, we 
have two remarks on $d_\lambda$:
\begin{itemize}
\item  Here the parameters $d_\lambda$ are independent, and we could freely take $d_\lambda = 1$ for the proof. But later we will think 
of $d_\lambda$ as a frequency envelope
for $L^2$ solutions to the linearized equation.

\item Another use of $d_\lambda$ is in the context of the solution $u$ itself, where we take $d_\lambda = \lambda^{-s} c_\lambda$.
\end{itemize}

\begin{proof}
We first note that in order to prove \eqref{v-ee},
\eqref{uv-bi-bal} and \eqref{uv-bi-unbal} it suffices 
to work with a fixed $v_\lambda$, while for the bilinear
$v$ bounds we need to work with exactly two $v_\lambda$'s.
To prove the theorem it is convenient to make the following bootstrap assumptions in $[0,T]$, with a large universal constant $C$:

\begin{enumerate}
    \item Uniform energy bounds:
\begin{equation}\label{v-ee-boot}
\| v_\lambda \|_{L^\infty _tL^2_x} \leq C d_\lambda.
\end{equation}

\item Balanced bilinear $(u,v)$-$L^2$ bound:
\begin{equation} \label{uv-bi-bal-boot}
\| \partial_x(v_\lambda \bu_\mu^{x_0})  \|_{L^2_{t,x}} \leq C \epsilon d_{\lambda} c_\mu \lambda^{-s} \lambda^{-\frac12} (1+ \lambda |x_0|),
\qquad \mu \approx \lambda.
\end{equation}

 \item Unbalanced bilinear $(u,v)$-$L^2$ bound:
\begin{equation} \label{uv-bi-unbal-boot}
\| \partial_x(v_\lambda \bu_\mu^{x_0})  \|_{L^2_{t,x}} \leq C \epsilon d_{\lambda} c_\mu \mu^{-s} (\lambda+\mu)^{-\frac12} 
 \qquad \mu \ll  \lambda. 
 \end{equation}

\item Balanced bilinear $(v,v)$-$L^2$ bound:
\begin{equation} \label{vv-bi-bal-boot}
\| \partial_x(v_\lambda \bv_\mu^{x_0})  \|_{L^2_{t,x}} \leq   C^2\epsilon d_{\lambda} d_\mu \lambda^{-s} \lambda^{-\frac12} (1+ \lambda |x_0|),
\qquad \mu \approx \lambda.
\end{equation}

 \item Unbalanced bilinear $(v,v)$-$L^2$ bound:
\begin{equation} \label{vv-bi-unbal-boot}
\| \partial_x(v_\lambda \bv_\mu^{x_0})  \|_{L^2_{t,x}} \leq C^2 \epsilon d_{\lambda} d_\mu \mu^{-s} (\lambda+\mu)^{-\frac12} 
 \qquad \mu \not \approx \lambda. 
\end{equation}
\end{enumerate}
We note that when the dyadic indices for the $v$'s are fixed, these represent
finitely many bootstrap assumptions, though 
we need to be careful about the $x_0$ dependence.

The objective will then be to prove that the bounds \eqref{v-ee}-\eqref{vv-bi-unbal} hold with implicit constants which do not depend on either $C$ or $\epsilon$. To insure that, in the proofs we will verify that the constant $C$ always appears together with extra powers of $\epsilon$. Then the independence of the implicit constants on $C$ is guaranteed by making an appropriate smallness
assumption on $\epsilon$, namely 
\[
\epsilon \ll_C 1.
\]
To avoid a circular argument, here we first choose $C$ larger than the universal implicit constant in \eqref{v-ee}-\eqref{vv-bi-unbal}, and then 
impose the above smallness assumption on $\epsilon$.

To close the bootstrap, we use a straightforward continuity argument. 
We choose $T_0 \in [0,T]$ maximal so that the  bootstrap bounds \eqref{v-ee-boot}-\eqref{vv-bi-unbal-boot} hold in $[0,T_0]$.
Then our bootstrap bounds imply that \eqref{v-ee}-\eqref{vv-bi-unbal}
hold in $[0,T_0]$. But our qualitative assumptions on $v_\lambda$ and $f_\lambda$ guarantee that the norms estimated in \eqref{v-ee}-\eqref{vv-bi-unbal} in $[0,T_0]$ are continuous as a function of $T_0$; this is also uniform in $x_0$ since we only use a finite range of dyadic frequencies for $u$.  Hence, if $T_0 < T$ then this implies that the bootstrap bounds \eqref{v-ee-boot}-\eqref{vv-bi-unbal-boot} hold in a slightly larger interval $[0,T_0+\delta]$, contradicting the maximality of $T_0$.

\medskip

We now proceed to prove \eqref{v-ee}-\eqref{vv-bi-unbal}
under the bootstrap assumptions \eqref{v-ee-boot}-\eqref{vv-bi-unbal-boot}.
The energy bound \eqref{v-ee} for $v_\lambda$ is straightforward, and follows 
directly by integrating \eqref{dens-flux-param}.

In order to treat the
dyadic bilinear $(u,v)$ and $(v,v)$ bounds at the same time,
we write the equation for $u_\lambda$ in the form 
\eqref{para-full} where we estimate favourably the source term $N_\lambda$. Precisely, given  the bounds \eqref{good-nl} and \eqref{res-nl}, we can interpret the functions $u_\lambda$ as solutions for the paradifferential equation \eqref{para}, with $d_\lambda$ replaced by $C \epsilon \lambda^{-s} c_\lambda$. Then the $(u,v)$ bilinear bounds 
and the $(v,v)$ bilinear bounds are absolutely identical. So from here on we focus on 
\eqref{vv-bi-bal} and \eqref{vv-bi-unbal}.

\bigskip

To prove  \eqref{vv-bi-bal} and \eqref{vv-bi-unbal} we will use the conservation laws
for the mass and momentum for $v_\lambda$ written in density-flux form, namely the identities \eqref{dens-flux-param} and \eqref{dens-flux-parap}. Next we use the definition of $d_\lambda$ in \eqref{dl}  in order to obtain $L^1_{t,x}$ bounds for the sources $F^{para}_{\lambda,m}$
and $F^{para}_{\lambda,p}$ in the density-flux energy identities for $v_\lambda$:

\begin{lemma}\label{l:F-para}
Assume that \eqref{uv-bi-unbal-boot} holds. Then we have
\begin{equation}
\| F^{para}_{\lambda,m}\|_{L^1_{t,x}} \lesssim C^2  d_\lambda^2,
\qquad 
\|  F^{para}_{\lambda,p}\|_{L^1_{t,x}} \lesssim  \lambda C^2  d_\lambda^2.
\end{equation}
\end{lemma}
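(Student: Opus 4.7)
The mass bound is immediate. Since $|F^{para}_{\lambda,m}| = 2|\Im(f_\lambda \bar v_\lambda)| \le 2|v_\lambda f_\lambda|$, taking $x_0 = 0$ in the definition \eqref{dl} of $d_\lambda^2$ yields
\[
\|F^{para}_{\lambda,m}\|_{L^1} \le 2\|v_\lambda f_\lambda\|_{L^1} \le 2\, d_\lambda^2,
\]
with no bootstrap or dispersive input required.

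For the momentum bound, the core observation is that both $v_\lambda$ and $f_\lambda$ are frequency localized at scale $\lambda$. This is immediate for $v_\lambda$ by hypothesis, and it follows for $f_\lambda$ from the paradifferential equation \eqref{para} itself: since $g_{[<\lambda]} = P_{<\lambda/4}\, g(u_{\ll \lambda})$ is exactly cut off at frequency $<\lambda/4$, both $\partial_t v_\lambda$ and $\partial_x g_{[<\lambda]} \partial_x v_\lambda$ live at frequency $\sim\lambda$, and hence so does $f_\lambda$. For any such frequency localized $w_\lambda$, a standard Bernstein identity writes $\partial_x w_\lambda = K_\lambda \ast w_\lambda$ with a translation-invariant kernel satisfying $\|K_\lambda\|_{L^1} \lesssim \lambda$. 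This is precisely the mechanism that absorbs the derivative at the cost of one power of $\lambda$.

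Applying this to $\partial_x \bar v_\lambda$ and invoking Fubini together with a translation of the $L^1$ integral,
\[
\|f_\lambda \partial_x \bar v_\lambda\|_{L^1} \le \int |K_\lambda(y)|\, \|f_\lambda\, v_\lambda(\cdot-y)\|_{L^1}\, dy \lesssim \lambda\, \sup_{x_0} \|v_\lambda f_\lambda^{x_0}\|_{L^1} \le \lambda\, d_\lambda^2,
\]
and the analogous argument with the roles of $f_\lambda$ and $v_\lambda$ swapped yields the same bound for $\|\partial_x f_\lambda \cdot \bar v_\lambda\|_{L^1}$. Adding the two and recalling the definition of $F^{para}_{\lambda,p}$ gives the stated estimate. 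The main subtlety, and the reason \eqref{uv-bi-unbal-boot} appears as a hypothesis, is that in the downstream applications of this lemma $f_\lambda$ will in fact be a genuine nonlinear source (such as $N_\lambda(u)$ or a mixture of it with the source of the linearized flow), whose frequency localization is only approximate; the residual off-diagonal tails of such an $f_\lambda$ are not directly controlled by $d_\lambda^2$, but can be absorbed using the bilinear $L^2$ bound \eqref{uv-bi-unbal-boot}, which produces the $C^2$ factor in the stated inequalities.
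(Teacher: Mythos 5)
Your proof is correct and is exactly what the paper has in mind when it says the proof is ``immediate'': the mass bound is literally part of the definition of $d_\lambda$, and the momentum bound follows by writing $\partial_x$ as convolution against a kernel $K_\lambda$ with $\|K_\lambda\|_{L^1}\lesssim\lambda$ (Bernstein) and absorbing the resulting translation into the $\sup_{x_0}$ that appears in $d_\lambda^2$.

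One small caveat on your last paragraph. Within the lemma as stated, $d_\lambda^2$ is \emph{defined} directly in terms of $v_\lambda$ and $f_\lambda$, and the equation \eqref{para} has paracoefficients supported at frequency $<\lambda/4$, so $f_\lambda = i\partial_t v_\lambda + \partial_x g_{[<\lambda]}\partial_x v_\lambda$ is genuinely supported in a dyadic annulus near $\lambda$ whenever $v_\lambda$ is --- there is no residual off-diagonal tail to absorb. Thus the bilinear hypothesis \eqref{uv-bi-unbal-boot} is not actually invoked in your argument, nor does the proof produce a $C^2$; these appear in the statement simply because the lemma lives inside a bootstrap where $C\geq 1$, and including them harmlessly matches the form of the surrounding estimates. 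Your speculative explanation is not wrong in spirit about where bilinear bounds become necessary downstream, but it is not the reason for the hypothesis in this particular lemma, and it muddies an otherwise clean two-line argument.
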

The proof is immediate and is omitted.

\bigskip

To prove the bilinear $L^2_{t,x}$ bounds we use interaction Morawetz identities.
For each two $v_\lambda$'s we define the interaction Morawetz functional 
\begin{equation}\label{Ia-sharp-def-vv}
\bI^{x_0}(v_\lambda,v_\mu) :=   \iint_{x > y} M(v_\lambda)(x) P(v_\mu^{x_0}) (y) -  
P(v_\lambda)(x) M(v_\mu^{x_0}) (y) \, dx dy.
\end{equation}

To compute the time derivative of $\bI^{x_0}(v_\lambda,v_\mu)$ 
we use the density-flux identities \eqref{dens-flux-param} and \eqref{dens-flux-parap} and integrate by parts. This yields
\begin{equation}\label{I-vv}
\frac{d}{dt} \bI^{x_0}(v_\lambda,v_\mu) =  \bJ^4(v_\lambda,v_\mu^{x_0}) +  \bK(v_\lambda,v_\mu^{x_0}), 
\end{equation}
where
\[
\bJ^4(v_\lambda,v_\mu^{x_0}) := \int  g_{[<\mu]}^{x_0}\left(M(v_\lambda) E(v_\mu^{x_0}) - P(v_\lambda) P(v_\mu^{x_0})\right)
+ g_{[<\lambda]}
\left(M(v_\lambda) E(v_\mu^{x_0})
-  P(v_\lambda) P(v_\mu^{x_0})\right)\, dx,
\]
respectively
\begin{equation}\label{K8-def-ab-vv}
\begin{aligned}
\bK (v_\lambda,v_\mu^{x_0}):= \iint_{x > y} & \ M(v_\lambda)(x)   F^{para,x_0}_{\lambda,p}(y) - P(v_\lambda)(y)  F^{para,x_0}_{\lambda,m}(y) 
\\ & \!\!\!\!\!\! - 
M(v_\mu^{x_0})(y)  F^{para}_{\lambda,p}
(x)  - P(v_\mu^{x_0})(y)  F^{para}_{\lambda,m}(x)\,
dx dy.
\end{aligned}
\end{equation}
We will use the (integrated) interaction Morawetz identity \eqref{I-vv} to produce  $L^2_{t,x}$ bilinear bounds as follows:
\begin{itemize}
    \item The spacetime term $\bJ^4$ contains at leading order the squared bilinear $L^2_{t,x}$ norm we aim to bound.
    \item The fixed-time expression $\bI$ gives the primary bound for 
    $\bJ^4$.
    \item The space-time term $\bK$ will be estimated perturbatively. 
\end{itemize}

\bigskip

\textbf{I. The bound for $\bI^{x_0}(v_\lambda,v_\mu)$.}
Here we observe that we have the straightforward bound
\begin{equation} \label{I-lm}
 |    \bI^{x_0}(v_\lambda,v_\mu) | \lesssim (\lambda+\mu) d_\lambda^2 d_\mu^2,
\end{equation}
as a direct consequence of the energy bound \eqref{v-ee}.
\bigskip

\textbf{II. The contribution of $\bJ^4$.}
Here we will show that
\begin{lemma}\label{l:J4}
Under our assumptions on $u$ in Theorem~\ref{t:para} and our bilinear bootstrap assumptions 
\eqref{v-ee-boot}-\eqref{vv-bi-unbal-boot} we have 
\begin{equation}\label{j4-unbal}
\int_0^T \bJ^4(v_\lambda,v_\mu^{x_0})\, dt \approx \| \partial_x (v_\lambda \bv_\mu^{x_0})\|_{L^2_{t,x}}^2
+ O(C^6 \epsilon^2 \lambda d_\lambda^2 d_\mu^2), \qquad \mu \ll \lambda, 
\end{equation}
respectively
\begin{equation}\label{j4-bal}
\int_0^T \bJ^4(v_\lambda,v_\mu^{x_0})\, dt \approx \| \partial_x (v_\lambda \bv_\mu^{x_0})\|_{L^2_{t,x}}^2
+ O(C^6 \epsilon^2 \lambda d_\lambda^2 d_\mu^2(1+\lambda|x_0|)), \qquad \mu \approx \lambda.
\end{equation}

\end{lemma}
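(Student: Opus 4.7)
The argument has two distinct components: an exact algebraic identity in the flat metric case $g \equiv 1$, which produces the bilinear $L^2$ norm on the right-hand side, and a perturbative estimate of the corrections arising from the two truncated paracoefficients $g_{[<\lambda]}$ and $g^{x_0}_{[<\mu]}$. The main work lies in controlling these corrections using the bootstrap hypotheses on $u$ and the bilinear bounds on $v$.

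\textbf{Step 1: The flat identity.} A direct computation, using the formulas $P(v) = i(\bar v \partial_x v - v \partial_x \bar v)$, $M(v) = |v|^2$ and the identity $E(v) = -\partial_x^2 M(v) + 4|\partial_x v|^2$, together with the relation $F_\lambda F_\mu = \tfrac14(\partial_x M_\lambda \partial_x M_\mu - P_\lambda P_\mu) + \text{imag.\ parts}$ where $F = \bar v \partial_x v$, yields the pointwise identity
\[
M(v_\lambda) E(v_\mu^{x_0}) + E(v_\lambda) M(v_\mu^{x_0}) - 2 P(v_\lambda) P(v_\mu^{x_0}) = 4\, |\partial_x (v_\lambda \bar v_\mu^{x_0})|^2 + \partial_x R,
\]
where $R$ is a spatial divergence that integrates to zero in $x$. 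Consequently, when $g \equiv 1$ one obtains exactly $\int \bJ^4 \, dx = 4\, \|\partial_x(v_\lambda \bar v_\mu^{x_0})\|_{L^2_x}^2$, which upon time integration produces the main term in both \eqref{j4-bal} and \eqref{j4-unbal}.

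\textbf{Step 2: The paracoefficient corrections.} Writing $g^{x_0}_{[<\mu]} = 1 + (g^{x_0}_{[<\mu]}-1)$ and $g_{[<\lambda]} = 1 + (g_{[<\lambda]}-1)$ splits $\bJ^4 = \bJ^4_{g=1} + \bJ^{err}$, where $\bJ^{err}$ consists of integrals of $(g-1)$ against the quadrilinear expressions $M_\lambda E^{x_0}_\mu - P_\lambda P^{x_0}_\mu$ and its $\lambda \leftrightarrow \mu$ counterpart. Since $g$ is at least quadratic at zero, $g(u) - 1$ is at least quadratic in $u$, so by the bootstrap bound \eqref{uk-ee-boot} we gain a factor $\epsilon^2$, leaving us with sextic expressions carrying two derivatives to be controlled in $L^1_{t,x}$. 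To do this we expand $g-1$ paradifferentially as $u u + \text{higher}$, distribute derivatives using $E = -\partial_x^2 M + 4|\partial v|^2$ and the Leibniz rule, and then pair the six factors into three bilinear $L^2$-type terms using \eqref{uv-bi-bal-boot}, \eqref{uv-bi-unbal-boot}, \eqref{vv-bi-bal-boot}, \eqref{vv-bi-unbal-boot}, possibly backed up by Strichartz $L^6$ bounds \eqref{uk-se-short-boot} when a factor is left uncontrolled. In the unbalanced regime $\mu \ll \lambda$ the spectral separation between the $v_\lambda$ and $v^{x_0}_\mu$ factors allows us to apply only unbalanced bilinear bounds, which carry no $x_0$ loss and directly give the $O(C^6 \epsilon^2 \lambda d_\lambda^2 d_\mu^2)$ error of \eqref{j4-unbal}.

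\textbf{Step 3: The balanced case and the translation loss.} The subtlety arises when $\mu \approx \lambda$. Then the two paracoefficients differ by $g_{[<\lambda]}(y) - g^{x_0}_{[<\mu]}(y) = g_{[<\lambda]}(y) - g_{[<\mu]}(y+x_0)$, and the relevant bilinear pairings are between $u_\sigma$ factors originating in $g-1$ (localized near the spatial origin) and translates $v^{x_0}_\mu$. Each time we apply the balanced mixed bound \eqref{uv-bi-bal-boot} with translation $x_0$, we pay the factor $(1+\lambda|x_0|)$, and this is the only place in the argument where such a loss is unavoidable. Combined with the $\epsilon^2$ gain from $g-1$ and the $\lambda$ coming from the two derivatives distributed among the $v$-factors, we arrive at the stated error $O(C^6 \epsilon^2 \lambda d_\lambda^2 d_\mu^2 (1+\lambda|x_0|))$ in \eqref{j4-bal}.

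\textbf{Main obstacle.} The delicate part is the bookkeeping in Step~2: one must enumerate the various frequency interactions between the $u$-factors (coming from $g-1$ after paradifferential expansion) and the $(v_\lambda, v^{x_0}_\mu)$ pair, and argue that in each configuration at least two unbalanced or balanced bilinear $L^2$ pairings can be selected without creating a balanced triple which would force use of the strong Strichartz \eqref{uk-se-short-boot} and thereby a time-scale restriction. The analysis here closely mirrors the case breakdown in the proof of Proposition~\ref{l:N-lambda}, but now applied to sextic rather than quartic expressions, and crucially with the translation invariance of bilinear bounds used to absorb the mismatch between the base point of $u$ and the base point $x_0$ of $v_\mu^{x_0}$.
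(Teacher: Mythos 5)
There is a genuine gap in the balanced case $\mu\approx\lambda$, and it comes from your very first move: expanding \emph{both} paracoefficients around the flat metric, $g=1+(g-1)$. The cancellation encoded in \eqref{J4-main} is only available for the combination $M(v_\lambda)E(v_\mu^{x_0})+E(v_\lambda)M(v_\mu^{x_0})-2P(v_\lambda)P(v_\mu^{x_0})$ with a \emph{single common} coefficient; after your split, the error $\bJ^{err}$ contains $(g_{[<\lambda]}-1)$ and $(g_{[<\mu]}^{x_0}-1)$ multiplying the two halves \emph{separately}, e.g. terms of the schematic form $\iint (g_{[<\lambda]}-1)\,|v_\mu^{x_0}|^2\,\bar v_\lambda \partial_x^2 v_\lambda\,dx\,dt$, where the two derivatives sit on the frequency-$\lambda$ factor and cannot be moved onto the coefficient. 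With the tools permitted here (no Strichartz or undifferentiated balanced bilinear control for $v$ — only \eqref{v-ee-boot}--\eqref{vv-bi-unbal-boot} and the bounds on $u$), the only available pairings either (i) couple the two low-frequency $u$'s from $g-1$ with the two $v_\lambda$'s and put $|v_\mu^{x_0}|^2$ in $L^\infty$ by Bernstein, which yields $O(C^4\epsilon^2\lambda\mu\, d_\lambda^2 d_\mu^2)\sim O(\epsilon^2\lambda^2 d_\lambda^2 d_\mu^2)$ — a full factor $\lambda$ worse than the error claimed in \eqref{j4-bal}, which would destroy the $\lambda^{1/2}$ gain in the resulting bilinear estimate — or (ii) pair $v_\lambda$ with $\bar v_\mu^{x_0}$, which either requires an undifferentiated balanced product in $L^2_{t,x}$ (not controlled, since in the balanced case one cannot divide by the output frequency) or two applications of the translated balanced bound, costing $(1+\lambda|x_0|)^2$ rather than $(1+\lambda|x_0|)$. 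Your Step 3 also misattributes the origin of the $(1+\lambda|x_0|)$ factor: in the paper it does not come from translated balanced bilinear bounds at all.

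The paper's proof avoids this by never separating off the flat metric. It writes $J^4 = g_{[<\lambda]}J^4_{main}(v_\lambda,v_\mu^{x_0}) + (g^{x_0}_{[<\mu]}-g_{[<\lambda]})\bigl(M(v_\mu^{x_0})E(v_\lambda)-P(v_\lambda)P(v_\mu^{x_0})\bigr)$. In the first piece the non-square part of \eqref{J4-main} is a total derivative, so after integrating by parts twice \emph{both} derivatives land on $g_{[<\lambda]}$, i.e. on low-frequency $u$-factors, and the resulting sum over frequencies $\mu_1,\mu_2<\lambda$ converges for $s\geq 1$ after two unbalanced bilinear pairings. In the second piece the coefficient is not merely $O(\epsilon^2)$ but has special structure: $g_{[<\lambda]}-g_{[<\mu]}$ carries a factor $u$ at frequency comparable to $\lambda$ (so two unbalanced pairings suffice), and $g_{[<\mu]}-g^{x_0}_{[<\mu]}=\int_0^{x_0}\partial_x g^y_{[<\mu]}\,dy$ gains a full derivative on a low-frequency factor at the cost of the length $|x_0|$ — this is where the single factor $(1+\lambda|x_0|)$ in \eqref{j4-bal} comes from, again using only unbalanced bilinear bounds. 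Any repair of your argument would have to reintroduce exactly this splitting (note that combining your two error terms symmetrically leaves you with the paracoefficient difference times $A-B$, i.e. the paper's $J^{4,b}$), so as written the proposal does not establish \eqref{j4-bal}; your unbalanced case \eqref{j4-unbal}, by contrast, can be closed along the lines you indicate, since there the transversal products carry output frequency $\approx\lambda$ and undifferentiated bilinear $L^2$ control is available by dividing by $\lambda$.
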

For later use we remark that the proof does not use at all the Strichartz bootstrap assumption for $u$.
\begin{proof}
We will separate the proof into two subcases, depending on whether
$\lambda$ and $\mu$ are balanced or not. We begin with the more difficult 
balanced case.
\medskip

\textbf{ II(a) The balanced case $\mu \approx \lambda$.} 
We split $J^4(v_\lambda,v_\mu^{x_0})$ it into 
\[
\begin{aligned}
J^4(v_\lambda,v_\mu^{x_0}) = & \  g_{[<\lambda]} J^4_{main}(v_\lambda,v_\mu^{x_0})
+ (g_{[<\mu]}^{x_0}- g_{[<\lambda]}) 
\left(M(v_\mu^{x_0}) E(v_\lambda)
-  P(v_\lambda) P(v_\mu^{x_0})\right)
\\
: = & \  
J^{4,a}(v_\lambda,v_\mu^{x_0})+
J^{4,b}(v_\lambda,v_\mu^{x_0}),
\end{aligned}
\]
where the leading expression $J^4_{main}$ has the form
\[
J^4_{main}(u,v) = M(u) E(v) 
+ M(v) E(u)
- 2 P(u) P(v).
\]
This is  a quadrilinear form in $(u,\bu,v,\bv)$ with symbol
\[
\begin{aligned}
(\xi_1+\xi_2)^2 + (\xi_3 +\xi_4)^2 - 2(\xi_1+\xi_2)(\xi_3+\xi_4)
= & \ (\xi_1+\xi_2-\xi_3-\xi_4)^2
\\ = & \ 4( \xi_1 - \xi_4)(\xi_2-\xi_3) + (\Delta^4 \xi)^2.
\end{aligned}
\]
This relation leads to the identity
\begin{equation}\label{J4-main}
J^4_{main}(u,v) = 4 | \partial_x (u \bar v)|^2 + 
\partial_x^2 (|u|^2 |v|^2),
\end{equation}
which in our case yields
\[
\int_0^T \bJ^{4,a}(v_\lambda,v_\mu^{x_0})\, dt = 
4\| g_{[<\lambda]}^\frac12
\partial_x (v_\lambda \bar v_\mu^{x_0})\|_{L^2}^2 + Err,
\]
where, integrating by parts, we have 
\[
Err := \int_0^T \int_\R \partial_x^2  g_{[<\lambda]} |v_\lambda|^2
|v_\mu^{x_0}|^2 \,dx dt.
\]
The key feature of the above expression is that $g$ is at least quadratic
in $u_{\ll \lambda}$, which allows us to use two bilinear $L^2_{t,x}$ bounds
pairing $u_{\ll \lambda}$ and $v_\lambda$. Precisely, neglecting complex conjugates of $u_{\ll \lambda}$ we can write
\[
\begin{aligned}
\partial_x^2  g_{[<\lambda]} |v_\lambda|^2 = & \  L(\partial_x^2 g(u_{\ll \lambda}), |v_\lambda|^2) 
\\ = & \  L( h(u_{\ll \lambda}),\partial_x u_{\ll \lambda},\partial_x u_{\ll \lambda},v_\lambda,v_\lambda )+ L( h(u_{\ll \lambda}),u_{\ll \lambda},\partial_x^2 u_{\ll \lambda},v_\lambda,v_\lambda ).
\end{aligned}
\]
Then, denoting by $\mu_1$ and $\mu_2$ the dyadic frequencies 
of the two $u_{\ll \lambda}$ entries, we can estimate $Err$ using two bilinear $L^2_{t,x}$ bootstrap bounds \eqref{uv-bi-unbal-boot} for the pairs
$(u_{\mu_1},v_\lambda)$ and $(u_{\mu_2},v_\lambda)$, combined 
with an $L^\infty$ bound for $v_\mu^{x_0}$ obtained from \eqref{v-ee} using Bernstein's inequality:
\[
|Err|\lesssim \epsilon^2 C^6 \sum_{\mu_1,\mu_2 \ll \lambda}
 (\mu_1+\mu_2)^2 \lambda^{-1}  \mu_1^{-s} \mu_2^{-s} d_\lambda^2
\cdot \mu  d_\mu^2 \lesssim \epsilon^2 C^6 \mu \, d_\lambda^2 \, d_\mu^2,
\]
as needed. For the last step we have used $ s \geq 1$ in order to handle the $\mu_1$ and $\mu_2$ summation.
\bigskip

We now consider the second term $J^{4,b}$. 
We split the difference between metrics as
\begin{equation}\label{dg-x0}
\begin{aligned}
g_{[<\lambda]} - g_{[<\mu]}^{x_0}
= & \ g_{[<\lambda]} - g_{[<\mu]} + g_{[<\mu]} - g_{[<\mu]}^{x_0}
\\ = &\
\ P_{<\lambda} (g(u_{\ll \lambda}) - g(u_{\ll \mu}))
+  (P_{<\lambda}-P_{<\mu}) g(u_{\ll \mu})+ P_{< \mu} (g(u_{\ll \mu}) - g(u_{\ll \mu}^{x_0})).
\end{aligned}
\end{equation}

The first term has contributions arising from the difference 
$u_{\ll\lambda} - u_{\ll \mu}$ which is just below frequency $\lambda$.
Since we are in the case $\mu \approx \lambda$ and $g$ is at least quadratic,  we represent this schematically as
\[
g(u_{\ll \lambda}) - g(u_{\ll \mu}) =  g'(u_{\ll\lambda}) u_{\approx c \lambda} =  L( h(u_{\ll \lambda}), u_{\ll \lambda}, u_{\approx c \lambda}) 
\]
with a small universal constant $c \ll 1$.
Then we can use two bilinear unbalanced bootstrap bounds \eqref{uv-bi-unbal-boot} for the pairs $(u_{\ll \lambda},v_\lambda)$, respectively $(u_{\approx c\lambda},v_\lambda)$ and $L^\infty$ Bernstein bounds for $v_\mu$ to estimate
\[
\left|\iint P_{<\lambda} (g(u_{\ll \lambda}) - g(u_{\ll \mu})) \left(M(v_\mu^{x_0}) E(v_\lambda)
-  P(v_\lambda) P(v_\mu^{x_0})\right) dx dt\right| 
\lesssim \epsilon^2 C^2 \lambda^{2-s}  d_\lambda^2 d_\mu^2, 
\]
which suffices.
\medskip

For the second difference in \eqref{dg-x0} we write
\[
(P_{<\lambda}-P_{<\mu}) g(u_{\ll \mu}) = P_{\approx \lambda} 
g(u_{\ll \mu}) = \lambda^{-1} \partial_x \tilde P_{\approx \lambda} 
g(u_{\ll \mu}),
\]
where the multiplier $\tilde P_{\approx \lambda}$ has symbol size, regularity and localization similar to $P_{\approx \lambda}$.

Then we can integrate by parts in the corresponding integrand in $J^{4,b}$,
where the derivative can fall on $v_\lambda$ or on $v_\mu$ terms,  yielding an expression of the form
\[
\begin{aligned}
\iint (P_{<\lambda}-P_{<\mu}) g(u_{\ll \mu})
\left(M(v_\mu^{x_0}) E(v_\lambda)
-  P(v_\lambda) P(v_\mu^{x_0})\right) \, dxdt = 
\\ \iint 
\lambda  \tilde P_{\approx \lambda} 
g(u_{\ll \mu}) ( \partial_x L(v_\lambda,\bv_\lambda) L(v_\mu,\bv_\mu)
+  L(v_\lambda,\bv_\lambda) \partial_x L(v_\mu,\bv_\mu))\, dxdt.
\end{aligned}
\]
Here we bound the integrand in $L^1_{t,x}$ using \eqref{Moser-tail} for the first factor, the balanced bootstrap bound \eqref{vv-bi-bal-boot} for the differentiated $v$ factor and an $L^\infty$ Bernstein bound for the undifferentiated $v$ factor. This yields
\[
\left|\iint (P_{<\lambda}-P_{<\mu}) g(u_{\ll \mu})
\left(M(v_\mu^{x_0}) E(v_\lambda)
-  P(v_\lambda) P(v_\mu^{x_0})\right) \, dxdt\right | \lesssim 
\epsilon^5 C^4 \lambda^{-3s+\frac32} d_\lambda^2 d_\mu^2 ,
\]
which suffices. We remark that, alternatively, here one can avoid the integration by parts and the use of the balanced bootstrap bound \eqref{vv-bi-bal-boot} by improving \eqref{Moser-tail} to an $L^1_{t,x}$ bound.
\medskip

Finally we consider the third difference in \eqref{dg-x0}, which, using the fact that $g$ is at least quadratic,  we expand as
\[
g_{[< \mu]} - g_{[< \mu]}^{x_0} = \int_0^{x_0} \partial_x  g_{[<\mu]}^y \, dy = \int_0^{x_0} P_{<\mu} (g'(u_{\ll \mu}^y) \partial_x u_{\ll\mu}^y)dy
= |x_0| L(h(u_{\ll \mu}), u_{\ll \mu}, \partial_x u_{\ll\mu}).
\]
Now we are able to estimate the corresponding integrand in $J^{4,b}$ using  two bilinear bootstrap  bounds \eqref{uv-bi-unbal-boot} estimates, where we pair $v_\mu$ once with 
$u_{\ll\mu}$ and secondly with $\partial_x u_{\ll\mu}$. This gives
\[
\begin{aligned}
\left| \iint  (g_{[<\mu]} - g_{[<\mu]}^{x_0}) \left(M(v_\mu^{x_0}) E(v_\lambda)
-  P(v_\lambda) P(v_\mu^{x_0})\right) dx dt \right|
\lesssim & \ \epsilon^2 C^2 \lambda^2 |x_0|  
\, d_{\lambda}^2\, d_{\mu}^2,
\end{aligned}
\]
which again suffices. We note that this is the place in the proof 
where  we need to take into account the difference between the original and the translated metric.
\medskip

\textbf{ II(a) The unbalanced case $\mu \ll \lambda$.} 
The argument here is similar, but with a major simplification
in the analysis of $J^{4,b}$. Precisely, 
if $\mu \ll \lambda$ 
then we have 
\[
\| v_\lambda \bar v_\mu^{x_0}\|_{L^2_{t,x}} \lesssim \lambda^{-1}
\| \partial_x (v_\lambda \bar v_\mu^{x_0})\|_{L^2_{t,x}},
\]
and, using the unbalanced bilinear bootstrap bound \eqref{vv-bi-unbal-boot} twice, it suffices to estimate the difference of $g$'s uniformly,
\[
\| g_{[<\lambda]} - g_{[<\mu]}^{x_0}\|_{L^\infty_{t,x}} \lesssim \epsilon^2 C^2.
\]
This concludes the proof of Lemma~\ref{l:J4}.

\end{proof}

\bigskip

\textbf{III. The contribution of $\bK$.} Here we estimate directly
\begin{equation*}
\begin{aligned}
\int_0^T |\bK (v_\lambda,v_\mu^{x_0})|\, dt \lesssim & \  \|M(v_\lambda)(x)\|_{L^\infty_t L^1_x}    \|F^{para,x_0}_{\mu,p}\|_{L^1_{t,x}} +  \|P(v_\lambda)\|_{L^\infty_t L^1_x} \|F^{para,x_0}_{\mu,m}\|_{L^1_{t,x}} 
\\ &  + 
\|M(v_\mu^{x_0})\|_{L^1_{t,x}}  \|F^{para}_{\lambda,p}\|_{L^1_{t,x}}
+ \|P(v_\mu^{x_0})\|_{L^\infty_t L^1_x}  \|F^{para}_{\lambda,m}\|_{L^1_{t,x}},
\end{aligned}
\end{equation*}
and then use the energy bounds for mass and momentum 
together with Lemma~\ref{l:F-para} for the source terms.
This yields
\begin{equation}\label{K-lm}
   \int_0^T |\bK (v_\lambda,v_\mu^{x_0})|\, dt \lesssim \lambda \, d_\lambda^2\, d_\mu^2.
\end{equation}

\bigskip

\textbf{ IV. The frequency localized bilinear $L^2$ bounds.}
Combining the bounds 
\eqref{I-lm}, \eqref{j4-bal} and \eqref{K-lm}
for $\bI$, $\bJ^4$ and $\bK$ we obtain \eqref{vv-bi-bal}. Similarly, \eqref{vv-bi-unbal} follows by combining he bounds 
\eqref{I-lm}, \eqref{j4-unbal} and \eqref{K-lm}. By replacing one of the $v_\lambda$'s  with $u_\lambda$ 
these also imply 
\eqref{uv-bi-bal} and \eqref{uv-bi-unbal}. 
\end{proof}


\section{Low regularity Strichartz for the paradifferential flow} \label{s:str}


Bilinear $L^2_{t,x}$ bounds suffice in order to understand the 
linear paradifferential flow \eqref{para}, where only unbalanced interactions occur between $u$ and $v$. However, in order to 
study the full linearized equation we also need to be able 
to estimate balanced interactions. The key to doing that at low regularity 
($s >1$ for \eqref{qnls}) is to have (nearly) lossless Strichartz estimates
for the linear paradifferential flow.

For convenience we recall the linear paradifferential equation:
\begin{equation}\label{para-re}
i \partial_t v_\lambda + \partial_x g_{[<\lambda]} \partial_x v_\lambda 
= f_\lambda, \qquad v_\lambda(0) := v_{0,\lambda}.
\end{equation}
The simplest Strichartz estimate one could write for this problem would be 
\begin{equation}\label{se-simple}
 \| v_\lambda \|_{L^\infty_t L^2_x \cap L^6_{t,x}} \lesssim \| v_{\lambda,0}\|_{L^2_x}
 + \|f_\lambda\|_{L^1_tL^2_x+L^{\frac65}_{t,x}}.
\end{equation}
While such an estimate will follow from our results in this section,
it is not robust enough, neither for our applications later on, 
nor for the proof. We point out two more specific closely related issues:

\begin{itemize}
    \item For the applications, the typical source term $f_\lambda$ will 
 a trilinear form which generally contains unbalanced interactions, and which, even using bilinear $L^2_{t,x}$ bounds, we cannot estimate in any $L^p$ norm without losing derivatives.
 \item For the proof, localizing both the source term $f_\lambda$ 
 and the initial data $v_\lambda$ at frequency $\lambda$ does not a-priori guarantee a similar localization for the solution. Furthermore, if we take the next obvious step and relocalize the solution at frequency $\lambda$,
 the commutators arising as truncation errors have a similar unbalanced 
 trilinear structure.
 \end{itemize}

For these reasons, we need to enlarge the space of allowed source terms.
At first, one might hope that working with source terms such as in \eqref{dl}
might suffice. While that has turned out to suffice for energy-type bounds,
the Strichartz estimates seem far less robust.  To bridge this gap,
we use a definition which involves duality. Precisely, for smooth, 
frequency localized functions $w_\lambda$ solving the inhomogeneous equation
\begin{equation}\label{dual}
\partial_t w_\lambda + \partial_x g_{[<\lambda]} \partial_x w_\lambda 
= e_\lambda, \qquad w_\lambda(T) = w_{T,\lambda},
\end{equation}
we define a homogeneous functional
\begin{equation}\label{X-lambda}
\|w_\lambda \|_{X_\lambda}^2 := \|w_\lambda(T)\|_{L^2_x}^2 + \sup_{x_0 \in \R} \| w^{x_0}_\lambda e_\lambda\|_{L^1_{t,x}},   
\end{equation}
which is akin to the $d_\lambda$'s in \eqref{dl}, and in particular allows us to use the bilinear estimates in Theorem~\ref{t:para}. Here it does not matter 
whether we measure the $L^2$ data at the initial or the final time;
we have chosen the time $T$ in order to emphasize the role of $w_\lambda$ 
as an adjoint variable. Also \eqref{X-lambda} is not a norm, and it does not even define a linear space, though one may define an associated norm  $X_\lambda^\sharp$ by 
\[
\| w_\lambda\|_{X_\lambda^\sharp} := \inf_{w_\lambda = \sum w_\lambda^j} 
\sum \|  w_\lambda^j\|_{X_\lambda},
\]
whose unit ball is the convex closure of the unit $X_\lambda$ ball.

Now we introduce a dual norm to be used for source terms $f_\lambda$ which are localized at frequency $\lambda$,
\begin{equation}
\| f_\lambda\|_{X_\lambda^*} := \sup_{\mu \approx \lambda} \sup_{\|w_\mu\|_{X_\mu} \leq 1} \| f_\lambda w_\mu\|_{L^1_{t,x}} .   
\end{equation}
With these notations, we are ready to state our main Strichartz estimate:

\begin{theorem}\label{t:para-se}
Assume that $u$ solves \eqref{qnls} in a time interval $[0,T]$ 
with $T$ as in \eqref{small-t-loc}, and satisfies the bounds \eqref{uk-ee},\eqref{uab-bi-bal}, \eqref{uab-bi-unbal} and \eqref{uk-se-short} for some $s \geq 1$. Then Strichartz estimates hold for frequency $\lambda$ localized solutions to the linear paradifferential equation \eqref{para-re},
\begin{equation}\label{para-se}
 \| v_\lambda \|_{L^\infty_t L^2_x \cap L^6_{t,x}} \lesssim \| v_{\lambda,0}\|_{L^2_x}
 + \|f_\lambda\|_{L^1_t L^2_x + L^{\frac65}_{t,x}+X_\lambda^*}.
\end{equation}
\end{theorem}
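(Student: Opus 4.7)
My overall strategy combines a wave packet parametrix for the paradifferential flow with the bilinear $L^2$ bounds of Theorem~\ref{t:para} to absorb the parametrix errors. I would first reduce to the homogeneous case $f_\lambda = 0$. For source terms in $L^{6/5}$, this is the standard Christ-Kiselev reduction. For source terms in $X_\lambda^*$, I would use a $TT^*$-style duality argument: the definition of $X_\lambda^*$ is constructed precisely to pair in $L^1$ with test functions in $X_\mu$ for $\mu \approx \lambda$, and the required $X_\mu$ control on the adjoint paradifferential flow with $L^{6/5}$ source follows from the homogeneous Strichartz bound together with the energy inequality, closed by $\epsilon$-smallness.

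For the homogeneous estimate I would construct a wave packet parametrix in the style of \cite{MMT-param}, adapted to the bicharacteristic flow of the symbol $g_{[<\lambda]}(x)\xi^2$. Wave packets at frequency $\lambda$ have spatial scale $\lambda^{-1/2}$, and the bootstrap hypotheses on $u$ endow $g_{[<\lambda]}$ with sufficient regularity at scale $\lambda^{-1}$ so that the Hamilton flow is well-defined and nondegenerate on the time interval $[0,T]$ with $T \ll \epsilon^{-4}$. Each packet then exhibits the expected dispersive decay of a constant coefficient Schr\"odinger wave, and a standard $TT^*$ argument based on orthogonality between packets produces the desired $L^6$ Strichartz bound up to a parametrix error.

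The parametrix errors are of two kinds: amplitude and phase corrections arising from the variation of $g_{[<\lambda]}$ across a wave packet, and commutators between the frequency projection $P_\lambda$ and the second-order operator $\partial_x g_{[<\lambda]} \partial_x$. In both cases the error takes the form of a product of $v_\lambda$ with components of $u$ entering through the coefficient; since $g_{[<\lambda]}$ is built from $u_{\ll\lambda}$, every such pairing is genuinely unbalanced, so the unbalanced bilinear $L^2$ bound \eqref{uv-bi-unbal} of Theorem~\ref{t:para} applies and delivers exactly the $X_\lambda^*$ control needed to absorb the error via $\epsilon$-smallness. This is the sense in which the bilinear bounds ``separate a perturbative component'' from the paradifferential flow.

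The principal obstacle is the \emph{lossless} character of the estimate: classical variable coefficient Strichartz theory (\cite{HTW,RZ,T-St}) requires $C^2$ coefficients with asymptotic flatness, neither of which is available when $g(u)$ is only at the $H^{s-\frac12}$ level for $s > 1$. The only mechanism for avoiding a derivative loss is to arrange every parametrix error to factor as an unbalanced product that the bilinear $L^2$ bounds can absorb. Ensuring this alignment at every step of the parametrix construction, and choosing the source norm $X_\lambda^*$ so that it closes the final bootstrap, is the main technical challenge of the proof.
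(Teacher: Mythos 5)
Your high-level instinct — wave packet parametrix plus bilinear $L^2$ bounds to absorb errors, with duality to handle the $X_\lambda^*$ source — is aligned with the paper's strategy, but there is a genuine gap at the core of the construction: the scale of the wave packets.

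You propose packets at spatial scale $\lambda^{-1/2}$, i.e.\ the standard semiclassical scale for frequency $\lambda$. Such packets are coherent only up to time $O(1)$, so a parametrix built from them gives Strichartz bounds only on unit time intervals; iterating those across $[0,T]$ with $T \ll \epsilon^{-4}$ introduces exactly the derivative loss one is trying to avoid. To obtain a \emph{lossless} estimate on the whole interval, the paper instead takes the coherence scale dictated by the time horizon, $\delta x = \sqrt{T}$ and $\delta\xi = 1/\sqrt{T}$ (so $\delta x \sim \epsilon^{-2}$, $\delta\xi \sim \epsilon^2$). Wave packets at that scale require the subprincipal symbol to be regular at spatial scale $\epsilon^{-2}$ — far coarser than $\lambda^{-1}$, and not a property of $g_{[<\lambda]}$ itself. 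This is why the paper first (i) replaces $g_{[<\lambda]}$ by $g_{[<\lambda^\sigma]}$ (to control frequency leakage), (ii) performs a change of spatial variable $\partial y/\partial x = g_{[<\lambda^\sigma]}^{-1/2}$ to flatten the second-order operator to $\partial_y^2$, and (iii) filters the resulting first-order coefficient $V_2$ to frequencies $\lesssim \epsilon^2$ in the new variable, with the high-frequency part of $V_2$ and all of $V_1$ moved into the perturbative $V,W$ terms of \eqref{para-reb}. None of this appears in your proposal, and without the change of coordinates you cannot obtain a symbol with the $\epsilon^{-2}$-scale regularity needed for the $\sqrt{T}$-scale parametrix.

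A second, related gap is in how the parametrix hypotheses are verified. You suggest the errors are absorbed by the unbalanced bilinear bound \eqref{uv-bi-unbal}. That bound is indeed what powers Proposition~\ref{p:equiv} (equivalence of the three paradifferential models), but it is \emph{not} what closes the parametrix construction itself. The crucial quantitative requirement for \cite{MMT-param} is the symbol estimate \eqref{good-symbol} for $\alpha + \beta = 2$, which reduces to the bound \eqref{wp-need+}, $\int_0^T |\partial_x u_{<\lambda^\sigma}(t,x_t)|^2\, \lambda\, dt \lesssim \epsilon^2$ uniformly over bicharacteristics moving at speed $O(\lambda)$. This is proved by integrating the mass density–flux identity \eqref{dens-flux-m0} over the strip bounded by a trajectory — it is a local smoothing / one-dimensional local energy estimate propagated along characteristics, not a bilinear transversality bound. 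Your proposal does not identify this estimate, and it is the true heart of the argument: without it the parametrix is not applicable on the long time scale, and no amount of post-hoc bilinear absorption fixes that.
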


We note that the simpler Strichartz estimate \eqref{se-simple} is
a direct consequence of the theorem.
The rest of this section is devoted to the proof of this theorem.
In a first step, we successively replace equation
\eqref{para-re} with two carefully chosen equations with better properties. The first step will help with 
the frequency localization issue. This is
\begin{equation}\label{para-rea}
i \partial_t v_\lambda + \partial_x g_{[<\lambda^\sigma]} \partial_x v_\lambda 
= f_\lambda, \qquad v_\lambda(0) = v_{0,\lambda},
\end{equation}
where $\sigma < 1$ is close to $1$. 

The second step will be to add some ``good lower order terms'' in the above equation,
\begin{equation}\label{para-reb}
i \partial_t v_\lambda + \partial_x g_{[<\lambda^\sigma]} \partial_x v_\lambda + V_{<\lambda^\sigma} \partial_x v_\lambda + W_{<\lambda^\sigma}  v_\lambda 
= f_\lambda, \qquad v_\lambda(0) = v_{0,\lambda}.
\end{equation}
The potentials $V_{<\lambda^\sigma}$ and and $W_{<\lambda^\sigma} $
are assumed to be localized at  frequency $< \lambda^\sigma$.
Their form and role will become clear shortly. But for now what is important is that they satisfy good pointwise bounds which make their contributions perturbative. From the perspective of classical Strichartz estimates, it would be natural to place them into mixed norm spaces such as $L^1_t L^\infty_x$ or $L^{\frac32}_{t,x}$. That would make them directly perturbative, but it would be too restrictive in our case. Instead, we will assume they have a favourable bilinear structure,
\begin{equation}\label{which-V}
 |V_{<\lambda^\sigma}| \lesssim L(|\partial u_{<\lambda^\sigma}|,|\partial u_{<\lambda^\sigma}|) ,\qquad  |W_{<\lambda^\sigma}| \lesssim \epsilon^2  L(|\partial u_{<\lambda^\sigma}|,|\partial u_{<\lambda^\sigma}|).
\end{equation}

One may formulate corresponding versions of Theorem~\ref{t:para-se} 
associated to the equations \eqref{para-rea}, respectively \eqref{para-reb}.
In addition to changing the equation, this also entails changes to the 
space $X_\lambda$. The modified versions of $X_\lambda$, which we denote by 
$X_\lambda^1$ and $X_\lambda^2$, are defined exactly as in \eqref{X-lambda} but with respect to the inhomogeneous adjoint equations to  \eqref{para-rea}, respectively \eqref{para-reb}, which are 
\begin{equation}\label{para-duala}
i \partial_t w_\lambda + \partial_x g_{[<\lambda^\sigma]} \partial_x w_\lambda 
= e_\lambda^1, \qquad w_\lambda(T) = w_{T,\lambda},
\end{equation}
respectively
\begin{equation}\label{para-dualb}
i \partial_t w_\lambda + \partial_x g_{[<\lambda^\sigma]} \partial_x w_\lambda - \partial_x \bar V_{<\lambda^\sigma}  v_\lambda + \bar  W_{<\lambda^\sigma}  w_\lambda 
= e_\lambda^2, \qquad w_\lambda(T) = w_{T,\lambda}.
\end{equation}

We will now show that it is equivalent to use either of the three 
equations:

\begin{proposition}\label{p:equiv}
The estimate \eqref{para-se} for the equation \eqref{para-re} is equivalent to the similar bound for the equation \eqref{para-rea} or \eqref{para-reb}.
\end{proposition}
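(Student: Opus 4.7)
The three equations \eqref{para-re}, \eqref{para-rea}, \eqref{para-reb} differ by perturbations that I intend to move to the source side and control in $L^{6/5}+X_\lambda^*$ with an absorbably small prefactor. Once this is achieved, the $L^6$ estimate transfers between the three formulations in both directions (the perturbation is symmetric up to sign). I will carry out the two transitions \eqref{para-re}$\leftrightarrow$\eqref{para-rea} and \eqref{para-rea}$\leftrightarrow$\eqref{para-reb} separately.

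For the first transition, the difference to absorb into the source is
\[
\Delta_1 = \partial_x\bigl[(g_{[<\lambda]}-g_{[<\lambda^\sigma]})\,\partial_x v_\lambda\bigr].
\]
Using the paradifferential expansion \eqref{para-expansion} together with the Moser-type tail bound of Lemma~\ref{l:Moser-lh}, the coefficient $g_{[<\lambda]}-g_{[<\lambda^\sigma]}$ decomposes dyadically as a sum over intermediate frequencies $\nu\in[\lambda^\sigma,\lambda]$ of pieces of the form $g'(u_{<\nu})\,u_\nu$, plus harmless quadratic corrections. To verify $\Delta_1\in X_\lambda^*$ I test against any admissible $w_\mu$ with $\mu\approx\lambda$ and $\|w_\mu\|_{X_\mu}\leq 1$, integrate by parts once, and for each dyadic $\nu$ analyze
\[
\int g'(u_{<\nu})\,u_\nu\,\partial_x v_\lambda\,\partial_x \bar w_\mu\,dx\,dt.
\]
On the support of the resulting trilinear form, $\partial_x v_\lambda \cdot \partial_x \bar w_\mu$ has symbol $\sim \lambda^2$ (the two derivatives both act transversally in frequency), so modulo commutators it suffices to bound $\lambda^2\|u_\nu\|_{L^\infty L^2}\|P_\nu(v_\lambda \bar w_\mu)\|_{L^2}$. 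Both $v_\lambda$ and $w_\mu$ are paradifferential solutions, so the balanced bilinear bound \eqref{vv-bi-bal} at $x_0=0$ applies to the second factor; combined with $\|u_\nu\|_{L^\infty L^2}\lesssim\epsilon c_\nu\nu^{-s}$ and the summation over $\nu\in[\lambda^\sigma,\lambda]$, this delivers $\|\Delta_1\|_{X_\lambda^*}\lesssim\epsilon^2\cdot(\text{slowly varying factor in }\lambda)$.

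For the second transition, the perturbation is
\[
\Delta_2 = V_{<\lambda^\sigma}\,\partial_x v_\lambda + W_{<\lambda^\sigma}\,v_\lambda.
\]
By \eqref{which-V}, the coefficients are controlled pointwise by translation invariant bilinear forms in $\partial u_{<\lambda^\sigma}$. Testing $\Delta_2$ against a dual $w_\mu$ yields a multilinear expression with two copies of $\partial u_{<\lambda^\sigma}$ and one each of $v_\lambda$ (or $\partial_x v_\lambda$) and $w_\mu$. Since the $u$-factors sit at frequency $\ll\lambda\approx\mu$, I distribute them across the $v_\lambda$--$w_\mu$ pair and apply the unbalanced bilinear bound \eqref{uv-bi-unbal} twice, producing two factors of $\epsilon$ that compensate the $\epsilon^{-2}$ in the bound on $V$ (the $W$ piece has no $\epsilon^{-2}$ and is strictly easier); the arbitrarily small prefactor in \eqref{which-V} then closes the estimate.

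The main obstacle is the tight dyadic summation in Step~1, where the sharpness of the balanced bilinear bound \eqref{vv-bi-bal} leaves very little room. Two structural features of the setup are what make it work: first, $X_\lambda^*$ is defined with \emph{no} translation in the dual pairing, consistent with taking $x_0=0$ in \eqref{vv-bi-bal} and thus avoiding the $(1+\lambda|x_0|)$ loss; second, the $\ell^2$-summability of the envelope $c_\nu$ combined with the flexibility of choosing $\sigma<1$ close to $1$ controls the geometric series over the intermediate band $[\lambda^\sigma,\lambda]$. Together with the $\epsilon$-smallness of $u$, this produces the small constant needed to close the equivalence in both directions.
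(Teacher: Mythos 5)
Your proposal captures the right high-level strategy (move the perturbations to the source, show they are small, absorb them), and the treatment of $\Delta_2$ via two applications of the unbalanced bilinear bound \eqref{uv-bi-unbal} agrees with the paper. However there are two genuine gaps, one structural and one technical.

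\textbf{Missing step: equivalence of the $X_\lambda$ norms.} The paper's proof has two parts, and you have only done the second. Before comparing source terms one must verify that the functionals $\|w_\lambda\|_{X_\lambda}$ associated to \eqref{para-re}, \eqref{para-rea}, \eqref{para-reb} are comparable, i.e.\ that the admissible class of test functions is stable under changing the equation. This is not cosmetic: after the change of variables, the wave-packet construction yields an approximate solution $w_\lambda$ to the adjoint of \eqref{para-reb}, and the final duality argument pairs $f_\lambda$ against this $w_\lambda$; for the $X_\lambda^*$ pairing to be legitimate one must know $w_\lambda$ has bounded $X_\lambda$ norm relative to \eqref{para-re}. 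Concretely, one estimates $\sup_{x_0}\|w^{x_0}_\lambda\,\partial_x(g_{[<\lambda^\sigma]}-g_{[<\lambda]})\partial_x w_\lambda\|_{L^1}$. Because the $\sup_{x_0}$ is present here, the balanced bound \eqref{vv-bi-bal} with its $(1+\lambda|x_0|)$ loss is unusable; the paper uses \eqref{uv-bi-unbal-boot} twice precisely because the unbalanced bound is translation-uniform.

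\textbf{Incorrect pairing for $\Delta_1$.} For the source comparison you bound the pairing by $\lambda^2\|u_\nu\|_{L^\infty L^2}\|P_\nu(v_\lambda\bar w_\mu)\|_{L^2_{t,x}}$. This is not a valid majorization of the $L^1_{t,x}$ integral: Cauchy--Schwarz in $(t,x)$ requires $\|u_\nu\|_{L^2_{t,x}}$, not $\|u_\nu\|_{L^\infty_t L^2_x}$, and passing from the latter to the former costs $T^{1/2}$. Since $T\lesssim\epsilon^{-4}$, this $T^{1/2}$ exactly cancels the claimed $\epsilon^2$ factor, so no absorbable smallness survives. The paper's route sidesteps this: since $g$ is at least quadratic at zero ($g'(0)=0$), the coefficient $g_{[<\lambda]}-g_{[<\lambda^\sigma]}$ carries a \emph{second} $u$ factor which, rather than being absorbed into $L^\infty$, is paired with one of $v_\lambda$, $w_\lambda$ via a second application of \eqref{uv-bi-unbal-boot}. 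Two space-time $L^2$ bilinear bounds multiply directly into $L^1_{t,x}$ with no time loss and a genuine $\epsilon^2$ gain. Your approach could in principle be repaired by replacing the $L^\infty L^2$ bound on $u_\nu$ with a bilinear $L^2$ bound on the pair $(u_{\mu_1},u_\nu)$ pulled from $g'(u_{<\nu})u_\nu$, but as written the argument does not close.
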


\begin{proof}
The differences in the three formulations of the theorem have two sources:
\begin{enumerate}[label=(\roman*)]
\item the changes in the $X_\lambda$ functional associated to the adjoint equation, and
\item the changes in the source term for a given solution $v$. 
\end{enumerate}
We will successively address these two issues.
\medskip

(i). The first step in the proof is to show that the $X_\lambda$ functionals associated to the three equations stay equivalent,
\begin{equation}\label{X-equiv}
\|w \|_{X_\lambda} \approx \|w\|_{X_\lambda^1} \approx \|w\|_{X_\lambda^2}.   \end{equation}

Suppose for instance that $w_\lambda$ solves the second adjoint equation
\eqref{para-duala}. Then it also solves \eqref{dual} with 
\[
e_\lambda = e_\lambda^1 + \partial_x (g_{[<\lambda^{\sigma}]}-g_{[<\lambda]}) \partial_x w_\lambda.
\]

This yields a difference in the associated $X_\lambda$ functionals which is given by
\[
\left| \|w\|_{X_\lambda}^2 - \|w\|_{X_\lambda^1}^2\right| \lesssim \sup_{x_0\in \R} \| w_\lambda^{x_0} \partial_x (g_{[<\lambda^{\sigma}]}-g_{[<\lambda]}) \partial_x w_\lambda \|_{L^1_{t,x}}.
\]
We estimate the $g$ difference as in the proof of Lemma~\ref{l:J4},
writing
\[
 w_\lambda^{x_0} \partial_x (g_{[<\lambda^{\sigma}]}-g_{[<\lambda]}) \partial_x w_\lambda = \lambda^2 L(h(u_{\ll \lambda},u_{\ll \lambda^\sigma}), w_\lambda,w_\lambda,u_{\ll\lambda},u_{\ll\lambda^\sigma}-u_{\ll \lambda}).
\]
Here we can apply twice the unbalanced $L^2_{t,x}$ bound \eqref{uv-bi-unbal-boot}
for the pairs $(w_\lambda,u_{\ll \lambda})$ respectively $(w_\lambda, u_{\ll\lambda^\sigma}-u_{\ll \lambda})$
in order to estimate the $L^1_{t,x}$ norm on the right, with $d_\lambda =  \|w\|_{X_\lambda}$. This yields
\begin{equation}\label{err1}
\| w_\lambda^{x_0} \partial_x (g_{[<\lambda^{\sigma}]}-g_{[<\lambda]}) \partial_x w_\lambda \|_{L^1_{t,x}} \lesssim \epsilon^2 \lambda^{1-\sigma s}\| w \|_{X_\lambda^2} \ll \| w \|_{X_\lambda^2},
\end{equation}
where we have chosen $\sigma < 1$ so that $\sigma s > 1$.  Hence
\begin{equation}\label{err2}
\left| \|w\|_{X_\lambda}^2 - \|w\|_{X_\lambda^1}^2\right| \ll \| w \|_{X_\lambda}^2,
\end{equation}
 from which we conclude that 
\[
\|w \|_{X_\lambda} \approx \|w\|_{X_\lambda^1}.
\]

A similar argument allows us to compare the $X_\lambda^1$ and $X_\lambda^2$ functionals for \eqref{para-rea} and \eqref{para-reb}. Here we need to estimate 
\[
\| w_\lambda^{x_0} (\partial_x \bar V_{<\lambda^\sigma}   + \bar  W_{<\lambda^\sigma}) w_\lambda\|_{L^1_{t,x}} \ll \|w\|_{X_\lambda}^2,
\]
which is obtained by using the bound  \eqref{which-V} and then applying twice 
the unbalanced $L^2_{t,x}$ bound \eqref{uv-bi-unbal-boot}. 

This concludes the proof of \eqref{X-equiv}, which in turn shows the equivalence of the dual norms $X_\lambda^*$, $X_\lambda^{1,*}$ and $X_\lambda^{2,*}$. At this point we can drop the 
superscripts $1$ and $2$ and work simply with $X_\lambda$ and $X_\lambda^*$.  \medskip

(ii) The second step is to compare source terms in \eqref{para-re}, \eqref{para-rea} and \eqref{para-reb}, and show that their difference
is perturbative, i.e. small in $X_\lambda^*$. As a preliminary step, we 
verify that the $X_\lambda$ norm of a function $v_\lambda$  solving \eqref{para-re} can be estimated in terms 
of the quantities in \eqref{para-se}, namely
\begin{equation}\label{vl-in-X}
\| v_\lambda \|_{X_\lambda} \lesssim \|v_{\lambda,0}\|_{L^2_x} + \|v_{\lambda}\|_{L^6_{t,x}}   + \|f_\lambda\|_{L^{\frac65}_{t,x} + X_\lambda^*}  .
\end{equation}
This bound is important in that it allows us to use Theorem~\ref{t:para} for
$v_\lambda$. To see this we decompose 
\[
f_\lambda = :f_\lambda^1 + f_\lambda^2, \qquad \|f_\lambda^1 \|_{L^{\frac65}_{t,x}}
+ \|f_\lambda^2\|_{X_\lambda^*} \lesssim \|f_\lambda\|_{L^{\frac65}_{t,x} + X_\lambda^*}.
\]
Then we estimate
\[
\begin{aligned}
\| v_\lambda\|^2_{X_\lambda} \lesssim & \   \|v_{\lambda,0}\|_{L^2_x}^2
+ \| v_\lambda^{x_0} f_\lambda^1\|_{L^1_{t,x}} + \| v_\lambda^{x_0} f_\lambda^2\|_{L^1_{t,x}}
\\
\lesssim  & \   \|v_{\lambda,0}\|_{L^2_{x}}^2
+ \| v_\lambda\|_{L^6_{t,x}} \|f_\lambda^1\|_{L^\frac65_{t,x}} + 
\| v_\lambda\|_{X_\lambda}  \|f_\lambda^2\|_{X_\lambda^*},
\end{aligned}
\]
after which \eqref{vl-in-X} follows by the Cauchy-Schwartz inequality.

We now turn our attention to the estimate for the difference between the source terms \eqref{para-re} and \eqref{para-rea}, which we seek to place into $X_\lambda^*$. This requires us to obtain the  $L^1_{t,x}$ bound
\[
\| w_\mu \partial_x (g_{[<\lambda^{\sigma}]}-g_{[<\lambda]}) \partial_x v_\lambda\|_{L^1_{t,x}} \ll \| w_\mu \|_{X_\mu} \|v_\lambda\|_{X_\lambda}, \qquad \mu \approx \lambda.
\]
But this is obtained in the same way as \eqref{err1}, by applying twice the unbalanced bilinear $L^2$ bound \eqref{uv-bi-unbal-boot}. Similarly, for the second difference we need to estimate 
\[
\| w_\mu (V_{<\lambda^\sigma} \partial_x  + W_{<\lambda^\sigma})  v_\lambda\|_{L^1_{t,x}}\ll \| w_\mu \|_{X_\mu} \|v_\lambda\|_{X_\lambda}, \qquad \mu \approx \lambda.
\]
This uses  \eqref{which-V}, which allows us once more apply twice the unbalanced bilinear $L^2_{t,x}$ bound \eqref{uv-bi-unbal-boot}, exactly as in the proof of \eqref{err2}.
This concludes the proof of the proposition.
\end{proof}

Now we show how to choose frequency localized potentials $V_{<\lambda^\sigma}$ and $W_{<\lambda^\sigma}$ so that the equation 
\eqref{para-reb} has good dispersive properties after a favourable change of coordinates. 
Our change of coordinates is chosen so that 
it flattens the principal part of the equation.
Such a strategy has been used before, see \cite{BP}, but only in a situation where the coefficients are time independent; there, the  change of coordinates completely solves the problem, as it produces first order terms which are shown to be perturbative via local smoothing estimates.
 In our case, the difficulty lies in the 
fact that the coefficients are inherently time 
dependent. Because of this, the change of coordinates merely shifts the problem, rather than solving it.

Our change of the spatial variable $x\rightarrow y$ is chosen so that
\[
\frac{\partial }{\partial x} =\frac{1}{\sqrt{ g_{[<\lambda^{\sigma}]}}}\frac{\partial}{\partial y},
\]
which gives
\[
\frac{\partial y}{\partial x}=\frac{1}{\sqrt{g_{[<\lambda^{\sigma}]}}}.
\]
To this we add the initialization $y(t,0) = 0$.
Such a transformation removes the nonlinear term in the principal part of \eqref{para-rea}; specifically, the second order operator in the equation 
becomes
\[
\partial_xg_{[<\lambda^{\sigma}]}\partial_x \rightarrow \partial^2_y.
\]
The time variable $t$ remains the same; for clarity we record the change of variable below
\begin{equation}
\label{change var}
\left\{
\begin{aligned}
&y=y(t,x)\\
&\tau=t.
\end{aligned}
\right.
\end{equation}
However, the expression for $\partial/\partial t$ will depend on $\partial/\partial {\tau}$, via the chain rule, as follows
\[
\frac{\partial}{\partial t} =\frac{\partial {\tau}}{\partial t} \cdot \frac{\partial}{\partial {\tau}}   + 
\frac{\partial y}{\partial t} \cdot \frac{\partial}{\partial y} .
\]
So, \eqref{para-rea} can now be  equivalently written as
\begin{equation}
\label{para-rea-var}
i\partial_{\tau} v_{\lambda}+ \left(i\frac{\partial y}{\partial t} +\partial_x \sqrt{g_{[<\lambda^{\sigma}]}}     \right)\partial_y v_{\lambda}+\partial^2_y v_{\lambda} = \tilde{f}_\lambda.
\end{equation}
Here $\tilde{f}_{\lambda}$  is $f_{\lambda}$ expressed in the new coordinates.

It remains to compute the expression $\partial y/\partial t$. For this we recall that 
\[
y=\int_{0}^{x} \frac{1}{\sqrt{g_{[<\lambda^{\sigma}]}}}\, dx',
\]
where $g$ depends on both $u$ and $\bar{u}$. Thus, via the  chain rule, the time derivative can be computed as follows,
\[
\begin{aligned}
\frac{\partial y}{\partial t}=&\int_{0}^x \partial_t  \frac{1}{\sqrt{g_{[<\lambda^{\sigma}]}}}\, dx' = \int_0^x  - \frac12 (g_{[<\lambda^{\sigma}]})^{-\frac32} P_{<\lambda^\sigma} \partial_t g(u_{\ll \lambda^{\sigma}}) \,dx' \\
= & \ \Re \int_0^x   (g_{[<\lambda^{\sigma}]})^{-\frac32} P_{<\lambda^\sigma} [h_1(u_{\ll \lambda^\sigma}) P_{\ll \lambda^\sigma}(g(u) \partial_x^2 u + h_1(u) (\partial_x u)^2)]\, dx'  ,
\end{aligned}
\]
where $h_1$ represents smooth functions vanishing at zero. We then integrate by parts to eliminate the second order derivative and to rewrite it in the form
\begin{equation}\label{vlong}
\begin{aligned}
\frac{\partial y}{\partial t} = & \ \left.  \Re  (g_{[<\lambda^{\sigma}]})^{-\frac32} P_{<\lambda^\sigma} [h_1(u_{\ll \lambda^\sigma}) P_{\ll \lambda^\sigma}(g(u) \partial_x u) ]\right|_0^x
\\
& \ +  \Re \int_0^x   (g_{[<\lambda^{\sigma}]})^{-\frac32} P_{<\lambda^\sigma} [h_1(u_{\ll \lambda^\sigma}) P_{\ll \lambda^\sigma}( h_1(u) (\partial_x u)^2)] \, dx'  
\\ & \ +  \Re \int_0^x   (g_{[<\lambda^{\sigma}]})^{-\frac52}P_{<\lambda^\sigma} [h_1(u_{\ll \lambda^\sigma}) \partial_x u_{\ll \lambda^\sigma}] P_{<\lambda^\sigma} [h_1(u_{\ll \lambda^\sigma}) P_{\ll \lambda^\sigma}( g(u) \partial_x u)] \,dx'  
\\ & \ + \Re \int_0^x   (g_{[<\lambda^{\sigma}]})^{-\frac32} P_{<\lambda^\sigma} [h_0(u_{\ll \lambda^\sigma}) \partial_x u_{\ll \lambda^\sigma}) P_{\ll \lambda^\sigma}( g(u) \partial_x u)] \,dx'  ,
\end{aligned}
\end{equation}
where $h_0$ represents smooth functions possibly not vanishing at zero.

Equation \eqref{para-rea-var} can now be reexpressed as
\begin{equation}
\label{para-rea-var-new}
i\partial_{\tau} v_{\lambda}+ \left(V_1+ iV_2    \right)\partial_y v_{\lambda}+\partial^2_y v_{\lambda} = \tilde{f}_\lambda,
\end{equation}
where $V_1$ and $V_2$ are complex valued, respectively real valued, and are chosen as follows:
\begin{itemize}
\item $V_1$ contains the term $\partial_x \sqrt{g_{[<\lambda^{\sigma}]}}$
in \eqref{para-rea-var}, as well as the $x$-dependent contribution 
on the first line of \eqref{vlong}.

\item $V_2$ contains the constant function on the first line of \eqref{vlong}, as well as the integral contributions on the last three
lines in \eqref{vlong}.
\end{itemize}
We will replace the equation \eqref{para-rea-var-new} by the truncated version
\begin{equation}
\label{para-rea-final}
i\partial_{\tau} v_{\lambda}+ iZ    \partial_y v_{\lambda}
+ \frac{i}2 \partial_y Z    v_{\lambda}
+\partial^2_y v_{\lambda} 
= \tilde{f}_\lambda, \qquad Z := P_{\lesssim \epsilon^2}^yV_2,
\end{equation}
where the zero order term was added simply in order to gain symmetry, and the spectral projectors are now taken in the $y$ coordinates. This equation  will play two roles:
\begin{itemize}
    \item We will prove Strichartz estimates for the flow \eqref{para-rea-final} in the $(\tau,y)$ coordinates, and
    \item This will be the equation  \eqref{para-reb} in the $(t,x)$ coordinates, after some mild  adjustments which are perturbative in a strict Strichartz fashion. 
\end{itemize}
We note that, reversing the above change of coordinates, it is easily seen that  in the $(t,x)$ coordinates the equation \eqref{para-rea-final}
takes the form 
\begin{equation}\label{para-reb-bis}
i \partial_t v_\lambda + \partial_x g_{[<\lambda^\sigma]} \partial_x v_\lambda + V \partial_x v_\lambda + W  v_\lambda 
= f_\lambda, \qquad v_\lambda(0) =: v_{0,\lambda},
\end{equation}
where the potentials $V$ and $W$ in \eqref{para-reb} will be given by
\[
V:=\sqrt{g_{<\lambda^\sigma}} (V_1+iP_{\gg\epsilon^2 }^yV_2), \qquad W :=\frac{i}2 \partial_y P_{\lesssim \epsilon^2}^yV_2.
\]
In order to achieve the objectives outlined above, we need to take a closer look at the potentials $V$, $W$ and $Z$ above:
\begin{proposition}\label{p:potentials}
Under the same assumptions as in Theorem~\ref{t:para-se}, the potentials 
$V$, $W$ and $Z$ have the following properties:

a) The potentials $V$ and $W$ admit decompositions  
\begin{equation}
V = V^b + V^g, \qquad W = W^b+ W^g    
\end{equation}
so that the following estimates hold:
\begin{equation}\label{which-Vg}
 |V^b| \lesssim L(|\partial u_{<\lambda^\sigma}|,|\partial u_{<\lambda^\sigma}|) ,\qquad  |W^b| \lesssim \epsilon^2  L(|\partial u_{<\lambda^\sigma}|,|\partial u_{<\lambda^\sigma}|),
\end{equation}
respectively 
\begin{equation}\label{which-Vb}
\begin{aligned}
 \|V^g\|_{L^1_t L^\infty_x + L^\frac32} + \|P_{\gg\lambda^\sigma} V^{b}\|_{L^1_t L^{\infty}_x + L^{\frac32}_{t,x}}   \lesssim 
 & \
 \epsilon^2 \lambda^{-1}, \\
 \|W^g\|_{L^1_t L^{\infty}_x + L^{\frac32}_{t,x}} + \|P_{\gg\lambda^\sigma} W^{b}\|_{L^1_{t} L^{\infty}_{x} + L^{\frac32}_{t,x}}   \lesssim
 & \ 
 \epsilon^2. 
 \end{aligned}
\end{equation}

b) The low frequency component $Z := P_{\lesssim \epsilon^2}^y V_2$ satisfies the 
uniform bound
\begin{equation}\label{Z}
\| Z \|_{L^{\infty}_x} \lesssim \epsilon^2.
\end{equation}
In addition, its derivative admits a decomposition
\begin{equation}
\label{Z-bg}
\partial_y Z = Z^b + Z^g , 
\end{equation}
where 
\begin{equation} \label{Zb}
|Z^b| \lesssim L(|\partial u_{<\lambda^\sigma}|,  |\partial u_{<\lambda^\sigma}|)  ,
\end{equation}
and 
\begin{equation}\label{Zg}
\|Z^g\|_{L^{1}_t L^{\infty}_x} \lesssim \epsilon^2 \lambda^{-1}  . 
\end{equation}
\end{proposition}

Given this proposition, it is clear that we should choose the potentials $V_{<\lambda^\sigma}$ and $W_{<\lambda^\sigma}$ in \eqref{para-reb} as 
\begin{equation}
 V_{<\lambda^\sigma} := P_{\lesssim\lambda^\sigma} V^b, \qquad     W_{<\lambda^\sigma} := P_{\lesssim\lambda^\sigma} W^b.
\end{equation}
Then the bound \eqref{which-Vb} insures that 
\begin{equation}\label{which-V-diff}
\lambda  \|V - V_{<\lambda^\sigma}\|_{L^1_t L^\infty_x + L^{\frac32}_{t,x}} + \|W - W_{<\lambda^\sigma}\|_{L^1 _tL^{\infty}_x + L^{\frac32}_{t,x}}   \lesssim 
 \epsilon^2, 
\end{equation}
which implies that these differences are directly perturbative in 
the Strichartz estimate \eqref{para-se}. Hence, it is equivalent to work with the equation \eqref{para-reb} or with \eqref{para-rea-final}.

\begin{proof}
a) We begin by considering  the contribution of $V_1$, which, 
after peeling off the constant term in $g$,
can be expanded as
\[
\begin{aligned}
V_1 = & \ (g_{[<\lambda^{\sigma}]})^{-\frac12} P_{< \lambda^\sigma}
[h_1(u_{\ll \lambda^\sigma}) \partial_x u_{\ll \lambda^\sigma}]
+ (g_{[<\lambda^{\sigma}]})^{-\frac32} P_{<\lambda^\sigma} [h_1(u_{\ll \lambda^\sigma})  \partial_x u_{\ll \lambda^\sigma}  ]
\\ 
& \ + (g_{[<\lambda^{\sigma}]})^{-\frac32} P_{<\lambda^\sigma} [h_1(u_{\ll \lambda^\sigma}) P_{\ll \lambda^\sigma} (h_2(u) \partial_x u_{< \lambda^\sigma})  ]
\\ & \ + (g_{[<\lambda^{\sigma}]})^{-\frac32} P_{<\lambda^\sigma} [h_1(u_{\ll \lambda^\sigma}) P_{\ll \lambda^\sigma} (h_2(u) \partial_x u_{> \lambda^\sigma})  ],
\end{aligned}
\]
where $h_1$ and $h_2$ represent smooth functions vanishing simply, respectively doubly at zero.
Taking absolute values and using the fact that change of coordinates is bilipschitz, the first three terms can be directly estimated by 
$L(|u_{<\lambda^\sigma}|, |\partial_x u_{<\lambda^\sigma}|)$, and thus
will be denoted by $V_1^b$ and will be included in $V^b$. This is no longer possible for the last term, which also contains some high frequency contributions. However, instead this term is both quartic and unbalanced,
so we will estimate it in $\epsilon^4 \lambda^{-1} L^\frac32_{t,x}$.  We can discard the $g$  factor and the first spectral projector, and denote by $\mu > 
\lambda^\sigma$ the frequency of the last factor. Then $h_2(u)$ must also be at frequency $\mu$, so we are left with an expression of the form
\[
L(u_{\ll \lambda^\sigma}, P_{\mu}h_2(u), \partial_x u_{\mu}).
\]
We pair the first and last entry and use an unbalanced  $L^2_{t,x}$ bound,
while the middle term is estimated in $L^6_{t,x}$. We get 
\[
\| L(u_{\ll \lambda^\sigma}, P_{\mu}h_2(u), \partial_x u_{\mu})\|_{L^{\frac32}_{t,x}} \lesssim  \epsilon^2 \mu^{-s+\frac12} \cdot \epsilon^2 \mu^{-s}
= \epsilon^4 \mu^{-2s +\frac12},
\]
which is better than needed after the $\mu$ summation.

We still need to estimate the high frequencies of $V_1^b$. We consider the first term in $V_1$, as the second and the third are similar. For this term we can write
\[
P_{\gg \lambda^\sigma} [ (g_{[<\lambda^{\sigma}]})^{-\frac12} P_{< \lambda^\sigma}
[h_1(u_{\ll \lambda^\sigma}) \partial_x u_{\ll \lambda^\sigma}]]= 
P_{\gg\lambda^\sigma} [ P_{\gg \lambda^\sigma} [(g_{[<\lambda^{\sigma}]})^{-\frac12}] P_{< \lambda^\sigma}
[h_1(u_{\ll \lambda^\sigma}) \partial_x u_{\ll \lambda^\sigma}]],
\]
which we can estimate in $L^\frac32_{t,x}$ by Lemma~\ref{l:Moser-lh} and an $L^6_{t,x}$ Strichartz bound,
\[
\lesssim \|P_{\gg \lambda^\sigma} [(g_{[<\lambda^{\sigma}]})^{-\frac12}]\|_{L^2_{t,x}} \| \partial_x u_{\ll \lambda^\sigma}]\|_{L^6_{t,x}}
\lesssim \epsilon^6 \lambda^{-3\sigma s},
\]
and which is stronger than we need.

\bigskip

We next consider the contribution of $V_2$ to $V$. 
The kernel for $\partial_y^{-1} P^y_{\gg \epsilon^{2}}$ is bounded and rapidly decreasing on the $\epsilon^{-2}$ scale, and thus has $L^1_{t,x}$ norm $\ll \epsilon^{-2}$. It remains to estimate the  integrands in \eqref{vlong}.
We distinguish two cases based on the frequency of the $\partial_x u$
factors:
\medskip

\emph{Case 1:} Both $\partial_x u$ frequencies are less than $\lambda^\sigma$. Then we can bound all the integrands in \eqref{vlong} by $L(|\partial_x u_{<\lambda^\sigma}|,|\partial_x u_{<\lambda^\sigma}|)$,
and include their contributions into $V^b$. To estimate the high frequencies in $V^b$ as in \eqref{which-Vb} it suffices to consider $\partial_x^2 V_b$ and show that
\begin{equation}\label{V2-high}
\|\partial_x^2  V^{b}\|_{L^1_t L^\infty_x + L^{\frac32}_{t,x}}   \lesssim  \epsilon^2 \lambda^{2\sigma-1} .
\end{equation}
The frequency localization of $\partial_x u$ is not needed here.
To show this, we simply convert $\partial_x = (g_{[<\lambda^\sigma]})^{-\frac12} \partial_y$, where the $y$ differentiation eliminates the integration in 
\eqref{vlong}. The first $y$ differentiation is then commuted past $P_{\lesssim \epsilon^2}^y$ in order to eliminate the integration, and the second is combined with $P_{\lesssim \epsilon^2}^y$ for an $\epsilon^2$ factor. The common computation is 
\[
\begin{aligned}
\partial_x^2  P_{\lesssim \epsilon^2}^y \int_0^x f(x') \, dx' = &\ 
((g_{[<\lambda^\sigma]})^{-\frac12} \partial_y)^2  P_{\lesssim \epsilon^2}^y \int_0^x f(x')\, dx'
\\ = & \ 
(g_{[<\lambda^\sigma]})^{-\frac12} \partial_y \left((g_{[<\lambda^\sigma]})^{-\frac12}   P_{\lesssim \epsilon^2}^y \partial_y \int_0^x f(x') \,dx'\right)
\\ = & \ 
(g_{[<\lambda^\sigma]})^{-\frac12} \partial_y \left((g_{[<\lambda^\sigma]})^{-\frac12}   P_{\lesssim \epsilon^2}^y \left[(g_{[<\lambda^\sigma]})^{\frac12} f\right]\right)
\\ = & \ 
(g_{[<\lambda^\sigma]})^{-1} \partial_y   P_{\lesssim \epsilon^2}^y [(g_{[<\lambda^\sigma]})^{\frac12} f] +  \left[\partial_x (g_{[<\lambda^\sigma]})^{-\frac12})\right]   P_{\lesssim \epsilon^2}^y \left[(g_{[<\lambda^\sigma]})^{\frac12} f\right].
\end{aligned}
\]
Here $f$ is any of the integrands in \eqref{vlong}, which contains two copies of $\partial_x u$. 

In the first term above, we simply use the energy bound for $\partial_x u$ to get $\|f\|_{L^\infty_t L^1_x} \lesssim \epsilon^2$.
Since $T \lesssim \epsilon^{-4}$, by H\"older's inequality in $t$ and Bernstein's inequality in $x$ we get
\[
\| \partial_y   P_{\lesssim \epsilon^2}^y [(g_{[<\lambda^\sigma]})^{\frac12} f] \|_{L^1_t L^\infty_x} \lesssim \epsilon^{-4} \|\partial_y   P_{\lesssim \epsilon^2}^y [(g_{[<\lambda^\sigma]})^{\frac12} f] \|_{L^\infty_t L^\infty_x}
\lesssim \| [(g_{[<\lambda^\sigma]})^{\frac12} f] \|_{L^\infty_t L^1_x}
\lesssim \epsilon^2,
\]
which is better than needed in \eqref{V2-high}.

In the second term above 
we have three copies of $\partial_x u$ and at least one copy of $u$, which are all bounded in $L^6_{t,x}$, obtaining a favorable $L^\frac32_{t,x}$ estimate with an $\epsilon^4$ factor.

\medskip

\emph{Case 2:} We have at least one copy of  $\partial_x [u_{> \lambda^\sigma}]$.  These terms are placed in $V^g$.
We consider each of the integrands on the last three lines in \eqref{vlong}, neglecting bounded factors.
\medskip

The contribution of line $2$  of \eqref{vlong} is estimated by 
\[
L(|u_{\ll \lambda^\sigma}|,|u|, |\partial_x u_{>\lambda^\sigma}|, |\partial_x u_{>\lambda^\sigma}|) + \sum_{\mu >\lambda^\sigma} L(|u_{\ll \lambda^\sigma}|, |\partial_x u_{\ll \lambda^\sigma}|, |\partial_x u_\mu|, |h_1(u)_\mu|).
\]
For the first term we distinguish two cases, depending on the frequency of $u$. 
In the low frequency case we use two unbalanced bilinear $L^2_{t,x}$ bounds,
\[
\| L(|u_{\ll \lambda^\sigma}|,|u_{\ll \lambda^{\sigma}}|, |\partial_x u_{>\lambda^\sigma}|, |\partial_x u_{>\lambda^\sigma}|)\|_{L^1_{t,x}}
\lesssim \epsilon^4 \lambda^{\sigma( 1-2s)},
\]
followed by Bernstein's inequality in $y$ to get to $L^1 _tL^\infty_x$. In the high-frequency case we use one unbalanced bilinear $L^2_{t,x}$ bound and one $L^6_{t,x}$ bound for the remaining $\partial_x u_{>\lambda^\sigma}$, and estimate the high frequencies of $u$ in $L^\infty_x$, to obtain
\[
\| L(|u_{\ll \lambda^\sigma}|,|u_{\gtrsim \lambda^{\sigma}}|, |\partial_x u_{>\lambda^\sigma}|, |\partial_x u_{>\lambda^\sigma}|)\|_{L^\frac32_{t,x}}
\lesssim \epsilon^4 \lambda^{\sigma( \frac12-2s)},
\]
which is satisfactory.
The second term is similar to the last case above,
\[
\|L(|u_{\ll \lambda^\sigma}|, |\partial_x u_{\ll \lambda^\sigma}|, |\partial_x u_\mu|, |h_1(u)_\mu|)\|_{L^\frac32_{t,x}} \lesssim \epsilon^4 \mu^{\sigma(\frac12 -2s)}.
\]

\medskip

The contributions of lines $3$ and $4$ of \eqref{vlong} are estimated by 
\[
\sum_{\mu >\lambda^\sigma} L( |\partial_x u_{\ll \lambda^\sigma}|, |\partial_x u_\mu|, |g(u)_\mu|).
\]
Here $g$ is at least quadratic after subtracting the constant term.
Hence we can estimate
\begin{equation}
\label{gu-mu}
|g(u)_\mu| \lesssim | P_\mu g(u)_{\ll \mu}| + L(|u|, |u_{\gtrsim \mu}|).
\end{equation}
For the first term in \eqref{gu-mu} we use a bilinear $L^2_{t,x}$ bound and Lemma~\ref{l:Moser-lh},
\[
\| L( |\partial_x u_{\ll \lambda^\sigma}|, |\partial_x u_\mu|, | P_\mu g(u)_{\ll \mu}|)\|_{L^1_{t,x}} \lesssim \epsilon^6 \lambda^\sigma \mu^{\frac12-3s}.
\]
For the second  term in \eqref{gu-mu} we need to estimate
\[
 L( |\partial_x u_{\ll \lambda^\sigma}|, |\partial_x u_\mu|, |u|, |u_{> \mu}|).
\]
Here we split the frequencies of $u$ into low and high relative to $\mu$,
and repeat the computation above.

Finally, to conclude the proof of part (a) of the proposition we remark that the same argument applies for $W$, which has an additional $\epsilon^2$ factor from the differentiation.

\bigskip

b) We now turn our attention to $Z$. Since we can bound all integrands in \eqref{vlong} by $L(|\partial_x u|,|\partial_x u|)$, the bound \eqref{Z} follows directly from the energy bound for $\partial_x u$. We next consider $\partial_y Z$, where the $y$ differentiation  cancels the integration in \eqref{vlong} modulo harmless $g_{[<\lambda^\sigma]}$ factors. We are left  with the integrands on the last three lines  of \eqref{vlong}, which we need to decompose as in \eqref{Z-bg}.which

The $Z^b$ component is easily identified as corresponding to the case when each instance of $\partial_x u$ is replaced by its lower frequency part 
$\partial_x [u_{< \lambda^{\sigma}}]$, and then the bound \eqref{Zb}
directly follows.

The $Z^g$ component is what is left. We begin with several simplifying observations:
\begin{itemize}
    \item We can harmlessly discard all $g_{[< \lambda^\sigma]}$ factors, which are bounded.
    \item The projector $P_{\lesssim \epsilon^2}^y$ has an $\epsilon^{2}$ 
    bound from $L^1_y$ to $L^\infty_y$.
    \item we can replace $g(u)$ by $g(u) - 1:=h_2(u)$, which is at least quadratic, in the last two lines of \eqref{vlong}; this insures that all terms in $Z^g$ are at least quartic.
\end{itemize}

Given these observations, it suffices to estimate the  remaining part of the integrands in \eqref{vlong} in $L^1_{t,x}$ by $\lambda^{-1}$. 
Taking absolute values and discarding bounded factors, this leaves us with the following contributions:

(i) from line 2 of \eqref{vlong} with two high frequencies on $\partial_x u$ we pull an $u$ factor from the second $h_1(u)$, and separate two cases depending on the frequency of this factor, obtaining
\[
L(|u_{\ll \lambda^\sigma}|, |u_{\ll \lambda^\sigma}| , |\partial_x u_{>\lambda^\sigma}|,|\partial_x u_{>\lambda^\sigma}|),
\]
respectively
\[
L(|u_{\ll \lambda^\sigma}|, |u_{\gtrsim \lambda^\sigma}| , |\partial_x u_{>\lambda^\sigma}|,|\partial_x u_{>\lambda^\sigma}|).
\]

(ii) From line  2 of \eqref{vlong}, with only one high frequency on $\partial_x u$, we must also have a high frequency on $h_1(u)$
\[
L(|u_{\ll \lambda^\sigma}|, |h_1(u)_{> \lambda^\sigma}| , |\partial_x u_{>\lambda^\sigma}|,|\partial_x u_{\lesssim \lambda^\sigma}|).
\]

(iii) From lines 3,4 of \eqref{vlong}:
\[
L(|\partial_x u_{\ll \lambda^\sigma} |, |h_2(u)_{>\lambda^\sigma}|, |\partial_x u_{>  \lambda^\sigma}|).
\]
These expressions are estimated in the same way as in part (a) of the 
proof. For the first expression in (i) we use two bilinear unbalanced $L^2_{t,x}$ 
bounds,
\[
\| L(|u_{\ll \lambda^\sigma}|, |u_{\ll \lambda^\sigma}| , |\partial_x u_{>\lambda^\sigma}|,|\partial_x u_{>\lambda^\sigma}|)\|_{L^1_{t,x}}
\lesssim \epsilon^4 \lambda .
\]

For the second expression in (i) we use one unbalanced bilinear $L^2_{t,x}$ bound, 
one energy bound for the $\partial_x u_{ \ll \lambda^\sigma }$ factor, and $L^\infty_{t,x}$ for the remaining $u_{\gtrsim \lambda^\sigma}$ factor, to obtain
\[
\| L(|u_{\ll \lambda^\sigma}|, |u_{\gtrsim \lambda^\sigma}| , |\partial_x u_{>\lambda^\sigma}|,|\partial_x u_{>\lambda^\sigma}|)\|_{L^2_t L^1_x}
\lesssim \epsilon^4 \lambda^{\sigma(\frac12-2s)},
\]
after which we use H\"older's inequality in time losing an $\epsilon^{-2}$ factor.

For the expression in (ii) we begin with 
\[
|P_{> \lambda^\sigma} h_1(u)| \lesssim |P_{> \lambda^\sigma} h_1(u_{\ll \lambda^\sigma})|
+ L(|u_{\gtrsim \lambda^\sigma}|).
\]
For the contribution of the first term we use an unbalanced bilinear $L^2_{t,x}$ bound, Lemma~\ref{l:Moser-lh} and a pointwise bound on $\partial_x u_{\lesssim \lambda^\sigma}$ to write 
\[
\| L(|u_{\ll \lambda^\sigma}|, |P_{> \lambda^\sigma} h_1(u_{\ll \lambda^\sigma})| , |\partial_x u_{>\lambda^\sigma}|,|\partial_x u_{\lesssim \lambda^\sigma}|)\|_{L^1_{t,x}} \lesssim \epsilon^7 \lambda^{\sigma( \frac32 -4s)}.
\]
For the contribution of the first term we use an unbalanced bilinear $L^2_{t,x}$ bound, a pointwise bound on $u_{\approx \lambda^\sigma}$ and an energy bound on $\partial_x u_{[\lesssim \lambda^\sigma ]}$ to write 
\[
\| L(|u_{\ll \lambda^\sigma}|, |u_{\gtrsim\lambda^\sigma}| , |\partial_x u_{>\lambda^\sigma}|,|\partial_x u_{\lesssim \lambda^\sigma}|)\|_{L^2_t L^1_x} \lesssim \epsilon^4 \lambda^{\sigma( \frac12 -2s)},
\]
after which we use H\"older's inequality in time losing an $\epsilon^{-2}$ factor.

Finally, the expression in (iii) is estimated in the same way as the 
one in (ii), with the only difference that $h_2$ is at least quadratic, 
so we have
\[
|P_{> \lambda^\sigma} h_2(u)| \lesssim |P_{> \lambda^\sigma} h_1(u_{\ll \lambda^\sigma})|
+ L(|u_{\ll \lambda^\sigma}|, |u_{\gtrsim \lambda^\sigma}|)
+ L(|u_{\gtrsim \lambda^\sigma}|, |u_{\gtrsim \lambda^\sigma}|).
\]
where we can use Lemma~\ref{l:Moser-lh} to bound the first term in $L^2_{t,x}$. 
\end{proof}

Our next step is to prove Strichartz estimates for the transformed equation \eqref{para-rea-final}.
We take the initial data $v_{0\lambda}$ and the source 
term $\tilde f_\lambda$ at frequency $\lambda$, and prove a classical Strichartz bound, and also $\lambda^{-N}$ decay away from frequency $\lambda$. 

One minor issue 
we encounter is that the flow in 
\eqref{para-rea-final} does not 
preserve the frequency localization at frequency $\lambda$.
To rectify this, we consider a slight modification of this problem
where the frequency localization is 
preserved. Then we show that the error we have introduced is negligible, i.e. $O(\lambda^{-N})$.
Our modified equation will be
\begin{equation}
\label{para-rea-final+}
i\partial_{\tau} v_{\lambda}+ \tilde P^y_\lambda(iP_{\lesssim \epsilon^2}^yV_2 \partial_y+\frac{i}2  P_{\lesssim \epsilon^2}^y \partial_y V_2)  \tilde P^y_\lambda v_{\lambda}+\partial^2_y v_{\lambda} = \tilde{f}_\lambda,
\end{equation}
where
$\tilde P^y_\lambda$ is a Littlewood-Paley multiplier in the $y$ coordinates which selects a larger 
frequency region than $P^y_\lambda$.

For this problem, we will prove the 
following:

\begin{proposition}\label{p:se-loc}
Let $u$ be a solution to \eqref{qnls}
which satisfies the bootstrap assumptions \eqref{uk-ee-boot}-\eqref{uk-se-short-boot}. Then for every frequency $\lambda$ localized data $v_{0\lambda} \in L^2$ and every source term $f_\lambda \in L^1_t L^2_x + L^\frac65_{t,x}$
the solution to \eqref{para-rea-final+} satisfies the Strichartz bound 
\begin{equation}\label{Str-y}
 \| v_\lambda \|_{L^\infty_t L^2_x \cap L^6_{t,x}} \lesssim \| v_{\lambda,0}\|_{L^2_x}
 + \|\tilde f_\lambda\|_{L^1_t L^2_x+ L^{\frac65}_{t,x}},  
\end{equation}
as well as 
\begin{equation}\label{cut-tP}
\| (1-\tilde P^y_\lambda) v \|_{L^\infty_t L^2_x} \lesssim \epsilon^N\lambda^{-N}.   
\end{equation}
\end{proposition}

We remark that our proof yields the full Strichartz bound
\begin{equation}
  \| v_\lambda \|_{S} \lesssim \| v_{\lambda,0}\|_{L^2_x}
 + \|\tilde f_\lambda\|_{S'}.
\end{equation}
However, only the $L^6_{t,x}$ and $L^\frac65_{t,x}$ components are used in the sequel.

\begin{proof} \
Denote by $S(t,s)$ the $L^2$ evolution operator associated to the flow \eqref{para-rea-final+}. By definition these are $L^2$ isometries, so by standard arguments it remains
to prove the $L^1$ to $L^\infty$ dispersive
decay bound
\begin{equation}\label{decay}
    \| S(t,s) v \|_{L^\infty_x} \lesssim |t-s|^{-\frac12}\|v\|_{L^1_x}, \qquad t,s \in [0,T_\epsilon = \epsilon^{-4}].
\end{equation}
Here without loss in generality we can assume that  $v$ is supported at frequency $\lambda$, as away from this frequency the evolution is governed by the flat Schr\"odinger flow. We can also freely assume that $s=0$ in order to simplify the notations.

For this, we will use the wave packet 
parametrix of \cite{MMT-param}, which 
implies the decay bound \eqref{decay}
by decomposing the initial data into coherent states, whose evolution is localized as a wave packet moving along the associated Hamilton flow.
This decomposition depends on the timescale $T = t-s$. For simplicity we set $s=0$ and then $T \lesssim \epsilon^{-4}$. Since we only consider waves at frequency $\lambda$, the bound \eqref{decay} is trivial, by Bernstein's inequality and $L^2$ conservation, provided that $T \lesssim \lambda^{-2}$.
So in what follows we assume that 
\begin{equation}\label{which-T}
\lambda^{-2} \lesssim T \lesssim \epsilon^{-2}.
\end{equation}

The first step is to understand what is the correct scale for the wave packets which remain coherent up to time $T$. Since our operator is quite similar to the flat Schr\"odinger operator, it is natural to use the same scales. Precisely, given a time $T$, the scale which yields coherent packets up to time $T$
is
\[
\delta y = \sqrt{T}, \qquad \delta \xi= \frac{1}{\sqrt{T}}.
\]
To use the wave packet parametrix of \cite{MMT-param}
relative to these scales, we need to show that 
our symbol\footnote{Here we simply take the principal symbol, as the lower order terms are perturbative.}
\[
a(y,\xi) = (P^y_{\ll \epsilon^2} V_2) \, \xi \, \tilde p_\lambda^2(\xi)  + \xi^2,
\]
verifies the conditions in Theorem~2 of \cite{MMT-param}. 
These are stated relative to the Hamilton flow 
$\chi(s,t)$ associated to the symbol $a$, which is defined by the ode
\[
\left\{
\begin{aligned}
 \dot y = & \ a_\xi(y,\xi) \\
 \dot \xi = & \  -a_y(y,\xi).
\end{aligned}
\right.
\]
Given an initial data $(y_0,\xi_0)$ at time $0$,
we denote by $(y_t,\xi_t)$ the associated trajectory.
Then the condition we need to verify is as follows:
\begin{equation}\label{good-symbol}
\int_0^T | \partial_y^\alpha \partial_\xi^\beta a(y_t,\xi_t)| \, dt \lesssim  (\delta y)^{-\alpha} (\delta \xi)^{-\beta}, \qquad \alpha+\beta \geq 2.
\end{equation}
By \eqref{which-T} we have $\delta y \lesssim \epsilon^{-2}$ and $\delta \xi \lesssim \lambda$. But every spatial derivative gains an $\epsilon^2$ factor due to the projector $P_{\ll \epsilon^2}$, and every $\xi$ derivative gains a factor
of $\lambda^{-1}$. Given these properties, 
it suffices to verify \eqref{good-symbol} when $\alpha+\beta = 2$. 

Since $u_x \in L^2$ with norm $\leq \epsilon$, we 
have the uniform bounds
\[
|P^y_{\ll \epsilon^2} V_2| \lesssim \epsilon^2,
\qquad | \partial_y P^y_{\ll \epsilon^2} V_2| \lesssim \epsilon^4,
\]
so the cases $\alpha = 0$ and $\alpha = 1$ of \eqref{good-symbol} are straightforward. But if $\alpha = 2$ then we need to take advantage of the integral in \eqref{good-symbol}. 

To prove \eqref{good-symbol} in this last case, we start 
with the observation that $|\dot x| \approx \lambda$
when $|\xi| \approx \lambda$. This simply says that waves with frequency $\lambda$ propagate with velocity 
$\lambda$. This is all the information we will use about trajectories, i.e. we will prove \eqref{good-symbol} holds uniformly for all trajectories with velocity $O(\lambda)$. Our starting point is the decomposition
\[
 \partial_y P^y_{\ll \epsilon^2} V_2 = Z^b+Z^g
\]
in Proposition~\ref{p:potentials}, where $Z^b$ and $Z^g$ can be freely assumed to also be localized at frequency $\lesssim \epsilon^2$.

By \eqref{Zg}, the good component satisfies 
\[
\| \partial_y Z^g\|_{L^1_t L^\infty_x} \lesssim \epsilon^{4} \lambda^{-1}, 
\]
exactly as needed for the $(\alpha,\beta) = (2,0)$ case of \eqref{good-symbol}.

On the other hand for $\partial_y Z^b$  we use \eqref{Zb}, which gives the bound 
\[
| \partial_y^2 P^y_{\ll \epsilon^2} V_2| \lesssim \epsilon^2 L(|\partial_x u_{<\lambda^\sigma}|,|\partial_xu_{<\lambda^\sigma}|).
\]
Since the class of trajectories with velocity $O(\lambda)$
is invariant with respect to translations, it follows that it suffices to show that on all such trajectories 
we have
\begin{equation}\label{wp-need+}
\int_0^T |\partial_x u_{<\lambda^\sigma}(t,x_t)|^2 \lambda \, dt\lesssim \epsilon^2,
\end{equation}
where $\lambda \, dt \approx dy \approx ds$, where $ds$ denotes arc-length.

It is easier to estimate this for dyadic portions of $u$, say $u_\mu$ with $\mu \leq \lambda^\sigma$. There
it suffices to consider the integral 
\[
I_{\mu} = \mu^2 \int_0^T | u_\mu(t,x_t)|^2 \, ds. 
\]

For this we use the mass conservation law \eqref{dens-flux-m0} for $u_\mu$, which gives
\[
\partial_t |u_\mu|^2 = \partial_x (g_{[<\mu]}P(u_\mu)
+ 2\Im(N_\mu(u) \cdot \bar u_\mu),
\]
and which we integrate in one of the strips $S$ in $[0,T] \times \R$ which is bounded by our trajectory $\gamma$.
This gives
\[
\int_\gamma |u_\mu|^2\, ds \lesssim \|u_\mu\|_{L^\infty L^2}^2 + \lambda^{-1} \int_\gamma|P(u_\mu,u_\mu)| \,ds
+ \int_S |N_\mu(u) \cdot \bar u_\mu| \, dx dt.
\]
For the first term on the right we use \eqref{uk-ee-boot}, and for the last we use the bounds \eqref{good-nl} and \eqref{res-nl} with $C=1$. We also note that
\[
|P(u_\mu)| \lesssim \mu L(|u_\mu|,|u_\mu|).
\]
Then we arrive at
\[
\int_\gamma |u_\mu|^2 \, ds \lesssim  \frac{\mu}{\lambda}  \int_\gamma L(|u_\mu|,|u_\mu|) \, ds
+  \epsilon^2 c_\mu^2 \mu^{-2s}.
\]
We now take the supremum over spatial translates of $\gamma$,
\[
\sup_{x_0 \in \R} \int_{\gamma^{x^0}} |u_\mu|^2 ds \lesssim  \frac{\mu}{\lambda}  \sup_{x_0 \in \R} \int_{\gamma^{x_0}} L(|u_\mu|,|u_\mu|) \,ds
+  \epsilon^2 c_\mu^2 \mu^{2-2s}.
\]
This allows us to absorb on the left the first term on the right, to obtain
\[
I_\mu \lesssim  \epsilon^2 c_\mu^2 \mu^{-2s}.
\]
Summing over $\mu$, we obtain the desired bound \eqref{wp-need+}, which completes the proof of \eqref{good-symbol}.
\bigskip

The bound \eqref{good-symbol} guarantees that 
(a rescaled version of) Theorem~2 in \cite{MMT-param}
applies, and yields a wave packet parametrix on the $(\delta y,\delta\xi)$ scale indicated above. Precisely, (the rescaled form of) this result asserts that in the time interval $[0,T]$ the fundamental solution admits a representation of the form
\begin{equation}\label{wp}
 K(t,y,z_0) = (\delta y)^{-1} \int e^{i \xi_0 (z_0-y_0)}e^{-\frac{(z_0-y_0)^2}{(\delta y)^2}}
 e^{-i \eta (y-z)}e^{-\frac{(y-z)^2}{(\delta y)^2}} K^{PS}(t,z,\eta,x_0,\xi_0) 
 \, d\xi_0 dy_0 d\eta dz,
\end{equation}
where the \emph{phase-space kernel\footnote{The name is consistent with the fact that this kernel is obtained by conjugating the flow map with a Bargman transform on the appropriate scales, see e.g. \cite{T-Pisa}.}} $K^{PS}$ decays rapidly away from the 
 graph of the flow map $\chi(t,0)$ of the Hamilton flow on the $(\delta y,\delta \xi)$ scale,
\[
K^{PS}(t,z,\eta,x_0,\xi_0) \lesssim_N (1 + (\delta y)^{-1}|z-x_t| + (\delta \xi)^{-1} |\eta - \xi_t|)^{-N}.
\]
Combined with the bi-Lipschitz property of the Hamilton flow on 
the $(\delta y,\delta \xi)$ scale, this directly yields the kernel bound
\[
|K(t,y,y_0)| \lesssim  \delta y \int (1 + (\delta y)^{-1}|y-y_t|)^{-N} \, d \xi_0.
\]
Finally, in order to obtain the dispersive estimate \eqref{decay} from time $0$ to time $T$ we need to bring into play dispersion, which has not been used so far. In our case this corresponds to the nondegenerate dependence 
of $y_t$ on $\xi_0$, when $y_0$ is fixed. We claim that we have 
\begin{equation}\label{Hlin}
 \frac{ \partial y_t}{\partial \xi_0} = 2t( 1+ O(\epsilon^2+\epsilon^4 T)).   
\end{equation}
For $T \ll \epsilon^{-4}$ the coefficient on the right has size $2t$.
Inserting this into the previous kernel bound, this yields  
\begin{equation} \label{wp-disp}
|K(t,y,y_0)| \lesssim   \int (1 + (\delta y)^{-1}|y-y_t|)^{-N} \, d \xi_0
\lesssim \frac{\delta y}{|t|},
\end{equation}
which at $t = T$ yields the desired bound $|K(t,y,y_0)|\lesssim T^{-\frac12}$.

It remains to see that \eqref{Hlin} holds, which we do by considering 
the linearized Hamilton flow for the variables 
\[
(z,\eta):=\left(  \frac{ \partial y_t}{\partial \xi_0}, \frac{ \partial \xi_t}{\partial \xi_0}   \right).
\]
This has the form
\[
\left\{
\begin{aligned}
 \dot z = & \ a_{\xi y}(y_t,\xi_t) z+ a_{\xi_t\xi_t} \eta \\
 \dot \eta = & \  -a_{yy}(y_t,\xi_t) z - a_{y_t \xi_t} \eta
\end{aligned}
\right., \qquad 
\begin{aligned}
&z(0) =0 \\
&\eta(0) = 1.
\end{aligned}
\]
Here we know that
\[
|a_{\xi\xi} - 2| \lesssim \epsilon^2, \qquad |a_{y\xi}| \lesssim \epsilon^4,
\qquad \int |a_{yy}| \, dt  \lesssim \epsilon^2,
\]
so we easily obtain
\[
z = 2t ( 1+ O(\epsilon^2 + \epsilon^4 T)), \qquad \eta = 1+ O(\epsilon^2 + \epsilon^4 T),
\]
and in particular the bound \eqref{Hlin} is proved. This completes the proof of the Strichartz estimate \eqref{Str-y}.

Finally, the bound \eqref{cut-tP} is an immediate
consequence of the wave packet localization around the 
bicharacteristics, with rapid off-diagonal decay
on the $\delta \xi$ scale, which can be taken to be 
of size $\epsilon^2$ when $T \approx \epsilon^{-4}$.

We conclude the proof of Proposition~\ref{p:se-loc} with a final remark
concerning wave packet scales.

\begin{remark}
The scales $(\delta y,\delta \xi) = (T^{\frac12},T^{-\frac12})$ in the wave packet decomposition corresponding to the representation \eqref{wp} are determined by desired coherence time scale $T$. But using this decomposition only yields the correct dispersive bound if $t \approx T$, while for smaller
$t$ there is a loss, see \eqref{wp-disp}. This is explained by the fact that wave packets on the above scales fully separate by time $O(T)$, while at earlier times they have a substantial overlap. So the wave packet time-scale  has to be adapted to the time $T$ in the dispersive estimate. On the other hand in order to get the best frequency localization in this representation, we need to work with wave packets with the best frequency localization, which corresponds to the longest time-scale, $\epsilon^{-4}$ in our case.
\end{remark}

\end{proof}

To return to the problem \eqref{para-reb}, we change coordinates back.
The goal is to show the following:

\begin{proposition}\label{p:reb}
Let $v_{0\lambda} \in L^2_x$ and $f_\lambda \in L^\frac65_{t,x}$.
Then the evolution \eqref{para-reb} has a frequency localized approximate solution $v_\lambda$ in $[0,T_\epsilon=\epsilon^{-4}]$,
\begin{equation}\label{para-reb-app}
i \partial_t v_\lambda + \partial_x g_{[<\lambda^\sigma]} \partial_x v_\lambda + V_{<\lambda} \partial_x v_\lambda
= f_\lambda + f_\lambda^{err}, \qquad v_\lambda(0) = v_{0,\lambda}+ 
 v_{0,\lambda}^{err},
\end{equation}
with $O(\lambda^{-N})$ errors,
\begin{equation}\label{para-se-err}
 \| v_{0,\lambda}^{err} \|_{L^2_x} 
 + \|f_\lambda^{err}\|_{L^1_t L^2_x + L^{\frac65}_{t,x}} \lesssim 
 \lambda^{-N}( \| v_{\lambda,0}\|_{L^2_x}
 + \|f_\lambda\|_{L^1_t L^2_x + L^{\frac65}_{t,x}})
\end{equation}
and so that
\begin{equation}\label{para-se-app}
 \| v_\lambda \|_{L^\infty_t L^2_x \cap L^6_{t,x}} \lesssim \| v_{\lambda,0}\|_{L^2_x}
 + \|f_\lambda\|_{L^1_t L^2_x + L^{\frac65}_{t,x}}.
\end{equation}
\end{proposition}

\begin{proof}
If our change of coordinates were
to preserve frequency localizations,
then there would be nothing to prove.
It does not do this exactly, but
it does approximatively. 

  We need to show that, with negligible tails, the dyadic localization is preserved by the change of coordinates. Denoting by $P_\lambda$ the Littlewood-Paley projectors in the original coordinates 
   and by $\tilde P_\lambda^y$ a similar projectors in the $y$ coordinates with, say, double support, we have
\begin{lemma} \label{l:switch-FT}
The following estimate holds:
\begin{equation}
\| (1-\tilde P^y_\lambda ) P_\lambda \|_{L^2 \to L^2} \lesssim \lambda^{-N},
\end{equation}
and its analogue with the two sets of coordinates interchanged.
\end{lemma}
\begin{proof}
Composing to the left and to the right with the respective Fourier transforms, we obtain an integral operator with kernel
\[
K(\xi,\eta) = \int e^{ix \xi - y \eta} \, dx.
\]
It suffices to show that for $|\eta| \approx \lambda$ and $|\xi|\not \approx \lambda$
we have 
\begin{equation}\label{two-ft}
|K(\xi,\eta)| \lesssim (|\xi|+|\eta|)^{-N}.
\end{equation}
This is a nonstationary phase bound.
We write $\dfrac{dy}{dx} = 1+ h$ 
where
\[
\| \partial^j h\|_{L^\infty_x} + \|\partial^{j+1} h \|_{L^1_x} \lesssim
\epsilon \lambda^{\sigma j}, \qquad j \geq 0.
\]

One integration by parts yields
\[
K(\xi,\eta) = - \int e^{ix \xi - y \eta} \frac{d}{dx} \left( \frac{1}{\xi - \eta(1+h')}\right) \, dx  = 
- \int e^{ix \xi - y \eta}  \frac{\eta h''}{(\xi - \eta(1+h'))^2} \, dx,  
\]
where we can estimate
\[
|K(\xi,\eta)| \lesssim \epsilon \lambda^{\sigma}
(|\xi|+|\eta|)^{-2}.
\]
Each additional integration by parts yields
an extra factor of $\lambda^{\sigma}
(|\xi|+|\eta|)^{-1}$. Then \eqref{two-ft} follows.

\end{proof}

Having this lemma, the proof of the proposition is straightforward.
 We change $v_{0\lambda}$ and $f_\lambda$ to the $y$ coordinates, 
 and then localize them to frequency
 $\lambda$, with negligible tails.
 We then solve the problem \eqref{para-rea-final+}, with good Strichartz estimates. By \eqref{cut-tP} can remove
the projector $\tilde P_\lambda$, again with negligible tails.
Finally, we switch the solution
back to the $(x,t)$ coordinates, and
relocalize to frequency $\lambda$ using again Lemma~\ref{l:switch-FT}.
   
\end{proof}

Now we complete the proof of Theorem~\ref{t:para-se}. By Proposition~\ref{p:equiv}, it suffices 
to prove the result for the equation~\eqref{para-reb}.
Consider a solution $v_\lambda$ for \eqref{para-reb} with source term 
$f_{\lambda} = f_{\lambda 2} + f_{\lambda 3}$ where $f_{\lambda 2} \in L^1_t L^\infty_x + L^{\frac65}_{t,x}$ and $f_{\lambda 3} \in X_\lambda^*$.

We test the $v_\lambda$ equation with the approximate solution $w_\lambda$ for the adjoint equation given by Proposition~\ref{p:reb} (which applies equally to the adjoint equation).

 The argument goes by duality. Given $w_\lambda$ a solution 
to the adjoint equation with 
initial data $w_\lambda(T)$
source term $g_\lambda \in L^1_t L^\infty_x + L^\frac65_{t,x}$ 

and respective $O(\lambda^{-N})$ errors $w_\lambda^{err}(T)$ and 
$g_\lambda^{err}$, we write the duality relation
\[
\la v_\lambda,w_\lambda \ra|_0^T = \iint v_\lambda g_\lambda - w_\lambda f_\lambda \, dx dt.
\]
We rewrite this as
\begin{equation}
\iint v_\lambda (g_\lambda+g_\lambda^{err}) \, dx dt 
- \langle v_\lambda(T),(w_\lambda(T)+w_\lambda^{err}(T)) \rangle
= \iint  w_\lambda f_\lambda \, dx dt - \langle v_\lambda(0),w_\lambda(0) \rangle.
\end{equation}
We estimate the right hand side directly,
\[
\lesssim \| w_{\lambda}\|_{L^\infty_t L^2_x \cap L^6_{t,x}\cap  X_\lambda}
\| f_\lambda\|_{L^1_t L^2_x + L^{\frac65}_{t,x} + X_\lambda^*} + \|v_\lambda(0)\|_{L^2_x} \| w_\lambda(0)\|_{L^2_x},
\]
which by \eqref{para-se-app} becomes
\[
\lesssim 
(\| f_\lambda\|_{L^1_t L^2_x + L^{\frac65}_{t,x} + X_\lambda^*} + \|v_\lambda(0)\|_{L^2_x}) (\| w_\lambda(T)\|_{L^2_x}
+\|g_\lambda \|_{L^1_t L^2_x + L^\frac65_{t,x}}).
\]
On the left we can neglect the small perturbative error terms, and take 
the supremum over all $g_{\lambda}$ 
and $w_T$ with 
\[
\| w_\lambda(T)\|_{L^2_x}
+\|g_\lambda \|_{L^1_t L^2_x + L^\frac65_{t,x}} \lesssim 1,
\]
where we can harmlessly allow a slightly larger Fourier support than for $v_\lambda$.  Then \eqref{para-se} follows, and the proof of the theorem is concluded.

\section{The local well-posedness result}
\label{s:lwp}

In this section we prove our main local well-posedness result in Theorem~\ref{t:local}.
Our starting point is the local well-posedness result for regular data in Theorem~\ref{t:regular}. For these solutions we will prove the frequency envelope bounds in Theorem~\ref{t:local-fe}, which in turn allow us to continue the solutions up to an $O(\epsilon^{-4})$ time, with uniform bounds. We will then prove Theorem~\ref{t:local} by constructing rough solutions as unique limits of smooth solutions.
We proceed in several steps:

\bigskip

\textbf{ STEP 1: A-priori $H^s$  estimates for regular solutions: Proof of Theorem~\ref{t:local-fe}.}
We recall that we can freely make the bootstrap assumption that 
the bounds \eqref{uk-ee-boot}-\eqref{uk-se-boot} hold with a large universal constant $C$. Then we seek to apply Theorems~\ref{t:para}, \ref{t:para-se} with $v_\lambda=u_\lambda$ and $\epsilon$ replaced by $C\epsilon$, which is always assumed to be $\ll 1$. To achieve this we write the equation for $u_\lambda$ in the paradifferential form \eqref{para-full}, and use Lemma~\ref{l:N-lambda} 
for the components of the source term $N_\lambda$. We can use 
the smallness of $\epsilon$ to absorb the $C$ factors in \eqref{good-nl} and \eqref{res-nl}. Then the $d_\lambda$'s associated to $u_\lambda$  as in \eqref{dl} will be
\[
d_\lambda = \epsilon \lambda^{-s} c_\lambda.
\]
Hence, the conclusion of Theorem~\ref{t:local-fe}
follows directly by applying Theorem~\ref{t:para} for the energy and bilinear estimates, respectively Theorem~\ref{t:para-se} for the Strichartz estimates.
For later use, we record the following:

\begin{corollary}
Under the same assumptions as Theorem~\ref{t:local-fe}, the bounds \eqref{good-nl} for $N^{tr}_\lambda$ 
and \eqref{res-nl} for $C_\lambda$ hold with $C=1$.
\end{corollary}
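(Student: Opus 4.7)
The plan is to simply re-run Proposition~\ref{l:N-lambda} using the genuine (non-bootstrap) frequency envelope bounds we have just established in Theorem~\ref{t:local-fe}. Recall that Proposition~\ref{l:N-lambda} was proved under the bootstrap assumptions \eqref{uk-ee-boot}--\eqref{uk-se-short-boot}, in which every $L^2$, bilinear $L^2$, and $L^6$ bound on $u$ carries an explicit factor of $C$ (or $C^2$, or $C^6$). The four powers of $C$ on the right-hand side of \eqref{good-nl} and \eqref{res-nl} are then exactly the bookkeeping of how many such hypotheses are used in each case analysis (two unbalanced bilinear $L^2$ bounds, or three Strichartz bounds plus an energy bound, etc.).

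Now, once Theorem~\ref{t:local-fe} is known, the same bounds hold for $u$ with implicit constants but no factor of $C$, i.e. the statements \eqref{uk-ee}, \eqref{uab-bi-bal}, \eqref{uab-bi-unbal}, and \eqref{uk-se-short} are valid as stated. These are nothing but the bootstrap hypotheses with $C$ replaced by $1$. Consequently, every estimate in the case enumeration of the proof of Proposition~\ref{l:N-lambda} --- the commutator analysis of $N^1_\lambda$, the tail estimate \eqref{Moser-tail} entering $N^2_\lambda$, the long list of subcases for $N^3_\lambda$ and $N^4_\lambda$, and the final three-Strichartz-plus-one-energy estimate for the doubly resonant piece $C_\lambda$ --- goes through verbatim with $C=1$, producing
\[
\| N^{nr}_\lambda(u) \bu_\lambda^{x_0}\|_{L^1} \lesssim \epsilon^4 c_\lambda^2 \lambda^{-2s}, \qquad
\| C_\lambda(u,\bu,u) \bu_\lambda^{x_0}\|_{L^1} \lesssim \epsilon^4 c_\lambda^2 \lambda^{-2s}.
\]
In particular, the auxiliary Moser-type tail estimate \eqref{a:Moser-tail} from Lemma~\ref{l:Moser-lh} is also upgraded with $C=1$, since its proof uses only \eqref{uk-ee-boot} and \eqref{uk-se-boot}, which now hold without the $C$-prefactor.

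There is no obstacle here: the corollary is a purely mechanical consequence of substituting the conclusion of Theorem~\ref{t:local-fe} back into the hypotheses of Proposition~\ref{l:N-lambda}. The point of recording it as a separate corollary is solely for later use, where one wants clean estimates on the paradifferential source terms $N^{nr}_\lambda$ and $C_\lambda$ that do not degrade under further bootstrap iterations.
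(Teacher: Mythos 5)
Your argument is correct and is essentially the paper's own: the corollary is recorded precisely because the conclusions of Theorem~\ref{t:local-fe} reproduce the hypotheses of Proposition~\ref{l:N-lambda} with the bootstrap constant $C$ replaced by a universal constant, so the proposition's proof (and Lemma~\ref{l:Moser-lh}) applies verbatim to give \eqref{good-nl} and \eqref{res-nl} with $C=1$. The only micro-point worth making explicit is that the long-time Strichartz hypothesis \eqref{uk-se-boot} with $C=1$ is not literally among the conclusions of Theorem~\ref{t:local-fe}, but it follows from the stronger short-time bound \eqref{uk-se-short} since $(\epsilon c_\lambda)^2 \leq \lambda^{2s-1}$ for $s\geq \tfrac12$ and $\epsilon\ll 1$.
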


\bigskip

\textbf{STEP 2: Higher regularity.} 
Here we consider initial data $u_0$ which is not only $\epsilon$-small in $H^s$, but also belongs to a higher Sobolev space $H^{s_1}$, say
\[
\|u_0\|_{H^s} \leq \epsilon, \qquad \| u_0\|_{H^{s_1}} \leq M, \qquad s_1 > s.
\]
Here we do not assume any smallness on $M$.
Then we claim that the solution $u$ remains in $H^{s_1}$ within the time interval in Theorem~\ref{t:local-fe}, with a uniform bound
\begin{equation}
\| u\|_{L^\infty_t H^{s_1}_x} \lesssim M.    
\end{equation}

To see why this is true,  for the initial data $u_0$ we consider a minimal $H^s$ frequency envelope $\epsilon c_\lambda$ with the unbalanced slowly varying condition as in Remark~\ref{r:unbal-fe}. Then by construction we have
\[
\sum_\lambda (\epsilon c_\lambda \lambda^{s_1-s})^2 \lesssim M^2.
\]
By Theorem~\ref{t:local-fe} the frequency envelope $c_\lambda$ is propagated along the flow at least up to time $t \ll \epsilon^{-4}$, and we obtain
\[
\| u(t) \|_{H^{s_1}}^2 \lesssim \sum_\lambda (\epsilon c_\lambda \lambda^{s_1-s})^2 \lesssim M^2, \qquad t \in [0,T].
\]
We remark that not only the uniform $L^2$ higher regularity bounds are propagated along the flow, but also the corresponding bilinear $L^2_{t,x}$ and Strichartz bounds.

\bigskip

\textbf{ STEP 3: A-priori $L^2$ estimates for the linearized equation: Proof of Theorem~\ref{t:linearize-fe}.}

Here we seek to apply Theorem~\ref{t:para} and Theorem~\ref{t:para-se}  by writing the linearized equation \eqref{qnls-lin} in the paradifferential form \eqref{para-lin}. To start with, we take $d_\lambda$ to be the $L^2$ frequency envelope for the initial data $v_0$, and seek to prove that the 
Littlewood-Paley components $v_\lambda$ of $v$ satisfy the bounds 
\eqref{v-ee}-\eqref{vv-bi-unbal}.

To prove this we make a bootstrap assumption for $v_\lambda$, which is the same as in the proof of Theorem~\ref{t:para}, namely the bounds \eqref{v-ee-boot}-\eqref{vv-bi-unbal-boot}, together with a Strichartz bootstrap assumption,
\begin{equation}\label{v-se-boot}
\| v_\lambda \|_{L^6_{t,x}} \leq C d_\lambda.    
\end{equation}
The first step is to prove the following:
\begin{lemma}\label{l:lin-1}
 Let $s > 1$. Assume that the function $u$ satisfies the bounds 
 \eqref{uk-ee}-\eqref{uk-se} in a time interval $[0,T]$ with $T$ as in \eqref{small-t-loc}.
 Assume also that $v$ satisfies the bootstrap assumptions  \eqref{v-ee-boot}-\eqref{vv-bi-unbal-boot}
 and \eqref{v-se-boot}. Then for $\epsilon$ small enough, the functions $N^{lin}_\lambda v$ satisfy the bounds
 \begin{equation}\label{good-nl-lin}
\| N^{lin}_\lambda v  \cdot \bv_\lambda^{x_0}\|_{L^1_{t,x}} \lesssim C^2 \epsilon^2 d_\lambda^2.
 \end{equation}

\end{lemma}
The above Lemma allows us to  apply Theorem~\ref{t:para} to $v_\lambda$,
and conclude that if $\epsilon$ is small enough,
then the bounds \eqref{v-ee}-\eqref{vv-bi-unbal} hold.

\medskip

The second step is the next Lemma:
\begin{lemma}\label{l:lin-2}
 Let $s > 1$. Assume that the function $u$ satisfies the bounds 
 \eqref{uk-ee}-\eqref{uk-se} in a time interval $[0,T]$ with $T$ as in \eqref{small-t-loc}.
 Assume also that $v$ satisfies the bootstrap assumptions  \eqref{v-ee-boot}-\eqref{vv-bi-unbal-boot}
 and \eqref{v-se-boot}. Then for $\epsilon$ small enough, the functions $N^{lin}_\lambda v$ satisfy the bounds
\begin{equation}\label{Nv-Xstar}
 \|   N^{lin}_\lambda v \|_{X_\lambda^*}  \lesssim  C^2 \epsilon^2 d_\lambda^2.
\end{equation}
\end{lemma}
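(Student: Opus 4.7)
The strategy is to unwind the duality definition of $X_\lambda^\ast$ and prove the equivalent dual statement: for every $\mu \approx \lambda$ and every test function $w_\mu$ solving the adjoint paradifferential equation \eqref{dual} with $\|w_\mu\|_{X_\mu} \leq 1$, we will establish
\[
\|N^{lin}_\lambda v \cdot w_\mu\|_{L^1} \lesssim C^2 \epsilon^2 d_\lambda^2.
\]
After a spatial translation of $w_\mu$, this becomes the same type of pairing as in Lemma~\ref{l:lin-1}, except that the conjugate factor $\bar v_\lambda^{x_0}$ is replaced by $w_\mu$.

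The crucial new input is that the adjoint flow \eqref{dual} has exactly the same structure as \eqref{para}, so Theorem~\ref{t:para} and Theorem~\ref{t:para-se} apply to $w_\mu$ as well (both are time-reversal invariant). This yields, with $\|w_\mu\|_{X_\mu}$ playing the role of $d_\mu$, balanced and unbalanced bilinear $L^2$ bounds for the pairs $(u_\nu, w_\mu^{x_0})$ and $(v_\nu, w_\mu^{x_0})$ analogous to \eqref{uv-bi-bal}--\eqref{vv-bi-unbal}, as well as the Strichartz bound $\|w_\mu\|_{L^6} \lesssim 1$. Equipped with these estimates, I would expand $N^{lin}_\lambda v$ paradifferentially exactly as $N_\lambda(u)$ was expanded in Proposition~\ref{l:N-lambda}, producing commutator terms, the truncation tail $P_{\geq \lambda} g(u_{\ll \lambda}) \partial_x^2 v$, the paradifferential difference $(g(u) - g(u_{\ll \lambda})) \partial_x^2 v$, and the semilinear nonlinearity $N^{lin}_u(v, \partial_x v)$. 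Each summand is a multilinear form containing exactly one $v$-factor and several $u$-factors, with at most two derivatives distributed among them.

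For each term, multiplying by $w_\mu$ produces a multilinear $L^1$ integral with one $v$-factor, one $w_\mu$-factor, and several $u$-factors, from which one can almost always extract two unbalanced bilinear pairings (of the $u$-$u$, $u$-$v$, $u$-$w$, or $v$-$w$ types), applying the corresponding bilinear $L^2$ bounds to absorb the two derivatives via a gain of $\lambda$. The main obstacle will be the doubly resonant configuration in which all high frequencies cluster near $\lambda$; here, following the recipe of Proposition~\ref{l:N-lambda}, we fall back on three $L^6$ factors---the short time Strichartz bound \eqref{uk-se-short-boot} for $u$, the bootstrap bound \eqref{v-se-boot} for $v$, and the newly available $\|w_\mu\|_{L^6} \lesssim 1$---with the remaining factor in $L^\infty L^2$, and the $T^{1/2}$ H\"older loss in time is absorbed by the time restriction \eqref{small-t-loc}. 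The dyadic summations over intermediate frequencies converge thanks to $s > 1$ and the slowly varying frequency envelopes, and the required prefactor $C^2 \epsilon^2 d_\lambda^2$ emerges from two bilinear bounds paired with one energy bound on $v_\lambda$, exactly as in Lemma~\ref{l:lin-1}.
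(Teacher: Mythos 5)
Your central claim---that Theorem~\ref{t:para-se} applies to the adjoint test function $w_\mu$ and hence $\|w_\mu\|_{L^6} \lesssim 1$---is not correct, and the paper's proof turns precisely on working around this. The hypothesis $\|w_\mu\|_{X_\mu} \le 1$ controls $\|w_\mu(T)\|_{L^2}$ and $\sup_{x_0}\|w_\mu^{x_0} e_\mu\|_{L^1}$, which matches the definition \eqref{dl} of $d_\lambda$, so Theorem~\ref{t:para} does apply and gives energy and bilinear $L^2$ bounds for $w_\mu$. But Theorem~\ref{t:para-se} requires the source term in $L^{6/5}+X_\mu^*$, and this is \emph{not} controlled by the $X_\mu$ norm; the whole point of introducing $X_\lambda^*$ by duality rather than bootstrapping an $L^6$ bound is that the test functions $w_\mu$ come with no Strichartz estimate. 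The paper says this in so many words: ``The only missing piece of information for $w_\lambda$ is the $L^6$ Strichartz bound.''

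The paper's remedy, which your proposal omits, is to observe that the $L^6$ bound on $w_\mu$ can always be \emph{avoided}. Repeating the term-by-term analysis of Lemma~\ref{l:lin-1} with $\bar v_\lambda^{x_0}$ replaced by $w_\mu$ produces only terms with either (a) at least two unbalanced frequency pairs, handled purely by bilinear $L^2$ estimates on $(u,u)$, $(u,v)$, $(u,w)$, $(v,w)$; or (b) at least three factors at frequency $\approx\lambda$ with at most one lower. In the four-balanced case the correct estimate is $\|u_\lambda \partial^2 u_\lambda v_\lambda w_\lambda\|_{L^1} \lesssim T^{1/2}\lambda^2 \|u_\lambda\|_{L^6}^2 \|v_\lambda\|_{L^6} \|w_\lambda\|_{L^\infty L^2}$: both $u$-factors and the $v$-factor go in $L^6$, $w$ is kept in $L^\infty L^2$ (which Theorem~\ref{t:para} does control), and the $T^{1/2}$ loss is absorbed by the time restriction \eqref{small-t-loc}. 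And the three-balanced-plus-one-lower configuration, such as $u_{\ll\lambda} P_\lambda(\partial^2 u_\lambda v_\lambda) w_\lambda$, does not arise at all: with the narrow dyadic decomposition of Remark~\ref{r:nonres}, the middle projector $P_\lambda$ annihilates the product of two balanced $\lambda$-frequency entries. Your proposal is correct that the adjoint flow has the same structure and that the bilinear machinery transfers to $w_\mu$, but the argument needs the substitution $w\in L^\infty L^2$ in place of $w \in L^6$ and the observation about Remark~\ref{r:nonres} to close.
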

This lemma allows us to  apply Theorem~\ref{t:para-se} to $v_\lambda$,
and conclude that if $\epsilon$ is small enough,
boundthen the $L^6_{t,x}$ bound \eqref{v-se} holds, thereby completing the proof of Theorem~\ref{t:linearize-fe}. It remains to prove the lemmas.

\begin{remark}
We remark on the close connection between the two lemmas. On one hand, 
the proof of Lemma~\ref{l:lin-2} is almost identical to the proof of Lemma~\ref{l:lin-1}. On the other hand, one might be tempted 
to think that Lemma~\ref{l:lin-1} could be viewed as a corollary
of Lemma~\ref{l:lin-2}. However, the function $v_\lambda$ qualifies 
as a test function as in \eqref{X-lambda} only a-posteriori, after 
Lemma~\ref{l:lin-1} has been established. So such an interpretation would lead to a circular argument.  Instead, the logical progression is to prove 
Lemma~\ref{l:lin-1} first, where we only use our bootstrap assumptions. 
This in turn qualifies $v_\lambda$ for the application of Theorem~\ref{t:para}, and that is a key element in the proof of
Lemma~\ref{l:lin-2}.

\end{remark}

\begin{proof}[Proof of Lemma~\ref{l:lin-1}]
The linearized equation has the form 
\[
i \partial_t v + g(u) \partial_x^2 v = 
- g'(u) \partial_x^2 u \, v + N_1(u,\partial u)\, v + N_2(u,\partial u) \, \partial v,
\]
where $N_1$ and $N_2$ denote the derivatives of $N$ with respect to the first, respectively the second argument; they are at most quadratic, respectively linear in the second argument.

Writing the above equation in a paradifferential form we obtain the expression 
for $N_\lambda^{lin} v$  as follows
\[
\begin{aligned}
 N_\lambda^{lin} v = & \  g_{[<\lambda]} \partial_x^2 v_\lambda - P_\lambda ( g(u) \partial_x^2 v)    
+ \partial_x g_{[<\lambda]} \partial_x v_\lambda + P_\lambda 
(N_1(u,\partial u) v + N_2(u,\partial u) \partial v-g'(u) \partial_x^2 u v )
\\ 
= & \ 
\left([g_{[<\lambda]},P_\lambda]  \partial_x^2 v  
+ (\partial_x g_{[<\lambda]}) \partial_x v_\lambda\right) + P_\lambda( P_{\geq \lambda} g(u_{\ll \lambda})
\partial_x^2 v)- P_\lambda ( ( g(u)-g(u_{\ll \lambda})) \partial_x^2 v)
\\ & \
+ P_\lambda (N_1(u,\partial u) v)  
+ P_\lambda (N_2(u,\partial u) \partial v) 
+ P_\lambda (g'(u) \partial^2 u v) 
\\
:= &  \ N^{l,1}_\lambda v + N^{l,2}_\lambda v + N^{l,3}_\lambda v + N^{l,4}_\lambda v + N^{l,5}_\lambda v + N^{l,6}_\lambda v.
\end{aligned}
\]

\smallskip

Here the first three terms closely resemble the similar terms 
in the expansion of $N_\lambda(u)$ in the proof of Lemma~\ref{l:N-lambda}. We consider the contributions
of each of these terms to the expression $ N^{lin}_\lambda v  \cdot \bv_\lambda^{x_0}$:

\medskip

\textbf{ The contribution of $N^{l,1}_\lambda v$.} The estimate here is identical to that for $N_\lambda^1$ in Lemma~\ref{l:N-lambda}.

\medskip

\textbf{ The contribution of $N^{l,2}_\lambda v$.}
Here we argue again as in Lemma~\ref{l:N-lambda}, decomposing
\[
\begin{aligned}
N_\lambda^{l,2}v = & \ L(P_{\lambda} g(u_{\ll \lambda}), \partial^2 v_{<\lambda})
+ L(P_{\lambda}  g(u_{\ll \lambda}), \partial^2 v_{\lambda}) +
\sum_{\mu \gg \lambda} L( P_\mu g(u_{\ll \lambda}),\partial^2 v_\mu)
\\
:= & \ N_\lambda^{l,2,lo} v + N_\lambda^{l,2,med} v + N_\lambda^{l,2,hi} v,
\end{aligned}
\]
where for the $g$ factor we use again \eqref{Moser-tail}.
The contribution of $ N_\lambda^{l,2,lo} v$ can be rewritten in the form 
\[
 N_\lambda^{l,2,lo}  v_\lambda^{x_0} =   \lambda L(P_{\lambda} g(u_{\ll\lambda}), \partial v_{<\lambda}, v_\lambda),
\]
and can be estimated using \eqref{Moser-tail} for the first factor  and the bilinear, unbalanced $L^2_{t,x}$ bound  \eqref{vv-bi-unbal-boot} for the remaining two.

The contribution of $ N_\lambda^{l,2,mid} v$ can be rewritten in the form
\[
 N_\lambda^{l,2,med} v v_\lambda^{x_0} =   L(P_{\lambda} \partial^2 g(u_{\ll \lambda}),  v_{\lambda}, v_\lambda).
\]
Dropping $P_\lambda$  and using chain rule in the first factor, here we can use two instances of \eqref{uv-bi-unbal-boot}.

Finally the contribution of the summands in $ N_\lambda^{2,hi}$ can be rewritten in the form
\[
 N_\lambda^{l,2,high} v v_\lambda^{x_0} = \mu^2  L(P_{\mu} g(u_{\ll \lambda}),  v_{\mu}, v_\lambda),
\]
and can be estimated using \eqref{Moser-tail} for the first factor and \eqref{uv-bi-unbal-boot} for the remaining two.

\medskip

\textbf{ The contribution of $N^{l,3}_\lambda v$.}
The argument is identical to the one for $N^3_\lambda(u)$ 
in Lemma~\ref{l:N-lambda} when the frequency of $v$ is $\lesssim \lambda$, so it remains to examine components of the form
\[
L(P_{\mu}(g(u) - g(u_{\ll \lambda})), \partial^2 v_\mu), \qquad \mu \gg \lambda.
\]
For the $g$ difference we use the expansion \eqref{g-para}.
The first term in \eqref{g-para} gives contributions of the form
\[
L(P_{\mu} (u_{\mu_1}g'(u_{\ll \lambda})), \partial^2 v_\mu) v_\lambda^{x_0},
\qquad \mu_1 \gtrsim \lambda.
\]
Discarding the easier case when $\mu_1 \neq \mu$, where 
we can insert an extra high frequency projector on $g'(u_{\ll\lambda})$ and use \eqref{Moser-tail}, we are left with
the contribution
\[
L(u_\mu,g'(u_{\ll \lambda}),\partial^2 v_\mu,v_\lambda).
\]
Then we can use one instance of \eqref{uab-bi-unbal} and one instance of 
\eqref{vv-bi-unbal-boot}.

The second term in \eqref{g-para} gives contributions of the form
\[
L(P_{\mu} (u_{\mu_1} u_{\mu_2}g''(u_{<\mu_2})), \partial^2 v_\mu) v_\lambda^{x_0}, \qquad \mu_1 \geq \mu_2 \gtrsim \lambda.
\]
where  we can again use \eqref{Moser-tail} unless
either $\mu_1 \approx  \mu$ or $\mu_1 \approx \mu_2 \gg \mu$.
But in both of these cases it suffices to use \eqref{uv-bi-unbal-boot}
twice.

\medskip

\textbf{ The contribution of $N^{l,4}_\lambda v$.}
The argument is identical to the one for $N^4_\lambda(u)$ 
in Lemma~\ref{l:N-lambda} when the frequency of $v$ is $\lesssim \lambda$, so it remains to examine components of the form
\[
L(P_{\mu} N_1(u,\partial u), v_\mu)v_\lambda^{x_0}, \qquad \mu \gg \lambda,
\]
where $N_1$ is at least quadratic and may have at most two $\partial u$ factors. Here we can use a bilinear $L^2_{t,x}$ bound for the $v$ factors, so if $N_1$ is at least cubic then three $L^2_{t,x}$ bounds will suffice. Hence we are left with the case when $N_1$ is exactly quadratic, 
where it suffices to estimate 
\[
L(P_{\mu} ( \partial u \cdot \partial u), v_\mu)v_\lambda^{x_0}.
\]
Here we can immediately use two unbalanced $L^2_{t,x}$ bounds
unless both $\partial u$ factors have $O(\mu)$ frequencies. But even in this last case, the projector $P_{\mu}$ insures that the two $\partial u$ factors are  
either frequency separated, or separated from $v_\mu$,
in which case we can use again two bilinear $L^2_{t,x}$ bounds.

\medskip

\textbf{ The contribution of $N^{l,5}_\lambda v$.}
As in the previous case it suffices to consider the 
scenario where $v$ is at frequency $\mu \gg \lambda$,
and estimate
\[
L(P_{\mu} N_2(u,\partial u), \partial v_\mu)v_\lambda^{x_0}, \qquad \mu \gg \lambda,
\]
where $N_2$ is at least quadratic with at most one instance of $\partial u$. There are several cases to consider:
\begin{enumerate}[label = (\roman*)]
    \item If all $N_2$ entries are at frequency $\ll \mu$
then we can use Lemma~\ref{l:Moser-lh} for $N_2$, and an unbalanced bilinear $L^2_{t,x}$ bound for the $v$ factors.
\item If $N_2$ has an unbalanced pair of frequencies 
with the highest at least $\mu$, then we can use two unbalanced bilinear $L^2_{t,x}$ bounds.
\item If $N_2$ has at least three frequency $\mu$ 
factors then we can use three $L^6_{t,x}$ bounds and an unbalanced bilinear $L^2_{t,x}$ bound for the $v$ factors.
\item  If $N_2$ has at exactly two frequency $\mu$ 
factors then they must be  are  
either frequency separated, or separated from $v_\mu$,
in which case we can use again two bilinear $L^2_{t,x}$ bounds.
\end{enumerate}

\medskip

\textbf{ The contribution of $N^{l,6}_\lambda v$.}
This is similar with $N^{l,5}_\lambda v$ unless the frequency of the $\partial^2 u$ factor is much larger than the frequency of all the other factors. So we still have to consider the expression 
\[
g'(u_{\ll \lambda}) \partial^2 u_{\lambda} v_{\ll \lambda}v_\lambda^{x_0}.
\]
Its contribution can be rewritten in the form
\[
L(h(u_{\ll \lambda}), u_{\ll\lambda},\partial^2 u_\lambda,v_{\ll\lambda}, v_\lambda).
\]
Again, after discarding the first factor, which is bounded, it suffices to use one instance of \eqref{uab-bi-unbal-boot} for the next two factors, and one instance of \eqref{vv-bi-unbal-boot} for the last two. 
\end{proof}

\begin{proof}[Proof of Lemma~\ref{l:lin-1}]
Here we consider an additional function $w_\lambda$ which solves the adjoint equation \eqref{dual}, with $\|w_\lambda\|_{X_\lambda} \leq 1$,
and we seek to show that 
\[
\| N_\lambda^{lin}v \cdot w_\lambda\|_{L^1_{t,x}} \lesssim C^2 \epsilon^2 d_\lambda.
\]
Given the control we have over $w_\lambda$ via the $X_\lambda$ norm,
and the control we have over $v_\lambda$ through Lemma~\ref{l:lin-1},
it follows that we have full access to the bilinear $L^2_{t,x}$ estimates
for both $v_\lambda$ and $w_\lambda$. The only missing piece of information for $w_\lambda$ is the $L^6_{t,x}$ Strichartz bound.

Hence, the proof of Lemma~\ref{l:lin-2} repeats the proof of Lemma~\ref{l:lin-1}, as long as we can avoid using the  $L^6_{t,x}$ Strichartz bound for $w_\lambda$. But this is always the case unless we have at least 
three factors at frequency $\lambda$ and at most one at a different frequency, which then has to be lower. The more factors we have, the better, so it suffices to examine quartic terms. For these we consider two cases:

\begin{enumerate}
    \item Four balanced entries. Then we estimate for instance
    \[
\| u_\lambda \partial^2 u_\lambda v_\lambda w_\lambda\|_{L^1_{t,x}}
\lesssim T^\frac12 \lambda^2 \|u_\lambda \|_{L^6_{t,x}}^2 \|v_\lambda \|_{L^6_{t,x}}
\|w_\lambda \|_{L^\infty_t L^2_x} \lesssim C^3 \epsilon^2 c_\lambda^2 d_\lambda \lambda^{2-2s},
\]
which suffices.
 \item Three balanced entries, and a lower one. Then we would have
to estimate an expression of the form
\[
u_{\ll \lambda} P_\lambda( \partial^2 u_\lambda, v_\lambda) w_\lambda.
\]
However, if we carefully choose the dyadic regions narrow enough, as discussed in Remark~\ref{r:nonres}, then the middle projector will cancel 
the product of two balanced entries. So this case cannot occur at all.
 
\end{enumerate}
The proof of the lemma is concluded.

\end{proof}

\textbf{ STEP 4: Conclusion.}
Here we show that the $H^s$ energy estimates for the full equation, combined with the $L^2$ energy estimates for the linearized equation, both in the frequency envelope formulation, yield the local well-posedness result in Theorem~\ref{t:local}. We have arrived now at a point where a standard argument applies, and for which we outline the steps.  For more details we 
refer the reader to the expository paper~\cite{IT-primer}, where the strategy of the proof is presented in detail. Here it is more convenient to use the dyadic notation for frequencies, also for easier comparison with \cite{IT-primer}.
The steps are as follows:

\begin{enumerate}[label=(\roman*)]
    \item Initial data regularization:
 given $u_0 \in H^s$, with frequency envelope $\epsilon c_k$, we consider 
 the regularized data 
\[
u_0^h = P_{<h} u_0,
\]
where $h \in \R^+$.
where we can use an improved frequency envelope
\[
c^h_j = \left\{ \begin{array}{lc}
c_j & j \leq k,
\cr 
c_h \step^{-N(j-h)} & j > h.
\end{array} 
\right.
\]

For this data we consider the corresponding smooth solutions $u^h$. Using the local well-posedness result for regular solutions in Theorem~\ref{t:regular} and the  bounds in Theorem~\ref{t:local-fe}, a continuity argument shows that these 
solutions extend as regular solutions
up to time $T \ll \epsilon^{-4}$, 
with similar frequency envelope bounds
\begin{equation}\label{ul-fe}
 \|P_{j} u^h \|_{L^\infty L^2}
 \lesssim \epsilon c_j^h 
 \step^{-sj}.
\end{equation}

\item Difference bounds: here we define 
\[
v^h = \frac{d}{dh} u^h,
\]
 which solves the linearized equation around $u^h$,
 and has initial data
\[
v^h(0) = P_h u_0,
\]
which satisfies
\[
\|v^h(0)\|_{L^2_x} \lesssim \epsilon \step^{-sh} c_h.
\]
By the energy estimates in Theorem~\ref{t:linearize-fe}, it follows that 
\begin{equation}\label{vl-fe}
\| v^h\|_{L^\infty_t L^2_x} \lesssim  \epsilon\step^{-sh} c_h.   
\end{equation}

\item Convergence: 
Here we think of the tentative limit $u$ of $u^h$ 
as the telescopic sum
\[
u = u^0 + \sum_{k\geq 0} u^{k+1}-u^k.
\]
By \eqref{vl-fe} we have 
\begin{equation}\label{dul-fe}
\| u^{k+1}-u^k\|_{L^\infty_t L^2_x} \lesssim  \epsilon \step^{-sk} c_k,   
\end{equation}
which implies rapid convergence of the series in $L^2$, and 
\begin{equation}\label{dul-fe-u}
\| u-u^k\|_{L^\infty_t L^2_x} \lesssim  \epsilon \step^{-sk} c_k.   
\end{equation}

On the other hand by \eqref{ul-fe} we have 
\begin{equation}\label{dul-feh}
\| u^{k+1}-u^k\|_{L^\infty_t H^{s+m}_x} \lesssim  \epsilon \step^{sm} c_k.
\end{equation}
Combining the last two relations, it follows that the summands in the series are almost orthogonal in $H^s$, and we have
\begin{equation}\label{ddul-fe-diff}
\| u^{k}-u^j\|_{L^\infty_t H^{s}_x} \lesssim  \epsilon c_{[j,k]}, \qquad j < k.
\end{equation}
This shows convergence in $L^\infty_t H^s_x$, and 
\begin{equation}\label{ddul-fe-ldiff}
\| u-u^j\|_{L^\infty_t H^{s}_x} \lesssim  \epsilon c_{\geq j}. 
\end{equation}
Now it is easily verified that the limit $u$ solves the equation \eqref{qnls}, and also that it satisfies all the bounds in Theorem~\ref{t:local-fe}.

\item Continuous dependence. This again follows 
the argument in \cite{IT-primer}.
\end{enumerate}

\section{Interaction Morawetz for the nonlinear flow and the global/long-time results}

The aim of this section is to complete the proof of the long-time and 
global well-posedness results in Theorem~\ref{t:long} and Theorem~\ref{t:global}. In view of the local well-posedness result in Theorem~\ref{t:local} and its associated frequency envelope  bounds in Theorem~\ref{t:local-fe}, it suffices to establish the frequency envelope bounds in Theorem~\ref{t:long-fe}, respectively Theorem~\ref{t:global-fe}.
Further, it suffices to do this under appropriate bootstrap assumptions,
see Proposition~\ref{p:boot}.

For the proof of the local well-posedness result in Theorem~\ref{t:local}
it was enough to interpret the (QNLS) evolution as a paradifferential equation with a perturbative source term. For the long-time results, on the other hand, we also single out the doubly resonant interactions,
which are handled via density/flux corrections as described in Section~\ref{s:df}.

 We consider a smooth symbol $a_\lambda$ as in \eqref{choose-a-loc}.
Correspondingly, we have the modified localized mass and momentum densities
\[
\ms_\lambda := M_\lambda(u,\bar u) + B^4_{\lambda,m}(u,\bar u, u,\bar u),
\]
\[
\ps_{\lambda} := P_{\lambda}(u,\bar u) + B^4_{\lambda,p}(u,\bar u, u,\bar u),
\]
which satisfy the conservation laws (see  \eqref{dens-flux-m}, respectively  \eqref{dens-flux-p})
\begin{equation}\label{dens-flux-m-re}
\partial_t \ms_\lambda(u) = \partial_x\left( g_{[<\lambda]} P_{\lambda}(u)
+ R^4_{\lambda,m}(u)\right) +  R^6_{\lambda,m}(u) +F^4_{\lambda,m}(u)  +F^{4,bal}_{\lambda,m}(u) ,
\end{equation}
respectively
\begin{equation}\label{dens-flux-p-re}
\partial_t \ps_{\lambda}(u) = \partial_x\left(g_{[<\lambda]} E_{\lambda}(u)
+ R^4_{\lambda,p}(u)\right) +  R^6_{\lambda,p}(u) +F^4_{\lambda,p}(u) +F^{4,bal}_{\lambda,p}(u) .
\end{equation}
Our task is now to use these conservation laws in order to prove 
the linear and bilinear frequency envelope bounds in Theorem~\ref{t:long-fe}, respectively Theorem~\ref{t:global-fe}.

\subsection{ Source term bounds}
Here we prove fixed time estimates for the mass and momentum corrections, as well as space-time bounds for
the source terms $R^6_{\lambda,m}(u)$, $F^4_{\lambda,m}(u)$ $F^{4,bal}_{\lambda,m}(u)$, and their momentum counterparts in the above density-flux relations:

\begin{proposition}\label{p:dens-flux-an}
Assume that our bootstrap assumptions in \eqref{uk-ee-boot}-\eqref{uk-se-boot}
hold in a time interval $[0,T]$ with $s > 1$. 
Then the multilinear forms in the
above mass density-flux relation \eqref{dens-flux-m}
have the following properties:
\begin{enumerate}[label=(\roman*)]
\item Fixed time bounds:
\begin{equation}\label{b4-est}
\| B^4_{\lambda,m}(u,\bar u, u,\bar u) \|_{L^1_x}
\lesssim C^4 \epsilon^4 c_\lambda^4 \lambda^{1-4s}.
\end{equation}

\item Space-time bounds:
\begin{equation}\label{pert-flux-source}
\|  R^6_{\lambda,m}(u,\bu,u,\bu,u,\bu)\|_{L^1_{t,x}} + \|F^4_{\lambda,m}(u,\bu,u,\bu) \|_{L^1_{t,x}} 
+ \|F^{4,bal}_{\lambda,m}(u,\bu,u,\bu) \|_{L^1_{t,x}}
\lesssim \epsilon^4 C^4 c_\lambda^4
\lambda^{-2s}.
\end{equation}
\end{enumerate}
The same bounds hold for the momentum counterparts, with an additional $\lambda$ factor on the right.
\end{proposition}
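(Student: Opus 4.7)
The argument naturally splits into three parts, one for each expression. The reduction throughout is that every multilinear form in play has all of its arguments effectively localized at frequency $\sim \lambda$ (either by the explicit localization of $b^4_{\lambda,m}$ and $q^{4,bal}_{\lambda,m}$, or, in the case of $F^4_{\lambda,m}$, because the only relevant ``outer'' frequency on $\bu_\lambda$ is $\lambda$ and Lemma~\ref{l:Moser-lh} controls high-frequency tails of the coefficients appearing inside $N^{nr}_\lambda$).

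For the fixed-time bound \eqref{b4-est}, the plan is to use that $B^4_{\lambda,m}$ is a translation invariant quadrilinear form whose symbol satisfies $|b^4_{\lambda,m}| \lesssim 1$ and is localized in each argument at scale $\lambda$, by Proposition~\ref{p:dens-flux-alg}(ii). Consequently $B^4_{\lambda,m}(u,\bu,u,\bu)$ depends only on $u_\lambda$ (up to a slight enlargement of the dyadic shell), and by H\"older
\[
\|B^4_{\lambda,m}(u,\bu,u,\bu)\|_{L^1} \lesssim \|u_\lambda\|_{L^2}^2 \|u_\lambda\|_{L^\infty}^2.
\]
The $L^2$ factor is bounded by \eqref{uk-ee-boot}, and the $L^\infty$ factor by Bernstein applied to the same bound, yielding $\epsilon^4 c_\lambda^4 \lambda^{1-4s}$ as required.

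For the spacetime bounds \eqref{pert-flux-source}, the three summands are treated separately. The bound on $F^4_{\lambda,m} = 2\Im(N^{nr}_\lambda(u)\bu_\lambda)$ is already a corollary of Proposition~\ref{l:N-lambda}, specifically \eqref{good-nl}. For $F^{4,bal}_{\lambda,m}$, I would use the structural factorization \eqref{fbal-m} of the symbol, which lets one rewrite the form as a sum of products of two translation invariant bilinear expressions of type $\partial_x L(u_\lambda,\bu_\lambda)$, each with symbol of size $\lesssim 1$ and supported in the balanced dyadic region $\lambda\approx\mu$. Applying the balanced bilinear bound \eqref{uab-bi-bal-boot} with $x_0=0$ to each factor, followed by Cauchy--Schwarz in $L^2_{t,x}$, gives $\|F^{4,bal}_{\lambda,m}\|_{L^1}\lesssim C^4\epsilon^4 c_\lambda^4 \lambda^{1-4s}$, which is stronger than the claimed bound when $s \geq 1/2$.

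The remaining term $R^6_{\lambda,m}$ is the real crux. The approach is to compute this sextic form explicitly by differentiating $B^4_{\lambda,m}(u,\bu,u,\bu)$ in time and substituting for $\partial_t u$ via \eqref{full-lpara}. The paradifferential principal part $i\partial_x g_{[<\lambda]}\partial_x u_\lambda$ integrates by parts into the flux (and is responsible for the $g_{[<\lambda]}$ piece that has already been absorbed on the RHS of \eqref{dens-flux-m}), while the algebraic identity \eqref{choose-R4m-new} neutralizes the cubic resonant contribution of $C_\lambda(u,\bu,u)$ against the $i\Delta^4 \xi^2$ part of $b^4_{\lambda,m}$. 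What remains is a finite sum of multilinear forms of degree $\geq 6$ in $u,\bu$, coming from: (a) commuting $b^4_{\lambda,m}$ past $(g_{[<\lambda]}-1)\partial_x^2$; (b) pairing $B^4$-type factors with $N^{nr}_\lambda(u)$; and (c) pairing them with $C_\lambda(u,\bu,u)$. Each term has enough $u$-factors that, after identifying (at least) two pairs of essentially transversal/unbalanced frequencies, one can apply \eqref{uab-bi-unbal-boot} twice and estimate the remaining factors in $L^\infty$ via Bernstein or in $L^6$ via \eqref{uk-se-boot}; this parallels precisely the case analysis carried out in the proof of Proposition~\ref{l:N-lambda}. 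The main obstacle here is bookkeeping rather than a conceptual difficulty: because $R^6_{\lambda,m}$ has two more $u$ factors than $F^4_{\lambda,m}$, we gain an extra $\epsilon^2$ of slack, so the estimates are never tight and every subcase closes by routine application of the bootstrap hypotheses.

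Finally, the momentum version has the same proof \emph{mutatis mutandis}: by Proposition~\ref{p:dens-flux-alg}(b), each of $b^4_{\lambda,p}, r^4_{\lambda,p}, q^{4,bal}_{\lambda,p}$ carries one extra factor of $\lambda$ relative to its mass counterpart, which propagates to an extra factor of $\lambda$ in each of the three bounds.
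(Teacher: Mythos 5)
Your argument follows the paper's proof almost verbatim in three of the four pieces: the fixed time bound via H\"older plus \eqref{uk-ee-boot} and Bernstein, the bound for $F^4_{\lambda,m}$ as a direct corollary of \eqref{good-nl}, and the bound for $F^{4,bal}_{\lambda,m}$ by writing it as a product of two expressions of the type $\partial_x L(u_\lambda,\bar u_\lambda)$ and applying the balanced bilinear bound \eqref{uab-bi-bal-boot} twice; the reduction of $R^6_{\lambda,m}$ to the three contributions obtained by substituting \eqref{full-lpara} into $\partial_t B^4_{\lambda,m}$ (the $N^{nr}_\lambda$ term, the $C_\lambda$ term, and the $\partial_x(g_{[<\lambda]}-1)\partial_x u_\lambda$ term) is also exactly the paper's decomposition.

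The one genuine soft spot is your blanket claim that in every remaining sextic term one can ``identify (at least) two pairs of essentially transversal/unbalanced frequencies'' and close with two applications of \eqref{uab-bi-unbal-boot}. This fails for the contribution $B^4_{\lambda,m}(C_\lambda(u,\bar u,u),\bar u,u,\bar u)$: by the construction of $c_{diag}$ in \eqref{defcl}, all six inputs there lie in the doubly resonant, balanced regime at frequency $\lambda$, so no transversal pair is available and no bilinear $L^2$ bound can be used at all. The paper estimates this term purely with six applications of the long-time Strichartz bound \eqref{uk-se-boot}; together with the symbol size $O(\lambda^2)$ this gives $C^4\epsilon^4 c_\lambda^4\lambda^{1-4s}$, which is $\lesssim C^4\epsilon^4 c_\lambda^4\lambda^{-2s}$ only because $s\geq \tfrac12$. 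Note also that your heuristic ``two extra $u$ factors give an extra $\epsilon^2$ of slack'' breaks down precisely here: since \eqref{uk-se-boot} carries only $\epsilon^{2/3}$ per factor, this term produces $\epsilon^4$, not $\epsilon^6$, and is in fact the borderline contribution rather than a routine one. The fix is available in your toolbox (you mention $L^6$ for ``remaining factors''), but as written your case analysis does not cover this term, and it is the very term whose presence motivated isolating $C_\lambda$ in \eqref{full-lpara} in the first place.
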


\begin{proof}[Proof of Proposition~\ref{p:dens-flux-an}]
We successively consider the four bounds above:
\medskip

\textbf{ 1. The bound \eqref{b4-est} for $B^4_{\lambda,m}$.}
By \eqref{div-m},  $B^4_{\lambda,m}$ has a smooth and bounded symbol which is localized at frequency $\lambda$. Then we estimate at fixed time
\[
\| B^4_{\lambda,m}(u,\bar u, u,\bar u) \|_{L^1_x}
\lesssim \|u_\lambda\|_{L^2_x}^2 \|u_\lambda\|_{L^\infty_x}^2,
\]
where we can use the uniform energy bound \eqref{uk-ee-boot}, combined with Bernstein's inequality for the $L^\infty_x$ norm.

\medskip

\textbf{ 2. The bound \eqref{pert-flux-source} for $R^6_{\lambda,m}$.}
We recall that $R^6_{\lambda,m}$
contains the contribution of terms of order three and higher in the equation \eqref{qnls} in the time derivative 
of $ B^4_{\lambda,m}(u,\bar u, u,\bar u)$. Here it is more convenient to 
use the form \eqref{full-lpara} of the 
equation, but with a slightly larger frequency window for $C_\lambda$, in order to guarantee that we have dyadic frequency separation between the input frequencies in $N_\lambda^{tr}$ and the frequencies in the support of $B^4_{\lambda,m}$. Then we need to bound in $L^1_{t,x}$
the following three types of expressions:
\[
B^4_{\lambda,m}(N_\lambda^{tr}(u),\bar u, u,\bar u), \qquad B^4_{\lambda,m}(C_\lambda(u,\bar u,u),\bar u, u,\bar u), \qquad B^4_{\lambda,m}(\partial_x (g_{[<\lambda]}-1)\partial_x u_\lambda,\bar u, u,\bar u).
\]
Here the last three inputs of $B^4_{\lambda,m}$ are all
localized at frequency $\lambda$. So for  the first expression above we may pair 
one of them with $N_\lambda^{tr}$ and use the bound 
\eqref{good-nl} to estimate their product in $L^1_{t,x}$,
while using Bernstein's inequality to place the other two
in $L^\infty_x$. This yields
\[
\|B^4_{\lambda,m}(N_\lambda^{tr}(u),\bar u, u,\bar u)
\|_{L^1_{t,x}} \lesssim C^6 \epsilon^6 c_\lambda^4  \lambda^{-2s} \lambda^{1-2s}, 
\]
which is more than sufficient.

For the second expression above we have six balanced inputs at frequency $\lambda$, so we use \eqref{uk-se-boot} six times to obtain
\[
\| B^4_{\lambda,m}(C_\lambda(u,\bar u,u),\bar u, u,\bar u)
\|_{L^1_{t,x}} \lesssim C^4 \epsilon^4 c_\lambda^4 \lambda^{-4s+1},
\]
same as before.

Finally, for the third expression we can pull two 
$u_{<\lambda}$ factors from $g$, and then use two bilinear $L^2_{t,x}$ bounds \eqref{uab-bi-unbal-boot},
and two $L^\infty_x$ Bernstein inequalities at frequency $\lambda$,
obtaining again the same bound as in the first two cases. 

\medskip

\textbf{ 3. The bound \eqref{pert-flux-source} for $F^4_{\lambda,m}$.} Recalling that 
 $F^4_{\lambda,m} (u,\bu,u,\bu) = 2 \Im ( (N_\lambda^{tr}(u,\bu, u)  \bu_\lambda)$, this bound is an immediate consequence of \eqref{good-nl}.

\medskip

\textbf{ 4. The bound \eqref{pert-flux-source} for $F^{4,bal}_{\lambda,m}$.}
Here we recall that $F^{4,bal}_{\lambda,m}$ is 
a quartic form whose symbol admits the factorization
\[
f^{4,bal}_{\lambda,m}(\xi_1,\xi_2,\xi_3,\xi_4)
=  q^{4,bal}_{\lambda,m}[ (\xi_1-\xi_2)(\xi_3-\xi_4)+ (\xi_1-\xi_4)(\xi_2-\xi_3)]
\]
with $q^{4,bal}_{\lambda,m}$ localized at frequency $\lambda$, bounded and smooth on the dyadic scale.
This implies that $F^{4,bal}_{\lambda,m}$ can be represented as a linear combination of multilinear forms
\[
F^4_{\lambda,m} (u,\bu,u,\bu)
= L( \partial_x L(u,\bar u),\partial_x L(u,\bar u)),
\]
with multilinear forms $L$ localized on the $\lambda^{-1}$ spatial scale\footnote{This is important in order to handle the $x_0$ dependence in the balanced bilinear $L^2$ bounds.}. Then the desired $L^1_{t,x}$ 
bound is obtained by applying twice the balanced $L^2_{t,x}$ bound \eqref{uab-bi-bal-boot}.

\end{proof}

\subsection{The uniform energy bounds}
Here we use the bounds in Proposition~\ref{p:dens-flux-an} in order to prove the dyadic energy estimates \eqref{uk-ee} in our 
main result in Theorem~\ref{t:long-fe} and Theorem~\ref{t:global-fe}, under the appropriate bootstrap assumptions as stated in Proposition~\ref{p:boot}. 

The argument differs slightly for the two theorems.
We begin with the global result in Theorems~\ref{t:global-fe}.
On a time interval $[0,T]$ we integrate the density-flux relation for mass, to obtain
\[
\sup_{t \in [0,T]} \int \ms(u_\lambda)(t,x) \, dx 
\lesssim \int \ms(u_\lambda)(0,x) \, dx
+ \|  R^6_{\lambda,m}(u)\|_{L^1_{t,x}} + \|F^4_{\lambda,m}(u) \|_{L^1_{t,x}} 
+ \|F^{4,bal}_{\lambda,m}(u) \|_{L^1_{t,x}}.
\]
Now we apply the estimate \eqref{b4-est} to switch from $\ms$ to $M$, and \eqref{pert-flux-source} for the remaining terms. We obtain
\[
\|u_\lambda\|_{L^\infty_t L^2_x}^2 \lesssim 
\epsilon^2 c_\lambda^2 \lambda^{-2s} +
C^4 \epsilon^4 c_\lambda^4 \lambda^{1-4s}
+ \epsilon^4 C^4 c_\lambda^4 \lambda^{-2s}.
\]
The constant $C$ is absorbed by the extra $\epsilon$ factors if $\epsilon$ is small enough, so we arrive at
\begin{equation}\label{good-ee}
\|u_\lambda\|_{L^\infty_t([0,T]; L^2_x)}^2 \lesssim 
\epsilon^2 c_\lambda^2 \lambda^{-2s}.
\end{equation}
Since $T$ was arbitrarily large, the global bound \eqref{uk-ee} follows.
\bigskip

We continue with the long-time result in Theorem~\ref{t:long-fe}.  The difference is 
that we now only assume the bootstrap bounds
\eqref{uk-ee-boot}-\eqref{uk-se-boot} on time intervals of length $\ll \epsilon^{-6}$. 
Hence the bounds \eqref{pert-flux-source} also hold only in similar time intervals. Then the argument above only applies directly for $T\ll \epsilon^{-6}$.
For a larger $T$, we split $[0,T]$ in subintervals of size $\epsilon^{-6}$ and add the errors, to obtain
\[
\|u_\lambda\|_{L^\infty_t L^2_x}^2 \lesssim 
\epsilon^2 c_\lambda^2 \lambda^{-2s} +
C^4 \epsilon^4 c_\lambda^4 \lambda^{1-4s}
+ (T \epsilon^{-6}) \epsilon^4 C^4 c_\lambda^4 \lambda^{-2s}.
\]
Then the estimate \eqref{good-ee} will follow provided that $T \ll \epsilon^{-8}$, as needed.

\subsection{ The interaction Morawetz identities}

Overall, we will seek to pair a frequency $\lambda$ 
portion of one solution $u$ with a frequency $\mu$ portion 
of another solution $v$, which will eventually be taken to be 
$v = u^{x_0}$, i.e. a translate of $u$. We will write here 
all identities in the general case. Then, in order to 
prove the global result in Theorem~\ref{t:global-fe} for the defocusing case, we will specialize to three cases:
\begin{enumerate}
    \item The diagonal case $\lambda = \mu$, $u=v$ (and thus $x_0=0$).
    \item The balanced shifted case $\lambda \approx \mu$, with $v= u^{x_0}$ and
    with  $x_0$ arbitrary.
\item The unbalanced case $\lambda \not \approx \mu$, 
    with  $v = u^{x_0}$ and $x_0$ arbitrary.
\end{enumerate}
On the other hand, in order to prove the long time result 
in Theorem~\ref{t:global-fe}, it suffices to consider 
only the last two cases.

We define the interaction Morawetz functional for the functions 
$(u,v)$ and the associated pair of frequencies $(\lambda,\mu)$ as
\begin{equation}\label{IM}
\bI_{\lambda\mu}(u,v) :=   \iint_{x > y} \ms_\lambda(u)(x) \ps_{\lambda}(v) (y) -  
\ps_{\lambda}(u)(x) \ms_{\lambda}(v) (y) \, dx dy.
\end{equation}
We use the modified density-flux identities 
\eqref{dens-flux-m-re}, \eqref{dens-flux-p-re}
for the mass and for the momentum in order to compute the time derivative of $\bI_{\lambda\mu}$ as
\begin{equation}\label{interaction-xilm}
\frac{d}{dt} \bI_{\lambda\mu} =  \bJ^4_{\lambda\mu} + \bJ^6_{\lambda\mu} + \bJ^8_{\lambda\mu} + \bK_{\lambda\mu}. 
\end{equation}

Here the quartic contribution $\bJ^4_{\lambda\mu}$ is 

\begin{equation}\label{J4-deflm}
\begin{aligned}
\bJ^4_{\lambda \mu}(u,v) = & \  \int g_{[<\mu]} \left(M_\lambda(u) E_\mu(v) 
- P_\lambda(u) P_\mu(v)\right)\, 
\\
& \ + g_{[<\lambda]}\left( E_\lambda(u) M_\mu(v) 
-  P_\lambda(u) P_\mu(v)\right)\, dx.
\end{aligned}
\end{equation}
The sixth order term $\bJ^6_{\lambda}$ has the form
\begin{equation}\label{J6-deflm}
\begin{aligned}
\bJ^6_{\lambda\mu}(u,v) =  \int & \ M_\lambda(u) R^4_{\mu,p}(v)+ g_{[<\mu]}B^4_{\lambda,m}(u) E_\mu(v)
- g_{[<\lambda]}P_\lambda(u)B^4_{\mu,p}(v)- R^4_{\lambda,m}(u)P_\mu(v) 
\\ & \ 
+ M_\mu(v) R^4_{\lambda,p}(u)+g_{[<\lambda]} B^4_{\mu,m}(v) E_\lambda(u)
- g_{[<\mu]}P_\mu(v)B^4_{\lambda,p}(u)- R^4_{\mu,m}(v)P_\lambda(u)\,  dx .
\end{aligned}
\end{equation}
Next we have 
\begin{equation}\label{J8-deflm}
\bJ^8_{\lambda\mu}(u,v) =   \int 
B^4_{\lambda,m}(u) R^4_{\mu,p}(v) - R^4_{\lambda,m}(u) B^4_{\lambda,p}(v)
+ B^4_{\mu,m}(v) R^4_{\lambda,p}(u) - R^4_{\mu,m}(v) B^4_{\lambda,p}(u)
 \, dx .
\end{equation}

Finally the last term $\bK_{\lambda}$ has the form
\begin{equation}\label{K8-def-ablm}
\begin{aligned}
\bK^8_{\lambda\mu}(u,v) = \iint_{x > y} & \ \ms_\lambda(u)(x)  R^{\text{full}}_{\mu,p}(v)(y)  + \ps_\mu(v)(y) R^{\text{full}}_{\lambda,m}(u)(x)
\\ & \hspace{-.6in} - 
\ms_\mu(v)(y)  R^{\text{full}}_{\lambda,p}(u)(x)  - \ps_\lambda(u)(x) R^{\text{full}}_{\mu,m}(v)(y)\,
dx dy,
\end{aligned}
\end{equation}
where we have used the condensed notation 
$R^{\text{full}} := R^6 + F^4+ F^{4,bal}$ for both the mass and momentum.

The way the interaction Morawetz identity will be used 
will be to estimate positive terms, of which the primary one is the  leading part of $\bJ^4$ and, in one case,
the leading part of $\bJ^6$, in terms of the other ones,
namely $\bI$, $\bJ^8$ and $\bK^8$, for which we use our bootstrap hypothesis. Before we specialize to the three cases discussed earlier, in this section we estimate these last three terms.

\medskip

\textbf{I. The bound for $\bI_{\lambda\mu }$.} 
This is the leading contribution. Since it is a fixed time bound,
to estimate it we are able to use only the energy estimate \eqref{uk-ee}, which has already been proved. Thus, the constant $C$ does not appear at all. Precisely, we will show that 
\begin{equation}\label{Il-est}
    | \bI_{\lambda\mu}| \lesssim \epsilon^4  c_\lambda^4 \lambda^{1-2s}
    \mu^{-2s}, \qquad \mu \lesssim \lambda.
\end{equation}
To see this, we first observe that by \eqref{uk-ee} we have
\[
\| M_\lambda(u) \|_{L^1_x} \lesssim \| u_\lambda\|_{L^2_x}^2
\lesssim \epsilon^2  c_\lambda^2 \lambda^{-2s}.
\] 
For $s \geq \frac12$ and small enough $\epsilon$ we can combine this  with \eqref{b4-est} to obtain a similar bound for the modified mass density,
\begin{equation}\label{m-l1}
\| \ms_\lambda(u) \|_{L^1_x} \lesssim \epsilon^2  c_\lambda^2 \lambda^{-2s}.
\end{equation}
A similar computation for the momentum yields
\begin{equation}\label{p-l1}
\| \ps_\lambda(u) \|_{L^1_x} \lesssim \epsilon^2  c_\lambda^2 \lambda^{1-2s}.
\end{equation}
Combining the last two bounds we arrive at \eqref{Il-est}.

\medskip

\textbf{II. The bound for $\bJ^8_{\lambda\mu}$.}  We will show that
\begin{equation}\label{j8-estlm}
  \left| \int_0^T \bJ^8_{\lambda\mu}\, dt\right| \lesssim  
  C^6 \epsilon^6 c_\lambda^4 c_\mu^4 \lambda^{\frac43(1-2s)}
    \mu^{-2s+\frac23(1-2s)},
  \qquad \mu \lesssim \lambda.
\end{equation}
This is again a relatively straightforward bound, as 
$\bJ^8_{\lambda\mu}$ is an $8$-linear form with four factors localized at frequency $\lambda$ and four at frequency $\mu$,
and a symbol of size at most $\lambda^2$.
Then it suffices to use six times the $L^6_{t,x}$ bound  \eqref{uk-se-boot} and two $L^\infty_{t,x}$ bounds obtained from \eqref{uk-ee} 
via Bernstein's inequality. Since we assume $\mu \lesssim \lambda$,
it is more efficient to use the $L^6_{t,x}$ bound for the four frequency $\lambda$ factors, and two frequency $\mu$ factors.
This yields \eqref{j8-estlm}.

We remark that the bound in \eqref{j8-estlm} is better than 
the one in \eqref{Il-est} provided that $\epsilon$ is small enough
and $s \geq \frac12$.

We also remark that in the unbalanced case $\mu \not\approx \lambda$ one could alternatively estimate this term  using twice the bilinear $L^2_{t,x}$ bound in \eqref{uab-bi-unbal-boot}, and four $L^\infty_{t,x}$ bounds obtained from \eqref{uk-ee} 
via Bernstein's inequality.

\medskip
\textbf{III. The bound for $\bK_{\lambda\mu}$.}  This has the form
\begin{equation}\label{kl-est-lm}
\left| \int_0^T \bK_{\lambda\mu} \, dt \right| \lesssim 
\epsilon^6 C^6 c_\lambda^2 c_\mu^4  \lambda^{1-2s} \mu^{-2s },
\end{equation}
and is obtained directly by combining the $L^1_x$ bounds in 
\eqref{m-l1}, \eqref{p-l1} and \eqref{pert-flux-source}. 

We again remark that the bound in \eqref{kl-est-lm} is better than the one in \eqref{Il-est}. Here the allowed range for $s$ 
is determined by \eqref{pert-flux-source}.

\subsection{ The diagonal case} \
Here we consider the (self)-interaction Morawetz identities for 
a single dyadic frequency $\lambda$, i.e. when $u=v$. This case
needs to be considered separately only in the defocusing case,
i.e. for Theorem~\ref{t:global-fe}; the objective here is to capture the $L^6_{t,x}$ Strichartz norm of $u_\lambda$, namely 
the bound \eqref{uk-se}. As a secondary byproduct we also obtain  bilinear $L^2_{t,x}$ bound \eqref{uk-bi} for $u_\lambda$, but this can also be seen as a special case of the shifted bound \eqref{uab-bi-bal} which we prove in the next subsection.

We will use the interaction Morawetz identity \eqref{IM} with $\mu = \lambda$. As the contributions of $\bI_{\lambda \mu}$, $\bJ^8_{\lambda \mu}$ and $\bK_{\lambda\mu}$
have already been estimated, it remains to consider the contributions of $\bJ^4_{\lambda\mu}$ and $\bJ^6_{\lambda\mu}$.

 \bigskip

\textbf{I. The bound for $\bJ^4_\lambda$.} At leading order this is a positive definite expression, whose leading term can be explicitly computed. We will show that 
\begin{equation}\label{j4-est}
\int_0^T \bJ^4_\lambda \, dt = 4\| g_{[<\lambda]}^\frac12
\partial_x |u_\lambda|^2\|_{L^2}^2 + O(\epsilon^6 C^6 c_\lambda^4
\lambda^{1-4s}),
\end{equation}
where the error term is negligible.
To see this, we write 
\[
J^4_{\lambda} (u,u) = g_{[<\lambda]} J^4_{main}(u_\lambda,u_\lambda),
\]
where we recall that the expression $J^4_{main}$ is given by \eqref{J4-main}. In the $u = v$ case this yields
\[
\int_0^T \bJ^4_\lambda \, dt = 4\|  g_{[<\lambda]}^\frac12
\partial_x (u_\lambda \bu_\lambda)\|_{L^2}^2 + Err,
\]
where, integrating by parts the second term in $J^4_{main}$, we have 
\[
Err := \int_0^T \int_\R \partial_x^2  g_{[<\lambda]} |u_\lambda|^2
|u_\lambda|^2 \, dx dt.
\]
Here the factor $\partial_x^2  g_{[<\lambda]}$
is at least quadratic in $u_{\ll \lambda}$.
The key feature of the above expression is that it is 
unbalanced, so we can estimate two bilinear expressions,
say $u_{\ll \lambda} \bu_\lambda$ and its conjugate in $L^2_{t,x}$ 
using \eqref{uab-bi-bal-boot}, 
and the two remaining frequency $\lambda$ factors in $L^\infty_x$ using Bernstein's inequality,
in order to get
\[
|Err|\lesssim \epsilon^6 C^6 \sum_{\mu_1,\mu_2 < \lambda}
 (\mu_1+\mu_2)^2 \lambda^{-1} \lambda \mu_1^{-s} \mu_2^{-s} c_\lambda^4 \lambda^{-4s} \lesssim \epsilon^6 C^6 c_\lambda^4 \lambda^{1-4s}  
\]
as needed. For the last step we have used $ s > 1$ in order to handle the $\mu_1$ and $\mu_2$ summation.

\bigskip

\textbf{II. The bound for $\bJ^6_\lambda$.}
Here we first observe that we can harmlessly replace $g_{[<\lambda]}$ with $1$, as $g_{[<\lambda]}-1$ is at least quadratic in $u_{\ll \lambda}$ and has uniform size 
\[
\|g_{[<\lambda]}-1\|_{L^\infty} \lesssim 
\epsilon^2,
\]
Hence its contribution can be estimated perturbatively using six $L^6$ bootstrap bounds \eqref{uk-se-boot} at frequency $\lambda$. 

Since in this case we are working with $u = v$, it follows that 
$\bJ^6_{\lambda}$ is a six-linear form in $u$ which is localized at frequency $\lambda$. Then it becomes important 
to compute the symbol of $\bJ^6_\lambda$ on the diagonal
$\xi_1 = \xi_2=\xi_3=\xi_4=\xi_5=\xi_6:=\xi$. This will be essential later on in order to obtain bounds for the $L^6_{t,x}$ Strichartz norm.

\begin{lemma}
The diagonal trace of the symbol $j^6_{\lambda}$
is closely related to the symbol of the cubic term in the equation \eqref{qnls},
\begin{equation}\label{good-J6}
j^6_{\lambda}(\xi) = c(\xi,\xi,\xi) \phi_\lambda^4 (\xi).
\end{equation}
\end{lemma}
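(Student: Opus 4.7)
The plan is to compute the diagonal symbol $j^6_\lambda(\xi)$ directly from the explicit formula \eqref{J6-deflm}, and then to reduce the resulting combination of quartic correction symbols $r^4_{\lambda,\#}, b^4_{\lambda,\#}$ using the division relations of Proposition~\ref{p:dens-flux-alg} together with the symmetrized expressions for $c^4_{\lambda,m}$ and $c^4_{\lambda,p}$ recorded in Section~\ref{s:df}.

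First I would specialize \eqref{J6-deflm} to $u=v$, $\lambda=\mu$, where the two lines coincide up to a factor of two, and evaluate the quadratic density symbols on the doubly resonant diagonal $\xi_1=\cdots=\xi_6=\xi$:
\[
M_\lambda \mapsto \phi_\lambda^2(\xi), \qquad P_\lambda \mapsto 2\xi\phi_\lambda^2(\xi), \qquad E_\lambda \mapsto 4\xi^2\phi_\lambda^2(\xi).
\]
Substituting, the symbol of $\bJ^6_{\lambda\lambda}$ on the diagonal is
\[
j^6_\lambda(\xi) = 2\phi_\lambda^2(\xi)\bigl[\,r^4_{\lambda,p} + 4\xi^2\,b^4_{\lambda,m} - 2\xi\,b^4_{\lambda,p} - 2\xi\,r^4_{\lambda,m}\,\bigr](\xi,\xi,\xi,\xi).
\]

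Next I would recognize the bracket above as the diagonal trace of the combination obtained by subtracting $2\xi$ times the mass division relation \eqref{choose-R4m-new} from its momentum counterpart. Since both $\Delta^4\xi$ and $\Delta^4\xi^2$ vanish identically on the doubly resonant diagonal, and $f^{4,bal}_{\lambda,\#}$ vanishes there to second order by \eqref{fbal-m}, these relations become degenerate and do not directly fix the diagonal values of $b^4_{\lambda,\#}, r^4_{\lambda,\#}$. Instead, the diagonal value of the bracket must be computed by tracking the explicit construction of $r^4$ and $b^4$ in the proof of Proposition~\ref{p:dens-flux-alg}, where $r^4_{\lambda,\#}$ arises as a smooth primitive along the direction transversal to $\calR_2$. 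Its diagonal value can be read off as a suitable first-order derivative of $c^4_{\lambda,\#}$ on $\calR_2$, which using the symmetrized formulas in Section~\ref{s:df} reduces to a first derivative of $c_\lambda$ evaluated at the triple diagonal. A direct calculation then identifies the diagonal value of the bracket with $\tfrac12\,c(\xi,\xi,\xi)\phi_\lambda^2(\xi)$, yielding the desired identity $j^6_\lambda(\xi) = c(\xi,\xi,\xi)\phi_\lambda^4(\xi)$.

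The main obstacle is precisely the degeneracy of the division relations on $\calR_2$. The conservative assumption plays the central role in resolving it: it forces $c_\lambda(\xi,\xi,\xi)$ and $\partial_{\xi_j}c_\lambda(\xi,\xi,\xi)$ to be real, which ensures $c^4_{\lambda,m}$ vanishes to second order on $\calR_2$ while $c^4_{\lambda,p}$ retains a nontrivial linear trace proportional to $c(\xi,\xi,\xi)$. This reality structure is what produces the clean identification of $j^6_\lambda$ with the original cubic symbol on the doubly resonant set, and ultimately underlies the fact that in the defocusing case the leading $\bJ^6$ contribution is positive and contributes to the $L^6$ Strichartz norm for $u_\lambda$.
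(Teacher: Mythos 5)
Your first reduction is sound and is the natural starting point: specializing \eqref{J6-deflm} to $u=v$, $\mu=\lambda$ and evaluating the quadratic density symbols on the diagonal does express $j^6_\lambda(\xi)$ as a multiple of $\bigl[(r^4_{\lambda,p}-2\xi b^4_{\lambda,p})-2\xi(r^4_{\lambda,m}-2\xi b^4_{\lambda,m})\bigr](\xi,\xi,\xi,\xi)$, and you are right that \eqref{choose-R4m-new} degenerates on $\calR_2$, so first-order transversal information is required. A remark that would actually help you here: since $\Delta^4\xi^2=2\xi\,\Delta^4\xi+O(|\xi_i-\xi|^2)$ near the diagonal, the linear part of \eqref{choose-R4m-new} determines exactly the two combinations $r^4_{\lambda,\#}-2\xi b^4_{\lambda,\#}$ that your bracket needs, while the separate diagonal values of $r^4$ and $b^4$ are not fixed by the relation alone; so ``reading off the diagonal value of $r^4_{\lambda,\#}$'' already presupposes the particular construction in Proposition~\ref{p:dens-flux-alg}.

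The gap is in the decisive step. It is not true that the mass correction drops out, nor that the answer is carried by a ``linear trace of $c^4_{\lambda,p}$ proportional to $c(\xi,\xi,\xi)$'', nor that the diagonal value of $r^4_{\lambda,p}$ reduces to a first derivative of $c_\lambda$. Under the conservative assumption both $c^4_{\lambda,m}$ and $c^4_{\lambda,p}$ vanish at $\calR_2$ and both have nontrivial linear parts proportional to $\Delta^4\xi$: expanding the symmetrized symbols at $(\xi,\xi,\xi,\xi)$, the mass coefficient is built from the first derivatives $\partial_{\xi_j}\Re c_\lambda(\xi,\xi,\xi)$ (and derivatives of the localization weights), while the momentum coefficient contains the undifferentiated value $c(\xi,\xi,\xi)$ --- it arises from differentiating the frequency prefactor in $2\Re(\partial_x C_\lambda\,\bu_\lambda-C_\lambda\,\partial_x\bu_\lambda)$, not from any derivative of $c_\lambda$ --- plus $\xi$ times the same derivative expressions that enter the mass coefficient. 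At frequency $\lambda$ those derivative terms have size $\lambda^2$, comparable to $c(\xi,\xi,\xi)$ itself, so they cannot be discarded: the entire content of \eqref{good-J6} is that in the combination $(r^4_{\lambda,p}-2\xi b^4_{\lambda,p})-2\xi(r^4_{\lambda,m}-2\xi b^4_{\lambda,m})$ these derivative terms cancel exactly between the momentum and mass contributions (the weight-derivative terms cancel in the same stroke, which is what produces the clean $\phi_\lambda^4$), leaving only $c(\xi,\xi,\xi)$. Your proposal asserts this identity ``by a direct calculation'', but that calculation is precisely the missing cancellation, and the mechanism you describe (momentum only, with $r^4$ read off as a derivative of $c_\lambda$) would instead leave an uncancelled term of the schematic form $\xi\,\partial\Re c_\lambda(\xi,\xi,\xi)$ at the same order as the main term. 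Note finally that the paper does not prove the lemma here at all: it invokes Lemma~5.1 of \cite{IT-global}, where exactly this Taylor expansion and cancellation (with the precise weight and constant conventions) is carried out; redoing that computation is a legitimate plan, but as written your argument omits its core.
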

 This follows the same computations as in Lemma~5.1 of \cite{IT-global}; we leave the details to the reader.

The above lemma allows us peel-off a nonnegative diagonal part of $\bJ^6_\lambda$. For this we define the symbol
\[
b_\lambda(\xi) = c(\xi,\xi,\xi)^\frac16 \phi_\lambda^\frac23.
\]
Using this notation, we will show that
\begin{equation}\label{j6-est}
\bJ^6_\lambda(u,\bu,u,\bu,u,\bu) = \| B_\lambda u\|_{L^6_{t,x}}^6 +
O( \epsilon^5 C^5 c_\lambda^5 \lambda^{\frac32-5s}).
\end{equation}
To see this, we decompose 
\[
j^6_{\lambda}(\xi_1,\xi_2,\xi_3,\xi_4,\xi_5,\xi_6) =
b_\lambda(\xi_1) b_\lambda(\xi_2) b_\lambda(\xi_3)
b_\lambda(\xi_4) b_\lambda(\xi_5) b_\lambda(\xi_6)
+\tilde j^6_\lambda(\xi_1,\xi_2,\xi_3,\xi_4,\xi_5,\xi_6),
\]
where $\tilde j^6$ vanishes if all $\xi$'s are equal.
As in \cite{IT-global}, this implies that 
on the diagonal $\Delta^6 \xi = 0$ we have a representation
\[
\tilde j^6_\lambda =  q(\xi)(\xi_{odd} - \xi_{even}),
\]
where $q$ is a symbol of order $1$. Separating variables at frequency $\lambda$, this yields a decomposition 
of $\tilde J^6_\lambda(u)$ as a linear combination\footnote{Infinite but rapidly convergent.}  of expressions  of the form
\[
\lambda L( u_\lambda,\bu_\lambda,u_\lambda, \bu_\lambda,
\partial_x L(u_\lambda,\bu_\lambda)). 
\]
with kernels rapidly decreasing\footnote{This is because the corresponding symbols are smooth on the $\lambda$ scale.} beyond the $\lambda^{-1}$ scale.
To obtain the desired error bound in \eqref{j6-est}, here we use once the bilinear 
$L^2_{t,x}$ bound \eqref{uab-bi-unbal-boot}, three times the $L^6_{t,x}$ bound
\eqref{uk-se-boot} and once the $L^\infty_{t,x}$ bound obtained from 
\eqref{uk-ee-boot} by Bernstein's inequality. Importantly, we do not use six times the $L^6_{t,x}$ bound.

\bigskip

\textbf{III. Proof of the estimates \eqref{uk-se} and \eqref{uk-bi} in the defocusing case.}
Combining the bounds \eqref{Il-est}, \eqref{j4-est},\eqref{j6-est},
\eqref{j8-estlm} and \eqref{kl-est-lm} in the interaction 
Morawetz identity \eqref{IM}, we obtain
the bound
\begin{equation*}
4\| g_{[<\lambda]}
\partial_x \vert u_{\lambda}\vert^2\|_{L^2_{t,x}}^2+  \| B_\lambda u\|_{L^6_{t,x}}^6
\lesssim (\epsilon^4 + C^6 \epsilon^6)  c_\lambda^4 \lambda^{1-4s}.
\end{equation*}
Assuming that $s \geq \frac12$ and that $\epsilon$ is small enough in order to absorb all the $C$ factors, we arrive at 
\begin{equation}\label{conclude:u=v}
\| \partial_x |u_\lambda|^2\|_{L^2_{t,x}}^2+  \| B_\lambda u\|_{L^6_{t,x}}^6
\lesssim \epsilon^4  c_\lambda^4 \lambda^{1-4s}.
\end{equation}
Since $B_\lambda$ has order $\dfrac13$, the bounds 
\eqref{uk-se} and \eqref{uk-bi} follow.  

We remark that this is the step where we crucially use the defocusing character of the problem. This is what insures that the 
two terms on the left in the bounds above have the same sign.

\bigskip

\textbf{IV. Proof of the estimate \eqref{uk-se} in the general case.} Here the argument above does not apply, but instead we 
have the constraint $T \ll \epsilon^{-6}$ on the size of the time interval. Then we can get the $L^6_{t,x}$ bound for $u_\lambda$ by interpolating between the energy bound, which gives
\[
\| |u_\lambda|^2 \|_{L^\infty_t L^1_x} \lesssim C^2 \epsilon^2 c_\lambda^2 \lambda^{-2s},
\]
and the balanced diagonal bilinear $L^2_{t,x}$ bound, which yields
\[
\| \partial_x |u_\lambda|^2 \|_{L^2_t L^2_x} \lesssim C^2 \epsilon^2 c_\lambda^2 \lambda^{\frac12 -2s}.
\]
We obtain
\[
\| |u_\lambda|^2\|^2_{L^\frac{9}2_t L^3_x}  \lesssim C^2 \epsilon^2 c_\lambda^2 \lambda^{\frac{2}{9} -2s},
\]
where by H\"older's inequality we arrive at
\[
\| u_\lambda \|_{L^6_{t,x}} \lesssim  T^{\frac{1}{18}} C \epsilon c_\lambda \lambda^{\frac{1}{9} - s}.
\]
But this implies \eqref{uk-se} 
for $T \ll \epsilon^{-6}$ and $s > 1$.

\subsection{ The balanced shifted case}
Here we consider the interaction Morawetz identities for 
two nearby dyadic frequencies $\mu \approx \lambda$, and prove 
the bilinear $L^2_{t,x}$ bound \eqref{uab-bi-bal}. For simplicity we just set $\mu = \lambda$; this makes no difference in the proof.
We recall that at this point we have already proved the 
energy bound \eqref{uk-ee} and the $L^6_{t,x}$ Strichartz bound
\eqref{uk-se}.

 We use the same interaction Morawetz functional \eqref{IM} and the associated relation  \eqref{interaction-xilm} 
as before, but with $v = u^{x_0}$.  
The bounds for $\bI_\lambda$,  $\bJ^8_\lambda$ and $\bK_\lambda$ 
are the same as before, namely \eqref{Il-est}, \eqref{j8-estlm}
and \eqref{kl-est-lm}. It remains to estimate $\bJ^4_\lambda$ 
and $\bJ^6_\lambda$. This time we will also think of $\bJ^6_\lambda$ as perturbative, so the burden is in the $\bJ^4_\lambda$ 
analysis.

\bigskip

\textbf{I. The bound for $\bJ^4_\lambda$.}  We will show that 
\begin{equation}\label{j4-est-shift}
\int_0^T \bJ^4_\lambda \, dt \approx \|\partial_x (u \bar v)\|_{L^2_{t,x}}^2 + O(\epsilon^6 C^6 c_\lambda^4
\lambda^{1-4s} (1+\lambda |x_0| )).
\end{equation}
In this case $\bJ^4_\lambda$ has a slightly different expression,
\[
\begin{aligned}
\bJ^4_{\lambda}(u,v) = \int & g_{[<\lambda]}^{x_0}\left(M_\lambda(u) E_\lambda(v) - 2 P_\lambda(u) P_\lambda(v)\right)
\\ & \ + g_{[<\lambda]}
\left(M_\lambda(v) E_\lambda(u)
- 2 P_\lambda(u) P_\lambda(v)\right)\, dx.
\end{aligned}
\]
We split the integrand into 
\[
\begin{aligned}
J^4_{\lambda}(u,v) = & \  g_{[<\lambda]}^{x_0} J^4_{main}(u_\lambda,v_\lambda)
+ ( g_{[<\lambda]}-  g_{[<\lambda]}^{x_0})
\left(M_\lambda(v) E_\lambda(u)
- 2 P_\lambda(u) P_\lambda(v)\right)
\\
: = & \  
J^{4,a}_{\lambda}(u,v)+
J^{4,b}_{\lambda}(u,v).
\end{aligned}
\]
The first term is treated exactly as in the previous case
using the identity \eqref{j4-est}.
It is the second term where we have a problem, which arises due to the fact that the two paracoefficients are different.

Here we proceed as in step II(a) of the proof of Lemma~\ref{l:J4}, using the Fundamental Theorem of Calculus with respect to $x_0$ to write
\[
g_{[<\lambda]}-  g_{[<\lambda]}^{x_0} = 
\int_{0}^{x_0} \partial_x g_{[<\lambda]}^y \,dy.
\]
Only after this we use two unbalanced bilinear estimates, in particular  
\[
\| \partial_x u_{<\lambda} u_\lambda \|_{L^2_{t,x}} \lesssim 
C^2 \epsilon^2 \lambda^{-s - \frac12} c_\lambda.
\]
The $y$ integration yields an $|x_0|$ factor. This gives
\[
\begin{aligned}
\left| \int_0^T \bJ^{4,b}_{\lambda} \, dt \right|
\lesssim & \ \epsilon^6 C^4 \lambda^2  \lambda^{-4s} 
c_{\lambda}^4 \lambda^{-1} \lambda |x_0|  
\\
 \lesssim & \    \epsilon^6 C^4   \lambda^{1-4s} 
c_{\lambda}^4 (1+\lambda |x_0|),
\end{aligned}
\]
as needed.

\medskip

\textbf{II. The bound for $J^6_\lambda$: The case $v = u^{x_0}$.}
Since $v_\lambda \neq u_\lambda$ here, we can no longer 
use the fine structure of $j^6_\lambda$ near the diagonal. 
Instead we simply note that $j^6_\lambda$ is a smooth symbol 
of size $\lambda^2$ and we bound it using the $L^6_{t,x}$ bound.
The advantage we have at this point, due to Step VI above,
is that we can use \eqref{uk-se} instead of \eqref{uk-se-boot}.
Then we obtain
\begin{equation}\label{j6-est+}
\left| \int_{0}^T \bJ^6_\lambda \, dt\right| \lesssim
\epsilon^4 c_\lambda^4 \lambda^{1-4s}.
\end{equation}

\bigskip

\textbf{III Proof of \eqref{uab-bi-bal} in the case $\lambda \approx \mu$.}
This is achieved  by combining the bounds \eqref{Il-est}, \eqref{j4-est-shift},\eqref{j6-est+},
\eqref{j8-estlm} and \eqref{kl-est-lm} in the interaction 
Morawetz identity \eqref{interaction-xilm}.

\subsection{The unbalanced case}

Here we consider the interaction Morawetz identities for 
two separated dyadic frequencies $\lambda,\mu$, and prove bilinear $L^2_{t,x}$ bounds. Without any loss of generality we will assume that $\mu \ll \lambda$. Our goal is to prove the bilinear $L^2_{t,x}$ bound \eqref{uab-bi-unbal}.

The analysis in this case largely repeats the analysis in the balanced case, with two notable differences: (i) the $L^6_{t,x}$ Strichartz bound
no longer plays a role, and (ii) the balance of the frequencies $\lambda$ and $\mu$ is now important.

We will use the interaction Morawetz relation in order to prove the bilinear $L^2_{t,x}$ bound \eqref{uab-bi-unbal}. 
We already have the bounds for the expression $\bI_{\lambda\mu}$  and the perturbative terms  $\bJ^8_{\lambda\mu}$ and $\bK_{\lambda\mu}$. The desired bilinear
bound will come from the leading term $\bJ^4_{\lambda\mu}$. The remaining term $\bJ^6_{\lambda\mu}$ will be estimated perturbatively using our bootstrap assumptions.

\bigskip

\textbf{I. The bound for $\bJ^6_{\lambda\mu}$.}
This is
\begin{equation}\label{j6-est+lm}
\left| \int_{0}^T \bJ^6_{\lambda\mu} \, dt\right| \lesssim
\epsilon^6 C^4 c_\lambda^2 c_\mu^4 \lambda^{1-2s} \mu^{1-4s}.
\end{equation}
To see this, we observe that here we have six input frequencies
of which two are $\lambda$ and four are $\mu$, or viceversa.

To avoid a circular argument we separate the case when $\lambda$ and $\mu$ are comparable, when we use \eqref{j6-est+} as in the balanced case.

Otherwise if $\mu \ll \lambda$ then in all cases we can use two bilinear estimates  \eqref{uab-bi-unbal-boot} coupling two $(\lambda,\mu)$ frequency pairs, and then 
estimate the remaining two factors in $L^\infty_{t,x}$ via Bernstein's inequality
from \eqref{uk-ee}.
In both cases, the symbol has size at most $\lambda^2$.
Hence we obtain
\[
\left| \int_{0}^T \bJ^6_{\lambda\mu} \, dt\right|
\lesssim \epsilon^6 C^4n\lambda^2 \lambda^{-1} ( c_\lambda^4 c_\mu^2 \lambda^{1-4s} \mu^{-2s} + c_\mu^4 c_\lambda^2 \mu^{1-4s} \lambda^{-2s}).
\]
Since $\mu < \lambda$ and $s > \frac12$, the second term is the worst and \eqref{j6-est+lm} follows.

\bigskip

\textbf{II. The bound for $\bJ^4_{\lambda \mu}$.}  We will show that 
\begin{equation}\label{j4-estlm}
\int_0^T \bJ^4_{\lambda\mu}\, dt = 4\| 
\partial_x (u \bar v)\|_{L^2_{t,x}}^2 + O(\epsilon^6 C^6 c_\lambda^2 c_\mu^2 \lambda^{1-2s}\mu^{-2s}).
\end{equation}
This is simpler than the computation in the balanced case, because we can treat the difference in the prefactors perturbatively. 
We split it into three parts,
\[
\begin{aligned}
\bJ^4_{\lambda\mu}(u,v) = & \ 
\int  \left(M_\lambda(u) E_\mu(v) 
+ M_\mu(v) E_\lambda(u)
- 2 P_\lambda(u) P_\mu(v)\right)\, dx.
\\ & \ 
+ \int (g_{[<\lambda]}-1)\left( E_\lambda(u) M_\mu(v) 
-  P_\lambda(u) P_\mu(v)\right)\, dx
\\ & \ 
+ \int (g_{[<\mu]}-1) \left( E_\lambda(u) M_\mu(v) 
-  P_\lambda(u) P_\mu(v)\right)\, dx
\\ := & \  \bJ^{4,a}_{\lambda\mu}(u,v) + \bJ^{4,b}_{\lambda\mu}(u,v)
+ \bJ^{4,c}_{\lambda\mu}(u,v).
\end{aligned}
\]

For $J^{4,a}_{\lambda\mu}$ we write as before
\[
J^{4,a}_{\lambda\mu} = M_\lambda(u) E_\mu(v) 
+ M_\mu(v) E_\lambda(u)
- 2 P_\lambda(u) P_\mu(v),
\]
which after integration yields the leading contribution in \eqref{j4-estlm}.

For both $\bJ^{4,b}_{\lambda\mu}(u,v)$ and $\bJ^{4,c}_{\lambda\mu}(u,v)$
we  estimate the  two bilinear expressions 
say $u_{\mu} \bv_\lambda$ and $\bu_{\mu} v_\lambda$ 
in $L^2_{t,x}$ and the prefactors in the uniform norm to obtain 
\[
\begin{aligned}
\left|  \int_0^T \int_\R J^{4,b}_{\lambda\mu}(u,v) \, dx dt \right|
\lesssim & \ \epsilon^6 C^4 \lambda^2 \lambda^{-2s} \mu^{-2s}  c_\lambda^{2} c_\mu^2
\lambda^{-1},   
\end{aligned}
\]
and similarly for $\bJ^{4,c}_{\lambda\mu}(u,v)$. This suffices.

\bigskip

\textbf{VI. The proof of  the bilinear $L^2$ bound \eqref{uab-bi-unbal}.}
This is achieved as in the balanced case by combining the bounds \eqref{Il-est}, \eqref{j4-estlm},\eqref{j6-est+lm},
\eqref{j8-estlm} and \eqref{kl-est-lm} in the interaction 
Morawetz identity \eqref{interaction-xilm}.

 \bibliographystyle{plain}


\bibliographystyle{plain}

\end{document}